\documentclass[11pt,a4paper]{article}

\usepackage[english]{babel}
\usepackage[T1]{fontenc}
\usepackage[utf8]{inputenc}
\usepackage[
  margin=2cm,
  includefoot,
  footskip=30pt,
]{geometry}
\usepackage{lmodern}
\usepackage{amsmath}
\usepackage{amssymb}
\usepackage{amsthm}
\usepackage{booktabs}
\usepackage{array}
\usepackage{graphicx}
\usepackage{cite}
\usepackage[dvipsnames]{xcolor}
\usepackage{tikz-cd}
\usetikzlibrary{decorations.markings,angles}
\usepackage{stmaryrd} \SetSymbolFont{stmry}{bold}{U}{stmry}{m}{n}
\usepackage{mathrsfs}
\usepackage{extarrows}
\usepackage{mathtools}
\usepackage{tensor}
\usepackage{enumerate}
\usepackage{adjustbox}
\usepackage[mathcal]{eucal}
\usepackage{hyperref}
\hypersetup{
	colorlinks,
	urlcolor={red!55!black},
	citecolor={green!60!black},
	linkcolor={red!55!black}
}
\usepackage{bm}
\usepackage{tabularx}
\usepackage{cleveref}
\usepackage[strict]{changepage}
\usepackage{tensor}
\usepackage{faktor}

\definecolor{ao}{rgb}{0.0, 0.3, 0.0}

\mathchardef\mhyphen="2D
\def\on{\operatorname}

\makeatletter
\providecommand{\leftsquigarrow}{%
  \mathrel{\mathpalette\reflect@squig\relax}%
}
\newcommand{\reflect@squig}[2]{%
  \reflectbox{$\m@th#1\rightsquigarrow$}%
}
\makeatother

\definecolor{ao}{rgb}{0.0, 0.5, 0.0}

\newtheorem{theorem}{Theorem}[section]
\newtheorem{lemma}[theorem]{Lemma}

\newtheorem{proposition}[theorem]{Proposition}
\newtheorem{corollary}[theorem]{Corollary}
\newtheorem{introthm}{Theorem}
\newtheorem{introcor}{Corollary}

\newtheorem{introrem}{Remark}

\theoremstyle{definition}
\newtheorem{construction}[theorem]{Construction}
\newtheorem{definition}[theorem]{Definition}
\newtheorem{notation}[theorem]{Notation}
\newtheorem{remark}[theorem]{Remark}
\newtheorem{example}[theorem]{Example}

\newtheorem{recollection}[theorem]{Recollection}
\newtheorem{warning}[theorem]{Warning}

\newtheorem{introex}{Example}

\newcommand\noloc{%
  \nobreak
  \mspace{6mu plus 1mu}
  {:}
  \nonscript\mkern-\thinmuskip
  \mathpunct{}
  \mspace{2mu}
}

\newcommand{\laxpush}[3]{ {#1 \underset{#2}{\overset{\rightarrow}{\amalg}} #3} }

\newcommand{\cons}{{\on{Cons}}}
\newcommand{\loc}{{\on{Loc}}}
\newcommand{\dSt}{\on{dSt}_k}

\newcommand{\A}{\mathcal{A}}
\newcommand{\B}{\mathcal{B}}
\newcommand{\C}{\mathcal{C}}
\newcommand{\D}{\mathcal{D}}
\newcommand{\E}{\mathcal{E}}

\newcommand{\X}{\mathcal{X}}
\newcommand{\M}{\mathcal{M}}
\newcommand{\Cons}{\mathrm{Cons}}
\newcommand{\Loc}{\mathrm{Loc}}

\title{Lagrangian structures on the derived moduli of constructible sheaves}
\author{Merlin Christ and Enrico Lampetti}
\date{\today}

\begin{document}
\maketitle

\abstract{Given a compact oriented manifold of dimension $n$ with a conically smooth stratification, we show that the moduli of $\D(k)$-valued constructible sheaves and the moduli of perverse sheaves are $(2-n)$-shifted Lagrangian. The former statement follows from the construction of a relative left $n$-Calabi--Yau structure on the stable $\infty$-category of $\D(k)$-valued constructible sheaves. This is achieved via a lax gluing result for categorical cubes equipped with cubical Calabi--Yau structures. Given a codimension $2$ submanifold, we further identify symplectic leaves corresponding to perverse sheaves with prescribed monodromy.}

\tableofcontents

\section{Introduction}

	Shifted Lagrangian structures on morphisms of derived stacks were introduced by Pantev--Toën--Vezzosi--Vaquié in \cite{PTVV13}. Following ideas of Toën, Brav--Dyckerhoff \cite{BD19} introduced a non-commutative analogue of a shifted Lagrangian structure, attached to a dg functor, called a relative left Calabi--Yau structure.
	A special case are (absolute) left $n$-Calabi--Yau structures, which amount to an $S^1$-equivariant identification of the inverse Serre functor with the suspension functor $[-n]$. Passing to the To\"en-Vaquié moduli of objects \cite{TV07}, a relative left Calabi--Yau structure induces a Lagrangian structure \cite{BD21}.
	Many natural examples of Lagrangian morphisms arise from topology and algebraic geometry.
	In this paper, we will describe a new class of examples arising from the topology of stratified spaces.

	Consider an oriented closed manifold $X$ of dimension $n$.
	As shown by Brav--Dyckerhoff \cite{BD19}, the smooth dg category $\loc^{\on{dg}}(X)$ of $\on{Perf}(k)$-valued local systems on $X$, with $k$ a field, inherits from the orientation of $X$ a left $n$-Calabi--Yau structure. 	The moduli of objects $\mathcal{M}_{\loc^{\on{dg}}(X)}$ thus inherits a $(2-n)$-shifted symplectic structure in the sense of \cite{PTVV13}.
	If more generally $X$ has boundary $\partial X$, the functor $\Loc^{\on{dg}}(\partial X)\to \Loc^{\on{dg}}(X)$ inherits a relative left $n$-Calabi--Yau structure. The arising morphism of derived stacks $\mathcal{M}_{\Loc^{\on{dg}}(X)}\to \mathcal{M}_{\Loc^{\on{dg}}(\partial X)}$ is then Lagrangian with $(3-n)$-shifted symplectic target, which is also called a $(2-n)$-shifted symplectic morphism \cite{Cal15}.

	In this paper, we generalize these results to the category of constructible sheaves $\on{Cons}_P(X)$ on a compact oriented manifold $X$ with respect to a conically smooth stratification $P$ in the sense of \cite{AFT17}.
	For this, we work in the setting of $k$-linear stable $\infty$-categories instead of dg categories. In particular, we replace $\on{Loc}^{\on{dg}}(X)$ with the $k$-linear stable $\infty$-category $\on{Loc}(X)$ of $\D(k)$-valued local systems and the To\"en-Vaquié moduli of objects by its $\infty$-categorical version \cite{AG14}.

	The $k$-linear stable $\infty$-category of $\D(k)$-valued constructible sheaves $\on{Cons}_P(X)$ was described in \cite{PT22, Jan24, CJ24, HPT24} in terms of functors from the exit-path $\infty$-category of $(X,P)$ to $\D(k)$, generalizing results on spaces-valued constructible sheaves from \cite[Appendix A]{HA}.
	The $\infty$-category $\on{Cons}_P(X)$ and its moduli of objects were further studied in \cite{HPT23,HPT24,Lam25}, and can be identified with an intrinsically defined moduli $\mathbf{Cons}_P(X)$ of constructible sheaves on $(X,P)$.
	For any perversity function $\mathfrak{p}$, there is an open immersion ${}^{\mathfrak{p}}\mathbf{Perv}_P(X) \hookrightarrow \mathbf{Cons}_P(X)$ of the moduli of perverse sheaves.

	Our main result can be stated as follows:

\begin{introthm}[{\Cref{thm:final_result}, \Cref{corollary:lagrangian_general}}]\label{introthm:1}
	Let $X$ be a compact oriented manifold of dimension $d$ with boundary.
	Let $(X,P)$ be a conically smooth stratification of $X$ with boundary of depth $n$. \begin{enumerate}[(i)]
\item There is a canonical left $d$-Calabi--Yau structure on the categorical $(n+1)$-cube $\on{Cons}_P(X)_\ast$ from \Cref{def:Cons_X} with tip the $k$-linear stable $\infty$-category $\on{Cons}_P(X)$ of $\D(k)$-valued constructible sheaves on the interior of $(X,P)$. In particular, the functor
\[
\on{colim}_{I^{d+1}\backslash \{1,\dots,1\}}\on{Cons}_P(X)_\ast\longrightarrow \on{Cons}_P(X)_{(1,\dots,1)}\simeq \on{Cons}_P(X)
\]
inherits a relative left $n$-Calabi--Yau structure.
\item Let $\mathfrak{p}\colon P \to \mathbb{Z}$ be a function.
	The above relative left $d$-Calabi--Yau functor gives rise to $(2-d)$-shifted Lagrangian morphisms between locally geometric derived stacks 
\[
\mathbf{Cons}_P(X) \longrightarrow \mathcal{M}_{\on{colim}_{I^{n+1}\backslash \{1,\dots,1\}}\on{Cons}_P(X)_\ast}
\]
and 
\[
{}^{\mathfrak{p}}\mathbf{Perv}_P(X) \longrightarrow \mathcal{M}_{\on{colim}_{I^{n+1}\backslash \{1,\dots,1\}}\on{Cons}_P(X)_\ast}\,.
\]
\end{enumerate}
\end{introthm}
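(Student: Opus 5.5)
The proof goes by induction on the depth $n$ of the stratification. At each step we cut $(X,P)$ along the boundary of a tubular neighbourhood of the deepest stratum and reassemble the Calabi--Yau structures with the lax gluing result for categorical cubes carrying cubical Calabi--Yau structures.

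\emph{Base case, depth $0$.} Here $X$ is a single stratum, so $\on{Cons}_P(X)\simeq\on{Loc}(X)$ and the cube $\on{Cons}_P(X)_\ast$ is the $1$-cube $\on{Loc}(\partial X)\to\on{Loc}(X)$. The relative left $d$-Calabi--Yau structure is the $\infty$-categorical version of the Brav--Dyckerhoff structure induced by the fundamental class $[X,\partial X]$. It comes from the identification of Hochschild homology of $\on{Loc}(Y)$ with chains on the free loop space $\mathcal{L}Y$. The fundamental class, pushed along the constant-loop inclusion, then gives the required $S^1$-invariant class in relative Hochschild homology. Nondegeneracy amounts to Poincar\'e--Lefschetz duality for local systems.

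\emph{Inductive step.} Let $S\subset X$ be the closed stratum of maximal depth. By conical smoothness it has a tubular neighbourhood $T$ which is a fibre bundle over $S$ whose fibre is the cone on a compact stratified link $L$ of depth $n-1$. Decompose
\[
X=T\cup_{\partial T} X^{\circ}, \qquad X^{\circ}=X\setminus T^{\circ}.
\]
Both the complement $X^\circ$ and the sphere bundle $\partial T$ are manifolds with boundary whose stratifications have depth at most $n-1$, and they carry the induced orientations. By induction their cubes carry cubical $d$-Calabi--Yau (respectively $(d-1)$-Calabi--Yau) structures.

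The constructible sheaves on $T$ are described by exit paths. Concretely, $\on{Cons}_P(T)$ is a lax limit: it consists of a local system on $S$, a constructible sheaf on $\partial T$, and a gluing map from the pullback of the former to the latter. Equivalently, $\on{Cons}_P(X)$ is the lax pushout of $\on{Cons}(\partial T)$ with $\on{Loc}(S)$ and $\on{Cons}(X^\circ)$, using the van Kampen theorem for exit-path categories. This exhibits $\on{Cons}_P(X)_\ast$ as a lax gluing of the cubes for $X^\circ$, $\partial T$ and $S$, which raises the cube dimension by one.

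The orientation data must be compatible: the fundamental classes of $X^\circ$ and $T$ restrict to that of $\partial T$. Given this compatibility, the lax gluing theorem for cubical Calabi--Yau structures produces the cubical $d$-Calabi--Yau structure on $\on{Cons}_P(X)_\ast$. The relative structure on the colimit functor in (i) is then the general consequence that a cubical Calabi--Yau structure induces one on the map from the colimit over $I^{n+1}\setminus\{(1,\dots,1)\}$ to the tip.

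I expect the main obstacle to be verifying the hypotheses of the lax gluing theorem for the tubular piece. The difficulty is exhibiting $\on{Cons}_P(T)$ with its Calabi--Yau structure as a categorical cube, in which the cone direction contributes the new lax (oplax) coordinate. The Hochschild class on that piece has to be a relative fundamental class on the mapping cylinder of $\partial T\to S$. I would construct it from $[T,\partial T]$ via Hochschild homology of exit-path categories, computed as chains on their classifying-space loop objects, and then check nondegeneracy fibrewise using the Calabi--Yau structure on the link $L$ supplied by the induction.

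For (ii), apply the Brav--Dyckerhoff result, in its $\infty$-categorical form via the moduli of objects of \cite{AG14}, to the relative left $d$-Calabi--Yau functor. A relative left $d$-Calabi--Yau functor with finite type source induces a $(2-d)$-shifted Lagrangian morphism $\mathcal{M}_{\on{Cons}_P(X)}\to\mathcal{M}_{\on{colim}}$. Finite type holds because exit-path categories of compact conically smooth stratifications are finite. The identification $\mathcal{M}_{\on{Cons}_P(X)}\simeq\mathbf{Cons}_P(X)$ then gives the first Lagrangian morphism. The perverse version follows by restricting along the open immersion ${}^{\mathfrak p}\mathbf{Perv}_P(X)\hookrightarrow\mathbf{Cons}_P(X)$, since Lagrangian structures restrict along open immersions (these are \'etale). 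Local geometricity of both stacks is inherited from finite type.
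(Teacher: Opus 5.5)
Your decomposition is the paper's. The cube of \Cref{def:Cons_X} is built by exactly this iteration of directed pushouts; the paper runs it bottom-up from the fully unzipped manifold with corners, and its last step is your inductive step. Your part (ii) is also the paper's argument, except that you should also say why the colimit over the punctured cube is of finite type: it is a finite colimit of finite-type categories.

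\textbf{The tubular piece.} This is the genuine gap, which you flag but do not resolve. What \Cref{prop:laxgluingCYcubes} needs there is a cubical $d$-Calabi--Yau structure on $[\on{Cons}(\partial T)_\ast\to\on{Loc}^0(T)_\ast]$. Here $\on{Loc}^0(T)_\ast$ is the local-systems cube of the manifold with corners $T$, padded by zeros in the link directions. The structure must be compatible on the common face with the class your induction put on $\on{Cons}(\partial T)_\ast$. Note that the trivially stratified $\on{Loc}(T)\simeq\on{Loc}(S)$ enters here, not $\on{Cons}_P(T)$: the lax cone direction is produced by the directed pushout itself.

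Your proposed verification builds a class from $[T,\partial T]$ and checks nondegeneracy ``fibrewise'' on $L$. As it stands this is not an argument, for three reasons.
\begin{enumerate}
\item Nondegeneracy is a condition on maps of endofunctors of $\on{Loc}(S)$.
\item The sphere bundle may be twisted, and nothing in the proposal reduces the condition to the fibre; that would need a separate descent or gluing argument over $S$.
\item The class on $\on{Cons}(\partial T)_\ast$ is only known through the previous gluings, so it cannot be compared with $[T,\partial T]$ directly.
\end{enumerate}

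The missing idea is the paper's cofiber trick. Taking iterated cofibers along the link-stratum coordinates turns the tubular cube into $\on{Loc}(T)_\ast$, which is Calabi--Yau by the orientation (\Cref{prop:CY_cube_manifold_w_corners}). The $\partial^1$-face has the same total negative cyclic homology before and after cofibering. Hence the restricted class on the $\partial^0$-face lifts to the tubular cube, and the orientation class of $T$ supplies the needed nullhomotopy. Nondegeneracy then follows from \Cref{prop:equivalent_characterization_CY_cube}. The faces whose target is $0$ reduce to the link cube, and the remaining condition reduces to $\on{Loc}(T)_\ast$.

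\textbf{The induction hypothesis.} This also shows why your inductive hypothesis is too weak. The cofiber argument needs the iterated cofiber of the inductively constructed class on $\on{Cons}(\partial T)_\ast$ to be the orientation class of the unzipped link bundle. A compatibility of fundamental classes in singular homology, as you state it, says nothing about Hochschild classes on constructible cubes. The paper carries exactly this as the second clause of its inductive claim in \Cref{prop:final_result_k}.

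Likewise, $X^\circ$ and $\partial T$ are not merely manifolds with boundary.
\begin{itemize}
\item $\partial T$ is a new boundary piece of $X^\circ$ and must be a separate cube coordinate. Only then is $[\on{Cons}(\partial T)_\ast\to\on{Cons}(X^\circ)_\ast]$ available as an input to the gluing theorem, separately from $\partial X$.
\item When $S$ meets $\partial X$, $X^\circ$ has genuine corners and $\partial T$ has boundary.
\end{itemize}
So the statement for manifolds with boundary does not apply to these pieces. You must run the induction for stratified manifolds with corners, with cubes of fixed dimension $n+1$, as the paper does with $\on{Cons}(\on{Unzip}^{[k,n]}(X))_\ast$. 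This includes the base case, where you need the cubical statement for local systems on manifolds with corners, not just the relative Brav--Dyckerhoff structure.
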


\begin{introrem}
	In the case where the boundary is non-empty in \Cref{introthm:1}, the $\infty$-category $\on{Cons}_P(X)$ is understood to mean constructible sheaves which are locally constant in the normal direction of the boundary, or equivalently constructible sheaves on the interior of $X$. See also \Cref{rem:corners_or_interior}.	
\end{introrem}

	For $X$ a smooth algebraic curve, the moduli of locally constant sheaves on $X$ carries a Poisson structure which has been extensively studied in the literature \cite{FR99, GHLW97, Gol06, GR98}.
	In higher dimension, this has been generalized by Pantev-Toën \cite{PT18} for the derived stack of locally constant sheaves (i.e., for $G=\mathrm{GL}_n$ in their paper).
	\Cref{introthm:1} allows us to offer the following generalization:
	
\begin{introcor}[{\Cref{coro:Poisson_structure}}]
	Let $X$ be a compact oriented manifold of dimension $d$ with boundary.
	Let $(X,P)$ be a conically smooth stratification of $X$ with boundary and let $\mathfrak{p} \colon P \to \mathbb{Z}$ be a function.
	Then the derived stacks $\mathbf{Cons}_P(X)$ and ${}^{\mathfrak{p}}\mathbf{Perv}_P(X)$ carry $(2-d)$-shifted Poisson structures.
\end{introcor}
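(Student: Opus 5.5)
The corollary will follow from \Cref{introthm:1}(ii) together with the general principle that a shifted Lagrangian morphism into a shifted symplectic target induces a shifted Poisson structure on its source. The plan has three steps: identify the target's symplectic structure, apply the Lagrangian-to-Poisson transfer, and then handle the perverse case by restriction along the open immersion.

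First, write $\mathcal{M}_\partial := \mathcal{M}_{\on{colim}_{I^{n+1}\backslash \{1,\dots,1\}}\on{Cons}_P(X)_\ast}$. By \Cref{introthm:1}(ii), the morphism $\mathbf{Cons}_P(X)\to \mathcal{M}_\partial$ is $(2-d)$-shifted Lagrangian. In particular, $\mathcal{M}_\partial$ carries a $(3-d)$-shifted symplectic structure. This comes from the left $(d-1)$-Calabi--Yau structure on the colimit category, which is the boundary part of the relative Calabi--Yau structure. Both stacks are locally geometric and locally of finite presentation, so the formalism of \cite{PTVV13} and Calaque--Pantev--To\"en--Vaquié--Vezzosi applies.

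Second, I will invoke the theorem of Melani--Safronov, also proved by Calaque--Pantev--To\"en--Vaquié--Vezzosi. It states that for a $(m)$-shifted symplectic derived Artin stack $Y$ and a Lagrangian $L\to Y$, the morphism carries a coisotropic structure. In particular, the source $L$ inherits an $(m-1)$-shifted Poisson structure, obtained as the Poisson structure on the fiber product $L\times_Y \mathrm{pt}$-type intersection. More precisely, the non-degenerate $(m-1)$-shifted structure on $L$ relative to $Y$ forgets to a $(m-1)$-shifted Poisson structure on $L$ itself. Applying this with $m=3-d$ gives a $(2-d)$-shifted Poisson structure on $\mathbf{Cons}_P(X)$.

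Third, for the perverse case, either the same argument is applied directly to the second Lagrangian morphism in \Cref{introthm:1}(ii), or the Poisson structure is restricted along the open immersion ${}^{\mathfrak{p}}\mathbf{Perv}_P(X)\hookrightarrow \mathbf{Cons}_P(X)$. Open immersions are étale, and shifted Poisson structures, being defined via polyvector fields, pull back along étale maps.

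The main obstacle is checking that the hypotheses of the coisotropic/Poisson transfer theorem hold. Those results are usually stated for derived Artin stacks locally of finite presentation, possibly requiring a geometric (not merely locally geometric) stack or finite-type conditions. I would first check that the cited theorems extend to locally geometric stacks by working on the exhausting open substacks of bounded amplitude, which are geometric, and gluing, since Poisson structures satisfy étale descent. If the boundary is empty, the target is a point with its trivial symplectic structure, and the statement reduces to the $(2-d)$-shifted symplectic, hence Poisson, structure on $\mathbf{Cons}_P(X)$.
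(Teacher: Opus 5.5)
Your argument is the same as the paper's. The paper cites Melani--Safronov to identify the Lagrangian structures from \Cref{corollary:lagrangian_general} with non-degenerate coisotropic structures. It then uses Safronov's comparison to match these with the coisotropic structures of Calaque--Pantev--To\"en--Vaqui\'e--Vezzosi, and forgets a coisotropic structure to the $(2-d)$-shifted Poisson structure on the source, as in your second and third steps. The paper does not address the locally geometric versus geometric issue you raise.

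One correction: drop your final remark about empty boundary, because it is false unless $P$ is trivial. The cube $\on{Cons}_P(X)_\ast$ also records the links of the strata. So for closed $X$ the target $\mathcal{M}_{\on{colim}_{I^{n+1}\backslash\{1,\dots,1\}}\on{Cons}_P(X)_\ast}$ is still nontrivial, for example $\mathbf{Loc}(\partial D)$ in the submanifold case. Consequently $\mathbf{Cons}_P(X)$ is in general only Poisson, not symplectic, which is why the paper has to pass to fixed-monodromy leaves to get symplectic stacks. Your general argument already covers the empty-boundary case, so no special treatment is needed.
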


	Cubical Calabi--Yau structures were introduced in \cite{CDW23}, generalizing relative Calabi--Yau structures corresponding to the case of $1$-cubes (i.e.~functors).
	They can also be considered as higher morphisms in higher categories of iterated Calabi--Yau cospans \cite{BCS24}.
	Passing to the moduli of objects, cubical Calabi--Yau structures give rise to what one may term iterated Lagrangian intersection cubes. 
	A central feature of relative Calabi--Yau structures is that they can be glued. Multiple versions of such gluing properties have been formulated in the literature \cite{BD19,CDW23,Chr23,BCS24,ST25}. The simplest instance is that relative left Calabi--Yau functors can be glued along pushouts \cite{BD19}. In this paper, we introduce and apply a novel gluing property of Calabi--Yau cubes, which uses so-called directed pushouts, which are $(\infty,2)$-categorical partially lax pushouts. This generalizes the special case of the lax gluing of relative Calabi--Yau structures on functors (i.e.~$1$-cubes) described in \cite{CDW23}.

The cubical left Calabi--Yau structure on $\on{Cons}_P(X)_\ast$ gives rise to a left Calabi--Yau structure on the functor from the colimit over the complement of the tip of the cube
\[
\on{colim}_{I^n\backslash \{1,\dots,1\}}\on{Cons}_P(X)_\ast\longrightarrow \on{Cons}_P(X)_{(1,\dots,1)}\simeq \on{Cons}_P(X)\,.
\]
	However, the domain is an abstract colimit, and in general we do not give an explicit description of it.
	In the case that $X$ is closed and the depth of $P$ is $1$, we can identify the domain as follows: 

\begin{introex}
	Let $X$ be a smooth closed manifold of dimension $n$ and $D\subset X$ a (not necessarily connected) smooth codimension $d$ submanifold.
	Then $X$ inherits a conically smooth stratification $P$ of depth $1$ with strata $D$ and $X\backslash D$. 
	Let $\partial D\to D$ be the circle bundle given by the link of $D$, i.e.~the boundary of the tubular neighborhood of $D$ in $X$.
	The categorical $1$-cube $\on{Cons}_P(X)_\ast$ amounts to a functor
\begin{equation}\label{eq:boundary_functor}
	\on{Loc}(\partial D)\longrightarrow \on{Cons}_P(X)\,.
\end{equation}

	The functor \eqref{eq:boundary_functor} is induced by an equivalence between $\on{Cons}_P(X)$ and the (partially lax) directed pushout
\[ \on{Cons}_P(X)\simeq \on{Loc}(D)\overset{\rightarrow}{\amalg}_{\on{Loc}(\partial D)}\on{Loc}(X\backslash D) \,,\]
see also \Cref{def:directed_pushout}.
	Note that this description as a directed lax pushout recovers the well-known recollement of $\on{Cons}_P(X)$ into $\on{Loc}(D)$ and $\on{Loc}(X\backslash D)$.
	Part (i) of \Cref{introthm:1} states that the functor \eqref{eq:boundary_functor} is relative left $n$-Calabi--Yau, and that 
\[
\mathbf{Cons}_P(X)\longrightarrow \mathbf{Loc}(\partial D)
\]
is a $(2-n)$-shifted Lagrangian morphism, where $\mathbf{Loc}(\partial D) \coloneqq \mathcal{M}_{\on{Loc}(\partial D)}$ is the stack of local systems on $\partial D$.
\end{introex}

	Finally, we further specialize to the case that the smooth closed submanifold $D \subset X$ is of codimension $2$. Suppose that $D$ has $m$ connected components.
	We further describe $(2-n)$-shifted symplectic leaves of the moduli of perverse sheaves on $X$, obtained from fixing the monodromy around the $m$ circles in the circle bundle $\partial D\to D$.

\begin{introthm}[\Cref{thm:symplectic_leaves}]\label{introthm:2}
	Consider a set of elements $\lambda_i \in GL_{r_i}$, $i= 0,\ldots, m$.
	Then the derived stack ${}^{\mathfrak{p}}\mathbf{Perv}^{\underline{r}, \underline{\lambda}}_D(X)$ of perverse sheaves with prescribed monodromy $\underline{\lambda}=\{\lambda_1,\dots,\lambda_m\}$ along the fibers of the $m$ connected components of the circle bundle $\partial D\to D$ carries a $(2-n)$-shifted symplectic structure.
\end{introthm}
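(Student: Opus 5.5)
The symplectic structure will come from the Lagrangian morphism of \Cref{introthm:1} together with a second Lagrangian, via the standard Lagrangian intersection theorem of \cite{PTVV13}: the derived fiber product of two Lagrangians $L_1\to Y \leftarrow L_2$ into an $(s+1)$-shifted symplectic stack $Y$ is $s$-shifted symplectic.

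\textbf{Setting.} Take the depth $1$ stratification of $X$ by $D$ and $X\backslash D$, with $D$ of codimension $2$. By the example above, the functor $\on{Loc}(\partial D)\to \on{Cons}_P(X)$ is relative left $n$-Calabi--Yau. Restricting to the open substack of perverse sheaves, \Cref{introthm:1}(ii) gives a $(2-n)$-shifted Lagrangian morphism
\[
{}^{\mathfrak{p}}\mathbf{Perv}_D(X)\longrightarrow \mathbf{Loc}(\partial D),
\]
whose target is $(3-n)$-shifted symplectic. The rank data $\underline r$ decomposes this into components.

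\textbf{Step 1: the second Lagrangian.} I need a Lagrangian $\mathcal{L}_{\underline\lambda}\to \mathbf{Loc}(\partial D)$ encoding the prescribed fiber monodromy. Since $\partial D=\coprod_i \partial D_i$, the stack $\mathbf{Loc}(\partial D)$ is the product of the $\mathbf{Loc}(\partial D_i)$, so it suffices to work one component at a time and take products.

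For each $i$, consider the fiber circle $S^1\to \partial D_i$. I would build $\mathcal{L}_{\lambda_i}$ as the substack of local systems whose fiber monodromy lies in a fixed conjugacy class. Two routes are available:
\begin{itemize}
\item[(a)] \emph{Categorical route.} Regard $\partial D_i\to D_i$ as an $S^1$-fibration and construct a relative Calabi--Yau cospan. The constant local systems along fibers correspond to $\on{Loc}(D_i)$, and $\on{Loc}(\partial D_i)\to \on{Loc}(D_i)$ is relative Calabi--Yau, because $D_i$ bounds the disc bundle (tubular neighbourhood) $N_i\simeq D_i$. This handles only trivial monodromy.
\item[(b)] \emph{Twisted route, for general $\lambda_i$.} Use the stack of local systems on $\partial D_i$ whose fiber monodromy is identified with $\lambda_i$. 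Equivalently, these are modules over a twisted version of $\on{Loc}(D_i)$: local systems $E$ on $D_i$ equipped with an automorphism conjugate to $\lambda_i$, forming a (locally) $\mathrm{Aut}$-gerbe over $[\lambda_i]/GL_{r_i}$. One then shows that the natural map to $\mathbf{Loc}(\partial D_i)$ is Lagrangian. This mirrors the classical picture in which conjugacy classes are Lagrangians (quasi-Hamiltonian moment map level sets) in $\mathbf{Loc}(S^1)$-fibered settings.
\end{itemize}

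\textbf{Step 2: the intersection.} Define
\[
{}^{\mathfrak{p}}\mathbf{Perv}^{\underline r,\underline\lambda}_D(X) := {}^{\mathfrak{p}}\mathbf{Perv}_D(X)\times_{\mathbf{Loc}(\partial D)}\prod_i \mathcal{L}_{\lambda_i}.
\]
This must be checked to agree with the paper's definition. Applying the intersection theorem then gives a $(3-n)-1=(2-n)$-shifted symplectic structure.

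\textbf{Main obstacle.} The hard step is Step 1 for nontrivial $\lambda_i$: proving that fixing fiber monodromy yields a Lagrangian in $\mathbf{Loc}(\partial D_i)$, with correct isotropy and nondegeneracy.

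My first attempt would realize it as a relative Calabi--Yau structure. Take the $S^1$-bundle $\partial D_i$ as the boundary of the disc bundle, and twist by a rank-$r_i$ "monodromy $\lambda_i$" object on the punctured normal disc. Gluing Calabi--Yau cospans along the fiber circle would then reduce the problem to the $1$-dimensional statement: the map $[\{\lambda\}/Z(\lambda)]\to \mathbf{Loc}(S^1)=[GL_r/GL_r]$ is $0$-shifted Lagrangian into a $1$-shifted symplectic stack.

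If that fails, I would verify nondegeneracy directly on tangent complexes. Using the fiber sequence of cochains $C^*(\partial D_i)$ versus $C^*(D_i)$ via the Gysin sequence, Poincaré duality on $D_i$ (dimension $n-2$) should produce the required identification of the tangent complex with the shifted dual of the normal complex.
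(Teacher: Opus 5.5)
Your architecture matches the paper's. You intersect the Lagrangian ${}^{\mathfrak{p}}\mathbf{Perv}_D(X)\to\mathbf{Loc}(\partial D)$ with a monodromy-fixing Lagrangian into the $(3-n)$-shifted symplectic $\mathbf{Loc}(\partial D)$, then apply the intersection theorem of \cite{PTVV13}. The gap is Step 1: it is the only input beyond \Cref{introthm:1}, and none of your routes closes it.

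\begin{itemize}
\item Route (a) only covers $\lambda_i=1$.
\item Route (b) misidentifies the stack unless the circle bundle $\partial D_i\to D_i$ is trivial. Take a closed genus $g$ surface $D_i$ in a $4$-manifold with normal Euler number $e\neq 0$. Then $\pi_1(\partial D_i)$ has the relation $\prod_j[a_j,b_j]=h^{e}$, with $h$ the central fiber class. A representation with $h\mapsto\lambda_i$ therefore satisfies $\prod_j[A_j,B_j]=\lambda_i^{e}$, so it is not a local system on $D_i$ with an automorphism. The Lagrangian property is also only asserted there.
\item Your ``first attempt'' isolates the right fiberwise input: $BZ_\lambda\to[G/G]=\mathbf{Map}(S^1,BG)$ is Lagrangian. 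But ``gluing Calabi--Yau cospans along the fiber circle'' does not globalize it. You exhibit no relative Calabi--Yau functor whose moduli realize this map. Over a nontrivial bundle, local trivializations are glued by rotations of the fiber, so you need to know how the fiberwise Lagrangian interacts with rotation.
\item The tangent-complex fallback can at most check nondegeneracy. A Lagrangian structure is first an isotropic structure, i.e.\ a nullhomotopy of the pulled-back closed $2$-form, and that has to be constructed.
\end{itemize}

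The missing idea is exactly that $S^1$-equivariance; the paper imports it from Pantev--Toën \cite{PT18} (\Cref{recollection:bundle_of_lagrangians}).
\begin{itemize}
\item Loop rotation acts on $[G/G]=\mathbf{Map}(S^1,BG)$, and $S^1$ acts on $BZ_\lambda$ through the central element $\lambda\in Z_\lambda$. With these actions, $BZ_\lambda\to[G/G]$ is an $S^1$-equivariant Lagrangian.
\item Pulling back along the classifying map $\alpha_i\colon D_i\to BS^1$ gives a locally constant family of Lagrangians $\widetilde{(BZ_{\lambda_i})}_{\alpha_i}\to\widetilde{[G/G]}_{\alpha_i}$ over $D_i$.
\item Taking global sections over the oriented closed $(n-2)$-manifold $D_i$ gives the Lagrangian $\mathbf{Loc}_{Z_{\lambda_i},\alpha_i}(\partial D_i)\to\mathbf{Loc}^{r_i}(\partial D_i)$, with $(3-n)$-shifted symplectic target.
\end{itemize}
This twisted stack of sections is also what enters the definition of ${}^{\mathfrak{p}}\mathbf{Perv}^{\underline r,\underline\lambda}_D(X)$. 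With $\mathcal{L}_{\lambda_i}$ chosen this way, your Step 2 agrees with the paper.

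Two further points remain.
\begin{itemize}
\item You must check that the symplectic form on $\mathbf{Loc}^{r_i}(\partial D_i)$ obtained this way agrees with the one induced by the Calabi--Yau structure on $\on{Loc}(\partial D_i)$. The paper uses \Cref{recollection:compatible_symplectic_structures} for this. Without it, the two Lagrangians do not live in the same symplectic stack.
\item The rank decomposition in your Setting presupposes that perverse sheaves for $\mathfrak{p}=\on{id}$ are sent to local systems concentrated in degree $0$. The paper proves this in \Cref{lm:preserve_t_structure} and \Cref{corollary:factorisation}.
\end{itemize}
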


\begin{introrem}\label{rem_intro:vanishing_cycles}
	The circle bundle $\partial D \to D$ in \Cref{introthm:2} can be identified with the normal bundle of $D \subset X$, and the given Lagrangian morphism therein can be identified with the vanishing cycles functor attached to the deformation to the normal cone.
	In particular, the symplectic stack from \Cref{introthm:2} associated with local systems with prescribed trivial monodromy recovers the moduli stack of local systems $\mathbf{Loc}(X)$.
\end{introrem}

	We exemplify \Cref{introthm:2} in the case of $D$ a knot in a $3$-manifold as follows.
	Another example of a $0$-shifted symplectic stack arising from a Riemann surface is given in \Cref{ex:Riemann_surface}.

\begin{introex}\label{ex:knots_embedding} 
	Let $K$ be a knot embedded in a $3$-manifold $M$ and let $\alpha\colon K \to BS^1$ be the $S^1$-bundle $\partial K \to K$ given by the boundary of a tubular neighborhood of $K$ in $M$.
	Let $r \in \mathbb{N}$ and $\lambda \in GL_r$.
	Then ${}^{\mathfrak{p}}\mathbf{Perv}^{r, \lambda}_K(M)$ carries a $(-1)$-shifted symplectic structure. 
	 As pointed out to us by T. Kinjo, if $\lambda$ is diagonalizable, then the stack ${}^{\mathfrak{p}}\mathbf{Perv}^{r, \lambda}_K(M)$ can be shown to be symmetric. 

The stack ${}^{\mathfrak{p}}\mathbf{Perv}^{r, \lambda}_K(M)$ was also considered in the case $M=S^3$ in \cite{BK16} and \cite[Section 8.5]{ST25}.
\end{introex}

	Recent works show that the existence of $0$-shifted and $(-1)$-shifted symplectic structures on a symmetric derived stack $\mathcal{X}$ can be used to construct BPS Lie algebras which allows the study of the cohomology of $\mathcal{X}$ via cohomological Hall algebras, see e.g. \cite{BDNIKP25, HK25}.
	Another key ingredient in order to apply the results in \textit{loc.cit.}~is the existence of (derived) good moduli spaces, which has been established for (derived) stacks of perverse sheaves in \cite{Lam25}.

\subsection*{Organization of the paper}

	We begin in \Cref{sec:CY_cubes} with a discussion of Calabi--Yau categorical cubes.
	We prove in \Cref{prop:laxgluingCYcubes} the gluing result for cubical Calabi--Yau structures along directed pushouts.
	In \Cref{sec:CY_str_constr_sheaves}, we prove the main result.
	For this, we first associate Calabi--Yau cubes with manifolds with corners.
	By lax gluing the Calabi--Yau cubes associated with the unzippings of $X$ and the tubular neighborhoods of the strata, we construct the cubical Calabi--Yau structure on the categorical cube $\on{Cons}_P(X)_\ast$.
	Finally, we describe the Lagrangian structures on the moduli of constructible and perverse sheaves on $(X,P)$, as well as shifted symplectic leaves, in \Cref{sec:Lagrangian_str}.

\subsection*{Acknowledgements}

	We thank Mauro Porta and Jean-Baptiste Teyssier for their suggestions, as well as Tasuki Kinjo for fruitful discussions and suggestions. We also thank Marco Volpe for his comments on a draft of the paper.

	M.C.~is a member of the Hausdorff Center for Mathematics at the University of Bonn (DFG GZ 2047/1, project ID 390685813).
	
	E.L. is thankful for the hospitality of the Hausdorff Center for Mathematics at the University of Bonn and of the University of Seattle, where part of this research took place.

\section{Calabi--Yau categorical cubes}\label{sec:CY_cubes}

	In \Cref{subsec:categorical_cubes,subsec:cubical_CY_structures}, we recall the notion of a left Calabi--Yau structure on a cube of linear stable $\infty$-categories, as introduced in \cite{CDW23}.
	We will refer to such Calabi--Yau structures as cubical Calabi--Yau structures.
	In \Cref{subsec:lax_gluing_cubes}, we describe the directed lax gluing of categorical cubes and show that cubical Calabi--Yau structures also glue along these.
	Finally, in \Cref{subsec:non_lax_gluing} we show that cubical Calabi--Yau structure also glue under non-lax pushouts. 

\subsection{Linear stable \texorpdfstring{$\infty$}{infinity}-categories and categorical cubes}\label{subsec:categorical_cubes}

	We denote by $\mathcal{P}r^L_{\on{St}}$ the $\infty$-category of stable presentable $\infty$-categories and left adjoint functors.
	We fix once and for all a field $k$ and denote by $\on{Mod}_k\in \mathcal{P}r^L_{\on{St}}$ the symmetric monoidal $\infty$-category of $k$-modules.
	Note that $\on{Mod}_k\simeq \D(k)$ as symmetric monoidal $\infty$-categories.
	The $\infty$-category of $k$-linear $\infty$-categories is defined as $\on{LinCat}_k=\on{Mod}_{\on{Mod}_k}(\mathcal{P}r^L_{\on{St}})$.
	We note that the forgetful functor $\on{LinCat}_k\to \mathcal{P}r^L_{\on{St}}$ preserves colimits. 

	Recall that a $k$-linear $\infty$-category $\C$ is called dualizable if it admits a dual $\C^{\vee}$ in the symmetric monoidal $\infty$-category $\on{LinCat}_k$, equipped with the evaluation functor $\C^{\vee}\otimes \C\to \on{Mod}_k$ and coevaluation functor $\on{Mod}_k\to \C\otimes \C^{\vee}$, satisfying the triangle identities.
	A $k$-linear functor is called dualizable if its right adjoint preserves colimits and is thus a $k$-linear functor as well.
	We denote the $\infty$-category of dualizable $k$-linear $\infty$-categories and dualizable functors by $\on{LinCat}_k^{\on{dual}}$.
	Note that any compactly generated $k$-linear $\infty$-category is dualizable.
	The forgetful functor $\on{LinCat}_k^{\on{dual}}\to \on{LinCat}_k$ preserves colimits, see \cite[Prop.~1.65]{Efi24}.

	We denote by $I=N(\{0\to 1\})$ the nerve of the poset $\{0\to 1\}$.
	Note that $I\simeq \Delta^1$. All cubes, considered as multi chain complexes, will thus be in the cohomological grading convention, with the differential increasing the degree. 

\begin{definition}~
\begin{enumerate}[(1)]
	\item A $k$-linear categorical $n$-cube is a functor $\C_\ast\colon I^n\to \on{LinCat}_k$. 
	\item The $\infty$-category of $k$-linear categorical $n$-cubes is defined as 
\[
\on{Cube}_n(\on{LinCat}_k)\coloneqq \on{Fun}(I^n,\on{LinCat}_k)\,.
\] 
	The $\infty$-category of dualizable $k$-linear categorical $n$-cubes is similarly defined as 
\[
\on{Cube}_n(\on{LinCat}_k^{\on{dual}})\coloneqq \on{Fun}(I^n,\on{LinCat}_k^{\on{dual}})\,.
\]
\end{enumerate}
\end{definition}

\begin{remark}
	There is a canonical equivalence of $\infty$-categories 
\[ \on{Fun}(\Delta^1,\on{Cube}_n(\on{LinCat}_k))\simeq \on{Cube}_{n+1}(\on{LinCat}_k)\,.\]
	This equivalence amounts to identifying a morphism between $n$-cubes $\C_\ast\to \D_\ast$ with an $(n+1)$-cube $\E_\ast$ with $\E_{0,\ast}=\C_\ast$ and $\E_{1,\ast}=D_\ast$.
	We will also use the notation 
\[ \E_\ast=\left[\C_\ast\to \D_\ast\right]\,.\]
\end{remark}

\begin{definition}
   Let $\C_\ast \in \on{Cube}_n(\on{LinCat}_k)$.
\begin{enumerate}
    \item  For $1\leq i\leq n$, we denote by 
    \[ \C_{\partial_i^0\ast}\in \on{Cube}_{n-1}(\on{LinCat}_k)\]
    the restriction of $\C_{\ast}$ to $I^{i-1}\times \{0\}\times I^{n-i-1}\subset I^n$. We similarly denote by
    \[ \C_{\partial_i^1\ast}\in \on{Cube}_{n-1}(\on{LinCat}_k)\]
    the restriction to $I^{i-1}\times \{1\}\times I^{n-i-1}\subset I^n$.
\item    Repeated restriction is denoted by
    \[ \C_{\partial_{i_1}^{\epsilon_1}\dots \partial_{i_j}^{\epsilon_j}\ast}\in \on{Cube}_{n-j}(\on{LinCat}_k)\]
    for any $1\leq i_1<\dots<i_j\leq n$ and $\epsilon_1,\dots,\epsilon_j\in \{0,1\}$. 
\end{enumerate}
\end{definition}

	We denote by $\on{cof}\colon \on{Fun}(\Delta^1,\on{LinCat}_k)\to \on{LinCat}_k$ the (categorical) cofiber functor that maps a $k$-linear functor $F\colon \A\to \B$ to the pushout along $0$ in $\on{LinCat}_k$: 
\[
\begin{tikzcd}
\A \arrow[d] \arrow[r, "F"] \arrow[rd, "\ulcorner", phantom] & \B \arrow[d] \\
0 \arrow[r]                                                  & \on{cof}(F) 
\end{tikzcd}
\] 

\begin{definition}
	We denote by $\on{cof}_i\colon \on{Cube}_n(\on{LinCat}_k)\to \on{Cube}_{n-1}(\on{LinCat}_k)$ the cofiber functor in the $i$-th coordinate direction, defined as the composite functor 
\begin{align*}
    \on{Cube}_n(\on{LinCat}_k)=&\on{Fun}(I^{k-1}\times I\times I^{n-k},\on{LinCat}_k)\\
    \simeq& \on{Fun}(I^{k-1}\times I^{n-k},\on{Fun}(I,\on{LinCat}_k))\\
    \xrightarrow{\on{Fun}(I^{k-1}\times I^{n-k},\on{cof})} & \on{Fun}(I^{k-1}\times I^{n-k},\on{LinCat}_k)\\
    \simeq &\on{Cube}_{n-1}(\on{LinCat}_k)\,.
\end{align*}
\end{definition}

\subsection{Cubical Calabi--Yau structures}\label{subsec:cubical_CY_structures}

	Given a $k$-linear $\infty$-category $\C\in \on{LinCat}_k$, we denote by $\on{HH}(\C)\in \on{Mod}_k$ the $k$-linear Hochschild homology of $\C$.
	It can be defined via the formalism of traces, see \cite{HSS17} and also \cite[Section 2.6]{Chr23} for a discussion using the same notation as this article.
	The Hochschild homology $\on{HH}(\C)$ comes with an $S^1$-action, whose fixed points (limit over $BS^1$) are denoted by $\on{HH}^{S^1}(\C)$ and called the ($k$-linear) negative cyclic homology of $\C$.
	Hochschild homology and negative cyclic homology form functors $\on{HH}(\mhyphen),\on{HH}^{S^1}(\mhyphen)\colon \on{LinCat}_k^{\on{dual}}\to \on{Mod}_k$, together with a natural transformation $\on{HH}^{S^1}(\mhyphen)\to \on{HH}(\mhyphen)$.

	Recall that a $k$-linear dualizable $\infty$-category $\C$ is called smooth if the evaluation functor $\on{ev}_\C\colon \C^\vee\otimes \C\to \on{Mod}_k$ admits a colimit preserving left adjoint.
	In this case, the left adjoint $\on{ev}_\C^L$ can be identified with a $k$-linear endofunctor of $\C$, denoted $\on{id}_\C^!$, and called the inverse dualizing bimodule.
	We note that $\on{HH}(\C)\simeq \on{Mor}(\on{id}_\C^!,\on{id}_\C)$ describes the morphism object in the $k$-linear $\infty$-category of $k$-linear endofunctors of $\C$. 

\begin{definition}[$\!\!$\cite{BD19} for dg categories, \cite{Chr23} for linear $\infty$-categories]
	Let $F\colon \C\to \D$ be a dualizable $k$-linear functor between smooth $k$-linear $\infty$-categories.
	A relative\footnote{The adjective relative for the Calabi--Yau structure on a functor is redundant.
	We will however always include it to distinguish relative Calabi--Yau structures from cubical Calabi--Yau structures.} left $n$-Calabi--Yau structure on $F$ consists of a class 
\[ R[n]\longrightarrow \on{cof}(\on{HH}^{S^1}(\C)\xrightarrow{\on{HH}^{S^1}(F)} \on{HH}^{S^1}(\D))\]
such that in the corresponding diagram of $k$-linear endofunctors of $\C$ (see \cite{BD19,Chr23})
\[
\begin{tikzcd}
\on{id}_\D^! \arrow[r, "\on{cu}^!"] \arrow[d] & F_!(\on{id}_\C^!) \arrow[r] \arrow[d]            & \on{cof} \arrow[d]  \\
\on{fib} \arrow[r]                       & {F_!(\on{id}_\C)[1-n]} \arrow[r, "\on{cu}"] & {\on{id}_\D[1-n]}
\end{tikzcd}
\]
with horizontal fiber and cofiber sequences, the vertical morphisms are equivalences. 
\end{definition}

	To define cubical Calabi--Yau structures, we consider the total Hochschild homology of a categorical cube.
	Recall that taking the total cofiber defines a functor $(\mhyphen)^{\on{tot}}\colon \on{Fun}(I^n,\on{Mod}_k)\to \on{Mod}_k$, see for instance \cite[Appendix A.2]{DGT19}. The functor $(\mhyphen)^{\on{tot}}$ can be obtained as the composite of the iterated cofiber functors in the $n$ directions in $I^n$, in any order. 

\begin{definition}\label{def:CY_Cubes}
	The total $k$-linear negative cyclic homology functor 
\[
\on{HH}^{S^1,\on{tot}}(\mhyphen)\colon \on{Cube}_n(\on{LinCat}_k^{\on{dual}})\longrightarrow \on{Mod}_k
\]
is defined as the composite of $\on{Fun}(I^n,\on{HH}^{S^1}(\mhyphen))$ with $(\mhyphen)^{\on{tot}}\colon \on{Fun}(I^n,\on{Mod}_k)\to \on{Mod}_k$.

	We similarly define the functor $\on{HH}^{\on{tot}}(\mhyphen)\colon \on{Cube}_n(\on{LinCat}_k^{\on{dual}})\longrightarrow \on{Mod}_k$.
\end{definition}

	A total negative cyclic homology class $R[m]\to \on{HH}^{\on{tot}}(\C_\ast)$ yields the following data\footnote{The total negative cyclic homology class can in fact also be recovered from this data.}:
\begin{itemize}
    \item A relative negative cyclic homology class $R[m]\to \on{cof}(\on{HH}(\on{colim}_{I^n\backslash (\{1\}^{\times n})} \C_\ast)\to \on{HH}(\C_{1,\dots,1}))$ of the functor $\on{colim}_{I^n\backslash (\{1\}^{\times n})} \C_\ast \to \C_{1,\dots,1}$. 
    \item A total relative negative cyclic homology class $R[m-1] \to \on{HH}^{\on{tot}}(C_{\partial_i^0 \ast})$ of a boundary cube for every $1\leq i\leq n$. 
\end{itemize}

	The notion of a cubical left Calabi--Yau structure is recursively obtained from the notion of a relative Calabi--Yau structure on a functor as follows:

\begin{definition}[\!\!\cite{CDW23}]\label{def:cubical_CY_structure}
    Let $\C_\ast\in \on{Cube}_n(\on{LinCat}_k)$. For any $m\in \mathbb{Z}$, a cubical $m$-Calabi--Yau structure on $\C$ consists of a class $R[m]\to \on{HH}^{S^1,\on{tot}}(\C_\ast)$ satisfying that
    \begin{enumerate}[(1)]
        \item The relative negative cyclic homology class $R[m]\to \on{cof}(\on{HH}(\on{colim}_{I^n\backslash \{1\}^{\times n}} \C_\ast)\to \on{HH}(\C_{1,\dots,1}))$ defines a relative left $m$-Calabi--Yau structure on the functor 
        \[ \on{colim}_{I^n\backslash \{1\}^{\times n}} \C_\ast \to \C_{1,\dots,1}\,.\] 
        \item For every $1\leq i\leq n$, the class $R[m-1] \to \on{HH}^{\on{tot}}(C_{\partial_i^0 \ast})$ defines a cubical $(m-1)$-Calabi--Yau structure on $C_{\partial_i^0\ast}$. 
    \end{enumerate}
\end{definition}

	Unraveling the definition, we see that a cubical left $m$-Calabi--Yau structure on a categorical $n$-cube $\C_\ast$ amounts to compatible left Calabi--Yau structures on the functors
\[
\on{colim}_{(I^n)_{/{\bf i}}\backslash \{{\bf i}\}} \C_\ast \to \C_{\bf i} 
\]
for all ${\bf i}\in I^n$, where $(I^n)_{/{\bf i}}\backslash \{{\bf i}\}$ describes the cube above ${\bf i}$ (of dimension less or equal to $n$) with the bottom tip ${\bf i}$ removed.  

\begin{example}
    Let $\C_\ast\in \on{Cube}_2(\on{LinCat}_k^{\on{dual}})$.  We can depict $\C_\ast$ as follows:
\[
\C_\ast =\begin{tikzcd}
{\C_{0,0}} \arrow[r, "F"] \arrow[d, "F'"'] & {\C_{0,1}} \arrow[d] \\
{\C_{1,0}} \arrow[r]                       & {\C_{1,1}}          
\end{tikzcd}
\]
	The colimit $\on{colim}_{I^2\backslash \{1\}^{\times 2}} \C_\ast$ amounts to the pushout $\C_{0,1}\amalg_{\C_{0,0}}\C_{1,0}$ of the upper span, and comes with a functor 
\[ F''\colon \C_{0,1}\amalg_{\C_{0,0}}\C_{1,0}\to \C_{1,1}\,.\] 

	A cubical $m$-Calabi--Yau structure on $\C_\ast$ consists of a total negative cyclic homology class $R[m]\to \on{HH}^{S^1,\on{tot}}(\C_\ast)$, which induces 
 \begin{itemize}
     \item a left $(m-2)$-Calabi--Yau structure on $\C_{0,0}$.
     \item relative  left $(m-1)$-Calabi--Yau structures on $F\colon \C_{0,0}\to \C_{0,1}$ and $F'\colon \C_{0,0}\to \C_{1,0}$. These glue to a left $(m-1)$-Calabi--Yau structure on $\C_{0,1}\amalg_{\C_{0,0}}\C_{1,0}$. 
     \item a relative left $m$-Calabi--Yau structure on $F''\colon \C_{0,1}\amalg_{\C_{0,0}}\C_{1,0}\to \C_{1,1}$. 
 \end{itemize}
\end{example}

	The following gives a convenient characterization of cubical Calabi--Yau structures for double cubes:

\begin{proposition}\label{prop:equivalent_characterization_CY_cube}
    Let $(\C_{\ast_1})_{\ast_2}\in \on{Fun}(I^{n_2},\on{Cube}_{n_1}(\on{LinCat}_k^{\on{dual}}))\simeq \on{Cube}_{n_1+n_2}(\on{LinCat}_k^{\on{dual}})$ be an $(n_1+n_2)$-cube, with $\ast_1\in I^{n_1}$ and $\ast_2 \in I^{n_2}$.
    Consider a total negative cyclic homology class $\eta\colon R[m]\to \on{HH}^{S^1,\on{tot}}((\C_{\ast_1})_{\ast_2})$. If
    \begin{enumerate}[1)]
        \item the class $\eta$ induces a cubical left $m$-Calabi--Yau structure on the $(n_1+1)$-cube 
        \[ \left[\on{colim}_{I^{\times n_2}\backslash \{1\}^{\times n_2}} (\C_{\ast_1})_{\ast_2}\to (\C_{\ast_1})_{1,\dots,1} \right] \]
        \item and $\eta$ restricts to a cubical left $(m-1)$-Calabi--Yau structure on the $k$-th face $(n_1+n_2-1)$-cube $(\C_{\ast_1})_{\partial^0_i \ast_2}$ for all $1\leq i\leq n_2$,
    \end{enumerate}
    then $\eta$ defines a cubical left $m$-Calabi--Yau structure on $(\C_{\ast_1})_{\ast_2}$.
\end{proposition}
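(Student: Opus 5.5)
Our approach is induction on $n_1+n_2$, unwinding the recursive \Cref{def:cubical_CY_structure}. We must check two things for $\eta$ on the $(n_1+n_2)$-cube: condition (1), that the functor $\on{colim}_{I^{n_1+n_2}\backslash\{1\}}(\C_{\ast_1})_{\ast_2}\to (\C_{1,\dots,1})_{1,\dots,1}$ is relative left $m$-Calabi--Yau for the induced class; and condition (2), that each face $\partial_j^0$ cube carries a cubical $(m-1)$-Calabi--Yau structure, where $j$ ranges over all $n_1+n_2$ directions.

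For condition (1) we first compute the colimit in stages. Write $I^{n_1+n_2}\backslash\{1\}$ as the union of $(I^{n_1}\backslash\{1\})\times I^{n_2}$ and $I^{n_1}\times(I^{n_2}\backslash\{1\})$, with intersection $(I^{n_1}\backslash\{1\})\times(I^{n_2}\backslash\{1\})$. Equivalently, left Kan extend first along $I^{n_2}\backslash\{1\}\subset I^{n_2}$. This gives the $(n_1+1)$-cube $\E_\ast=[\on{colim}_{I^{n_2}\backslash\{1\}}(\C_{\ast_1})_{\ast_2}\to(\C_{\ast_1})_{1,\dots,1}]$ of hypothesis 1). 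By cofinality (Fubini), $\on{colim}_{I^{n_1+n_2}\backslash\{1\}}\C$ agrees with $\on{colim}_{I^{n_1+1}\backslash\{1\}}\E_\ast$, compatibly with the maps to the tip. The total Hochschild functor commutes with this replacement: $\on{HH}^{S^1}$ preserves colimits in $\on{LinCat}_k^{\on{dual}}$, and the total cofiber of an $I^{n_2}$-cube equals the cofiber of $\on{colim}_{I^{n_2}\backslash\{1\}}\to$ tip. So $\on{HH}^{S^1,\on{tot}}(\C)\simeq\on{HH}^{S^1,\on{tot}}(\E)$, and $\eta$ corresponds to the class from hypothesis 1). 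Condition (1) for $\E$, which holds by hypothesis, is then literally condition (1) for $\C$.

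For condition (2), the faces in the $I^{n_2}$ directions are covered by hypothesis 2). For a face $\partial_j^0$ in an $I^{n_1}$ direction, the face is $(\C_{\partial_j^0\ast_1})_{\ast_2}$, again a double cube, now of size $(n_1-1)+n_2$. We apply the proposition inductively with class $m-1$ and verify its hypotheses as follows.
\begin{enumerate}[(a)]
\item The cube $[\on{colim}(\C_{\partial_j^0\ast_1})_{\ast_2}\to(\C_{\partial_j^0\ast_1})_{1}]$ is precisely the face $\E_{\partial_j^0\ast}$ of $\E$. It is cubical $(m-1)$-Calabi--Yau because $\E$ is cubical $m$-CY, using condition (2) of the definition for $\E$.
\item Its $\partial_i^0$ faces in the $\ast_2$ directions are faces of $(\C_{\ast_1})_{\partial_i^0\ast_2}$. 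These are cubical $(m-2)$-CY by hypothesis 2) and condition (2) applied to that cube.
\end{enumerate}
The base cases $n_1=0$ or $n_2=0$ are tautological.

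The main obstacle is the identification in condition (1). One must carefully justify that the Fubini decomposition of the punctured cube and the identification of total cofibers are compatible with the class $\eta$, not merely abstractly equivalent, including the $S^1$-equivariance. For this we would write the total cofiber functor as iterated cofibers ordered with the $\ast_2$ directions first, and use that $\on{HH}^{S^1}$ commutes with colimits in $\on{LinCat}_k^{\on{dual}}$ (finite colimits of fixed points of stable modules suffice here). The remaining bookkeeping of induced classes on faces is routine naturality.
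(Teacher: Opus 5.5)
Your argument is correct and is essentially the paper's. The paper unrolls the recursive definition into conditions at all indices $({\bf i},{\bf j})$ and settles those with ${\bf j}\neq(1,\dots,1)$ by hypothesis 2). For the remaining indices $({\bf i},(1,\dots,1))$ it identifies the colimit over $(I^{n_1}_{/{\bf i}}\times I^{n_2})\backslash\{({\bf i},1,\dots,1)\}$ with the corresponding colimit for $\E_\ast$ at $({\bf i},1)$, using exactly your decomposition and cofinality. Your induction on $n_1$ through the faces $\E_{\partial_j^0\ast}$ is a recursive packaging of the same index-by-index check.

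One step needs correcting. $\on{HH}^{S^1}$ does not preserve colimits, even pushouts, in $\on{LinCat}_k^{\on{dual}}$. For instance, $\on{Loc}(S^1)$ is the pushout of $\on{Mod}_k\leftarrow\on{Mod}_k\times\on{Mod}_k\to\on{Mod}_k$. Yet $\on{HH}(\on{Loc}(S^1))\simeq C_*(LS^1;k)$ is infinite-dimensional, while the pushout of the Hochschild homologies is $k\oplus k[1]$. So $\on{HH}^{S^1,\on{tot}}(\C_\ast)\simeq\on{HH}^{S^1,\on{tot}}(\E_\ast)$ is false in general.

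You do not need that equivalence. There is a natural comparison map $\on{colim}\,\on{HH}^{S^1}(\mhyphen)\to\on{HH}^{S^1}(\on{colim}\,\mhyphen)$. This map is how $\eta$ induces a class on $\E_\ast$ in hypothesis 1), and how a total class induces the relative class in \Cref{def:cubical_CY_structure}(1). Two agreements are then needed:
\begin{enumerate}
\item the two relative classes on $\on{colim}_{I^{n_1+n_2}\backslash\{1\}}\C_\ast\to\C_{1,\dots,1}$ must agree;
\item in your step (a), the face classes on $\E_{\partial_j^0\ast}$ must agree.
\end{enumerate}
Both follow from naturality of these comparison maps when left Kan extensions are composed, and from their compatibility with the face boundary maps.
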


\begin{proof}
	A total negative cyclic homology class $\eta$ for $(\C_{\ast_1})_{\ast_2}$ satisfies the (recursively appearing) condition 1) in \Cref{def:cubical_CY_structure} at all indices of the cube $(\C_{{\bf i}})_{{\bf j}}$ with ${\bf i}\in I^{n_1}$ arbitrary and ${\bf j}\in I^{n_2}\backslash \{1,\dots,1\}^{\times n_2}$ if and only it satisfies the recursive condition 2) in \Cref{prop:equivalent_characterization_CY_cube}. 

	We suppose that these equivalent conditions are satisfied in the following and show that if the induced total negative cyclic homology class of the categorical $(n_1+1)$-cube 
\[
\E_\ast\coloneqq \left[\on{colim}_{I^{\times n_2}\backslash \{1\}^{\times n_2}} (\C_{\ast_1})_{\ast_2}\to (\C_{\ast_1})_{1,\dots,1} \right] 
\]
defines a cubical left $m$-Calabi--Yau structure then the class $\eta$ satisfies the recursive definition for cubical left Calabi--Yau structures at the indices of the form $(\C_{{\bf i}})_{1,\dots,1}$, ${\bf i}\in I^{n_1}$.
	For this, we show that the non-degeneracy conditions arising from the $(n_1+1)$-th front face $\E_{\partial^1_{n_1+1}\ast}$ coincide with the remaining non-degeneracy conditions of $\eta$ at entries $(\C_{{\bf i}})_{1,\dots,1}$, where ${\bf i}\in I^{n_1}$, by showing that the relevant functors are equivalent. 

	We fix a front face index $({\bf i},1)$ of $\E_\ast$.
	We have 
\begin{equation}\label{eq:decomp_n+1_cube}
I^{n_1+1}_{/({\bf i},1)} \backslash \{({\bf i},1)\}\simeq (I^{n_1}_{/{\bf i}}\times \{0\}) \amalg_{(I^{n_1}_{/{\bf i}}\backslash \{{\bf i}\})\times \{0\}} (I^{n_1}_{/{\bf i}}\backslash {\bf i}\times I)
\end{equation}
and
\begin{equation}\label{eq:decomp_n+j_cube} 
(I^{n_1}_{/{\bf i}}\times I^{n_2})\backslash \{({\bf i},(1,\dots,1))\} \simeq 
\big(I^{n_1}_{/{\bf i}}\times (I^{n_2}\backslash \{1\}^{\times n_2})\big) \amalg_{(I^{n_1}_{/{\bf i}}\backslash \{{\bf i}\}\times (I^{n_2}\backslash \{1\}^{\times n_2})} ((I^{n_1}_{/{\bf i}}\backslash \{{\bf i}\})\times I^{n_2} \big)\,.
\end{equation}
	Using \cite[\href{https://kerodon.net/tag/03DB}{Tag 03DB}]{Ker} and \eqref{eq:decomp_n+1_cube}, we find
\begin{align*}
\on{colim}_{I^{n_1+1}_{/({\bf i},1)}\backslash ({\bf i},1)}\E_\ast  
& \simeq  \E_{({\bf i}, 0)} \amalg_{\on{colim}_{I^{n_1}_{/{\bf i}}\backslash {\bf i}} \E_{(\ast_1,0)}}  \on{colim}_{I^{n_1}_{/{\bf i}}\backslash {\bf i}} \E_{(\ast_1,1)}\\
& \simeq  \on{colim}_{I^{n_2}\backslash \{1\}^{\times n_2}} (\C_{\bf i})_{\ast_2} \amalg_{\on{colim}_{I^{n_1}_{/{\bf i}}\backslash {\bf i}}\on{colim}_{I^{n_2}\backslash \{1\}^{\times n_2}} (\C_{\ast_1})_{\ast_2}} \on{colim}_{I^{n_1}_{/{\bf i}}\backslash \{{\bf i}\}} (\C_{\ast_1})_{1,\dots,1}\,.
\end{align*}
	Similarly, using \eqref{eq:decomp_n+j_cube}, we find 
\begin{align*}
&\on{colim}_{(I^{n_1}_{/{\bf i}}\times I^{n_2})\backslash \{({\bf i},(1,\dots,1))\}}(\C_{\ast_1})_{\ast_2}\\
& \simeq \on{colim}_{I^{n_1}_{/{\bf i}}\times (I^{n_2}\backslash \{1\}^{\times n_2})}(\C_{\ast_1})_{\ast_2}\amalg_{ \on{colim}_{I^{n_1}_{/{\bf i}}\backslash \{{\bf i}\})\times I^{n_2}\backslash \{1\}^{\times n_2}} (\C_{\ast_1})_{\ast_2}} \on{colim}_{I^{n_1}_{/{\bf i}}\backslash \{{\bf i}\}\times I^{n_2}}(\C_{\ast_1})_{\ast_2}\,.
\end{align*}
	Note that 
\[
\on{colim}_{I^{n_2}\backslash \{1\}^{\times n_2}} (\C_{\bf i})_{\ast_2} \simeq \on{colim}_{I^{n_1}_{/{\bf i}}\times (I^{n_2}\backslash \{1\}^{\times n_2})}(\C_{\ast_1})_{\ast_2}\]
and
\[
\on{colim}_{I^{n_1}_{/{\bf i}}\backslash \{{\bf i}\}} (\C_{\ast_1})_{1,\dots,1}\simeq \on{colim}_{I^{n_1}_{/{\bf i}}\backslash \{{\bf i}\}\times I^{n_2}}(\C_{\ast_1})_{\ast_2}
\]
by cofinality. We thus find 
\[
\on{colim}_{I^{n_1+1}_{/({\bf i},1)}\backslash ({\bf i},1)}\E_\ast  \simeq \on{colim}_{(I^{n_1}_{/{\bf i}}\times I^{n_2})\backslash \{({\bf i},(1,\dots,1))\}}(\C_{\ast_1})_{\ast_2}\simeq \on{colim}_{(I^{n_1}\times I^{n_2})_{/\{({\bf i},1,\dots,1)\}}\backslash \{({\bf i},(1,\dots,1))\}}(\C_{\ast_1})_{\ast_2}\]
which implies that the induced functors to $(\C_{\bf i})_{1,\dots,1}\simeq \E_{({\bf i},1)}$ are equivalent too. 
\end{proof}

\begin{remark}
	The reverse implication in \Cref{prop:equivalent_characterization_CY_cube} can be shown to also hold.
\end{remark}

\begin{example}
\begin{enumerate}[(1)]
\item In the case that $n_2=1$ in \Cref{prop:equivalent_characterization_CY_cube}, we have that 
\[
(\C_{\ast_1})_{\ast_2}\simeq \left[\on{colim}_{I\backslash \{1\}} (\C_{\ast_1})_{\ast_2}\to (\C_{\ast_1})_{1} \right]\,.
\]
	Hence, \Cref{prop:equivalent_characterization_CY_cube} is immediate.
\item Suppose that $n_1=1$ and $n_2=2$. We can depict $(\C_{\ast_1})_{\ast_2}$ as follows:
\[
\begin{tikzcd}[column sep=small, row sep=small]
{\C_{0,0,0}} \arrow[rd] \arrow[rr] \arrow[dd] &                                    & {\C_{1,0,0}} \arrow[rd] \arrow[dd] &                         \\
                                              & {\C_{0,1,0}} \arrow[rr] \arrow[dd] &                                    & {\C_{1,1,0}} \arrow[dd] \\
{\C_{0,0,1}} \arrow[rd] \arrow[rr]            &                                    & {\C_{1,0,1}} \arrow[rd]            &                         \\
                                              & {\C_{0,1,1}} \arrow[rr]            &                                    & {\C_{1,1,1}}           
\end{tikzcd}
\]
	Condition 1) states that the square
\[
\begin{tikzcd}
{\C_{0,0,1}\amalg_{\C_{0,0,0}}\C_{0,1,0}} \arrow[r] \arrow[d] & {\C_{1,0,1}\amalg_{\C_{1,0,0}}\C_{1,1,0}} \arrow[d] \\
{\C_{0,1,1}} \arrow[r]                                        & {\C_{1,1,1}}                                       
\end{tikzcd}
\]
inherits a cubical left $n$-Calabi--Yau structure.
	Condition 2) states that the top and the back faces inherit cubical left $(n-1)$-Calabi--Yau structures. Together, these conditions imply that the class $\eta$ defines a cubical $n$-Calabi--Yau structure on $(\C_{\ast_1})_{\ast_2}$.
\end{enumerate}
\end{example}

\subsection{Lax gluing}\label{subsec:lax_gluing_cubes}

\begin{definition}\label{def:directed_pushout}
	Given a span 
\[
\begin{tikzcd}
\A \arrow[r] \arrow[d] & \B \\
\C                     &   
\end{tikzcd}
\] 
in $\on{LinCat}_k$, the directed pushout $\laxpush{\B}{\A}{\C}$ is defined as the colimit of the diagram
\begin{equation}\label{eq:double_span}
\begin{tikzcd}
                                  & \A \arrow[r] \arrow[d, "\iota_0"] & \B \\
\A \arrow[d] \arrow[r, "\iota_1"] & {\on{Fun}(\Delta^1,\A)}           &    \\
\C                                &                                   &   
\end{tikzcd}
\end{equation}
where $\iota_0\colon \A\to \on{Fun}(\Delta^1,\A), X\mapsto (X\to 0)$ and $\iota_1\colon \A\to \on{Fun}(\Delta^1,\A), X\mapsto (0\to X)[1]$ are the apparent fully faithful functors. 
\end{definition}

	The directed pushout can also be interpreted more conceptually as a partially lax $(\infty,2)$-categorical colimit, see for instance \cite{CDW23,AHM26}.
	Since the functor $\on{LinCat}_k^{\on{dual}}\to \on{LinCat}_k$ preserves colimits, the directed pushout of a span in $\on{LinCat}_k^{\on{dual}}$ is again dualizable. 

\begin{construction}
	The passage to the directed pushout defines a functor
\[ \laxpush{(\mhyphen)}{(\mhyphen)}{(\mhyphen)}\colon \on{Fun}(\Lambda^2_0,\on{LinCat}_k)\to \on{LinCat}_k\,,\]
which we construct in the following. 

	Below, we construct a functor 
\[
\iota_{0,1}(\mhyphen)\colon \on{LinCat}_k\to \on{Fun}(\Lambda^2_2,\on{LinCat}_k)
\]
mapping a $k$-linear $\infty$-category $\mathcal{A}\in \on{LinCat}_k$ to the following diagram.
\[
\begin{tikzcd}
                        & \A \arrow[d, "\iota_1"] \\
\A \arrow[r, "\iota_0"] & {\on{Fun}(\Delta^1,\A)}
\end{tikzcd}
\]

	We define $\laxpush{(\mhyphen)}{(\mhyphen)}{(\mhyphen)}$ as the following composite functor
\begin{align*}
\on{Fun}(\Lambda^2_0,\on{LinCat}_k)&\simeq \on{Fun}(\Delta^1,\on{LinCat}_k)\times_{\on{LinCat}_k}\on{Fun}(\Delta^1,\on{LinCat}_k)\\
&\simeq \on{Fun}(\Delta^1,\on{LinCat}_k)\times_{\on{LinCat}_k}\on{LinCat}_k\times_{\on{LinCat}_k}\on{Fun}(\Delta^1,\on{LinCat}_k)\\
& \to \on{Fun}(\Delta^1,\on{LinCat}_k)\times_{\on{LinCat}_k}\on{Fun}(\Lambda^2_2,\on{LinCat}_k)\times_{\on{LinCat}_k}\on{Fun}(\Delta^1,\on{LinCat}_k)\\
& \simeq \on{Fun}(\Delta^1\amalg_{\Delta^0}\Lambda^2_2\amalg_{\Delta^0}\Delta^1,\on{LinCat}_k)\\
&\xrightarrow{\on{colim}} \on{LinCat}_k\,.
\end{align*}
	The second morphism above is given by composing with $\iota_{0,1}$ at the central $\on{LinCat}_k$.

	For $i=0,1$, let $f_i\colon \Delta^0\to \Delta^1$ be the inclusion of $i$. Pulling back along $f_i$ defines a functor 
\[
\on{LinCat}_k\to \on{Fun}(\Delta^1,\on{LinCat}_k)
\]
mapping a $k$-linear stable $\infty$-category $\A$ to $\on{Fun}(\Delta^1,\A)\xrightarrow{\iota_i^*}\A$.
	Passing to the right Kan extension along $\Delta^1\to \Lambda^2_2$, mapping $0\to 1$ to $1\to 2$, and to the left Kan extension along $\Lambda^2_2\to \Delta^1\times \Delta^1$ defines a functor 
\[
\on{LinCat}_k\to \on{Fun}(\Delta^1\times \Delta^1,\on{LinCat}_k)
\]
mapping a $k$-linear stable $\infty$-category $\A$ to the pullback square
\[
\begin{tikzcd}
\A \arrow[d] \arrow[r, "\iota_{j}"] & {\on{Fun}(\Delta^1,\A)} \arrow[d, "f_i^*"] \\
0 \arrow[r]                         & \A                                        
\end{tikzcd}
\]
where $j=1$ if $i=2$ and $j=2$ if $i=1$.
	Restricting to the upper morphism in the above squares defines functors $\on{LinCat}_k\to \on{Fun}(\Delta^1,\on{LinCat}_k)$, which combine into the desired functor $\iota_{0,1}(\mhyphen)$. 
\end{construction}

\begin{definition}\label{def:directed_pushout_cubes}
	Given a diagram in $\on{Cube}_n(\on{LinCat}_k)$ of the form
\[\begin{tikzcd}
\A_\ast \arrow[r] \arrow[d] & \B_\ast \\
\C_\ast                    &       
\end{tikzcd}\]
we denote by 
\[ \laxpush{\B_\ast}{\A_\ast}{\C_\ast}\in \on{Cube}_{n}(\on{LinCat}_k)\] 
the component-wise directed pushout.
	Thus, $\laxpush{\B_\ast}{\A_\ast}{\C_\ast}$ satisfies for all ${\bf i}\in I^n$
\[
\left(\laxpush{\B_\ast}{\A_\ast}{\C_\ast}\right)_{\bf i}\simeq \laxpush{\B_{\bf i}}{\A_{\bf i}}{\C_{\bf i}}\,.
\]
\end{definition}

\begin{remark}\label{rem:functor_into_directed_pushout}~
\begin{enumerate}[(1)]
\item Consider the setting of \Cref{def:directed_pushout}. Composing the fully faithful functor 
\[ \iota\colon \A\to \on{Fun}(\Delta^1,\A),\quad X\mapsto (X\xrightarrow{\on{id}}X)\] 
with the functor $\on{Fun}(\Delta^1,\A)\to \laxpush{\B}{\A}{\C}$ in the colimit diagram defines a functor 
\[ \A \to \laxpush{\B}{\A}{\C}\,.\]
	This functor has the property that the (non-lax) pushout $\B\amalg_{\A}\C$ is equivalent to its cofiber, meaning that the following diagram is a pushout square in $\on{LinCat}_k$:
\[
\begin{tikzcd}
\A \arrow[r] \arrow[d] \arrow[rd, "\ulcorner", phantom] & \laxpush{\B}{\A}{\C} \arrow[d] \\
0 \arrow[r]                                             & \B \amalg_{\A}\C              
\end{tikzcd}
\]
\item In the setting of \Cref{def:directed_pushout_cubes}, there is similarly a morphism of categorical $n$-cubes $\A_\ast \to \laxpush{\B_\ast}{\A_\ast}{\C_\ast}$ and a cofiber sequence in $\on{Cube}_n(\on{LinCat}_k)$ as follows:
\[
\begin{tikzcd}
\A_\ast \arrow[r] \arrow[d] \arrow[rd, "\ulcorner", phantom] & \laxpush{\B_\ast}{\A_\ast}{\C_\ast} \arrow[d] \\
0 \arrow[r]                                             & \B_\ast \amalg_{\A_\ast}\C_\ast              
\end{tikzcd}
\]
with $\B_\ast \amalg_{\A_\ast}\C_\ast$ the (non-lax) pushout $n$-cube. 
\end{enumerate}
\end{remark}

\begin{proposition}[$\!\!$\cite{CDW23}]\label{prop:laxCYgluing}
	Consider a diagram in $\on{LinCat}_k^{\on{dual}}$ of the form
\[
\begin{tikzcd}
                          &                                     & \A \arrow[d] \\
                          & \A' \arrow[r] \arrow[d, "\iota_0"] & \B             \\
\A'' \arrow[r, "\iota_1"] & \C                                  &               
\end{tikzcd}
\]
and assume that the functors $\A\times \A'\to \B$ and $\A'\times \A''\to \C$ are equipped with relative left $m$-Calabi--Yau structures, which are compatible at $\A'$.
	Then the functor
\[
\A\times \A'\times \A''\to \laxpush{\B}{\A'}{\C}
\]
inherits a canonical relative left $n$-Calabi--Yau structure.
\end{proposition}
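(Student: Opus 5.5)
The plan is to reduce the lax gluing to the ordinary (non-lax) gluing of relative Calabi--Yau structures along pushouts of \cite{BD19}. The first step is to note that the directed pushout is itself an honest pushout:
\[
\laxpush{\B}{\A'}{\C}\simeq \B\amalg_{\A'}\on{Fun}(\Delta^1,\A')\amalg_{\A'}\A''\,.
\]
Here, despite the notation in the displayed diagram, the middle category of the directed pushout is $\on{Fun}(\Delta^1,\A')$ and $\C$ is glued in along the map from $\A''$. To make this fit the pushout-gluing template, I would rather factor through the cospan
\[
\A'\times\A'\longrightarrow \on{Fun}(\Delta^1,\A')\,,\qquad (X,Y)\longmapsto \iota_0(X)\oplus\iota_1(Y)\,.
\]
The key input is that, for $\A'$ smooth and carrying the absolute $(m-1)$-Calabi--Yau structure (the compatible boundary structure at $\A'$), this functor carries a relative left $m$-Calabi--Yau structure. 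In other words, $\on{Fun}(\Delta^1,\A')$ is a Calabi--Yau cylinder, the categorical analogue of $\A'\times[0,1]$.

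To build the cylinder structure, I would use $\on{Fun}(\Delta^1,\A')\simeq \A'\otimes\on{Fun}(\Delta^1,\on{Mod}_k)$. The functor $\on{Mod}_k\times\on{Mod}_k\to\on{Fun}(\Delta^1,\on{Mod}_k)$ via $\iota_0,\iota_1$ is the $A_1$-quiver with its two boundary inclusions. It is the standard example of a relative left $1$-Calabi--Yau functor, the categorical interval, coming from the fact that $\on{HH}$ of the $A_1$-quiver is $k\oplus k$ and the class is the difference of the two units. Tensoring with the $(m-1)$-Calabi--Yau structure on $\A'$ then gives the relative $m$-Calabi--Yau structure, using that relative Calabi--Yau structures are compatible with tensor products (Künneth for $\on{HH}^{S^1}$ and for the inverse dualizing bimodule). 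The sign and shift in $\iota_1(X)=(0\to X)[1]$ must be tracked so that the two boundary components get opposite orientations, matching the two given structures on $\A'$.

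The next step is to glue twice using \cite[Thm.~6.2]{BD19}, i.e.\ gluing of relative Calabi--Yau cospans along a common boundary. First glue $\A\times\A'\to\B$ with the cylinder $\A'\times\A'\to\on{Fun}(\Delta^1,\A')$ along the first copy of $\A'$. This produces a relative $m$-Calabi--Yau functor $\A\times\A'\to\B\amalg_{\A'}\on{Fun}(\Delta^1,\A')$. Then glue this result with $\A'\times\A''\to\C$ along the remaining $\A'$, obtaining $\A\times\A''\to\laxpush{\B}{\A'}{\C}$.

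To get the map from $\A\times\A'\times\A''$ stated in the proposition, the middle factor $\A'$ should not be glued away but kept as boundary via the diagonal $\iota$ of \Cref{rem:functor_into_directed_pushout}. So I would instead use the cylinder with three boundary pieces
\[
\A'\times\A'\times\A'\to\on{Fun}(\Delta^1,\A')\,,
\]
given by $\iota_0$, $\iota$ and $\iota_1$. Since $\iota(X)$ sits in a cofiber sequence with $\iota_0(X)$ and $\iota_1(X)[-1]$, the negative cyclic class here is the same interval class, now viewed as a pair of pants. The proof of non-degeneracy should follow from the cofiber sequence $\iota_1\to\iota\to\iota_0$ on the level of bimodules.

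I expect the main obstacle to be exactly this non-degeneracy check. It requires verifying that the induced map $\on{id}^!\to F_!\on{id}^!\to\dots$ is an equivalence on $\on{Fun}(\Delta^1,\A')$. I would do this by reducing to $\A'=\on{Mod}_k$, where the inverse dualizing bimodule of the $A_2$-quiver category can be computed explicitly, and then conclude by tensoring. The gluing steps themselves, and the compatibility of the classes at $\A'$, are then formal consequences of \cite{BD19} and the functoriality of $\on{HH}^{S^1}$ under colimits in $\on{LinCat}_k^{\on{dual}}$.
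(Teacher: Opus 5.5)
The key input of your first plan is false for every $\A'\not\simeq 0$. The functor $(\iota_0,\iota_1)\colon\A'\times\A'\to\on{Fun}(\Delta^1,\A')$ is the inclusion of a semi-orthogonal decomposition: every $(X\to Y)$ sits in a cofiber sequence $(0\to Y)\to(X\to Y)\to(X\to 0)$, and there are no nonzero maps from objects $(0\to Y)$ to objects $(X\to 0)$. By additivity of Hochschild homology, $\on{HH}^{S^1}(\A')^{\oplus 2}\to\on{HH}^{S^1}(\on{Fun}(\Delta^1,\A'))$ is an equivalence. So the relative negative cyclic homology of $(\iota_0,\iota_1)$ vanishes, and any relative class restricts to the zero class on $\A'\times\A'$, which is degenerate. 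For $\A'=\on{Mod}_k$, the category $\on{Fun}(\Delta^1,\on{Mod}_k)$ is the $A_2$-quiver, not $A_1$. Its $\on{HH}\simeq k^2$ is freely spanned by the classes of $\iota_0(k)$ and $\iota_1(k)$, so there is no ``difference of units'' class. Moreover, the twist $\on{cof}(FF^r\to\on{id})$ of this functor kills $\iota_0(k)$, so it is not even spherical. The genuine categorical interval is $\A'\times\A'\xrightarrow{\oplus}\A'$, and gluing with it produces the non-lax pushout $\B\amalg_{\A'}\C$. Hence your two-step gluing, which would put a structure on $\A\times\A''\to\laxpush{\B}{\A'}{\C}$, fails; in the statement $\A'$ must stay on the boundary. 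Also, $\C$ is attached along $\A'\to\C$, not along $\A''$; the labels $\iota_0,\iota_1$ in the displayed diagram are misprints.

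Your fallback $F=(\iota_0,\iota,\iota_1)\colon\A'\times\A'\times\A'\to\on{Fun}(\Delta^1,\A')$ is the correct local model: a disk with three stops, tensored with $\A'$. Gluing it via \cite{BD19} to $\A\times\A'\to\B$ along the $\iota_0$-copy, and to $\A'\times\A''\to\C$ along the $\iota_1$-copy, does give exactly the functor of the statement. However, its class cannot be ``the same interval class'', since none exists. It comes from the relation $[\iota(X)]=[\iota_0(X)]-[\iota_1(X)]$, which is additivity applied to $\iota_1(X)[-1]\to\iota(X)\to\iota_0(X)$. Thus $\on{HH}(\A')^{\oplus 3}\to\on{HH}(\A')^{\oplus 2}$ is $(a,b,c)\mapsto(a+b,c-b)$, which is split surjective with kernel $\on{HH}(\A')$ embedded via $b\mapsto(-b,b,b)$. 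So the relative negative cyclic homology of $F$ is $\on{HH}^{S^1}(\A')[1]$. The $(m-1)$-Calabi--Yau class $\sigma$ of $\A'$ then yields a class restricting to $(-\sigma,\sigma,\sigma)$ up to an overall sign. The opposite signs on the $\iota_0$- and $\iota_1$-copies are what allow both gluings under the compatibility at $\A'$.

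The substantive step is non-degeneracy, and you leave it open. Using $\on{id}^!_{\on{Fun}(\Delta^1,\A')}\simeq\on{id}^!_{\A'}\otimes S^{-1}$, with $S$ the Serre functor of $\on{Fun}(\Delta^1,\on{Mod}_k)$, it reduces to the case $\A'=\on{Mod}_k$. There you must show that the map induced by the class is an equivalence $S^{-1}[1]\simeq\on{cof}(FF^r\to\on{id})$. On objects both sides send $(0\to k)\mapsto(k\to 0)$, $(k\to k)\mapsto(0\to k)[1]$ and $(k\to 0)\mapsto(k\to k)[1]$, but the check concerns the map, not just the objects.

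With these repairs you get a valid and more elementary route than the one the paper relies on. The paper only cites \cite{CDW23}. Its proof of the cubical version \Cref{prop:laxgluingCYcubes}, whose case $n=0$ is this statement, packages both structures into a Calabi--Yau square $[\A\times\A'\times\A''\to\B,\ \C\to 0]$. It then reads off the lax-pushout functor from the shifted coproduct totalization. That approach needs no local computation and extends directly to cubes.
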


	A cubical generalization of the above proposition is as follows:

\begin{proposition}\label{prop:laxgluingCYcubes}
	Consider a diagram in $\on{Cube}_n(\on{LinCat}_k)$ of the form
\[\begin{tikzcd}
                    &                               & \A_\ast\arrow[d] \\
                    & \A_\ast' \arrow[r] \arrow[d] & \B_\ast             \\
\A_\ast'' \arrow[r] & \C_\ast                       &                    
\end{tikzcd}\]
such that the $(n+1)$-cubes $\left[ \A_\ast\times \A'_\ast\to \B_\ast \right]$ and $\left[ \A'_\ast\times \A''_\ast\to \C_\ast \right]$ carry cubical $m$-Calabi--Yau structures, compatible at $\A'_\ast$. Then the $(n+1)$-cube 
\[
\left[ \A_\ast \times \A_\ast' \times \A_\ast''\to \laxpush{\B_\ast}{\A_\ast'}{\C_\ast}\right]
\]
inherits a canonical cubical left $m$-Calabi--Yau structure.

	In particular, if $\A_\ast\simeq \A_\ast''\simeq 0$, the categorical $(n+1)$-cube 
\[
\left[ \A_\ast'\to \laxpush{\B_\ast}{\A_\ast'}{\C_\ast}\right]
\]
arising from the morphisms of categorical $n$-cubes from \Cref{rem:functor_into_directed_pushout} inherits a canonical cubical left $m$-Calabi--Yau structure.
\end{proposition}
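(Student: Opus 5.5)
We argue by induction on $n$, with the case $n=0$ being exactly \Cref{prop:laxCYgluing}. The idea is to reduce to \Cref{prop:equivalent_characterization_CY_cube}, viewing the $(n+1)$-cube
\[
\D_\ast\coloneqq\left[ \A_\ast \times \A_\ast' \times \A_\ast''\to \laxpush{\B_\ast}{\A_\ast'}{\C_\ast}\right]
\]
as a double cube. The outer $1$-cube direction is the arrow $[\A_\ast\times\A'_\ast\times\A''_\ast\to(-)]$, and the inner direction is the $n$-cube $\ast$.

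First we construct the class. Total negative cyclic homology is a composite of exact operations, and $\on{HH}^{S^1}$ of the directed pushout is computed by the colimit formula of \Cref{def:directed_pushout}. Since $\on{HH}^{S^1}(\on{Fun}(\Delta^1,\A'_{\bf i}))\simeq \on{HH}^{S^1}(\A'_{\bf i})^{\oplus 2}$ via $\iota_0,\iota_1$ (semiorthogonal decomposition), we get
\[
\on{HH}^{S^1,\on{tot}}(\D_\ast)\simeq \on{HH}^{S^1,\on{tot}}\left[\A_\ast\times\A'_\ast\to\B_\ast\right]\times_{\on{HH}^{S^1,\on{tot}}(\A'_\ast)[1]}\on{HH}^{S^1,\on{tot}}\left[\A'_\ast\times\A''_\ast\to\C_\ast\right],
\]
functorially in the cube index. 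The two given classes are compatible at $\A'_\ast$, so they glue to a class $\eta\colon R[m]\to\on{HH}^{S^1,\on{tot}}(\D_\ast)$. This is the same gluing of classes as in \Cref{prop:laxCYgluing}, carried out entrywise and passed through $(\mhyphen)^{\on{tot}}$.

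Next we verify nondegeneracy using \Cref{prop:equivalent_characterization_CY_cube}, with the roles chosen so that the outer direction has $n_1=1$ and $n_2=n$. Condition 2) asks that $\eta$ restrict to cubical $(m-1)$-Calabi--Yau structures on the faces $\partial^0_i$ of the inner cube. These faces are again of the form in the statement, built from the face cubes $\A_{\partial_i^0\ast},\dots$, because directed pushouts are computed componentwise (\Cref{def:directed_pushout_cubes}). The restricted hypotheses are cubical $(m-1)$-Calabi--Yau by \Cref{def:cubical_CY_structure}(2), so the induction hypothesis applies.

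Condition 1) asks for a relative $m$-Calabi--Yau structure on the $2$-cube obtained by taking $\on{colim}_{I^n\backslash\{1\}^{n}}$ in the inner direction. Colimits commute with products (which are biproducts in $\on{LinCat}_k$) and with directed pushouts, since the latter are themselves colimits. This $2$-cube is therefore again a directed-pushout gluing, of the $2$-cubes
\[
\left[\left[\on{colim}\A\times\on{colim}\A'\to\on{colim}\B\right]\to\left[\A_{1..1}\times\A'_{1..1}\to\B_{1..1}\right]\right]
\]
and its analogue for $\C$. These are cubical $m$-Calabi--Yau by the hypothesis together with the reverse implication of \Cref{prop:equivalent_characterization_CY_cube}, or directly by unraveling \Cref{def:cubical_CY_structure}. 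This reduces everything to the case $n=1$.

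The main obstacle is this base case of $2$-cubes, i.e.\ showing that \Cref{prop:laxCYgluing} holds functorially along a morphism of spans. The plan is to apply \Cref{prop:laxCYgluing} at the source and at the target. We then identify the induced functor
\[
\on{colim}\left(\text{source data}\right)\to \laxpush{\B_1}{\A'_1}{\C_1}
\]
as a directed pushout of the relative functors $\B_0\amalg_{\A_0\times\A'_0}(\A_1\times\A'_1)\to\B_1$ and the analogous functor for $\C$, using the pasting argument in the proof of \Cref{prop:equivalent_characterization_CY_cube}. Finally, we apply \Cref{prop:laxCYgluing} once more to these relative Calabi--Yau functors. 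The delicate point is checking that the nondegeneracy diagrams of inverse dualizing bimodules glue. For this we would use that $\on{id}^!$ of a directed pushout is computed from the recollement, and that the counit maps agree under the identifications of $\on{HH}^{S^1}$ above. The special case $\A_\ast\simeq\A''_\ast\simeq0$ follows by substitution.
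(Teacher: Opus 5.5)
Your outer reduction is sound. The case $n=0$ is \Cref{prop:laxCYgluing}, and the faces in condition 2) of \Cref{prop:equivalent_characterization_CY_cube} are handled by induction. Since products are biproducts and directed pushouts commute with colimits (\Cref{lem:lax_gluing_commutes_with_colimits}), condition 1) is the $n=1$ instance of the statement. (The hypothesis you need there on the $2$-cubes is not a mere unraveling of \Cref{def:cubical_CY_structure}: it includes that $\on{colim}_{I^n\backslash\{1,\dots,1\}}(\A_\ast\times\A'_\ast)\to\on{colim}_{I^n\backslash\{1,\dots,1\}}\B_\ast$ is relative Calabi--Yau. It is, however, \Cref{lem:bottom_CY_square_morphism_of_cubes}, which does not depend on this proposition.)

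The gap is the case $n=1$, which carries all the content. There one must show that
\[
P\coloneqq \laxpush{\B_0}{\A'_0}{\C_0}\amalg_{\A_0\times\A'_0\times\A''_0}(\A_1\times\A'_1\times\A''_1)\longrightarrow \laxpush{\B_1}{\A'_1}{\C_1}
\]
is relative left $m$-Calabi--Yau. \Cref{prop:laxCYgluing} does not apply to $F_B\colon\B_0\amalg_{\A_0\times\A'_0}(\A_1\times\A'_1)\to\B_1$ and its $\C$-analogue. That proposition needs functors out of products $\widetilde{\A}\times\A'\to\B$, with the gluing piece a disjoint factor of the boundary. In the source of $F_B$, however, $\A'_1$ is glued to $\B_0$ along $\A'_0$ and does not split off unless $\A'_0\simeq 0$. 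Correspondingly, $P$ is not of the form $\widetilde{\A}\times\A'_1\times\widetilde{\A}''$. Unwinding the definitions, $P$ is the colimit of
\[
\B_0\amalg_{\A_0}\A_1\longleftarrow\A'_0\xrightarrow{\iota_0}\on{Fun}(\Delta^1,\A'_0)\amalg_{\A'_0}\A'_1\xleftarrow{\iota_1}\A'_0\longrightarrow\C_0\amalg_{\A''_0}\A''_1 ,
\]
with the middle pushout taken along the diagonal $\iota$. So $\A'_1$ is attached along $\A'_0$ to the collar of $\laxpush{\B_0}{\A'_0}{\C_0}$. Reading ``directed pushout of $F_B$ and $F_C$'' as a pointwise directed pushout of arrows does not help either. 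Over $\A'_0\to\A'_1$ the source contains two copies of $\A'_1$. Over $\on{id}_{\A'_1}$ it contains $\on{Fun}(\Delta^1,\A'_1)$ in place of $\on{Fun}(\Delta^1,\A'_0)\amalg_{\A'_0}\A'_1$.

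So at $n=1$ you need a lax gluing theorem for relative Calabi--Yau functors whose boundaries have corners, which is the proposition itself. The ``delicate point'' you defer (matching the inverse dualizing bimodule diagrams through the recollement) is therefore the whole proof, not a verification.

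The paper never argues index by index. It forms the $(n+2)$-cube $\E_\ast$ given by the square of $n$-cubes with top row $\A_\ast\times\A'_\ast\times\A''_\ast\to\C_\ast$ and bottom row $\B_\ast\to 0$; this cube is cubical left $(m+1)$-Calabi--Yau. It then passes to the coproduct totalization, which is a Calabi--Yau categorical complex by \cite[Thm.~6.4.6]{CDW23}. It identifies the partial totalization in the last two directions, shifted down by one, with $\D_\ast$. Finally it recovers the cubical structure on $\D_\ast$ via \cite[Rem.~6.4.13]{CDW23}. The lax gluing at all corners at once is thereby absorbed into the totalization theorem of \cite{CDW23}, which is exactly the input missing from your base case.
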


\begin{proof}
	The proof uses results from \cite{CDW23} on categorical chain complexes, totalizations of categorical cubes, and the interactions between their relative Calabi--Yau structures.
	We refer to \cite{CDW23} for background on categorical chain complexes. We also cite the relevant locations in \cite{CDW23} below.

	Consider the categorical $(n+2)$-cube $\E_\ast$ described by the square of categorical $n$-cubes
\[
\begin{tikzcd}
\A_\ast\times \A_\ast'\times \A_\ast'' \arrow[r] \arrow[d] & \C_\ast \arrow[d] \\
\B_\ast\arrow[r]                                & 0           
\end{tikzcd}
\]
where the functors $\A''_\ast\to \B_\ast$ and $\A_\ast\to \C_\ast$ are trivial.
	We choose the coordinates $n+1,n+2$ of $\E_\ast$ to correspond to the morphisms of $n$-cubes depicted above.
	The given cubical left $m$-Calabi--Yau structures determine a cubical left $(m+1)$-Calabi--Yau structure on $\E_\ast$. Let $\on{tot}^{\amalg}(\E_\ast)$ be the coproduct totalization of $\E_\ast$ in the sense of \cite[Def.~4.4.2]{CDW23}.
	This is a categorical chain complex in the sense of \cite[Def.~2.2.5]{CDW23} concentrated in homological degrees $m+3,\dots,1$.
	By \cite[Thm.~6.4.6]{CDW23}, the totalization $\on{tot}^{\amalg}(\E_\ast)$ inherits a left $(m+1)$-Calabi--Yau structure in the sense of \cite[Def.~6.2.2]{CDW23}.
	Shifting $\on{tot}^{\amalg}(\E_\ast)$ a degree down, we obtain the categorical chain complex $\on{tot}^{\amalg}(\E_\ast)[-1]$ (concentrated in degrees $m+2,\dots 0$) with a left $m$-Calabi--Yau structure. 

	Consider the categorical $(n+1)$-cube $\D_\ast=\left[\A_\ast\times \A_\ast'\times \A_\ast''\to \laxpush{\B_\ast}{\A_\ast'}{\C_\ast}\right]$, where the morphism of $n$-cubes defines the $(n+1)$-th coordinate. Using the notation of \cite[Constr.~4.4.1]{CDW23}, it is immediate that 
\[ \on{tot}_{n+1,n+2}^{\amalg}(\E_\ast)[-1] \simeq \D_\ast\,,\]
where the left $(n+1)$-cube is shifted down a degree in the $(n+1)$-th coordinate.
	It follows that $\on{tot}^{\amalg}(E_\ast)[-1]\simeq \on{tot}^{\amalg}(\D_\ast)$, so that $\on{tot}^{\amalg}(\D_\ast)$ inherits a left $(n+1)$-Calabi--Yau structure.
	By \cite[Rem.~6.4.13]{CDW23}, this determines the desired cubical left $(n+1)$-Calabi--Yau structure on $\D_\ast$.
\end{proof}

\begin{lemma}\label{lem:lax_gluing_commutes_with_colimits}~
\begin{enumerate}[(1)]
\item  The functor $\laxpush{(\mhyphen)}{(\mhyphen)}{(\mhyphen)} \colon \on{Fun}(\Lambda^2_0,\on{LinCat}_k)\to \on{LinCat}_k$ preserves colimits, and thus in particular cofibers.
\item The functor $\laxpush{(\mhyphen)}{(\mhyphen)}{(\mhyphen)} \colon  \on{Fun}(\Lambda^2_0,\on{Cube}_n(\on{LinCat}_k))\to \on{Cube}_n(\on{LinCat}_k)$ preserves colimits, and thus in particular cofibers.
\end{enumerate}
\end{lemma}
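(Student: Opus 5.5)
The plan is to write the directed pushout functor as a composite of functors, each of which preserves colimits. By the Construction above, $\laxpush{(\mhyphen)}{(\mhyphen)}{(\mhyphen)}$ is the composite
\[
\on{Fun}(\Lambda^2_0,\on{LinCat}_k)\xrightarrow{\ \Phi\ } \on{Fun}(\Delta^1\amalg_{\Delta^0}\Lambda^2_2\amalg_{\Delta^0}\Delta^1,\on{LinCat}_k)\xrightarrow{\on{colim}}\on{LinCat}_k\,,
\]
where $\Phi$ keeps the two outer arrows of the span and replaces the middle vertex $\A$ by the diagram $\iota_{0,1}(\A)$. The second functor is a left adjoint to the constant diagram functor, so it preserves colimits. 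Colimits in functor categories are computed pointwise. Hence it suffices to check that $\Phi$ preserves colimits after evaluating at each vertex and each edge of $\Delta^1\amalg_{\Delta^0}\Lambda^2_2\amalg_{\Delta^0}\Delta^1$.

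\textbf{Step 1: reduce to $\A\mapsto\on{Fun}(\Delta^1,\A)$.} At the outer vertices $\B,\C$ and the two copies of $\A$, the evaluation of $\Phi$ is simply an evaluation functor $\on{Fun}(\Lambda^2_0,\on{LinCat}_k)\to\on{LinCat}_k$, which preserves colimits. The only nontrivial entry is the central vertex, which is the functor $\A\mapsto \on{Fun}(\Delta^1,\A)$ precomposed with evaluation at the cone point. The arrows $\iota_0,\iota_1$ are natural in $\A$ by construction, so the edge data is automatic once the vertices are handled. So everything reduces to showing that $\A\mapsto\on{Fun}(\Delta^1,\A)$ preserves colimits in $\on{LinCat}_k$.

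\textbf{Step 2: the key claim.} This is the step I expect to be the main obstacle, since limits-type constructions such as $\on{Fun}(\Delta^1,-)$ do not obviously commute with colimits in $\mathcal{P}r^L$. I would use the equivalence $\on{Fun}(\Delta^1,\A)\simeq \on{Fun}(\Delta^1,\on{Mod}_k)\otimes\A$, where $\otimes$ is the Lurie tensor product in $\on{LinCat}_k$; this holds for presentable $\A$ and the small category $\Delta^1$ by \cite{HA}. The functor $\on{Fun}(\Delta^1,\on{Mod}_k)\otimes(\mhyphen)$ preserves colimits because the tensor product on $\on{LinCat}_k$ commutes with colimits separately in each variable. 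One must also check that under this identification the maps $\iota_0,\iota_1$ correspond to $\iota_0\otimes\A$ and $\iota_1\otimes\A$, and that the adjoints used in the Construction match. Both follow by naturality of the equivalence in $\A$, since the maps are induced from the corresponding maps at $\A=\on{Mod}_k$.

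\textbf{Step 3: the cube version.} Part (2) follows from part (1) because everything is computed componentwise. By \Cref{def:directed_pushout_cubes}, the cubical directed pushout is the functor $\on{Fun}(I^n,-)$ applied to the functor of part (1), under the identification $\on{Fun}(\Lambda^2_0,\on{Fun}(I^n,\on{LinCat}_k))\simeq\on{Fun}(I^n,\on{Fun}(\Lambda^2_0,\on{LinCat}_k))$. Colimits in $\on{Cube}_n(\on{LinCat}_k)$ are computed pointwise, so preservation of colimits follows from part (1). Preservation of cofibers is then a special case, since a cofiber is a pushout along $0$ and the directed pushout of the zero span is $0$.
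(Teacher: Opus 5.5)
Your proposal is correct and follows the paper's route. Part (2) is reduced pointwise to part (1), and part (1) is reduced to colimit-preservation of the functor sending a span to the diagram \eqref{eq:double_span}, which the paper leaves to ``inspecting the construction''. Your Step 2 supplies exactly that inspection: at the one nontrivial vertex you use $\on{Fun}(\Delta^1,\A)\simeq\on{Fun}(\Delta^1,\on{Mod}_k)\otimes\A$, the same identification the paper uses in the proof of \Cref{cor:lax_pushout_Unzip}, and since colimits are detected vertexwise, your extra checks on edges and on matching $\iota_0,\iota_1$ are harmless but not needed.
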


\begin{proof}
	Part (2) follows from part (1), using the equivalence 
\[ \on{Fun}(\Lambda^2_0,\on{Cube}_n(\on{LinCat}_k))\simeq \on{Fun}(I^n,\on{Fun}(\Lambda^2_0,\on{LinCat}_k))\] 
and that colimits in functor categories are computed pointwise \cite[\href{https://kerodon.net/tag/02X9}{Tag 02X9}]{Ker}.

	For the proof of part (1), it suffices to note that the functor that assigns to a span \[
\begin{tikzcd}
\A \arrow[r] \arrow[d] & \B \\
\C                     &   
\end{tikzcd}
\]
the diagram \eqref{eq:double_span} preserves colimits, which follows from inspecting the construction.
\end{proof}

	We note that it is also shown more generally in \cite[Thm.~5.6.5]{AHM26} that the passage to a (partially) lax colimit cone is a left adjoint and thus preserves colimits.

\subsection{Non-lax gluing}\label{subsec:non_lax_gluing}

	In this section, we deduce from \Cref{prop:laxgluingCYcubes} a non-lax gluing result for cubical Calabi--Yau structures. 

\begin{lemma}\label{lem:vertical_cofiber_is_CY}
    Consider a categorical $2$-cube in $\on{LinCat}_k^{\on{dual}}$
    \[\begin{tikzcd}
\A \arrow[d] \arrow[r] & \B \arrow[d] \\
\C \arrow[r]           & \D          
\end{tikzcd}
\]
equipped with a cubical left $m$-Calabi--Yau structure. Then the induced functor
\[ \C/\A\to \D/\B\]
inherits a canonical relative left $m$-Calabi--Yau structure.
\end{lemma}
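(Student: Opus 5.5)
We derive the statement from \Cref{prop:laxgluingCYcubes} in the case $n=1$ with $\A_\ast\simeq\A''_\ast\simeq 0$, followed by a passage to cofibers. Regard the given square as a morphism of $1$-cubes $\A_\ast'\coloneqq[\A\to\B]\to\B_\ast\coloneqq[\C\to\D]$, i.e.\ the vertical functors. Take $\C_\ast\coloneqq[\A\to\B]$ with $\A'_\ast\to\C_\ast$ given by $\iota_0$ componentwise. Alternatively, take $\C_\ast\coloneqq 0$, in which case the directed pushout with $\C_\ast=0$ degenerates and should be computed directly. The aim is to arrange things so that the resulting directed pushout $1$-cube is (Morita) equivalent to the cofiber cube $[\C/\A\to\D/\B]$, or at least contains it as a cofiber that we can control.

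\textbf{Step 1 (cube-level degree bookkeeping).} A cubical left $m$-Calabi--Yau structure on the $2$-cube amounts to a class in $\on{HH}^{S^1,\on{tot}}$ of the square. The total cofiber can be computed by first taking cofibers vertically. Since $\on{HH}^{S^1}$ is a localizing invariant and hence sends cofiber sequences of dualizable categories, $\A\to\C\to\C/\A$, to cofiber sequences, we obtain
\[
\on{HH}^{S^1,\on{tot}}(\text{square})\simeq \on{cof}\big(\on{HH}^{S^1}(\C/\A)\to\on{HH}^{S^1}(\D/\B)\big).
\]
This produces the candidate class $R[m]\to\on{cof}(\on{HH}^{S^1}(\C/\A)\to\on{HH}^{S^1}(\D/\B))$, and the Calabi--Yau structure we seek is this class.

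\textbf{Step 2 (nondegeneracy).} It remains to check that the candidate class is nondegenerate. Using \Cref{rem:functor_into_directed_pushout}, the non-lax pushout is the cofiber of $\A'\to\laxpush{\B}{\A'}{\C}$. With the choices $\A'=\C$ in a suitable recombination and $0$ elsewhere, this exhibits $\C/\A$ and $\D/\B$ as cofibers of the Calabi--Yau cube produced by \Cref{prop:laxgluingCYcubes}. Concretely, apply the ``in particular'' clause to $\A'_\ast=[\A\to\B]$, $\B_\ast=[\C\to\D]$, $\C_\ast=0$. This gives a cubical $m$-Calabi--Yau $2$-cube $[\A'_\ast\to\laxpush{\B_\ast}{\A'_\ast}{0}]$. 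Its entries are $\laxpush{\C}{\A}{0}$ and $\laxpush{\D}{\B}{0}$, which are glued categories of recollement type with cofiber $\C/\A$, respectively $\D/\B$, by \Cref{rem:functor_into_directed_pushout}.

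By \Cref{lem:lax_gluing_commutes_with_colimits}, taking cofibers along the $\A'_\ast$-direction commutes with the directed pushout. We then identify the front-face functor $\on{colim}\to\laxpush{\D}{\B}{0}$ with one whose relative cofiber is $\C/\A\to\D/\B$. Nondegeneracy would then follow from the standard fact that a relative Calabi--Yau structure on a square, with the source face already Calabi--Yau, descends along the cofiber. This is the dual Serre functor compatibility, and it can be checked through the diagram of inverse dualizing bimodules in the definition, using that $\on{id}^!$ of a Verdier quotient is computed by restricting $\on{id}^!$ along the localization.

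\textbf{Main obstacle.} The hardest part is this last identification of inverse dualizing bimodules under Verdier quotients, i.e.\ that $(\C/\A)$ is smooth and $\on{id}^!_{\C/\A}$ is the image of $\on{cof}(\on{id}^!_\A\to\on{id}^!_\C)$. I would first try to prove it via the categorical chain complex formalism of \cite{CDW23}, as in the proof of \Cref{prop:laxgluingCYcubes}. There, the totalization of the $2$-cube, as a categorical chain complex, is quasi-equivalent to the two-term complex $\C/\A\to\D/\B$, and \cite[Thm.~6.4.6]{CDW23} transfers the Calabi--Yau structure directly.
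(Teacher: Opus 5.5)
Your proposal has a genuine gap: the step that carries the content is circular. Applying \Cref{prop:laxgluingCYcubes} with $\C_\ast=0$ gains nothing, because $\laxpush{\B}{\A}{0}\simeq\B$ for every functor $\A\to\B$. Indeed, the quotient of $\on{Fun}(\Delta^1,\A)$ by $\iota_1(\A)=\{0\to Y\}$ is $\A$ via evaluation at the source, and under this quotient $\iota_0$ becomes the identity. So the cube $[\A'_\ast\to\laxpush{\B_\ast}{\A'_\ast}{0}]$ you produce is just the original square. Its entries are $\C$ and $\D$, not new recollement-type categories, and their cofibers under $\A$ and $\B$ are $\C/\A$ and $\D/\B$ tautologically. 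What remains is your ``standard fact'' that a Calabi--Yau square descends along cofibers to $\C/\A\to\D/\B$, and that is exactly the lemma.

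Step 1 also overclaims. $\on{HH}^{S^1}$ turns Verdier sequences into cofiber sequences, but the vertical functors need not be fully faithful. Take the Calabi--Yau square with $\A=\C=0$, $\B=\on{Loc}(S^{m-1})$ and $\D=\on{Loc}(D^m)$. Its total negative cyclic homology is $\on{cof}(\on{HH}^{S^1}(\on{Loc}(S^{m-1}))\to\on{HH}^{S^1}(\on{Mod}_k))$, which contains the nonzero Calabi--Yau class, while $\C/\A\simeq\D/\B\simeq 0$. Only the natural map to $\on{cof}(\on{HH}^{S^1}(\C/\A)\to\on{HH}^{S^1}(\D/\B))$ is needed to define the class, so this error is harmless, but the map is not an equivalence.

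The missing idea is to work with the functor $F\colon\C\amalg_\A\B\to\D$, which is relative left $m$-Calabi--Yau by condition (1) of \Cref{def:cubical_CY_structure}. Pasting pushouts gives $\C/\A\simeq(\C\amalg_\A\B)/\B$, so $F'\colon\C/\A\to\D/\B$ is obtained from $F$ by quotienting source and target by $\B$. Let $\pi\colon\D\to\D/\B$ and $\pi'\colon\C\amalg_\A\B\to\C/\A$ be the quotient functors. Apply the exact functor $\pi_!$ to the nondegeneracy sequence $\on{id}^!_\D\to F_!(\on{id}^!_{\C\amalg_\A\B})\to\on{id}_\D[1-m]$, using three facts:
quotient functors send inverse dualizing bimodules to inverse dualizing bimodules \cite[Lemma~2.33]{Chr23};
$\pi_!(\on{id}_\D)\simeq\pi\pi^R\simeq\on{id}_{\D/\B}$, since $\pi^R$ is fully faithful;
$\pi_!F_!\simeq F'_!\pi'_!$, since $\pi F\simeq F'\pi'$.
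The result is the sequence $\on{id}^!_{\D/\B}\to F'_!(\on{id}^!_{\C/\A})\to\on{id}_{\D/\B}[1-m]$, which witnesses nondegeneracy of the induced class.

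Your ``main obstacle'' gestures at the first of these facts but states it incorrectly. $\on{id}^!_\A$ and $\on{id}^!_\C$ are endofunctors of different categories, and the relevant statement is $\pi'_!(\on{id}^!_{\C\amalg_\A\B})\simeq\on{id}^!_{\C/\A}$ for a quotient functor.

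The suggested detour through \cite{CDW23} is also unsupported. You would need an identification of the three-term totalization $\A\to\B\times\C\to\D$ with the two-term complex $\C/\A\to\D/\B$, which is essentially the content of the lemma. In this paper the dependency runs the other way: the totalization argument in \Cref{lem:half_cube_colimit_CY} itself invokes the present lemma.
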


\begin{proof}
	Denote the functor $\C\amalg_\A \B \to \D$ by $F$ and the functor $\C/\A\to \D/\B$ by $F'$. We first note that $\left(\C\amalg_{\A}\B\right)/\B\simeq \C/\A$.
	By \cite[Lemma 2.33.(1)]{Chr23}, the functors
\[ \Big(\left(\C\amalg_{\A}\B\right)\to\left(\C\amalg_{\A}\B\right)/\B\simeq \C/\A\Big)_!\,,\quad\quad \Big(\D\to \D/\B\Big)_!\]
map the inverse dualizing bimodules $\on{id}^!_{\C\amalg_{\A}\B}, \on{id}^!_\D$ to the inverse dualizing bimodules $\on{id}^!_{\C/\A},\on{id}^!_{\D/\B}$.  

	The fiber and cofiber sequence of $k$-linear endofunctors of $\D$ describing the relative Calabi--Yau structure of $F$
\[
\on{id}_{\D}^! \to F_!(\on{id}_{\C\amalg_{\A}\B}^!) \to \on{id}_{\D}[1-m]
\]
thus gets mapped by the exact functor $(\D\to \D/\B)_!$ to a fiber and cofiber sequence of endofunctors
\[
\on{id}_{\D/\B}^! \to F'_!(\on{id}_{\C/\A}^!) \to \on{id}_{\D/\B}[1-m]
\]
showing the non-degeneracy of the induced Hochschild homology class of $F'$. 
\end{proof}

\begin{lemma}\label{lem:half_cube_colimit_CY}~
\begin{enumerate}[(1)]
\item Let $\C_\ast \in \on{Cube}_n(\on{LinCat}_k^{\on{dual}})$ be a categorical $n$-cube equipped with a cubical left $m$-Calabi--Yau structure.
	Then $\on{colim}_{I^n\backslash \{1,\dots,1\}} \C_\ast\in \on{LinCat}_k^{\on{dual}}$ inherits a canonical left $(m-1)$-Calabi--Yau structure.
\item  Let $\left[ \A_\ast \to \B_\ast \right]\in \on{Cube}_{n+1}(\on{LinCat}_k^{\on{dual}})$ be a categorical $(n+1)$-cube equipped with a left $m$-Calabi--Yau structure.
	Then the functor
\begin{equation}\label{eq:A_to_B}
{\on{colim}_{I^n\backslash \{1,\dots,1\}} \A_\ast}\to {\on{colim}_{I^n\backslash \{1,\dots,1\}} \B_\ast}
\end{equation}
inherits a canonical left $(m-1)$-Calabi--Yau structure.
\end{enumerate}
\end{lemma}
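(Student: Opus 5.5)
I would prove part (1) by induction on $n$, unwinding \Cref{def:cubical_CY_structure}. For $n=1$ the cube is a functor $\C_0\to\C_1$ with a relative left $m$-Calabi--Yau structure, whose restriction to $\C_0=\on{colim}_{I\backslash\{1\}}\C_\ast$ is the absolute left $(m-1)$-Calabi--Yau structure that is part of the data. For $n\geq 2$, I would decompose $I^n\backslash\{1,\dots,1\}$ as in \eqref{eq:decomp_n+j_cube}, viewing the cube as a morphism of $(n-1)$-cubes $\left[\C_{\partial_n^0\ast}\to\C_{\partial_n^1\ast}\right]$. This gives
\[
\on{colim}_{I^n\backslash\{1,\dots,1\}}\C_\ast\simeq \C_{(1,\dots,1,0)}\amalg_{\on{colim}_{I^{n-1}\backslash\{1,\dots,1\}}\C_{\partial_n^0\ast}}\on{colim}_{I^{n-1}\backslash\{1,\dots,1\}}\C_{\partial_n^1\ast}\,.
\]
The face $\C_{\partial_n^0\ast}$ carries a cubical $(m-1)$-Calabi--Yau structure by condition (2). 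By induction, the middle term carries a left $(m-2)$-Calabi--Yau structure, and the maps to the two outer terms become relative $(m-1)$-Calabi--Yau functors: one from condition (1) for the face $\C_{\partial_n^0\ast}$, the other from part (2) applied at level $n-1$. The Brav--Dyckerhoff gluing of relative Calabi--Yau cospans along pushouts \cite{BD19} then gives the absolute $(m-1)$-structure. So parts (1) and (2) are best proved by a simultaneous induction.

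For part (2), the plan is to reduce to \Cref{lem:vertical_cofiber_is_CY}, or more precisely to its gluing mechanism. Let $\A'\coloneqq \on{colim}_{I^n\backslash\{1,\dots,1\}}\A_\ast$ and $\B'\coloneqq \on{colim}_{I^n\backslash\{1,\dots,1\}}\B_\ast$. The $(n+1)$-cube restricted to $(I^n\backslash\{1,\dots,1\})\times I$ has colimit $\B'$ by cofinality. Consider the $2$-cube
\[
\begin{tikzcd}
\A' \arrow[r]\arrow[d] & \A_{1,\dots,1}\arrow[d]\\
\B' \arrow[r] & \B_{1,\dots,1}\,.
\end{tikzcd}
\]
I would show it carries a cubical left $m$-Calabi--Yau structure, using \Cref{prop:equivalent_characterization_CY_cube} in reverse with $n_2=n$ and $n_1=1$. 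The top map is relative $(m-1)$-CY by condition (1) for $\A_\ast=\left[\A_\ast\to\B_\ast\right]_{\partial^0_{n+1}\ast}$. The map from $\B'\amalg_{\A'}\A_{1,\dots,1}=\on{colim}_{I^{n+1}\backslash\{1,\dots,1\}}$ to $\B_{1,\dots,1}$ is relative $m$-CY by condition (1) for the whole cube. The remaining piece of data, the left map $\A'\to\B'$, is exactly the claim of part (2).

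I would therefore prove the left vertical functor directly. By \eqref{eq:decomp_n+j_cube}, $\B'$ is a pushout of $\on{colim}_{I^n\backslash\{1,\dots,1\}\times\{0\}}=\A'$ with pieces coming from the faces $\left[\A_{\partial_i^1\ast}\to\B_{\partial_i^1\ast}\right]$ glued along the faces $\partial_i^0$. The functor $\A'\to\B'$ is then a composite of pushouts of lower-dimensional cubes of the same shape, to which the induction hypothesis applies. Each of those gives a relative $(m-1)$-Calabi--Yau functor, and these glue by \cite{BD19} to a relative structure on $\A'\to\B'$. I would organize this as induction on $n$ using the decomposition $\B'\simeq \B_{(1,\dots,1,0)}\amalg_{\on{colim}\B_{\partial^0_n\ast}}\on{colim}\B_{\partial^1_n\ast}$ and its $\A$-analogue. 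This yields a morphism of cospans whose legs are relative Calabi--Yau by induction and by condition (1), so the relative structure on the induced map of pushouts follows from the gluing of relative Calabi--Yau squares.

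The main obstacle is exactly this last gluing statement: Calabi--Yau structures on maps of pushouts, that is, $2$-cubes glued along a common face. The cleanest route is to use the coproduct totalization from \cite{CDW23} as in the proof of \Cref{prop:laxgluingCYcubes}. Totalizing in all but the last coordinate replaces $\on{colim}$ over $I^n\backslash\{1,\dots,1\}$ by the degree-one piece of $\on{tot}^{\amalg}$, and \cite[Thm.~6.4.6]{CDW23} supplies the nondegeneracy. I would try that first, and fall back on the explicit induction with \cite{BD19} if identifying the totalization terms proves awkward.
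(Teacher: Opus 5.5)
\textbf{There is a genuine gap, in part (2).} Your reduction of part (1) to part (2) one level down is sound and differs from the paper. It uses the pushout decomposition along the $n$-th coordinate plus Brav--Dyckerhoff gluing, once you check that the two structures induced on $\on{colim}_{I^{n-1}\backslash\{1,\dots,1\}}\C_{\partial_n^0\ast}$ are compatible. But everything then hinges on part (2), and there your inductive step does not close.

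\textbf{Why the induction for part (2) fails.}
\begin{enumerate}
\item A cubical structure restricts only to the back faces $\partial_i^0$ (\Cref{def:cubical_CY_structure}), and $\B_\ast$ is the front face $\partial^1_{n+1}$ of $\left[\A_\ast\to\B_\ast\right]$. So the legs of the $\B$-span carry no relative Calabi--Yau structure, e.g.\ $\on{colim}_{I^{n-1}\backslash\{1,\dots,1\}}\B_{\partial_n^0\ast}\to\B_{(1,\dots,1,0)}$. The nondegeneracy at $\B_{(1,\dots,1,0)}$ concerns $\A_{(1,\dots,1,0)}\amalg_{\on{colim}\A_{\partial_n^0\ast}}\on{colim}\B_{\partial_n^0\ast}\to\B_{(1,\dots,1,0)}$ instead.
\item For the same reason, the induction hypothesis cannot be applied to the front face $\left[\A_{\partial_n^1\ast}\to\B_{\partial_n^1\ast}\right]$.
\item What is actually available are two squares. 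To conclude, you would need that the right-hand square of colimits is a Calabi--Yau square. That is a two-dimensional analogue of part (2), so your inductive statement would have to be strengthened to arbitrary $j$-cubes of colimits.
\item You would also need a gluing theorem for Calabi--Yau squares along pushouts. That is \Cref{prop:cofiber_and_pushout_CY_cubes}, which the paper proves via \Cref{lem:bottom_CY_square_morphism_of_cubes}, i.e.\ via the very lemma you are proving. One could import it from \cite{BCS24} instead, but the strengthened induction would still be required.
\end{enumerate}
You name this as the main obstacle but leave it open. Your first attempt, via the square with corners $\A_{1,\dots,1},\B_{1,\dots,1}$, is circular, as you note yourself.

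\textbf{The missing idea.} Exhibit $\on{colim}\A_\ast\to\on{colim}\B_\ast$ as the induced map of vertical cofibers of a square whose Calabi--Yau property is already known, and then apply \Cref{lem:vertical_cofiber_is_CY}.
\begin{itemize}
\item By \cite[Lemma 6.4.9]{CDW23}, $\on{colim}_{I^n\backslash\{1,\dots,1\}}\A_\ast$ is the cofiber of $\on{tot}^{\amalg}(\A_\ast)_2/\on{tot}^{\amalg}(\A_\ast)_3\to\on{tot}^{\amalg}(\A_\ast)_1$, and likewise for $\B_\ast$. It is not the degree-one piece $\on{tot}^{\amalg}(\A_\ast)_1$ itself, as your last paragraph says.
\item The square formed by these two functors and the maps induced by $\A_\ast\to\B_\ast$ is left $(m-1)$-Calabi--Yau by \cite[Thm.~6.4.6, Lem.~6.3.5.(3)]{CDW23}.
\item \Cref{lem:vertical_cofiber_is_CY} then transports this nondegeneracy to the functor on vertical cofibers.
\end{itemize}
Part (1) is the same argument without the $\B$-row: the colimit is the cofiber of a Calabi--Yau functor, hence Calabi--Yau, with no induction needed.

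Your closing remark points toward totalization but supplies neither the correct identification of the colimit nor the cofiber step. As written, part (2) is incomplete, and with it your inductive proof of part (1) for $n\geq 3$.
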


\begin{proof}
	We begin with the proof of part (1).
	The $k$-linear $\infty$-category $\on{colim}_{I^n\backslash \{1,\dots,1\}} \C_\ast$ arises as the cofiber of the differential $\on{tot}^{\amalg}(\C_\ast)_2\to \on{tot}^{\amalg}(\C_\ast)_1$ of the coproduct totalization $\on{tot}^{\amalg}(\C_\ast)$, as follows from \cite[Lemma 6.4.9.(2)]{CDW23}, and thus inherits a left $(m-1)$-Calabi--Yau structure (by \cite[Definition 6.2.2]{CDW23} and the fact that the cofiber of a Calabi--Yau functor is a Calabi--Yau category). 

	Part (2) follows from a similar argument: we pass to the coproduct totalizations, obtaining the following commutative diagram with veritical cofiber sequences in $\on{LinCat}_k^{\on{dual}}$:
 \[
 \begin{tikzcd}
\on{tot}^{\amalg}(\A_\ast)_2/\on{tot}^{\amalg}(\A_\ast)_3 \arrow[d] \arrow[r] & \on{tot}^{\amalg}(\B_\ast)_2/\on{tot}^{\amalg}(\B_\ast)_3 \arrow[d] \\
\on{tot}^{\amalg}(\A_\ast)_1 \arrow[r] \arrow[d]                              & \on{tot}^{\amalg}(\B_\ast)_1 \arrow[d]                              \\
{\on{colim}_{I^n\backslash \{1,\dots,1\}} \A_\ast} \arrow[r]                  & {\on{colim}_{I^n\backslash \{1,\dots,1\}} \B_\ast}                 
\end{tikzcd}
 \]
 The upper square is left $(m-1)$-Calabi--Yau, as can be deduced from \cite[Thm.~6.4.6]{CDW23} and \cite[Lem.~6.3.5.(3)]{CDW23}. The desired Calabi--Yau-ness of the functor \eqref{eq:A_to_B} thus follows from \Cref{lem:vertical_cofiber_is_CY}.
 \end{proof}

\begin{lemma}\label{lem:bottom_CY_square_morphism_of_cubes}
	Let $\C_\ast=\left[\A_\ast\to \B_\ast \right]\in \on{Cube}_{n+1}(\on{LinCat}_k^{\on{dual}})$ be a categorical $(n+1)$-cube equipped with a left $m$-Calabi--Yau structure. Then the square
\[
\begin{tikzcd}
{\on{colim}_{I^n\backslash \{1,\dots,1\}} \A_\ast} \arrow[d] \arrow[r] & {\on{colim}_{I^n\backslash \{1,\dots,1\}} \B_\ast} \arrow[d] \\
{\A_{1,\dots,1}} \arrow[r]                                              & {\B_{1,\dots 1}}                                             
\end{tikzcd}
\]
inherits a canonical cubical left $m$-Calabi--Yau structure. 
\end{lemma}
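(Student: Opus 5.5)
The plan is to use \Cref{prop:equivalent_characterization_CY_cube} in reverse spirit, i.e.\ to view $\C_\ast$ as a double cube and to collapse the $I^n$-directions. Write $\C_\ast=(\C_{\ast_1})_{\ast_2}$ with $\ast_1\in I^{1}$ the direction of the morphism $\A_\ast\to\B_\ast$ and $\ast_2\in I^n$. The total negative cyclic homology class $\eta\colon R[m]\to \on{HH}^{S^1,\on{tot}}(\C_\ast)$ induces a total class on the square in the statement. The reason is that $(\mhyphen)^{\on{tot}}$ can be computed by first taking total cofibers in the $n$ coordinates of $\ast_2$. Also, since $\on{HH}^{S^1}$ commutes with colimits in $\on{LinCat}^{\on{dual}}_k$ (being a localizing-type invariant of finite colimits), the total cofiber of $\on{HH}^{S^1}$ over $I^n$ agrees with the cofiber of $\on{HH}^{S^1}(\on{colim}_{I^n\backslash\{1,\dots,1\}}\A_\ast)\to \on{HH}^{S^1}(\A_{1,\dots,1})$, and likewise for $\B_\ast$. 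This gives the class on the square, which is canonical.

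Next I verify the conditions of \Cref{def:cubical_CY_structure} for the square, one vertex at a time.
\begin{enumerate}[(a)]
\item At the vertex $(0,0)$, I need a left $(m-2)$-Calabi--Yau structure on $\on{colim}_{I^n\backslash\{1,\dots,1\}}\A_\ast$. The face $\A_\ast=\C_{\partial^0_{1}\ast}$ carries a cubical left $(m-1)$-Calabi--Yau structure, so \Cref{lem:half_cube_colimit_CY}(1) applies.
\item For the top horizontal functor, \Cref{lem:half_cube_colimit_CY}(2) gives exactly the relative left $(m-1)$-Calabi--Yau structure.
\item For the left vertical functor, the left $(m-1)$-Calabi--Yau structure comes from condition (1) of \Cref{def:cubical_CY_structure} applied to the cube $\A_\ast$.
\item Finally, at the tip I need the functor
\[
\on{colim}_{I^n\backslash\{1,\dots,1\}}\B_\ast\amalg_{\on{colim}_{I^n\backslash\{1,\dots,1\}}\A_\ast}\A_{1,\dots,1}\longrightarrow \B_{1,\dots,1}
\]
to be relative left $m$-Calabi--Yau. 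This is where the cofinality computation from the proof of \Cref{prop:equivalent_characterization_CY_cube} enters. Using the pushout decomposition \eqref{eq:decomp_n+j_cube} of $I^{1}\times I^n\backslash\{(1,1,\dots,1)\}$ together with \cite[Tag 03DB]{Ker}, the domain is identified with $\on{colim}_{I^{n+1}\backslash\{1,\dots,1\}}\C_\ast$. The functor is therefore identified with the one in condition (1) for $\C_\ast$, which is Calabi--Yau by assumption.
\end{enumerate}

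The main obstacle is compatibility: the Calabi--Yau structures produced in (a)--(d) must all be the ones induced by the single total class on the square. This is needed in (a) and (b), which go through the totalization machinery of \cite{CDW23}, and in the boundary-face condition (2) of \Cref{def:cubical_CY_structure}. In other words, the boundary restrictions of the induced square class must match the classes produced by \Cref{lem:half_cube_colimit_CY}. I would handle this by noting that both are obtained from $\eta$ by the same iterated cofiber and colimit operations on $\on{HH}^{S^1}$, which commute by the commutation of total cofibers in different directions. Non-degeneracy is a property, and it has already been checked in (a)--(d), so once the classes are matched nothing further remains.
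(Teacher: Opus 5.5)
Your proposal is correct and is essentially the paper's proof. The paper verifies exactly your conditions (a)--(d), using \Cref{lem:half_cube_colimit_CY}(1) and (2), the cubical left $(m-1)$-Calabi--Yau structure on $\A_\ast$, and the identification of the tip functor with $\on{colim}_{I^{n+1}\backslash\{1,\dots,1\}}\C_\ast\to\C_{1,\dots,1}$; the paper only asserts this last identification, and your cofinality argument via \eqref{eq:decomp_n+j_cube} supplies it.

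One correction concerns how you produce the class on the square: negative cyclic homology does not preserve pushouts in $\on{LinCat}_k^{\on{dual}}$ in general. For instance, $\on{Mod}_{k[x]}\amalg_{\on{Mod}_k}\on{Mod}_{k[y]}\simeq\on{Mod}_{k\langle x,y\rangle}$ acquires the new cyclic-word class $xy$. You do not need that equivalence, though. The canonical comparison map $\on{colim}_{I^n\backslash\{1,\dots,1\}}\on{HH}^{S^1}(\A_\ast)\to\on{HH}^{S^1}(\on{colim}_{I^n\backslash\{1,\dots,1\}}\A_\ast)$, natural in $\A_\ast\to\B_\ast$, already induces a map from $\on{HH}^{S^1,\on{tot}}(\C_\ast)$ to the total negative cyclic homology of the square, and pushing $\eta$ forward along it gives the class.
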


\begin{proof}
	\Cref{lem:half_cube_colimit_CY} shows that $\on{colim}_{I^n\backslash \{1,\dots,1\}} \A_\ast$ is left $(m-2)$-Calabi--Yau and that the functor $\on{colim}_{I^n\backslash \{1,\dots,1\}} \A_\ast\to \on{colim}_{I^n\backslash \{1,\dots,1\}} \B_\ast$ is relative left $(m-1)$-Calabi--Yau.
	The functor $\on{colim}_{I^n\backslash \{1,\dots,1\}} \A_\ast \to \A_{1,\dots,1}$ is relative left $(m-1)$-Calabi--Yau since $\A_\ast$ is cubical left $(m-1)$-Calabi--Yau.
	The pushout morphism 
\[
\A_{1,\dots,1}\amalg_{\on{colim}_{I^n\backslash \{1,\dots,1\}} \A_\ast}{\on{colim}_{I^n\backslash \{1,\dots,1\}} \B_\ast}\to \B_{1,\dots,1}
\] 
is equivalent to the functor
\[\on{colim}_{I^{n+1}\backslash \{1,\dots,1\}} \C_\ast \to \C_{1,\dots,1}=\B_{1,\dots 1}\,,\] 
which is relative left $m$-Calabi--Yau by the assumption that $\C_\ast$ is cubical $m$-Calabi--Yau.
\end{proof}

\begin{proposition}\label{prop:cofiber_and_pushout_CY_cubes}~
	Consider a diagram in $\on{Cube}_n(\on{LinCat}_k^{\on{dual}})$ of the form
\[
\begin{tikzcd}
\A_\ast \arrow[r] \arrow[d] & \B_\ast \\
\C_\ast                     &        
\end{tikzcd}
\]
such that the $(n+1)$-cubes $\left[ \A_\ast \to \B_\ast\right]$ and $\left[ \A_\ast \to \C_\ast\right]$ carry compatible cubical left $m$-Calabi--Yau structures.
	Then the pushout cube $\B_\ast \amalg_{\A_\ast} \C_\ast$ inherits a canonical $m$-Calabi--Yau structure.
\end{proposition}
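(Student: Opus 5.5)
The plan is to realize the pushout cube as a cofiber of a Calabi--Yau morphism of cubes, and then push the Calabi--Yau structure through that cofiber.

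\emph{Step 1.} Apply \Cref{prop:laxgluingCYcubes} in the special case $\A_\ast\simeq\A''_\ast\simeq 0$, with the roles of $\A'_\ast,\B_\ast,\C_\ast$ there played by $\A_\ast,\B_\ast,\C_\ast$ here. This yields a canonical cubical left $m$-Calabi--Yau structure on the $(n+1)$-cube
\[
\D_\ast\coloneqq\left[\A_\ast\to \laxpush{\B_\ast}{\A_\ast}{\C_\ast}\right].
\]
By \Cref{rem:functor_into_directed_pushout}(2), the cofiber of $\D_\ast$ in the $(n+1)$-th direction is
\[
\on{cof}_{n+1}(\D_\ast)\simeq \B_\ast\amalg_{\A_\ast}\C_\ast .
\]
So it suffices to prove the following claim.

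\emph{Claim.} If an $(n+1)$-cube $\left[\A_\ast\to\D_\ast\right]$ carries a cubical left $m$-Calabi--Yau structure, then the $n$-cube $\D_\ast/\A_\ast\coloneqq \on{cof}_{n+1}\left[\A_\ast\to\D_\ast\right]$ inherits a cubical left $m$-Calabi--Yau structure. This is a cubical version of \Cref{lem:vertical_cofiber_is_CY}, which is exactly the case $n=1$ (a $2$-cube).

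\emph{Step 2 (the class).} The total negative cyclic homology of $\left[\A_\ast\to\D_\ast\right]$ is $\on{cof}\big(\on{HH}^{S^1,\on{tot}}(\A_\ast)\to \on{HH}^{S^1,\on{tot}}(\D_\ast)\big)$. The composite $\A_\ast\to\D_\ast\to\D_\ast/\A_\ast$ is canonically null. Hence the map $\on{HH}^{S^1,\on{tot}}(\D_\ast)\to \on{HH}^{S^1,\on{tot}}(\D_\ast/\A_\ast)$ factors canonically through this cofiber. Composing the given class $R[m]\to \on{HH}^{S^1,\on{tot}}([\A_\ast\to\D_\ast])$ with this factorization produces the candidate class $R[m]\to\on{HH}^{S^1,\on{tot}}(\D_\ast/\A_\ast)$.

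\emph{Step 3 (non-degeneracy, by induction on $n$).} I verify the two conditions of \Cref{def:cubical_CY_structure}.

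\begin{itemize}
\item \emph{Condition (2), the faces.} For each $1\le i\le n$, the face $(\D_\ast/\A_\ast)_{\partial_i^0\ast}$ is the cofiber of the face $n$-cube $\left[\A_{\partial_i^0\ast}\to\D_{\partial_i^0\ast}\right]$. That face carries the restricted cubical $(m-1)$-Calabi--Yau structure, and the restrictions of classes are compatible with the construction in Step 2. The induction hypothesis then gives condition (2).

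\item \emph{Condition (1), the tip.} Colimits commute with cofibers, so the functor
\[
\on{colim}_{I^n\backslash\{1,\dots,1\}}(\D_\ast/\A_\ast)\longrightarrow \D_{1,\dots,1}/\A_{1,\dots,1}
\]
is the vertical cofiber of the square
\[
\begin{tikzcd}
{\on{colim}_{I^n\backslash\{1,\dots,1\}}\A_\ast} \arrow[r] \arrow[d] & {\on{colim}_{I^n\backslash\{1,\dots,1\}}\D_\ast} \arrow[d] \\
{\A_{1,\dots,1}} \arrow[r] & {\D_{1,\dots,1}}
\end{tikzcd}
\]
after transposing so that the $\A$-row becomes the top row. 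By \Cref{lem:bottom_CY_square_morphism_of_cubes}, this square is cubical left $m$-Calabi--Yau. \Cref{lem:vertical_cofiber_is_CY} then shows that the induced functor is relative left $m$-Calabi--Yau.
\end{itemize}

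The base case $n=1$ is \Cref{lem:vertical_cofiber_is_CY} itself, applied to $\D_\ast$ viewed as a $2$-cube.

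\emph{Main obstacle.} The delicate point is checking that the relative class produced at the tip by Step 2 agrees with the class which \Cref{lem:vertical_cofiber_is_CY} attaches to the square of \Cref{lem:bottom_CY_square_morphism_of_cubes}. The same compatibility is needed on the faces. I would handle this by naturality of the total cofiber and of the maps $\on{HH}^{S^1}$ with respect to the null-composite $\A\to\D\to\D/\A$, applied entrywise. The orientation of the transposition also has to be tracked. Dualizability of all the cubes involved is automatic: the forgetful functor $\on{LinCat}_k^{\on{dual}}\to\on{LinCat}_k$ preserves colimits, and the cubes are built from dualizable ones by colimits.
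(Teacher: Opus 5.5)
Your proposal is correct and follows essentially the same route as the paper. You obtain the cubical Calabi--Yau structure on $\left[\A_\ast\to \laxpush{\B_\ast}{\A_\ast}{\C_\ast}\right]$ from \Cref{prop:laxgluingCYcubes}, identify its cofiber with the pushout cube, and prove the cofiber case index by index by combining \Cref{lem:bottom_CY_square_morphism_of_cubes} with \Cref{lem:vertical_cofiber_is_CY}; your induction over faces is just an explicit unwinding of the paper's remark that, by the recursive definition, it suffices to check the functors $\on{colim}_{I^n_{/{\bf i}}\backslash\{{\bf i}\}}\B_\ast/\A_\ast\to\B_{\bf i}/\A_{\bf i}$ for all ${\bf i}\in I^n$.
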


	We remark that \Cref{prop:cofiber_and_pushout_CY_cubes} can also be deduced from the results of \cite{BCS24} via the composition of morphisms in higher categories of iterated Calabi--Yau spans. 

\begin{proof}[Proof of \Cref{prop:cofiber_and_pushout_CY_cubes}]
	We first prove the assertion in the case that $\C_\ast\simeq 0$, so that the pushout is given by the cofiber cube $\B_\ast/\A_\ast$.
	Inspecting the recursive definition of cubical Calabi--Yau structures, we see that it suffices to prove for ${\bf i}\in I^n$ that the functor $\on{colim}_{I^n_{/{\bf i}}\backslash \{{\bf i}\}} \B_\ast/\A_\ast \to \B_{{\bf i}}/\A_{{\bf i}}$ inherits a relative left $m$-Calabi--Yau structure.
	This follows from combining \Cref{lem:vertical_cofiber_is_CY,lem:bottom_CY_square_morphism_of_cubes}.

	Let $\C_\ast$ again be arbitrary. From \Cref{prop:laxgluingCYcubes}, we deduce that the cube $\left[ \A_\ast \to \laxpush{\B_\ast}{\A_\ast}{\C_\ast}\right]$ has a cubical left $m$-Calabi--Yau structure. The cofiber cube is given by the pushout cube $\B_\ast\amalg_{\A_\ast}\C_\ast$ and thus inherits the desired cubical Calabi--Yau structure by the above. 
\end{proof}

\section{Calabi--Yau structures on constructible sheaves categories}\label{sec:CY_str_constr_sheaves}

	In \Cref{subsec:constr_sheaves}, we recall background on the $k$-linear stable $\infty$-category of constructible sheaves on a conically stratified space.
	In \Cref{section:conically_smooth}, we recall the notion of a conically smooth stratification, possibly with corners, and recall how these can be resolved into manifolds with corners via unzippings.
	We proceed in \Cref{subsec:cat_cube_of_stratified_space} by associating with every conically smooth stratified space $(X,P)$ the categorical cube $\on{Cons}_P(X)_\ast$, by lax gluing categorical cubes associated with the tubular neighborhoods of the strata.
	We use this lax gluing construction of $\on{Cons}_P(X)_\ast$ to produce cubical Calabi--Yau structures in \Cref{section:main_theorem}.
	In \Cref{subsec:manifolds_with_corners}, we apply the construction of $\on{Cons}_P(X)_\ast$ to reprove the result of Brav--Dyckerhoff on Calabi--Yau structures arising from local systems on smooth manifolds with boundary in the $\infty$-categorical setting.
	The proof is formulated in a way that generalizes to coefficients in an arbitrary $\mathbb{E}_\infty$-ring spectrum.
	In \Cref{subsec:example}, we spell out the constructions of this section in the example of the $2$-dimensional disk stratified by the coordinate axes. 
 
\subsection{Constructible sheaves on stratified spaces}\label{subsec:constr_sheaves}

	In this section, we recall some aspects of the theory of constructible sheaves.	Our main references are \cite{HA, PT22}.
\begin{notation}
	Throughout this section we will work with hypersheaves instead of sheaves. 	Since there is no risk of confusion, we will not make any reference to this in our notation. 
	Moreover, in the rest of the paper we will work with conically smooth stratified spaces, where the notions of sheaves and hypersheaves valued in a compactly generated $\infty$-category $\C$ coincide, as follows from \cite[Lemma A.2]{Vol25} using that $\on{Sh}(X; \C) \simeq \on{Sh}(X;\mathcal{S}) \otimes \C$ and $\on{Sh}^{\on{hyp}}(X;\mathcal{S}) \otimes \C \simeq \on{Sh}^{\on{hyp}}(X;\C)$.
\end{notation}

\begin{definition}
	A stratified space is the data of a continuous map $\rho \colon X \to P$ where $X$ is a topological space and $P$ is a poset endowed with the Alexandroff topology.
	A morphism of stratified spaces $(f,r) \colon (X,P) \to (Y,Q)$ is a commutative diagram as follows:
\[\begin{tikzcd}[column sep=scriptsize,row sep=small]
	X && Y \\
	\\
	P && Q
	\arrow["f", from=1-1, to=1-3]
	\arrow["{\rho_x}"', from=1-1, to=3-1]
	\arrow["{\rho_Y}", from=1-3, to=3-3]
	\arrow["r"', from=3-1, to=3-3]
\end{tikzcd}\]
	Abusing notation, we will often refer to a stratified space as a pair $(X,P)$, or sometimes simply $X$. For $p \in P$, the $p$-stratum of $X$ is defined as $X_p \coloneqq \rho^{-1}(p) \subset X$.
\end{definition}
	 
\begin{definition}[{$\!\!$\cite[Definition A.5.3]{HA}}]
	Let $(X,P)$ be a stratified space.
	The cone $(C(X), P \sqcup \left\lbrace - \infty \right\rbrace$) is the stratified space defined by:
	\begin{enumerate}\itemsep=0.2cm
    	\item as a set, $C(X) = \left\lbrace \ast \right\rbrace \sqcup (X \times \mathbb{R}_{>0}) $;
		\item a subset $U \subset C(X)$ is open if and only if $U \cap X \times \mathbb{R}_{>0}$ is open, and if $\ast \in U$ then there exist $\epsilon >0$ such that $X \times (0, \epsilon) \subset U$;
		\item the stratification of $C(X)$ is given by the map $\overline{\rho} \colon X \to P \sqcup \left\lbrace - \infty \right\rbrace$ defined by $\overline{\rho}(\ast) = - \infty$ and $\overline{\rho}(x,t) = \rho(x)$ for $(x,t) \in X \times \mathbb{R}_{>0}$.
	\end{enumerate}
\end{definition}

\begin{definition}[$\!\!${\cite[Definition A.5.5]{HA}}]\label{conicallly_strat_def}
	Let $(X,P)$ be a stratified space.
	We say that $(X,P)$ is conically stratified if for every $p \in P$ and $x \in X_p$, there exists a stratified space $(Y, P_{>p})$ and a topological space $Z$ such that there exists an open immersion $j \colon C(Y) \times Z \to X$ for which $x$ lies in the image of $j$ and the following diagram commutes:
	\[\begin{tikzcd}[sep=small]
	{C(Y) \times Z} && X \\
	{C(Y)} \\
	{P_{\geq p}} && P
	\arrow["j", hook, from=1-1, to=1-3]
	\arrow["{\pi_1}"', from=1-1, to=2-1]
	\arrow["{\rho_X}", from=1-3, to=3-3]
	\arrow["{\overline{\rho}_Y}"', from=2-1, to=3-1]
	\arrow[from=3-1, to=3-3]
\end{tikzcd}\]
\end{definition}

\begin{definition}[{$\!\!$\cite[Definition A.6.2]{HA}}]
    Let $(X,P)$ be a stratified space. 
    We define the exit path category $\on{Exit}(X,P)$ as the simplicial subset of $\on{Sing}(X)$ formed by the simplices $\sigma\colon  | \Delta^n | \to X$ such that there exists a chain $p_1 \leq \cdots \leq p_n$ of elements of $P$ such that for every $(t_0,\ldots ,t_i,0,\ldots,0) \in | \Delta^n |$ with $t_i > 0$, we have $\sigma(t_0,\ldots,t_i,0,\ldots,0) \in X_{p_i}$
\end{definition}

\begin{theorem}[{$\!\!$\cite[Theorem A.6.4]{HA}}]\label{thm:exit_paths}
    Let $(X,P)$ be a conically stratified space.
    Then $\on{Exit}(X,P)$ is an $\infty$-category.
\end{theorem}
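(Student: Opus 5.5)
We will verify directly that the simplicial set $\on{Exit}(X,P)$ satisfies the inner horn extension property: for every $0<i<n$, every map $\Lambda^n_i\to \on{Exit}(X,P)$ extends to $\Delta^n$. Since $\on{Exit}(X,P)\subset\on{Sing}(X)$ and $\on{Sing}(X)$ is a Kan complex, any horn $\sigma_0\colon|\Lambda^n_i|\to X$ admits \emph{some} filler $|\Delta^n|\to X$. The whole problem is therefore to choose the filler so that it respects the stratification condition defining exit paths.

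First, we reduce to a local statement. Subdividing $|\Delta^n|$ barycentrically and using that $\on{Exit}$ is defined by a pointwise condition, we can reduce, via a Lebesgue number argument, to the case where the image of the horn lies in a single conical chart $C(Y)\times Z$ from \Cref{conicallly_strat_def}. This requires knowing that exit-path simplices compose well with subdivision, i.e.\ that the relevant ``filtered'' simplicial sets are stable under refinement. We would phrase this as the fact that $\on{Exit}$ is the simplicial set of stratified maps $|\Delta^n|\to X$, where $|\Delta^n|$ carries its standard stratification over $[n]$ composed with a monotone map $[n]\to P$. Stratified homotopy-theoretic arguments then apply.

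Second, in a chart $C(Y)\times Z$, the factor $Z$ is unstratified and harmless, since we can fill in $Z$ using $\on{Sing}(Z)$ and take the product. For the cone we exploit its contraction: the cone coordinate lets us radially push simplices toward the cone point. Concretely, we would show that the inclusion $|\Lambda^n_i|\hookrightarrow|\Delta^n|$ admits a stratum-preserving retraction-up-to-reparametrization. More precisely, there is a stratified map $r\colon|\Delta^n|\to|\Lambda^n_i|$ over $[n]$ that restricts to the identity on the horn; for $0<i<n$ this exists because the inner vertex $i$ can be ``collapsed'' compatibly with the stratification by $[n]$. This is exactly where the inner condition $0<i<n$ enters. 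Composing $\sigma_0\circ r$ then gives the filler directly, with no chart needed, as long as the map $[n]\to P$ is constant on $\{0,\dots,n\}$ away from jumps. When the strata jump, however, $r$ must be a stratified map between stratified simplices, and this is the delicate part.

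The main obstacle is the second step: constructing fillers that respect strata when the horn crosses several strata. Our first attempt would be to induct on the depth of $P$, using the cone structure. A simplex with initial vertex in the cone-point stratum $\{-\infty\}$ and the rest in $C(Y)\setminus\{\ast\}\cong Y\times\mathbb{R}_{>0}$ corresponds, via the radial coordinate, to a simplex in $\on{Exit}(Y)$ together with a ``cone'' direction. This identifies $\on{Exit}(C(Y))$ with the right cone $\on{Exit}(Y)^{\triangleleft}$ up to categorical equivalence. Since the cone of an $\infty$-category is an $\infty$-category, the inductive hypothesis for $Y$ (of smaller depth) yields inner horn filling for $C(Y)$. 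The technical content is to make this identification an honest statement about horn filling rather than only up to equivalence, which we would handle via the stratified retraction above. The local-to-global gluing also needs care; we would carry it out with a Seifert–van Kampen style argument for the exit-path simplicial sets.
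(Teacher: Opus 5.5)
The paper does not prove this statement; it quotes \cite[Theorem A.6.4]{HA}. I therefore judge your argument on its own terms. There is a genuine gap at its core: the stratified retraction $r\colon |\Delta^n|\to|\Lambda^n_i|$ over $[n]$ does not exist once the chain of strata jumps, and the inner condition $0<i<n$ does not help. Already for $n=2$, $i=1$ with three distinct strata, the points $(1-\epsilon,0,\epsilon)$ lie in the stratum $\{t_2\neq 0\}$ of $|\Delta^2|$ and converge to $v_0$. The corresponding stratum of $|\Lambda^2_1|$ is the half-open edge $(v_1,v_2]$, whose closure misses $v_0=r(v_0)$, so $r$ cannot be continuous at $v_0$.

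More conceptually, if such an $r$ existed, then $\sigma_0\circ r$ would fill every inner horn of $\on{Exit}(X,P)$ for an \emph{arbitrary} stratified space. That is false: take $X=|\Lambda^2_1|$ with its induced $[2]$-stratification, which is not conically stratified at $v_1$. By the same closure argument, the tautological horn in $X$ has no exit-path filler. So the conical charts must be used to \emph{construct} the filler. Your proposal delegates exactly this to $r$, and as written it only covers the trivial case where all vertices lie in one stratum, where an ordinary retraction suffices.

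The surrounding steps do not repair this.
\begin{enumerate}
\item The Lebesgue-number reduction controls a map you already have. It cannot force the not-yet-constructed filler into a single chart. Moving the horn into a chart through exit paths, or extending cell by cell over a subdivision, is itself a stratified filling problem, so the local-to-global step needs a real argument.
\item An identification $\on{Exit}(C(Y))\simeq\on{Exit}(Y)^{\triangleleft}$ up to categorical equivalence says nothing about inner horn filling in the simplicial set $\on{Exit}(C(Y))$. That property is not invariant under Joyal equivalence: $\Lambda^2_1\hookrightarrow\Delta^2$ is a categorical equivalence, yet $\Lambda^2_1$ is not an $\infty$-category.
\item The van Kampen results of \cite{AFR19,HPT24} are likewise statements about $\infty$-categories up to equivalence. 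They take the $\infty$-categorical structure as input, so invoking them here is circular.
\item An induction on the depth of $Y$ is not available in this generality. The definition only supplies $Y$ as a stratified space, not as a conically stratified one of smaller depth. The natural induction is on the number of distinct elements of the chain $p_0\le\cdots\le p_n$, carried out inside the open, hence conically stratified, subspace $Z\times Y\times\mathbb{R}_{>0}$.
\end{enumerate}
What is missing is a construction, in a chart around the image of $v_0$, that uses the radial coordinate of $C(Y)$ to reduce the filling problem to one involving only strata strictly above $p_0$, together with a genuine local-to-global reduction. That is the real content of the theorem, and it is absent from your proposal.
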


\begin{notation}
    In the situations where $\on{Exit}(X,P)$ is an $\infty$-category, we will denote it by $\Pi_\infty(X,P)$ and refer to it as the stratified homotopy type of $(X,P)$.
\end{notation}

	For later reference, we recall the following finiteness conditions on conically stratified spaces.
\begin{definition}[{$\!\!$\cite[Definition 2.2.1]{PT22}}]\label{finite_strat}
	Let $(X,P)$ be a conically stratified space with locally weakly contractible strata.
\begin{enumerate}[(1)]
    \item We say that $(X,P)$ is categorically compact if $\Pi_\infty(X,P)$ is a compact object in $\on{Cat}_\infty$.
    \item We say that $(X,P)$ is locally categorically compact if for every $p \in P$ and every $x\in X_p$, there exists a conical chart of the form $Z\times \on{C}(Y)$ containing $x$ such that $(Y,P_{>p})$ is categorically compact and $Z$ is weakly contractible and locally weakly contractible.
\end{enumerate}
\end{definition}

\begin{definition}
	Let $(X,P)$ be a stratified space and let $\E \in \on{Cat}_\infty$. 
	For $p \in P$, denote by $i_p \colon X_p \to X$ the inclusion.
	An object $F \in \on{Sh}(X; \E)$ is constructible if $i_p^\ast F$ is locally constant for all $p \in P$.
	We denote by
	\[
	\on{Cons}_P(X; \E) \subset \on{Sh}(X ; \E)
	\]
	the full subcategory spanned by constructible sheaves. In the case $\E=\on{Mod}_k$, we also write $\on{Cons}_P(X)$ for $\on{Cons}_P(X;\on{Mod}_k)$.
\end{definition}

\begin{definition}
	Let $(X,P)$ be a stratified space and let $\E \in \on{Cat}_\infty$.
	We let 
\[
\on{Cons}_{P, \omega}(X; \E) \subset \on{Cons}_P(X; \E)
\]
be the full subcategory spanned by constructible sheaves  with compact stalks.
\end{definition}

	The following result is a natural generalization of the monodromy equivalence to the stratified setting. 
	We will refer to it as the exodromy equivalence.
	
\begin{theorem}[{$\!\!$\cite[Theorem 5.4.1 \& Remark 5.4.2]{PT22}}]\label{thm:exodromy_equivalence}
    Let $(X,P)$ be a conically stratified space with locally weakly contractible strata and let $\E$ be a compactly generated $\infty$-category.
    Then we have an equivalence
    \[
    \on{Cons}_P(X; \E) \simeq \on{Fun}(\Pi_\infty(X,P), \E).
    \]
\end{theorem}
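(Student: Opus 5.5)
The plan is to deduce the statement from the classical monodromy equivalence for locally constant sheaves and the standard presentation of constructible sheaves on a conically stratified space. There are three steps: reduce to $\E=\mathcal{S}$ (spaces), prove the space-valued statement by descent, and then tensor back up.

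\emph{Reduction to spaces.} Since $\E$ is compactly generated, and the strata are locally weakly contractible, we use $\on{Sh}(X;\E)\simeq \on{Sh}(X;\mathcal{S})\otimes\E$ (hypersheaves, as in the Notation above). Constructibility is detected stalkwise along the strata $i_p^\ast$, and $i_p^\ast$ commutes with the tensor product. It therefore suffices to identify $\on{Cons}_P(X;\mathcal{S})$ with $\on{Fun}(\Pi_\infty(X,P),\mathcal{S})$ compatibly with these operations. We then tensor with $\E$, using $\on{Fun}(K,\mathcal{S})\otimes\E\simeq\on{Fun}(K,\E)$ for a small $\infty$-category $K$. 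One must also check that constructibility is preserved under this tensoring. Here the compact generation of $\E$ is used: test against $\on{Map}(e,-)$ for compact $e$, which commutes with pullback to strata and with filtered colimits.

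\emph{Space-valued case.} We would follow Lurie, \cite[Theorem A.9.3]{HA}. First define the functor
\[
\Psi\colon \on{Cons}_P(X;\mathcal{S})\to\on{Fun}(\Pi_\infty(X,P),\mathcal{S}).
\]
It sends a sheaf to its stalks. An exit path $x\to y$ induces the map $F_x\to F_y$ by choosing a small conical neighborhood of $x$ containing the path, so that $F_x\simeq F(U)\to F_y$. Both sides satisfy descent in $X$. For the target, this amounts to $U\mapsto \Pi_\infty(U,P)$ converting hypercovers into colimits in $\on{Cat}_\infty$, which is a Seifert–van Kampen statement for exit paths. Hence it suffices to check that $\Psi$ is an equivalence locally, on basic opens $C(Y)\times Z$ with $Z$ weakly contractible. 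There $\Pi_\infty(C(Y)\times Z)\simeq \Pi_\infty(Y)^{\triangleleft}$. We then argue by induction on the depth of $P$, which is possible since $(Y,P_{>p})$ has smaller depth. A constructible sheaf on the cone is determined by its restriction to $Y\times\mathbb{R}_{>0}$, which is handled by induction, together with its stalk at the cone point. That stalk equals the global sections, namely the limit over $\Pi_\infty(Y)$. This matches functors out of the left cone $\Pi_\infty(Y)^{\triangleleft}$. The depth-zero base case is the classical monodromy equivalence $\on{Loc}(Z)\simeq\on{Fun}(\on{Sing}Z,\mathcal{S})$ for locally weakly contractible $Z$.

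\emph{Main obstacle.} The hard step is the descent/van Kampen property for $\Pi_\infty(-,P)$ with respect to hypercovers of $X$ by conical charts. This requires \Cref{thm:exit_paths} and a careful analysis of the simplicial subsets $\on{Exit}$. I would first try to show it by exhibiting $\on{Exit}(X,P)$ as a colimit of $\on{Exit}(U)$ over a basis closed under finite intersections, using a subdivision argument on singular simplices as in the proof of the ordinary van Kampen theorem. Hypersheaves rather than sheaves are needed here, to avoid non-hypercomplete phenomena when passing from covers to hypercovers.
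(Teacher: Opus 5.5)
The paper does not prove this statement; it quotes it from \cite{PT22}. Measured against the statement as quoted, your argument has a genuine gap in the local step. You deduce exodromy on a chart $C(Y)\times Z$ around $x\in X_p$ from exodromy on $Y\times\mathbb{R}_{>0}$, which is stratified over $P_{>p}$. By descent, that in turn reduces to charts around points of strata $q>p$, whose links are stratified over $P_{>q}$, and so on. This recursion is a well-founded induction only if $P$ has no infinite ascending chains. The theorem assumes nothing about $P$, and ``depth'' is only defined in the paper for conically smooth spaces. This is exactly why \cite[Theorem A.9.3]{HA}, which you propose to follow, assumes that $P$ satisfies the ascending chain condition and that $X$ is paracompact. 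The statement quoted from \cite{PT22} carries neither hypothesis. Hypersheaves and hyperdescent may dispose of paracompactness, but they do nothing for the chain condition. As written, you therefore prove the theorem only when $P$ satisfies the ascending chain condition. That would cover this paper's applications, where $P$ is finite by \Cref{rem:finite_poset}, but it is not the stated result. To reach the full statement you need an argument that is not an induction over $P$. One route: prove directly that $F(U)\to F_x$ is an equivalence for $F$ constructible and $U\ni x$ a small conical chart. This is homotopy invariance of $F|_{Y\times\mathbb{R}_{>0}}$ along the cone coordinate, obtainable e.g.\ from proper base change, since $F$ is locally constant on each fibre $\{y\}\times\mathbb{R}_{>0}$. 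Then argue stalkwise, using that stalks are jointly conservative on hypersheaves. By contrast, the van Kampen property you single out as the main obstacle is available in the literature; it is what the proof of \Cref{lm:pushout} invokes.

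Two further steps need repair. First, your $\Psi$ is not a construction of an $\infty$-functor. It depends on choices of neighbourhoods with no coherence data, and it presupposes $F_x\simeq F(U)$, which is the key lemma above rather than a formality. It is better to build the comparison in the other direction, $G\mapsto\big(U\mapsto\lim_{\Pi_\infty(U,P)}G\big)$. This is manifestly functorial, and its sheaf property is exactly the van Kampen input. Second, in the reduction from $\E$ to $\mathcal{S}$: tensoring with $\E$ does not preserve fully faithful functors in general. You must therefore show that $\Cons_P(X;\mathcal{S})\otimes\E\to\on{Sh}^{\mathrm{hyp}}(X;\mathcal{S})\otimes\E$ is fully faithful. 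For compactly generated $\E$ this follows from $\C\otimes\E\simeq\on{Fun}^{\mathrm{lex}}((\E^{\omega})^{op},\C)$. Under this description, $f\otimes\E$ is post-composition with $f$ for any left exact $f$ in $\mathcal{P}r^L$, such as the inclusion. The same description, together with left exactness of $i_p^\ast$, gives $\on{Map}(e,i_p^\ast F)\simeq i_p^\ast\on{Map}(e,F)$; commutation with filtered colimits alone does not, since it ignores sheafification. Finally, local constancy on a locally weakly contractible stratum can be tested on the jointly conservative family $\on{Map}(e,-)$. This is because it amounts to restriction maps between nested weakly contractible opens being equivalences.
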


\begin{remark}
    A $2$-categorical version of the exodromy equivalence first appeared in \cite{Tr09}, where it is presented as a generalization of an unpublished $1$-categorical version due to MacPherson.
    Lurie proved \Cref{thm:exodromy_equivalence} for $\E = \mathcal{S}$ in \cite[Appendix A]{HA}. 
    Other and more general versions of the exodromy equivalence appeared in \cite{HPT24, CJ24, Jan24}.
\end{remark}

\begin{proposition}[{$\!\!$\cite[Corollary 6.7.2]{PT22}}]\label{prop:recollements}
	Let $(X,P)$ be a conically stratified space with locally weakly contractible strata and let $S \subset P$ be a closed subset and let $U \coloneqq P \setminus S$.
	Consider the corresponding open and closed immersions $j \colon X_U \hookrightarrow X$ and $i \colon X_S \to X$.
	Let $\E$ be a presentable $\infty$-category.
	The fully faithful functors
\[
i_\ast \colon \on{Cons}_S(X_S; \E) \rightarrow \on{Cons}_P(X; \E) \leftarrow \on{Cons}_U(X_U; \E) \colon j_\ast
\]
	extend to a recollement of $\on{Cons}_P(X; \E)$ into $\on{Cons}_S(X_S; \E)$ and $\on{Cons}_U(X_U; \E)$.
\end{proposition}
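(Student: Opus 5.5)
Via the exodromy equivalence (\Cref{thm:exodromy_equivalence}) I will replace all three categories by functor categories on exit-path $\infty$-categories and check the recollement axioms directly. Write $\Pi\coloneqq\Pi_\infty(X,P)$, $\Pi_S\coloneqq\Pi_\infty(X_S,S)$ and $\Pi_U\coloneqq\Pi_\infty(X_U,U)$. The first step is to establish three facts about these exit-path categories.
\begin{enumerate}[(a)]
\item The inclusions $\Pi_S\hookrightarrow\Pi$ and $\Pi_U\hookrightarrow\Pi$ are fully faithful. This is plausible because an exit path can only move to larger strata, so a path starting in $X_U$ stays in $X_U$ ($U$ is upward closed), and a path starting and ending in $X_S$ stays in $X_S$.
\item There are no morphisms from $\Pi_U$ to $\Pi_S$.
\item Every object of $\Pi$ lies in exactly one of $\Pi_S$ or $\Pi_U$.
\end{enumerate}
For (a) I would argue on simplices: a simplex $\sigma$ of $\on{Exit}(X,P)$ whose vertices lie in $X_S$ (resp.\ $X_U$) lands entirely in $X_S$ (resp.\ $X_U$), because the stratum function along $\sigma$ is monotone and $S$ is downward closed while $U$ is upward closed. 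So the simplicial subsets $\on{Exit}(X_S,S)$ and $\on{Exit}(X_U,U)$ are full. Together, (a)--(c) say that $\Pi$ is a left-lax join $\Pi_S\star_{\mathrm{lax}}\Pi_U$ glued along the bimodule $\on{Map}_\Pi(s,u)$.

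Next, I identify the functors. Under exodromy, $i^\ast$ and $j^\ast$ become restriction along $\Pi_S\to\Pi$ and $\Pi_U\to\Pi$. Restriction has both adjoints given by Kan extensions, which exist because $\E$ is presentable. Full faithfulness of the inclusions gives fully faithful $i_\ast=\mathrm{Ran}$, $i_!=\mathrm{Lan}$, $j_\ast=\mathrm{Ran}$ and $j_!=\mathrm{Lan}$.

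Then I check the recollement axioms.
\begin{itemize}
\item Vanishing $j^\ast i_\ast\simeq 0$: for $u\in\Pi_U$, $(\mathrm{Ran}\,F)(u)=\lim_{u\to s,\,s\in\Pi_S}F(s)$ is a limit over the empty category by (b), hence terminal.
\item Joint conservativity of $(i^\ast,j^\ast)$: this is (c), since equivalences in functor categories are detected objectwise.
\end{itemize}
The remaining adjoints come from the existence of both Kan extensions. For constructible sheaves I need the actual sheaf-theoretic $i_\ast$ and $j_\ast$ to preserve constructibility and to match the Kan extensions under exodromy. I would deduce this from the naturality of the exodromy equivalence with respect to pullback along $i$ and $j$ (as in \cite{PT22}) and then pass to right adjoints, which are unique.

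The main obstacle is this compatibility of exodromy with $i_\ast$ and $j_\ast$, since it is not formal that sheaf pushforward preserves constructibility. My workaround is to avoid it: define $i_\ast$ and $j_\ast$ on $\on{Cons}$ as the right adjoints of the restrictions $i^\ast$ and $j^\ast$. Naturality of exodromy under pullback is the only input needed, and it holds because the equivalence is induced by stalks along the conical charts. With that definition, the statement follows from the Kan extension computation above.
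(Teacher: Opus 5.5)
Your combinatorial input is right. Facts (a)–(c) hold for exactly the reason you give: stratum labels along an exit simplex are monotone, $S$ is downward closed and $U$ is upward closed. Restriction and right Kan extension along $\Pi_S,\Pi_U\hookrightarrow\Pi$ do form a recollement of $\on{Fun}(\Pi,\E)$, with $j^\ast\mathrm{Ran}\simeq\ast$ because $\Pi_{u/}\times_\Pi\Pi_S=\emptyset$.

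The gap is the step you explicitly sidestep. The proposition is about the sheaf-theoretic direct images, and part of its content is that $i_\ast$ and $j_\ast$ carry constructible sheaves to constructible sheaves. The paper relies on this afterwards, for instance in \Cref{perverse_t_structure}, to restrict the perverse $t$-structure (defined via the sheaf-theoretic $i_p^\ast$ and $i_p^!$) to $\on{Cons}_P(X;\on{Mod}_A)$. Your first attempt (``pass to right adjoints, which are unique'') does not give this. The functor $j_\ast$ is right adjoint to $j^\ast$ on all of $\on{Sh}$, and that adjunction restricts to the full subcategories $\on{Cons}$ only if $j_\ast$ already preserves constructibility. Your workaround of \emph{defining} $j_\ast$ on $\on{Cons}$ as the right adjoint of $j^\ast|_{\on{Cons}}$ therefore proves a different statement. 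That functor coincides with the sheaf-theoretic $j_\ast$ exactly when the latter preserves constructibility, and the same applies to the resulting $i^!$ when $\E$ is stable.

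For $i_\ast$ the missing input is elementary. We have $i^\ast i_\ast\simeq\on{id}$, and $j^\ast i_\ast F$ is the final sheaf, so each $i_p^\ast i_\ast F$ is locally constant.

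For $j_\ast$ you need an actual geometric argument using conical charts. Take $p\in S$, $x\in X_p$, and a chart $C(Y)\times Z$ at $x$. Then $(C_{<\epsilon}(Y)\times Z')\cap X_U\cong Y_U\times(0,\epsilon)\times Z'$, where $Y_U$ is the part of $Y$ lying over $U$. By exodromy on $X_U$, sections of a constructible hypersheaf over an open $V\subset X_U$ satisfy $\Gamma(V;F)\simeq\lim_{\Pi_\infty(V)}F$. Moreover $\Pi_\infty(Y_U\times(0,\epsilon)\times Z')\simeq\Pi_\infty(Y_U)$ for weakly contractible $Z'$. Hence all sections $\Gamma(C_{<\epsilon}(Y)\times Z';j_\ast F)$ are identified with $\lim_{\Pi_\infty(Y_U)}F$, compatibly with restriction. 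It follows that $i_p^\ast j_\ast F$ is locally constant, i.e.\ $j_\ast F$ is constructible. Only then does uniqueness of adjoints identify the sheaf-theoretic $j_\ast$ with $\mathrm{Ran}$, after which your formal argument proves the proposition as stated.

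Separately, \Cref{thm:exodromy_equivalence} as recorded in the paper assumes $\E$ compactly generated, while the proposition allows any presentable $\E$. You must invoke a version of exodromy that covers that generality.
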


\subsection{Conically smooth stratified spaces}\label{section:conically_smooth}

	A conically smooth stratified space, as introduced in \cite[Def.~3.2.21]{AFT17}, consists of a $C^0$-stratified space $\rho\colon X\to P$ together with a choice of conically smooth atlas.
	The notion of a $C^0$-stratified space is a refinement of the notion of a conically stratified space, satisfying that the strata are topological manifolds.
	We refer to \cite{AFT17} for more details.

	We note that any smooth manifold equipped with the trivial stratification is conically smooth, see \cite[Ex.~3.2.15]{AFT17}.
	The strata of a conically smooth stratified space are smooth manifolds, see \cite[Lem.~3.4.5]{AFT17}.
	We can thus assign to each conically smooth stratified space $X$ a dimension $\on{dim}(X)$, given by the maximal dimension of the strata. 

\begin{definition}
	Let $X$ be a conically smooth stratified space. 
\begin{enumerate}
    \item The depth $\on{depth}(Y)$ of a stratum $Y$ is defined as $\on{dim}(X)-\on{dim}(Y)$.
    \item The depth of $X$ is defined as the maximum depth of the strata of $X$.
\end{enumerate}
\end{definition}

	The notion of a conically smooth stratified space can be further refined to a conically smooth stratified space with $\langle m\rangle$-corners: 

\begin{definition}
	Let $m\geq 1$ and $[m]=\{1,\dots,m\}$. Let $\mathcal{P}([m])$ be the power set of $[m]$, considered as a poset via reverse inclusion.
	A conically smooth stratified space with $\langle m\rangle$-corners consists of a conically smooth stratified space $\rho\colon X\to P$ together with a morphism of posets $P\to \mathcal{P}([m])$, such that the composite $\rho'\colon X\to \mathcal{P}([m])$ is topologically coCartesian in the sense of \cite[Def.~7.2.2]{AFT17}, meaning that for $T\in \mathcal{P}([m])$ the inclusion $(\rho')^{-1}(T)\subset X$ extends to a conically smooth open embedding 
\[
\mathbb{R}_{\geq 0}^{\times |T|}\times (\rho')^{-1}(T)\subset X\,.
\]
	A conically smooth stratified space with $\langle 1 \rangle$-corners is also called a conically smooth stratified space with boundary.
\end{definition}

\begin{definition}
	Given a conically smooth stratified space with $\langle m\rangle$-corners $\rho'\colon X\xrightarrow{\rho} P\to \mathcal{P}([m])$, we denote by $X^\circ=(\rho')^{-1}(\emptyset)$ the interior of $X$, which inherits the structure of a conically smooth stratified space by restricting $\rho$. 
\end{definition}

\begin{remark}\label{rem:finite_poset}
	Let $(X,P)$ be a conically smooth stratified space. Then $P$ must be a locally finite poset, and finite if $X$ is compact. This is proven by Volpe in \cite[Lemma 3.1]{Vol24}.
\end{remark}

\begin{notation}\label{notation:Cons_corners}
	Let $(X,P)$ be a conically smooth stratified space with $\langle m \rangle$-corners and $\E$ be an $\infty$-category.
	Exceptionally, we denote the category of constructible sheaves on $X$ with coefficient in $\E$ by $\cons_{P, \partial}(X;\E)$.
\end{notation}

\begin{warning}\label{warning:notation_Cons_corners}
	In the setting of \Cref{notation:Cons_corners}, the notation $\cons_{P}(X;\E)$ will be reserved for a full subcategory of $\cons_{P, \partial}(X;\E)$, that we introduce next.
\end{warning}

	Let $(X,P)$ a conically smooth stratified space with $\langle m \rangle$-corners.
	Consider the set of arrows $W_\partial \subset \Pi_\infty(X,P)_1$ defined as
\begin{align*}
W_\partial \coloneqq \left\lbrace \gamma \in \Pi_\infty(X,P)_1 \ \left| \ 
\begin{aligned}
& \exists U = \mathbb{R}^i \times (\mathbb{R}_{\geq 0})^j \times \on{C}(Z) \ \text{local chart such that} \ \on{Im}(\gamma)\subset U, \ \text{and} \\
& \gamma=(y, \gamma', w) \ \text{where} \ y \in \mathbb{R}^i, w \in \on{C}(Z), \gamma' \in \Pi_\infty((\mathbb{R}_{\geq 0})^j, \mathcal{P}([j]))_1
\end{aligned}
\right.
\right\rbrace
\ .
\end{align*}

\begin{definition}\label{def:cons_P_with_corners}
	Let $(X,P)$ be a conically smooth stratified space with $\langle m \rangle$-corners and $\E$ be a compactly generated $\infty$-category.
	We let
\[
\cons_{P}(X;\E) \subset \cons_{P, \partial}(X;\E)
\]
be the full subcategory corresponding to $\on{Fun}(\Pi_\infty[W_\partial^{-1}], \E)$ under the equivalence of \Cref{thm:exodromy_equivalence}.
\end{definition}

\begin{remark}\label{rem:corners_or_interior}
Let $(X,P)$ be a conically smooth stratification with $\langle m\rangle$-corners. By definition, the $\infty$-category $\cons_{P}(X)$ is the full subcategory of $\cons_{P, \partial}(X)$ spanned by those constructible sheaves that are locally constant in the normal directions emanating from the corners.
	Using the existence of collar neighborhoods \cite[Lemma 8.2.1]{AFT17}, it is easy to see that $\cons_{P}(X)$ is equivalent to the $\infty$-category $\on{Cons}_P(X^\circ)$ of constructible sheaves on the interior stratified space $X^\circ$.
\end{remark}

\begin{definition}
	Let $m\geq 1$.
	A smooth manifold with $\langle m\rangle$-corners (or smooth $\langle m\rangle$-manifold for short) consists of a conically smooth stratified space with $\langle m\rangle$-corners $\rho'\colon X\xrightarrow{\rho} P\xrightarrow{c} \mathcal{P}([m])$, where $c$ is a partial order preserving bijection. 
\end{definition}

	For $T\in \mathcal{P}([m])$, let $\mathcal{P}([m])_{\leq T}\subset \mathcal{P}([m])$ be the sub-poset with elements $\{T'\in \mathcal{P}([m])|T'\leq T\}$.

\begin{definition}
	Given a smooth manifold with $\langle m\rangle$-corners $\rho\colon X\to \mathcal{P}([m])$, the closed face associated with $T\in \mathcal{P}([m])$ is the conically smooth stratified space with $\langle m-|T|\rangle$-corners \[\rho^{-1}(\mathcal{P}([m])_{\leq T})\to \mathcal{P}([m])_{\leq T}\simeq \mathcal{P}([m-|T|])\,.\]
	If $T=\{1,\dots,m\}\backslash \{i\}$, we also write $\partial_i X$ for the associated face.
\end{definition}

	The faces of a smooth manifold $X$ with $\langle m\rangle$-corners are thus again smooth manifolds with corners.
	Furthermore, the topological space underlying $X$ inherits the structure of a smooth manifold with boundary given by the union of all faces.

\begin{remark}
	Given a smooth manifold with $\langle m\rangle$-corners $X$, we have that $\partial_i X\cap \partial_j X$ is a face of both $\partial_iX$ and $\partial_j X$.
	Thus, a conically smooth manifold with $\langle m\rangle$-corners gives rise to an $\langle m\rangle$-manifold in the sense of \cite{Jan68,Lau00}.
	This is also noted in \cite[Ex.~7.2.12]{AFT17}.
\end{remark}

We proceed with recalling the unzippings of conically smooth stratified spaces with corners.

\begin{definition}[{$\!\!$\cite[Def.~7.3.2]{AFT17}}]
	Let $X$ be a conically smooth stratified space with $\langle m\rangle$-corners whose interior is of depth $n$. The unzipping $\on{Unzip}_n(X)$ is defined as the terminal object of the $1$-category $\on{Res}_{\bf X}^n$ with 
\begin{itemize}
    \item objects given by conically smooth stratified spaces $Y$ with $\langle m+1\rangle$-corners, together with a conically smooth map $p\colon Y\to X$, such that the square
    \[
    \begin{tikzcd}
Y \arrow[d] \arrow[r, "p"]                                  & X \arrow[d]        \\
{\mathcal{P}([m+1])} \arrow[r, "{(\mhyphen)\cap [m]}"] & {\mathcal{P}([m])}
\end{tikzcd}
    \]
    commutes and $\partial_{m+1}Y$ agrees with the inverse image under $p$ of the largest stratified subspace with corners of $X$ whose interior is the deepest stratum $X^{\circ,n}$ of $X^\circ$.
    \item morphisms given by the conically smooth maps $Y\to Y'$ over $\mathcal{P}([m+1])\times X$.  
\end{itemize}
\end{definition}

\begin{definition}\label{def:iterated_unzip}
	Let $X$ be a conically smooth stratified space of depth $n$ with $\langle m\rangle$-corners.
\begin{enumerate}[(1)]
    \item  We define the $k$-th unzipping as 
    \[ \on{Unzip}^{[k,n]}(X)\coloneqq \on{Unzip}_k\circ \dots \circ \on{Unzip}_n(X) \,.\]
    It comes with a conically smooth map $\pi_k\colon \on{Unzip}^{[k,n]}(X)\to X$. We set $\on{Unzip}^{[n+1,n]}(X)=X$.

    We will denote the depth $(k-1)$-stratum of $\on{Unzip}^{[k,n]}(X)$ (this is the lowest dimensional stratum) by $\widetilde{X^{k-1}}\subset \on{Unzip}^{[k,n]}(X)$.
    \item We define the $k$-th link $\on{Link}_k(X)$ as $\partial_{m+n}\on{Unzip}^{[k,n]}(X)$. Note that $\on{Link}_k(X)$ is the inverse image of $\widetilde{X^{k}}$ under the map $\on{Unzip}^{[k,n]}(X)\to \on{Unzip}^{[k+1,n]}(X)$. 
    
    The restriction of $\pi_k$ defines a map $\tilde{\pi}_k\colon \on{Link}_k(X)\to \widetilde{X^{k}}$.
\end{enumerate}    
\end{definition}

\begin{proposition}[{\!\!\cite[Prop.~8.2.3]{AFT17}}]\label{prop:tubular_neighborhood}
    Let $X$ be a conically smooth stratified space of depth $n$ with $\langle m\rangle$-corners.
    For any $1\leq k\leq n$, the deepest stratum $\widetilde{X^{k-1}} \subset \on{Unzip}^{[k,n]}(X)$ admits a tubular neighborhood $T^\circ(\widetilde{X^{k}})$, given by a conically smooth open embedding 
    \[ T^\circ(\widetilde{X^{k}})=C(\tilde{\pi}_k)\hookrightarrow \on{Unzip}^{[k,n]}(X)\,.\]
\end{proposition}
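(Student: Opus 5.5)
The statement is essentially a citation of \cite[Prop.~8.2.3]{AFT17}, so the plan is to reduce to that result by checking that the hypotheses match after iterated unzipping. I will proceed by downward induction on $k$, from $k=n$ to $k=1$, and track what $\on{Unzip}^{[k,n]}(X)$ looks like at each stage.

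First, I will show that $\on{Unzip}^{[k,n]}(X)$ is a conically smooth stratified space with $\langle m+n-k+1\rangle$-corners whose interior has depth $k-1$. For $k=n+1$ this is just $X$. For the inductive step, I will use the defining property of $\on{Unzip}_k$ from \cite[Def.~7.3.2]{AFT17}: it replaces the deepest interior stratum by a new boundary face. This face is $\partial_{m+n-k+1}$, which is the preimage of that stratum, and it adds one corner index. The depth therefore drops by one. Existence of the terminal object in $\on{Res}^k$ is \cite[Thm.~7.3.8]{AFT17}, and I will take it as given.

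Second, I will identify the deepest stratum $\widetilde{X^{k-1}}$ of $\on{Unzip}^{[k,n]}(X)$ with a conically smooth manifold with corners. It is the closed stratum of minimal dimension, and by \cite[Lem.~3.4.5]{AFT17} it is smooth. I will also identify the link $\on{Link}_k(X)\to \widetilde{X^{k}}$ as the restriction of the unzipping map to the new face. Here the content is that the face $\partial_{m+n-k+1}\on{Unzip}^{[k,n]}(X)$ is a fiber bundle over the stratum being unzipped, with fibers the links of that stratum. This is exactly the input for the open cylinder $C(\tilde\pi_k)$, the fiberwise open cone (mapping cylinder) of $\tilde\pi_k$.

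Third, I will apply \cite[Prop.~8.2.3]{AFT17} to the stratified space with corners $\on{Unzip}^{[k+1,n]}(X)$ and its deepest stratum. That result provides a conically smooth open embedding of the fiberwise open cone of the link map, $C(\tilde\pi_k)\hookrightarrow \on{Unzip}^{[k+1,n]}(X)$, extending the inclusion of the stratum. Which space the embedding lands in is a matter of indexing; per \Cref{def:iterated_unzip}, $\on{Link}_k$ sits over $\widetilde{X^k}\subset \on{Unzip}^{[k+1,n]}(X)$.

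The main obstacle I expect is the corners. \cite[Prop.~8.2.3]{AFT17} is stated for conically smooth spaces with boundary or corners, but compatibility of the tubular neighborhood with the existing faces needs checking. Concretely, the collar directions $\mathbb{R}_{\ge0}^{|T|}$ must commute with the cone coordinate. I would first try to handle this using the topologically coCartesian condition, which locally splits $X$ as $\mathbb{R}_{\ge0}^{|T|}\times(\rho')^{-1}(T)$. The plan is then to construct the tubular neighborhood on each $(\rho')^{-1}(T)$ compatibly, relying on uniqueness of collars \cite[Lemma 8.2.1]{AFT17}.
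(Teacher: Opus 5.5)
Your proposal is correct and takes essentially the same route as the paper, which gives no independent argument and simply invokes \cite[Prop.~8.2.3]{AFT17}; the corner and depth bookkeeping you add is harmless, though strictly the interior of $\on{Unzip}^{[k,n]}(X)$ has depth \emph{at most} $k-1$. You are also right that, with $\tilde\pi_k\colon \on{Link}_k(X)\to\widetilde{X^k}$ as defined, $C(\tilde\pi_k)$ is a tubular neighborhood of $\widetilde{X^k}$ inside $\on{Unzip}^{[k+1,n]}(X)$, which is how it is used in \Cref{lem:unzip_gluing}, so the statement as printed has its indices $k-1$ and $k$ off by one.
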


\begin{remark}\label{rem:tubular_neigh}
	In the setting of \Cref{prop:tubular_neighborhood}, we denote the closed version of the tubular neighborhood by $\on{T}(\widetilde{X^k})\subset T^\circ(\widetilde{X^{k}})$, defined as the cone of $\tilde{\pi}_k$ over $[0,1]\subset \mathbb{R}_{\geq 0}$.
	We note that $\on{T}(\widetilde{X^k})$ is canonically a compact smooth $\langle n+m-k+1\rangle$-manifold.
\end{remark}

\begin{lemma}[{\!\!\cite[Ex.~8.3.5]{AFT17}}]\label{lem:unzip_gluing}
    Let $X$ be a conically smooth stratified space of depth $n$ with $\langle m\rangle$-corners.
    Then there is a homeomorphism
    \begin{equation}\label{eq:Unzip_gluing}
    \on{Unzip}^{[k+1,n]}(X)\simeq \on{T}(\widetilde{X^{k}}) \underset{\mathbb{R}\times \on{Link}_k(X)}{\amalg} \on{Unzip}^{[k,n]}(X)\,.
    \end{equation}
\end{lemma}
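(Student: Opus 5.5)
The homeomorphism \eqref{eq:Unzip_gluing} will be obtained by decomposing $\on{Unzip}^{[k+1,n]}(X)$ along the tubular neighborhood of its deepest stratum $\widetilde{X^{k}}$. Write $Y\coloneqq \on{Unzip}^{[k+1,n]}(X)$, so that $\on{Unzip}^{[k,n]}(X)=\on{Unzip}_k(Y)$, and $\widetilde{X^k}\subset Y$ is the lowest-dimensional stratum. By \Cref{prop:tubular_neighborhood} there is a conically smooth open embedding $T^\circ(\widetilde{X^k})=C(\tilde\pi_k)\hookrightarrow Y$, with $\tilde\pi_k\colon \on{Link}_k(X)\to \widetilde{X^k}$. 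Since $\widetilde{X^k}$ is the deepest stratum, $Y$ is covered by the two opens $C(\tilde\pi_k)$ and $Y\setminus \widetilde{X^k}$. Their intersection is the punctured cone, i.e.\ $\on{Link}_k(X)\times \mathbb{R}_{>0}\cong \mathbb{R}\times\on{Link}_k(X)$.

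The next step is to identify the pieces with the closed objects in the statement. Restricting the cone coordinate to $[0,1]$ replaces $C(\tilde\pi_k)$ by the compact manifold $\on{T}(\widetilde{X^k})$ of \Cref{rem:tubular_neigh}, up to adding the open collar $\on{Link}_k(X)\times(1/2,1]$. For the other piece we use the universal property of the unzipping from \cite[Def.~7.3.2]{AFT17}. The blow-up $\on{Unzip}_k(Y)\to Y$ is an isomorphism away from $\widetilde{X^k}$, and the face $\partial_{m+n}\on{Unzip}_k(Y)=\on{Link}_k(X)$ maps onto $\widetilde{X^k}$. Its restriction over the tubular neighborhood is the closed cylinder $\on{Link}_k(X)\times[0,\infty)$, with $\on{Link}_k(X)\times\{0\}$ the new boundary face. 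Hence $\on{Unzip}_k(Y)$ is homeomorphic to $Y\setminus \widetilde{X^k}$ with a boundary collar attached, and in particular contains $Y\setminus \widetilde{X^k}$ as its interior relative to the new face.

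Gluing then gives the result. Both $\on{T}(\widetilde{X^k})$ and $\on{Unzip}_k(Y)$ contain a copy of the collar $\mathbb{R}\times\on{Link}_k(X)$, namely the open cylinder region $\on{Link}_k(X)\times(0,1)$ reparametrized. The two copies agree as subspaces of $Y$ under the maps to $Y$. The pushout of topological spaces along this open subspace therefore recovers $Y$ via the open cover above, using that a space is the pushout of two opens along their intersection. To make this literal, one may need to thicken $\on{T}(\widetilde{X^k})$ slightly (cone over $[0,1+\epsilon)$) or use collar neighborhoods \cite[Lemma 8.2.1]{AFT17} to absorb the boundaries into opens.

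The main obstacle is the identification of $\on{Unzip}_k(Y)$ over the tubular neighborhood with the cylinder on $\on{Link}_k(X)$, compatibly with the map $\pi_k$. I would obtain this from the fact that unzipping is local on $Y$ and commutes with open embeddings \cite[Sec.~7.3]{AFT17}. Under that locality, $\on{Unzip}_k(C(\tilde\pi_k))$ is the cylinder $\on{Link}_k(X)\times \mathbb{R}_{\geq0}$, essentially by the construction in \cite[Ex.~8.3.5]{AFT17}, which we cite.
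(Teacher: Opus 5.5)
Your sketch is the standard collar-gluing argument behind the citation. The paper gives no independent proof and simply invokes \cite[Ex.~8.3.5]{AFT17}, and you likewise defer to that reference (plus locality of unzipping) to identify the unzipping over the tubular neighborhood with $\on{Link}_k(X)\times\mathbb{R}_{\geq 0}$.

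The one point to state cleanly is the one you already flag. Since $\on{Unzip}^{[k,n]}(X)\to\on{Unzip}^{[k+1,n]}(X)$ collapses the face $\on{Link}_k(X)$ via $\tilde\pi_k$, the literal pushout of the closed pieces along an open $\mathbb{R}\times\on{Link}_k(X)$ is non-Hausdorff. The homeomorphism must therefore be read in one of two ways: as gluing the open pieces $T^\circ(\widetilde{X^k})$ and $\on{Unzip}^{[k,n]}(X)\setminus\on{Link}_k(X)\cong\on{Unzip}^{[k+1,n]}(X)\setminus\widetilde{X^k}$, or as a union of closed subspaces along a closed collar, as in \Cref{rem:lax_gluing_tub_neighb}.
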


\begin{remark}\label{rem:lax_gluing_tub_neighb}
	Let $X$ be a conically smooth stratified space of depth $n$ with $\langle m\rangle$-corners.
	There exists a homeomorphism $\on{Unzip}_n(X)\backslash \on{Link}_n(X) \simeq X\backslash X_n$ , see \cite[Prop.~7.3.10]{AFT17}. 

	By the existence of collar neighborhoods \cite[Lemma 8.2.1]{AFT17}, we thus find that $\on{Unzip}^{[k,n]}(X)$ is homeomorphic to a subspace of $X$.
	Similarly, we find that $\on{Unzip}^{[k,n]}(X)$ is homeomorphic to a subspace of $\on{Unzip}^{[k+1,n]}(X)$.
	We can choose these embeddings such that the arising diagram
\[
\begin{tikzcd}
{\on{Unzip}^{[k,n]}(X)} \arrow[rd, hook] \arrow[rr, hook] &   & {\on{Unzip}^{[k+1,n]}(X)} \arrow[ld, hook] \\
                                                          & X &                                           
\end{tikzcd}
\]
commutes for all $1\leq k\leq n$.
	Furthermore, we can embed $\on{T}(\widetilde{X^k})$ in $\on{Unzip}^{[k+1,n]}(X)$ such that the pushout in \eqref{eq:Unzip_gluing} becomes an equality of subspaces of $X$:
\[
\on{Unzip}^{[k+1,n]}(X) \simeq \on{T}(\widetilde{X^k}) \underset{\mathbb{R}_{\geq 0}\times \on{Link}_k(X)}{\bigcup} \on{Unzip}^{[k,n]}(X)\,.
\]
	By \cite[Lemma 7.3.5-(3)]{AFT17}, the above equivalence is compatible with restriction to the corners of $\on{Unzip}^{[k+1,n]}(X)$.
\end{remark}

\begin{remark}
    Let $X$ be a conically smooth stratified space of depth $n$ with $\langle m\rangle$-corners, such that the topological space underlying $X$ is a topological manifold with boundary.
    Then the topological space underlying each unzipping $\on{Unzip}^{[k,n]}(X)$ is also a topological manifold with boundary.
    This follows from the existence of collar neighborhoods \cite[Lemma 8.2.1]{AFT17} and the fact that each open subset of a topological manifold with boundary is a topological manifold with boundary. 

    An orientation of the topological manifold with boundary $X$ thus induces orientations on the manifolds with boundary $\on{Unzip}^{[k,n]}(X), \on{T}(\widetilde{X^k}), \mathbb{R}_{\geq 0}\times \on{Link}_k(X)$ for all $1\leq k\leq n$, and the chosen embeddings of these into $X$ respect the orientations.
\end{remark}

\begin{lemma}\label{lm:pushout}
	Let $(X,P)$ be a conically smooth stratified space with $\langle m \rangle$-corners and let $X_p$ be the deepest stratum in $X^\circ$, with depth $n$.
	The diagram of stratified spaces, with stratification induced by $X$,
\[\begin{tikzcd}[sep=small]
	{\on{Link}_n(X) \sqcup (\on{Link}_n(X) \times \mathbb{R}_{>0})} && {\on{Link}_n(X) \times \mathbb{R}_{\geq 0}} \\
	\\
	{X_p \sqcup (X \setminus \on{T}(X_p))} && X
	\arrow[from=1-1, to=1-3]
	\arrow[from=1-1, to=3-1]
	\arrow[from=1-3, to=3-3]
	\arrow[from=3-1, to=3-3]
\end{tikzcd}\]
	induces a pushout square in $\on{Cat}_\infty$
	\[\begin{tikzcd}[sep=small]
	{\Pi_\infty(\on{Link}_{n}(X), P \setminus \{p\}) \sqcup \Pi_\infty(\on{Link}_{p}(X), P \setminus \{p\})} && {\Pi_\infty(\on{Link}_{n}(X) \times \mathbb{R}_{\geq 0}, (P \setminus \{p\})^{\vartriangleright})} \\
	\\
	{\Pi_\infty(X_p, \{p\}) \sqcup \Pi_\infty(X \setminus \on{T}(X_p), P \setminus \{p\})} && {\Pi_\infty(X,P)}
	\arrow[from=1-1, to=1-3]
	\arrow[from=1-1, to=3-1]
	\arrow[from=1-3, to=3-3]
	\arrow[from=3-1, to=3-3]
\end{tikzcd}\,.\]
\end{lemma}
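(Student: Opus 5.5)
The plan is to present the stratified space $(X,P)$ as a homotopy pushout whose pieces are open in $X$, and then apply the van Kampen property of exit-path $\infty$-categories. Work with the open cover of $X$ given by $U = T^\circ(X_p) \cong C(\tilde\pi_n)$, the open tubular neighborhood of the deepest stratum from \Cref{prop:tubular_neighborhood}, and $V = X\setminus X_p$. By \Cref{rem:lax_gluing_tub_neighb}, $V$ is stratified-homeomorphic to $\on{Unzip}_n(X)\setminus\on{Link}_n(X)$. Their intersection is $U\cap V \cong \on{Link}_n(X)\times\mathbb{R}_{>0}$.

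The first step is to invoke the Seifert--van Kampen theorem for exit paths, \cite[Thm.~A.7.1]{HA}, in its form for open covers of conically stratified spaces. It gives a pushout in $\on{Cat}_\infty$:
\[
\Pi_\infty(X,P)\simeq \Pi_\infty(U)\amalg_{\Pi_\infty(U\cap V)}\Pi_\infty(V).
\]
The second step is to replace each of these pieces by the corresponding term in the stated square, up to equivalence.
\begin{enumerate}[(a)]
\item Using a collar, $V$ deformation retracts stratifiedly onto $X\setminus T(X_p)$. Stratum-preserving homotopy equivalences induce equivalences of exit-path $\infty$-categories, so $\Pi_\infty(V)\simeq\Pi_\infty(X\setminus T(X_p))$.
\item Similarly, $\Pi_\infty(U\cap V)\simeq\Pi_\infty(\on{Link}_n(X),P\setminus\{p\})$.
\item The open cone $C(\tilde\pi_n)$ is equivalent to the closed-type cylinder $\on{Link}_n(X)\times\mathbb{R}_{\geq 0}$ with $\{0\}\times\on{Link}_n$ collapsed fiberwise onto $X_p$. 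Its exit-path category is therefore the pushout
\[
\Pi_\infty(X_p)\amalg_{\Pi_\infty(\on{Link}_n)}\Pi_\infty(\on{Link}_n\times\mathbb{R}_{\geq 0},(P\setminus\{p\})^{\vartriangleright}),
\]
where the second factor is the exit-path category with the boundary $\{0\}$ given the cone point stratum.
\end{enumerate}
This last identification is a second application of van Kampen, or equivalently the mapping-cylinder description of links \cite[A.9]{HA}.

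The third step is to paste the two pushouts. The square in the statement has top-left corner a disjoint union. A pushout whose corners are coproducts of this shape is equivalent to the iterated pushout obtained by gluing first along one copy of the link, at $\{0\}$, and then along the other, at $\mathbb{R}_{>0}$. So the square in the statement is the composite of the pushout from (c) with the van Kampen pushout from the first step, and it is a pushout by the pasting lemma.

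The main obstacle is (c): identifying $\Pi_\infty(C(\tilde\pi_n))$ with the stated pushout involving $\Pi_\infty(\on{Link}_n\times\mathbb{R}_{\geq0})$. This requires care with how the stratification $(P\setminus\{p\})^{\vartriangleright}$ is placed on $\mathbb{R}_{\geq 0}$, so that exit paths from $X_p$ into the link are correctly encoded by the map $\{0\}\times\on{Link}_n\to X_p$. I would verify it locally on conical charts $\mathbb{R}^i\times C(Z)$, where both sides compute to $\Pi_\infty(Z)^{\vartriangleleft}$, and then globalize by van Kampen over a cover of $X_p$.
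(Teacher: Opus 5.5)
Your proof is correct. It uses the same two ingredients as the paper: the van Kampen theorem for exit paths, and an explicit computation on conical charts. It organizes them differently.

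\textbf{How the routes differ.} The paper applies van Kampen once, reducing the whole square to the local square on $\mathbb{R}^i\times\mathbb{R}^j_{\geq 0}\times C(Z)$. It then asserts that this local square is a pushout preserved by $\Pi_\infty$. You instead proceed in three steps:
\begin{enumerate}
\item You apply van Kampen globally to the two-element cover $\{T^\circ(X_p),\,X\setminus X_p\}$.
\item You handle $X\setminus X_p$ and the overlap by stratified homotopy invariance.
\item You isolate the one non-open gluing, the collapse of $\on{Link}_n(X)\times\{0\}$ onto $X_p$. This becomes the mapping-cylinder formula $\Pi_\infty(C(\tilde\pi_n))\simeq \Pi_\infty(X_p)\amalg_{\Pi_\infty(\on{Link}_n(X))}\bigl(\Delta^1\times\Pi_\infty(\on{Link}_n(X))\bigr)$, which you check locally and globalize over $X_p$.
\end{enumerate}
Your final pasting over the coproduct corner is fine.

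\textbf{What each buys.} Your route makes the real local input explicit: $\Pi_\infty(C(Z))\simeq\Pi_\infty(Z)^{\vartriangleleft}\simeq \Delta^0\amalg_{\Pi_\infty(Z)}(\Delta^1\times\Pi_\infty(Z))$, the second equivalence being the cone-versus-join comparison. The paper's ``one immediately checks'' leaves this implicit, and a point-set pushout along a non-open map does not by itself give a pushout of exit-path categories. The paper's route is shorter and needs no separate globalization step.

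\textbf{Details to tighten.}
\begin{itemize}
\item The stratification on $\on{Link}_n(X)\times\mathbb{R}_{\geq0}$ must be the product one, indexed by $\{0<1\}\times(P\setminus\{p\})$, as in the proof of \Cref{cor:lax_pushout_Unzip}. It must not collapse $\on{Link}_n(X)\times\{0\}$ to a single stratum; otherwise your local computation fails.
\item Near $\partial X$ the charts are $\mathbb{R}^i\times\mathbb{R}^j_{\geq0}\times C(Z)$. This is harmless, since $-\times\Pi_\infty(\mathbb{R}^j_{\geq0})$ preserves pushouts in $\on{Cat}_\infty$.
\item For the globalization, use two facts. First, the opens $W\subseteq X_p$ over which $C(\tilde\pi_n)$ is a product $W\times C(Z)$ are closed under passing to open subsets. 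Second, the collapse map $\on{Link}_n(X)\times\mathbb{R}_{\geq0}\to C(\tilde\pi_n)$ is stratified, so the comparison map exists globally.
\item $X\setminus\on{T}(X_p)$ is not closed in $X\setminus X_p$, so it is not literally a deformation retract. What the collar gives is that the inclusion is a stratified homotopy equivalence, which is all you need.
\item It is your local check plus globalization, not the citation to [HA], that actually proves step (c).
\end{itemize}
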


\begin{proof}
	We mimic the proof of \cite[Prop.~3.3.8]{AFR19}. By Van-Kampen theorem for exit paths (\cite[Prop.~3.3.8-(1)]{AFR19}, or \cite[Th.~5.1.7-(4)]{HPT24}) we can reduce the statement to a local one.
	Hence we are reduced to prove that the square of stratified spaces 
\[\begin{tikzcd}[sep=small]
	{(\mathbb{R}^{i} \times (\mathbb{R})_{\geq 0}^{j} \times \{0\} \times Z)  \sqcup (\mathbb{R}^{i} \times (\mathbb{R})_{\geq 0}^{j} \times \{0\} \times Z \times \mathbb{R}_{>0})} && {\mathbb{R}^{i} \times (\mathbb{R})_{\geq 0}^{j} \times \{0\} \times Z \times \mathbb{R}_{\geq 0}} \\
	\\
	{(\mathbb{R}^{i} \times (\mathbb{R})_{\geq 0}^{j}) \sqcup (\mathbb{R}^{i} \times (\mathbb{R})_{\geq 0}^{j} \times R_{>\epsilon} \times Z)} && {\mathbb{R}^{i} \times (\mathbb{R})_{\geq 0}^{j} \times \on{C}(Z)}
	\arrow[from=1-1, to=1-3]
	\arrow[from=1-1, to=3-1]
	\arrow[from=1-3, to=3-3]
	\arrow[from=3-1, to=3-3]
\end{tikzcd}\]
where $j \leq m$, $0 < \epsilon \ll 1$, and $Z$ is a conically smooth stratified space.
	One immediately checks that this is a pushout of stratified spaces and that the exit path category functor respects this pushout.
\end{proof}

\begin{corollary}\label{cor:lax_pushout_Unzip}
	Let $X$ be a conically smooth stratified space with $\langle m \rangle$-corners and let $X_p$ be the deepest stratum in $X^\circ$, with depth $n$. \Cref{lm:pushout} induces equivalences in $\on{LinCat}_k$
\[
\on{Cons}_{P, \partial}(X;\on{Mod}_k)
\simeq 
\laxpush{\on{Loc}(\on{T}(X_p);\on{Mod}_k)}{\Cons_{P \setminus \{p\}, \partial}(\on{Link}_{n}(X);\on{Mod}_k)}{\Cons_{P \setminus \{p\}, \partial}(X \setminus T^\circ(X_p);\on{Mod}_k)}\,,
\]
and
\[
\on{Cons}_{P}(X;\on{Mod}_k)
\simeq 
\laxpush{\on{Loc}(\on{T}(X_p);\on{Mod}_k)}{\Cons_{P \setminus \{p\}}(\on{Link}_{n}(X);\on{Mod}_k)}{\Cons_{P \setminus \{p\}}(X \setminus T^\circ(X_p);\on{Mod}_k)}\,.
\]
\end{corollary}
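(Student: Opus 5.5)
Starting from the pushout square of \Cref{lm:pushout}, I plan to apply the exodromy equivalence \Cref{thm:exodromy_equivalence} and then recognize the resulting pullback of functor categories as the directed pushout of \Cref{def:directed_pushout}.

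Writing $L=\on{Link}_n(X)$ with its induced stratification by $P\setminus\{p\}$, I would identify the cylinder term $\Pi_\infty(L\times\mathbb{R}_{\geq 0},(P\setminus\{p\})^{\vartriangleright})$. The cone point $\{0\}$ lies in the stratum $p$; exit paths leave $L\times\{0\}$ into $L\times\mathbb{R}_{>0}$ and never come back. So I expect this $\infty$-category to be the cocartesian fibration over $\Delta^1$ that is classified by the functor $\Pi_\infty(L)\to \Pi_\infty(L\times\{0\},\{p\})\simeq\Pi_\infty(\tilde{\pi}_n)$, which collapses the stratification on $L$. In the left column, $\Pi_\infty(X_p,\{p\})$ is the ordinary homotopy type of $X_p$, and that is equivalent to $\on{T}(X_p)$ because $\on{T}(X_p)$ deformation retracts onto $X_p$. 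Next I apply $\on{Fun}(-,\on{Mod}_k)$, which takes pushouts in $\on{Cat}_\infty$ to pullbacks in $\mathcal{P}r^L_{\on{St}}$; the result expresses $\on{Cons}_{P,\partial}(X)$ as a pullback of $\on{Loc}(\on{T}(X_p))\times\on{Cons}_{\partial}(X\setminus T^\circ(X_p))$ over $\on{Cons}_\partial(L)\times\on{Cons}_\partial(L)$ against the functor category on the cylinder.

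The core step is to show that this pullback coincides with the colimit of diagram \eqref{eq:double_span}. I would use the standard fact (Lurie's recollement/lax limit description) that functors out of the collapse-cylinder form the lax limit $\on{Fun}(\Pi_\infty(X_p),\on{Mod}_k)\overset{\rightarrow}{\times}_{\on{Cons}(L)}\on{Cons}(L)$. After gluing on $X\setminus T^\circ(X_p)$, the lax limit then becomes $\on{Loc}(\on{T}(X_p))\times_{\on{Cons}(L)}\on{Fun}(\Delta^1,\on{Cons}(L))\times_{\on{Cons}(L)}\on{Cons}(X\setminus T^\circ(X_p))$, taken along the evaluation functors $f_0^\ast,f_1^\ast$. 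The right adjoints of $\iota_0,\iota_1$ are, up to shift, the evaluations $f_i^*$. Passing from limits in $\mathcal{P}r^R$ to colimits in $\mathcal{P}r^L$ therefore turns this limit into the colimit of \eqref{eq:double_span}, with left adjoints being the pullback functors on constructible sheaves. Because the forgetful functor $\on{LinCat}_k\to\mathcal{P}r^L_{\on{St}}$ preserves colimits, the resulting identification holds $k$-linearly. This proves the first equivalence.

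For the second equivalence, I would restrict to the full subcategories determined by inverting $W_\partial$ (\Cref{def:cons_P_with_corners}). Every arrow of $W_\partial$ lies in a local chart whose corner coordinates are independent of the link direction, so inverting $W_\partial$ can be done separately on each piece of the pushout square in $\on{Cat}_\infty$; localization commutes with pushouts. The same argument then gives the second equivalence. I expect the main obstacle to be the sign and shift bookkeeping: checking that the transition functor in the cylinder corresponds to the adjoints of $\iota_0$ and $\iota_1$ as normalized in \Cref{def:directed_pushout}, including the shift $[1]$ appearing in $\iota_1$. My first approach would be to check this on the local model $\mathbb{R}^i\times\mathbb{R}_{\geq0}^j\times C(Z)$.
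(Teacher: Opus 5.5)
Your route is the paper's: apply \Cref{thm:exodromy_equivalence} to the pushout of \Cref{lm:pushout}, identify the cylinder term with $\on{Fun}(\Delta^1,\Cons_{P\setminus\{p\}}(\on{Link}_n(X)))$, and read the resulting square in $\on{LinCat}_k$ as the colimit \eqref{eq:double_span}. Two smaller points first. The paper handles the cylinder more cleanly, by putting the product stratification $\{0<1\}\times(P\setminus\{p\})$ on $\on{Link}_n(X)\times\mathbb{R}_{\geq 0}$, whose exit-path category is $\Pi_\infty(\on{Link}_n(X))\times\Delta^1$; in your collapse cylinder the fibre over $0$ is the groupoid of $\on{Link}_n(X)$, not of $X_p$. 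Your localization at $W_\partial$ is a reasonable substitute for the paper's rerun with $(\mathbb{R}_{>0})^j$.

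The genuine gap is the step that matches the two diagrams, which is false as written. $\iota_0\colon X\mapsto(X\to 0)$ is the right Kan extension along $f_0$. So $f_0^\ast$ is its \emph{left} adjoint, and its right adjoint is $(Y_0\xrightarrow{g}Y_1)\mapsto\on{fib}(g)$. Only $\iota_1$ has an evaluation, namely $f_1^\ast[-1]$, as right adjoint. Dually, the pullback along $f_0^\ast,f_1^\ast$ produced by exodromy is the $\mathcal{P}r^L$-colimit along $(f_0)_!X=(X\xrightarrow{\mathrm{id}}X)$ and $(f_1)_!X=(0\to X)$. No autoequivalence of $\on{Fun}(\Delta^1,\A)$ can carry these to $\iota_0$ and $\iota_1$ respectively, because $\on{Hom}((f_1)_!X,(f_0)_!Y)\simeq\on{Hom}(X,Y)$ while $\on{Hom}(\iota_1X,\iota_0Y)\simeq 0$.

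Carrying the correct adjoints through, apply the rotation $(Y_0\xrightarrow{g}Y_1)\mapsto(Y_1[-1]\to\on{fib}(g))$. The limit of right adjoints of \eqref{eq:double_span} then becomes $\{(b,c,\ G^rc\to F^rb)\}$ for legs $F\colon\A\to\B$ and $G\colon\A\to\C$. So the gluing arrow runs from the $\C$-slot into the $\B$-slot.

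The exodromy pullback, by contrast, is $\{(b,c,\ \tilde\pi^\ast b\to c|_L)\}$, with $b$ on $\on{T}(X_p)$, $c$ on $X\setminus T^\circ(X_p)$, and $L=\on{Link}_n(X)$. With the legs you implicitly use (the left adjoints $\tilde\pi_\#$ of $\tilde\pi^\ast$ and of restriction to $L$), you therefore land in $\laxpush{\Cons(X\setminus T^\circ(X_p))}{\Cons(L)}{\Loc(\on{T}(X_p))}$, with the outer terms in the opposite slots from the statement. You can check this independently: the colimit insertions $i_\#$ and $j_!$ satisfy $\on{Hom}(i_\# b,j_!c)=0$, whereas in $\laxpush{\B}{\A}{\C}$ the image of $\C$ admits no maps to the image of $\B$.

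To obtain the stated ordering you must also rotate in the $b$-variable, $b\mapsto b'=\on{cof}(b\to\tilde\pi_\ast(c|_L))$; this is $i^!$ of the sheaf shifted by one. That turns the gluing datum into $c|_L\to\tilde\pi^!b'$. In other words, the leg $\Cons(L)\to\Loc(\on{T}(X_p))$ must be $\tilde\pi_\ast$, which preserves colimits because the link fibres are categorically compact, rather than $\tilde\pi_\#$. When the link fibres are homology spheres, as when $X$ is a topological manifold, the two legs differ only by a shift and an orientation twist, so nothing changes. For general links they do not.

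So this is not sign-and-shift bookkeeping to be checked on a chart. It decides which functors are the legs and which slot of the non-symmetric directed pushout each piece occupies. The paper's proof leaves this identification implicit; your proposal fills it with an incorrect adjunction.
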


\begin{warning}\label{rem:open_unzip_notations}
	Notice that the stratifications on the right hand side of the equivalence in \Cref{cor:lax_pushout_Unzip} do not agree with the stratification induced by $(X,P)$.
	They agree instead with the stratifications induced by that of \Cref{lm:pushout}.
	For example, the induced stratification on $\on{T}(X_p) \subset X$ is non trivial.
	Precisely, we consider:
	\begin{itemize}
	\item  $\on{T}(X_p)$ with the trivial stratification, which agrees with the stratification of its homotopy equivalent space $X_p$;
	\item $X \setminus T^\circ(X_p)$ with the stratification induced by its homotopy equivalent space $X \setminus T(X_p)$;
	\item $X$ and $\on{Link}_{p}(X)$ with their natural stratifications.
\end{itemize}
\end{warning}

\begin{proof}[Proof of \Cref{cor:lax_pushout_Unzip}]
	In order to ease the notation, we drop the decoration $\on{Mod}_k$ in this proof.
	We need to prove that there is a (non-lax) pushout square in $\on{LinCat}_k$:
	\[\begin{tikzcd}[sep=small]
	{\Cons_{P \setminus \{p\}}(\on{Link}_k(X)) \sqcup \Cons_{P \setminus \{p\}}(\on{Link}_k(X))} && {\on{Fun}(\Delta^1,\Cons_{P \setminus \{p\}}(\on{Link}_k(X)))} \\
	\\
	{\Loc(\on{T}(X_p)) \sqcup \Cons_{P \setminus \{p\}}(X \setminus T^\circ(X_p))} && {\on{Cons}_{P \setminus \{p\}}(X \setminus \on{T}^\circ(X))}
	\arrow[from=1-1, to=1-3]
	\arrow[from=1-1, to=3-1]
	\arrow[from=1-3, to=3-3]
	\arrow[from=3-1, to=3-3]
\end{tikzcd}\]
	We have equivalences in the symmetric monoidal $\infty$-category $\on{LinCat}_k$
\begin{align*}
\on{Fun}(\Delta^1,\Cons_{P \setminus \{p\}}(\on{Link}_k(X))) & \simeq \on{Fun}(\Delta^1,\on{Mod}_k) \otimes \Cons_{P \setminus \{p\}}(\on{Link}_k(X);\on{Mod}_k) \simeq \\
& \simeq \Cons_{0<1}(\mathbb{R}_{\geq 0}) \otimes \Cons_{P \setminus \{p\}}(\on{Link}_k(X)) \simeq \\
& \simeq \Cons_{\{0<1\} \times (P \setminus \{p\})}(\on{Link}_k(X) \times \mathbb{R}_{\geq 0}) \,.
\end{align*}
where the last equivalence follows from \cite[Prop.~5.2.11]{HPT24}.
	By the homotopy invariance of constructible sheaves \cite{HPT23}, we have $\Loc(\on{T}(X_p)) \simeq \Loc(X_p)$ and $\on{Cons}_{P \setminus \{p\}}(X \setminus \on{T}^\circ(X)) \simeq \on{Cons}_{P \setminus \{p\}}(X \setminus \on{T}(X))$.
	Hence the above diagram can be rewritten as
		\[\begin{tikzcd}[sep=small]
	{\Cons_{P \setminus \{p\}}(\on{Link}_k(X)) \sqcup \Cons_{P \setminus \{p\}}(\on{Link}_k(X))} && {\Cons_{\{0<1\} \times (P \setminus \{p\})}(\on{Link}_k(X) \times \mathbb{R}_{\geq 0})} \\
	\\
	{\Loc(X_p) \sqcup \Cons_{P \setminus \{p\}}(X \setminus T(X_p))} && {\on{Cons}_{P \setminus \{p\}}(X \setminus \on{T}^\circ(X))}
	\arrow[from=1-1, to=1-3]
	\arrow[from=1-1, to=3-1]
	\arrow[from=1-3, to=3-3]
	\arrow[from=3-1, to=3-3]
\end{tikzcd}\]
	The first equivalence then follows by applying $\on{Fun}(-;\on{Mod}_k)$ to the pushout square of \Cref{lm:pushout} and \Cref{thm:exodromy_equivalence}.
	By \Cref{rem:corners_or_interior}, the second equivalence is proved in the same way by replacing $(\mathbb{R}_{\geq 0})^j$ with $(\mathbb{R}_{> 0})^j$ everywhere.
\end{proof}

	The work of Volpe \cite{Vol25, Vol24} shows that conically smooth manifolds enjoy nice finiteness conditions. Indeed we have the following:

\begin{theorem}\label{thm:finiteness_conically_smooth}
	Conically smooth stratified spaces are locally categorically compact.
	Compact conically smooth stratified spaces are categorically compact.
	The same holds for the interior of a conically smooth stratified space with corners.
\end{theorem}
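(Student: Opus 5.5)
The plan is to reduce all three assertions to Volpe's results \cite{Vol24,Vol25}, treating the compact case as a consequence of the local one together with a finite decomposition. For the first assertion, fix $p\in P$ and $x\in X_p$. A conically smooth atlas provides a basic chart $\mathbb{R}^i\times C(Z)\hookrightarrow X$ around $x$, where $Z$ is a compact conically smooth stratified space of strictly smaller depth and $\mathbb{R}^i$ is weakly contractible and locally weakly contractible. Definition~\ref{finite_strat}(2) then only asks that $(Z,P_{>p})$ be categorically compact. So the local statement follows once we know the global statement for compact spaces of smaller depth, and we argue by induction on depth.

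For the compact case, let $X$ be compact of depth $n$. By Remark~\ref{rem:finite_poset}, $P$ is finite, so the induction on depth terminates. Apply Lemma~\ref{lm:pushout} to the deepest stratum $X_p$. This expresses $\Pi_\infty(X,P)$ as a pushout in $\on{Cat}_\infty$ of the following pieces:
\begin{itemize}
\item $\Pi_\infty(X_p)$, where $X_p$ is a compact smooth manifold, possibly with corners, hence homotopy equivalent to a finite CW complex; its fundamental $\infty$-groupoid is therefore compact in $\on{Cat}_\infty$.
\item $\Pi_\infty$ of $\on{Link}_n(X)$ and of $\on{Link}_n(X)\times\mathbb{R}_{\geq 0}$; these are compact conically smooth stratified spaces (with corners) of depth $<n$, so they are covered by the induction hypothesis. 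For the product with $\mathbb{R}_{\geq 0}$ we use that $\Pi_\infty$ is multiplicative and that $\Delta^1$ is compact.
\item $\Pi_\infty(X\setminus T^\circ(X_p))$; by Remark~\ref{rem:lax_gluing_tub_neighb} this is $\on{Unzip}_n(X)$ up to homotopy, which is compact of depth $<n$ with one more corner, so again the induction hypothesis applies.
\end{itemize}
Since compact objects of $\on{Cat}_\infty$ are closed under finite colimits, $\Pi_\infty(X,P)$ is compact.

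It follows that the induction must run over conically smooth stratified spaces with $\langle m\rangle$-corners for all $m$ at once, with the base case being compact manifolds with corners. For the interior statement, Remark~\ref{rem:corners_or_interior} and the collar neighborhoods \cite[Lemma 8.2.1]{AFT17} show that $\Pi_\infty(X^\circ)\simeq\Pi_\infty(X,P)[W_\partial^{-1}]$. A localization at a \emph{finite} set of morphisms preserves compactness, since it is a pushout along $\coprod\Delta^1\to\coprod\Delta^0$. Local finiteness of the corner charts then lets us choose such a finite generating set when $X$ is compact.

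The main obstacle is showing that the compact-with-corners pieces actually fall under the induction, and that Lemma~\ref{lm:pushout} is legitimately a pushout of compact objects. If this becomes delicate, the fallback is simply to cite \cite[Lemma 3.1 and Thm.~1.1]{Vol24} and \cite{Vol25}, which prove exactly these finiteness statements, and to record that the corner and interior versions follow via Remark~\ref{rem:corners_or_interior}.
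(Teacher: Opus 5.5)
Your reduction of the local statement to the compact one is exactly the paper's argument: use basic charts $\mathbb{R}^i\times C(Z)$ with $Z$ compact of smaller depth \cite[Cor.~7.3.7]{AFT17}. For the other two assertions the routes differ. The paper proves them purely by citation, \cite[Thm.~3.10]{Vol24} for compact spaces and \cite[Cor.~2.20]{Vol25} for interiors, which is your fallback. Your main route instead reproves them by induction on depth through the decomposition of \Cref{lm:pushout}.

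That buys a self-contained argument, modulo the Van Kampen input behind \Cref{lm:pushout}. The formal ingredients you use are sound:
\begin{itemize}
\item compact objects of $\on{Cat}_\infty$ are closed under finite colimits and under $-\times\Delta^1$;
\item finite spaces are compact in $\on{Cat}_\infty$, because the core functor preserves filtered colimits;
\item inverting finitely many arrows is a pushout along $\coprod\Delta^1\to\coprod\Delta^0$.
\end{itemize}
The paper's citation is shorter and leaves the delicate handling of corners to Volpe.

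That handling is also where your induction, as written, does not quite close.
\begin{itemize}
\item The piece $\Pi_\infty(X\setminus T^\circ(X_p),P\setminus\{p\})$ carries the stratification induced from $X$. In it the new face $\on{Link}_n(X)\times\{1\}$ is \emph{not} a stratum. So this piece is not $\Pi_\infty$ of $\on{Unzip}_n(X)$ with its full $\langle m+1\rangle$-corner stratification, and ``compact with one more corner'' does not literally fall under your hypothesis.
\item Similarly, if corner strata are kept, the bottom-left piece is the exit-path category of $\overline{X_p}$ with its corner strata, not the groupoid $\Pi_\infty(X_p)$.
\end{itemize}

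The clean repair is to run the whole induction on interiors. Take as hypothesis that $\Pi_\infty(Y^\circ)$ is compact for every compact $Y$ with $\langle m\rangle$-corners (any $m$) of interior depth $<n$. Then use the interior form of \Cref{lm:pushout}, which is the version behind the second equivalence of \Cref{cor:lax_pushout_Unzip}. The four pieces become:
\begin{itemize}
\item $\Pi_\infty(X_p)$, where $X_p$ is the interior of a compact manifold with corners and so has finite homotopy type;
\item $\Pi_\infty(\on{Link}_n(X)^\circ)$;
\item $\Pi_\infty(\on{Link}_n(X)^\circ)\times\Delta^1$;
\item $\Pi_\infty(\on{Unzip}_n(X)^\circ)$.
\end{itemize}
All of these are covered by the hypothesis or the base case. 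You should also unzip all depth-$n$ strata at once; their union is closed in $X^\circ$.

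With this set-up the second and third assertions come out together. You then no longer need two claims you only assert. The first is $\Pi_\infty(X^\circ)\simeq\Pi_\infty(X,P)[W_\partial^{-1}]$; \Cref{rem:corners_or_interior} only identifies the categories of sheaves, not the exit-path categories. The second is that $W_\partial$ is generated by finitely many arrows.
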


\begin{proof}
	The second statement is \cite[Thm.~3.10]{Vol24}.
	The first statement is a local one and it follows from the first and the fact that a conically smooth stratified spaces admits conical charts of the form $\on{C(Z)} \times \mathbb{R}^n$ with $(Z,Q)$ a compact conically smooth stratified space, see \cite[Cor.~7.3.7]{AFT17}.
	The last assertion is \cite[Cor.~2.20]{Vol25}, combined with the first one.
\end{proof}

	The following result of Nocera-Volpe provides a large class of examples of conically smooth stratified spaces.
\begin{theorem}[{$\!\!$\cite[Prop.~2.9 \& Thm.~3.7]{NV23}}]\label{thm:Whitney_implies_conically_smooth}
	Whitney stratified spaces are conically smooth.
\end{theorem}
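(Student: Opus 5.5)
This is Nocera--Volpe's result cited from \cite{NV23}, so I would not reprove it from scratch. I would explain how the cited statements combine and what each contributes.

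The starting input is a Whitney stratified space $(X,P)$: a closed subset of a smooth manifold $M$, partitioned into locally closed smooth submanifolds that satisfy the frontier condition and Whitney's condition (b). The first step is Mather's theory of control data. Each stratum $S$ receives a tubular neighborhood $T_S$ with a smooth retraction $\pi_S\colon T_S\to S$ and a tubular function $\rho_S\colon T_S\to\mathbb{R}_{\geq 0}$. These are chosen to satisfy the commutation relations $\pi_S\pi_{S'}=\pi_S$ and $\rho_S\pi_{S'}=\rho_S$ on overlaps, and $(\pi_S,\rho_S)$ restricted to any higher stratum $S'$ is a submersion. Condition (b) is exactly what makes these controlled data exist.

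The second step uses Thom's first isotopy lemma, applied to $(\pi_S,\rho_S)$ on $\rho_S^{-1}(0,\epsilon)$. It gives a stratified homeomorphism of $T_S$, locally over $S$, with the open cone on the link $L=\rho_S^{-1}(\epsilon)\cap\pi_S^{-1}(x)$, times a disc in $S$. Since $L$ is itself Whitney stratified and has smaller depth, induction on depth produces charts of the form $\mathbb{R}^i\times C(L)$ as in \Cref{conicallly_strat_def}.

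The third step upgrades these charts to a conically smooth atlas in the sense of \cite{AFT17}. This is the content of \cite[Prop.~2.9]{NV23}. One needs the transition maps to be conically smooth, meaning smooth on strata and having well-defined derivatives in the cone direction at the cone points. The controlled vector fields from the isotopy lemma are smooth and are radial near each $S$, so the resulting trivializations are conically smooth. This compatibility is the main obstacle, because the topological homeomorphisms from the isotopy lemma must be refined to smooth-in-the-cone-coordinate ones. I would take this from \cite[Thm.~3.7]{NV23}, which proves that the controlled flows give conically smooth charts.

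Finally, I would observe that the atlases obtained from different choices of control data are equivalent, so the resulting conically smooth structure is canonical. This makes the statement applicable to the stratifications considered later in the paper.
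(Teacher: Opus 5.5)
Your proposal matches the paper in substance. The paper gives no argument of its own for this statement and simply invokes \cite[Prop.~2.9 \& Thm.~3.7]{NV23}, and your write-up likewise rests on that citation for the only nontrivial step. The surrounding sketch, however, should not be read as carrying any of the weight, and two of its claims need correcting.

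First, the assertion that the controlled vector fields are smooth and radial, so that the isotopy-lemma trivializations are conically smooth, is not a valid justification. Controlled vector fields are smooth only stratum by stratum; in general they are not even continuous on $X$. Their flows are merely stratum-preserving homeomorphisms. Conical smoothness in the sense of \cite{AFT17} further requires the rescaled transition maps to admit conically smooth limits along the cone locus, and the control conditions do not give you this. That is exactly what \cite{NV23} has to establish. Your following sentence, which says these homeomorphisms must still be refined, already concedes the point.

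Second, the closing claim that atlases built from different control data are equivalent, so that the structure is canonical, is not part of the statement, and you give no argument for it. The statement only asserts existence. Existence of some conically smooth structure is all the paper uses, for instance in the example following \Cref{cor:relative_Calabi_Yau}. You should therefore drop this claim rather than tie the later applications to it.

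Finally, the specific division of labour you assign to Prop.~2.9 versus Thm.~3.7 is guesswork; the paper cites the two results only jointly.
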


\subsection{The categorical cube of a conically smooth stratified space}\label{subsec:cat_cube_of_stratified_space}

The main purpose of this section is to carry out \Cref{construction:cube}, which associates a categorical cube $\cons(X)_\ast$ to each compact conically smooth stratified space $(X,P)$ with (possibly empty) boundary. 
	
	We can summarize \Cref{construction:cube} as follows. Let $n$ be the depth of the stratification. 
\begin{itemize}
\item The unzipping $\on{Unzip}^{[1,n]}(X)$ is a smooth $\langle n+1\rangle$-manifold, to which we associated a categorical $n+1$-cube of local systems in \Cref{def:cube_local_systems}.
	The tip of this cube is given by $\on{Loc}(\on{Unzip}^{[1,n]}(X))\simeq \on{Loc}(X^0)$, with $X^0$ being the depth $0$ stratum, that is the top dimensional stratum of $X$. 
\item More generally, each closed tubular neighborhood $T(\widetilde{X^k})$ of the depth $k$-stratum $\widetilde{X^k}$ of $\on{Unzip}^{[n-k+1,n]}(X)$ is itself a manifold with $\langle n-k+2\rangle$ corners, and thus determines a categorical $(n-k+2)$-cube with tip $\on{Loc}(T(\widetilde{X^k}))\simeq \on{Loc}(\widetilde{X^k})$.
	We will extend this categorical $(n-k+2)$-cube to a categorical $n$-cube by filling it with $0$'s.
\item We can reconstruct $X$ from $\on{Unzip}^{[1,n]}(X)$ by iteratively gluing in the tubular neighborhoods of the lower strata $T(\widetilde{X^k})$, see \Cref{lem:unzip_gluing}.
	The corresponding $\infty$-category of constructible sheaves $\on{Cons}_P(X;\on{Mod}_k)$ arises via the iterated directed pushout of the categories of local systems on the strata of $X$.
	We consider a sequence of directed pushouts of the categorical $n$-cubes associated with the strata of $X$, yielding the $n$-cube $\on{Cons}_P(X)_\ast$.
	Restricting to the tips, we recover the directed pushouts describing $\on{Cons}_P(X;\on{Mod}_k)$, and thus find that the tip is given by
\[
\Cons_P(X)_{1,\dots,1}\simeq \Cons_P(X;\on{Mod}_k)\,.
\]
\end{itemize}

Recall that $I=\{0\to 1\}$. Given a compact smooth $\langle n\rangle$-manifold $X$, we denote by $\underline{X}$ the functor $I^n\to \on{Top}$ defined by
\[
\underline{X}((i_1,\dots,i_n))=\begin{cases} X & (i_1,\dots,i_n)=(1,\dots,1)\\  \bigcap_{1\leq j\leq n~\text{s.t.}~i_j=0} \partial X_j & (i_1,\dots,i_n)\not =(1,\dots,1)\,. \end{cases}
\]
We have for any ${\bf j}\in I^n$ 
\[
\bigcup_{{\bf i}<{\bf j}} \underline{X}({\bf i})=\partial \underline{X}({\bf j})\subset  \underline{X}({\bf j})\,. 
\]

We denote by $\loc(\mhyphen)\colon \mathcal{S}\to \on{LinCat}_k$ the essentially unique colimit preserving functor, determined by mapping $\ast$ to $\on{Mod}_k$. We note that for any space $X\in \mathcal{S}$, we have $\loc(X)\simeq \on{Fun}(X,\on{Mod}_k)$. 

\begin{definition}\label{def:cube_local_systems} 
	Let $X$ be a compact smooth $\langle n \rangle$-manifold.
	We define the categorical $n$-cube 
	\[
	\Loc(X)_\ast \coloneqq \Loc \circ \underline{X} \colon I^n \to \on{LinCat}_k\]
	\[
	(i_1, \ldots, i_n) \mapsto \begin{cases} \Loc(X) & (i_1,\dots,i_n)=(1,\dots,1) \\ \Loc\big(\bigcap_{1\leq j\leq n~\text{s.t.}~i_j=0} \partial X_j\big) & (i_1,\dots,i_n)\not =(1,\dots,1)\end{cases}\,. 
	\]
\end{definition}

	Recall the conventions for constructible sheaves on conically smooth stratified spaces with corners form \Cref{def:cons_P_with_corners} (see also \Cref{notation:Cons_corners} and \Cref{warning:notation_Cons_corners}).

\begin{construction}\label{construction:cube}
	Let $\rho \colon X \to P$ be a conically smooth stratified space with boundary whose underlying space $X$ is a compact topological $d$-manifold with boundary. We let $n$ be the depth of the stratification of $X$.
	Denote by $X^k$ the depth $k$-stratum of $(X,P)$.
	We will use the notation from \Cref{section:conically_smooth} in the following.
	Let $\on{Loc}(\on{Unzip}^{[1,n]}(X))_\ast, \on{Loc}(\on{T}(\widetilde{X^1}))_\ast$ be the categorical $(n+1)$-cubes of local systems associated to the smooth $\langle n+1 \rangle$-manifolds $\on{Unzip}^{[1,n]}(X), \on{T}(\widetilde{X^1})$, as in  \Cref{def:cube_local_systems}.
	Unraveling the definitions, we find a canonical equivalence of the faces
\[
\Loc(\on{Unzip}^{[1,n]}(X))_{\partial_1^0 \ast} \simeq \Loc(\on{T}(\widetilde{X^1}))_{\partial_1^0 \ast}\,.
\]
	This determines a morphism of categorical $n$-cubes
\[
\Loc(\on{Unzip}^{[1,n]}(X))_{\partial_1^0 \ast} \to \Loc(\on{T}(\widetilde{X^1}))_{\partial_1^1 \ast}\,.
\]
	We define the $(n+1)$-cube $\Cons(\on{Unzip}^{[2,n]}(X))_\ast$ as follows:
\[
\big[ \Loc(\on{Unzip}^{[1,n]}(X))_{\partial_1^0 \ast} \to \Loc(\on{T}(\widetilde{X^1}))_{\partial_1^1 \ast} \bigsqcup^\rightarrow_{\Loc(\on{Unzip}^{[1,n]}(X))_{\partial_1^0 \ast}} \Loc(\on{Unzip}^{[1,n]}(X))_{\partial_1^1 \ast} \big]
\]
	By \Cref{rem:functor_into_directed_pushout}, the pushout of the span of categorical $n$-cubes
\[\begin{tikzcd}[sep=small]
	{\Cons(\on{Unzip}^{[2,n]}(X))_{\partial_1^0 \ast} \simeq \Loc (\on{Unzip}^{[1, n]}(X))_{\partial_1^0 \ast}} && 0_\ast \\
	\\
	{\Cons(\on{Unzip}^{[2,n]}(X))_{\partial_1^1 \ast}}
	\arrow[from=1-1, to=1-3]
	\arrow[from=1-1, to=3-1]
\end{tikzcd}\]
	is equivalent to the pushout $n$-cube
\[
\Loc(\on{T}(\widetilde{X^1}))_{\partial_1^1 \ast} \bigsqcup_{{\Loc}(\on{Unzip}^{[1,n]}(X))_{\partial_1^0 \ast}} \Loc(\on{Unzip}^{[1,n]}(X))_{\partial_1^1 \ast}
\]
	Using \Cref{rem:lax_gluing_tub_neighb}, the latter can be canonically identified with the cube of local systems $\Loc(\on{Unzip}^{[2, n]}(X))_\ast$ associated with the smooth $\langle n \rangle$-manifold $\on{Unzip}^{[2, n]}(X)$. \\ \indent
	We proceed iteratively as follows.
    
    We suppose that we have defined $\cons(\on{Unzip}^{[k, n]}(X))_\ast$, with $0 < k <n+1$. 
    We  make the following further assumption:\\

    \noindent {\bf Assumption $(k)$.} There exists an equivalence of categorical $(n-k+2)$-cubes
    \[\on{cof}_1^{\times k-1}\cons(\on{Unzip}^{[k, n]}(X))_\ast \simeq \Loc(\on{Unzip}^{[k,n]}(X))_\ast\] 
    between the iterated cofiber cube along the first $k-1$ coordinates to the categorical cube associated with the smooth $\langle n-k+2\rangle$-manifold $\on{Unzip}^{[k,n]}(X)$.\\
    
    In the following, we construct $\Cons(\on{Unzip}^{[k+1, n]}(X))_\ast$ and show that it satisfies Assumption $(k+1)$.
    Unraveling the definitions, we find a canonical equivalence of categorical $(n-k+1)$-cubes  
    \[
    \Loc(\on{Unzip}^{[k,n]}(X))_{\partial_k^0 \ast} \simeq \on{Loc}(\on{T}(\widetilde{X^k}))_{\partial_1^0 \ast}.
    \]
    Hence the $(n-k+2)$-cube associated to $\on{T}(\widetilde{X^k})$ can be presented as the morphism of $(n-k+1)$-cubes
    \[
    \Loc(\on{Unzip}^{[k,n]}(X))_{\partial_k^0 \ast} \to \Loc(\on{T}(\widetilde{X^k}))_{\partial_1^1 \ast}.
    \]
    Using the induction hypothesis, this gives to a morphism of categorical $n$-cubes
    \[
    \cons(\on{Unzip}^{[k, n]}(X))_{\partial_k^0 \ast} \to \Loc^0(\on{T}(\widetilde{X^k}))_\ast,
    \]
    where $\Loc^0(\on{T}(\widetilde{X^k}))_\ast$ is defined by replacing the tip of $0 \in \on{Fun}(I^{k-1}, \on{Cube}_{n-k+1}(\on{LinCat}_k^{\on{dual}}))$ with the $(n-k+1)$-cube $\Loc(\on{T}(\widetilde{X^k}))_{\partial_1^1 \ast}$.

    We then define $\Cons(\on{Unzip}^{[k+1, n]}(X))_\ast$ as
\[
\big[ \cons(\on{Unzip}^{[k, n]}(X))_{\partial_k^0  \ast} \to \laxpush{\Loc^0(\on{T}(\widetilde{X^k}))_\ast}{\cons(\on{Unzip}^{[k, n]}(X))_{\partial_k^0  \ast}}{ \cons(\on{Unzip}^{[k, n]}(X))_{\partial_k^1  \ast}}\big].
    \]
\end{construction}

	In order to complete \Cref{construction:cube}, we are left to prove the following

\begin{lemma}\label{lm:construction_cube}
    The categorical $(n+1)$-cube $\Cons(\on{Unzip}^{[k+1, n]}(X))_\ast$ satisfies Assumption $(k+1)$.
\end{lemma}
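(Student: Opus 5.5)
We need to show that $\on{cof}_1^{\times k}\cons(\on{Unzip}^{[k+1,n]}(X))_\ast\simeq \Loc(\on{Unzip}^{[k+1,n]}(X))_\ast$. The plan is to commute the iterated cofibers past the directed pushout, reduce to a non-lax pushout via \Cref{rem:functor_into_directed_pushout}, and then identify that pushout geometrically.

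First, use that $\on{cof}_1$ is computed pointwise, so the iterated cofiber along the first $k$ coordinates of $\Cons(\on{Unzip}^{[k+1,n]}(X))_\ast$ can be computed in two stages. We first take $\on{cof}_1^{\times k-1}$ in the directions coming from the first $k-1$ coordinates of the $n$-cubes involved in the directed pushout. Then we take the cofiber in the $k$-th coordinate, namely the direction of the morphism $\cons(\on{Unzip}^{[k,n]}(X))_{\partial_k^0\ast}\to \laxpush{\cdots}{}{}$.

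By \Cref{lem:lax_gluing_commutes_with_colimits}(2), the directed pushout commutes with $\on{cof}_1^{\times k-1}$, since the latter is a colimit. We compute each leg separately:
\begin{itemize}
\item By Assumption $(k)$, applying $\on{cof}_1^{\times k-1}$ to $\cons(\on{Unzip}^{[k,n]}(X))_{\partial_k^\epsilon\ast}$ gives the faces $\Loc(\on{Unzip}^{[k,n]}(X))_{\partial_1^\epsilon\ast}$. Reindexing is needed here, since the $k$-th coordinate becomes the first after removing $k-1$ coordinates.
\item Applying $\on{cof}_1^{\times k-1}$ to $\Loc^0(\on{T}(\widetilde{X^k}))_\ast$ returns $\Loc(\on{T}(\widetilde{X^k}))_{\partial_1^1\ast}$. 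This holds because $\Loc^0$ is $0$ away from the tip of $I^{k-1}$, so the iterated cofiber of a cube concentrated at the tip returns the tip.
\end{itemize}
The cofiber in the $k$-th direction also commutes with the previous cofibers, because total cofibers can be taken in any order.

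After these reductions, the cube becomes
\[
\big[\Loc(\on{Unzip}^{[k,n]}(X))_{\partial_1^0\ast}\to \laxpush{\Loc(\on{T}(\widetilde{X^k}))_{\partial_1^1\ast}}{\Loc(\on{Unzip}^{[k,n]}(X))_{\partial_1^0\ast}}{\Loc(\on{Unzip}^{[k,n]}(X))_{\partial_1^1\ast}}\big],
\]
and we need its cofiber along the arrow. By \Cref{rem:functor_into_directed_pushout}(2), this cofiber is the non-lax pushout $n-k+1$-cube
\[
\Loc(\on{T}(\widetilde{X^k}))_{\partial_1^1\ast}\amalg_{\Loc(\on{Unzip}^{[k,n]}(X))_{\partial_1^0\ast}}\Loc(\on{Unzip}^{[k,n]}(X))_{\partial_1^1\ast}.
\]

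The main step, and the one we expect to be the real obstacle, is identifying this pushout with $\Loc(\on{Unzip}^{[k+1,n]}(X))_\ast$, exactly as in the base case $k=1$. Since $\Loc(\mhyphen)\colon\mathcal S\to\on{LinCat}_k$ preserves colimits, it suffices to check, entrywise for $\mathbf i\in I^{n-k+1}$, that the homotopy pushout of spaces is the corresponding corner stratum. We argue as follows:
\begin{itemize}
\item The faces $\partial_1^0$ of $\on{Unzip}^{[k,n]}(X)$ correspond to $\mathbb R_{\ge0}\times\on{Link}_k(X)$ intersected with the other corner faces.
\item We apply \Cref{rem:lax_gluing_tub_neighb}, which gives $\on{Unzip}^{[k+1,n]}(X)=\on{T}(\widetilde{X^k})\cup_{\mathbb R_{\ge0}\times \on{Link}_k(X)}\on{Unzip}^{[k,n]}(X)$ as subspaces of $X$.
\item This decomposition is compatible with restriction to corners, by \cite[Lemma 7.3.5-(3)]{AFT17}.
\item The union is along a closed collar, so the pushout of subspaces is a homotopy pushout (via collar neighborhoods \cite[Lemma 8.2.1]{AFT17}).
\end{itemize}

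The delicate points are the bookkeeping of which faces match and the naturality in $\mathbf i$. For the latter, we use that all the identifications come from a single commuting diagram of embeddings into $X$, so they assemble into an equivalence of functors $I^{n-k+1}\to\mathcal S$. Applying $\Loc$ then yields Assumption $(k+1)$.
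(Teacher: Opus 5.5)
Your proposal is correct and follows the paper's proof essentially step for step. You commute $\on{cof}_1^{\times k-1}$ past the directed pushout using \Cref{lem:lax_gluing_commutes_with_colimits}, and evaluate the legs via Assumption $(k)$ together with the fact that $\Loc^0(\on{T}(\widetilde{X^k}))_\ast$ is concentrated at the tip. You then take the remaining cofiber via \Cref{rem:functor_into_directed_pushout} and identify the resulting non-lax pushout with $\Loc(\on{Unzip}^{[k+1,n]}(X))_\ast$ via \Cref{rem:lax_gluing_tub_neighb}. Your extra remarks on why the union along the collar is a homotopy pushout, and on naturality in $\mathbf{i}$, spell out what the paper leaves implicit in that final citation.
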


\begin{proof}
    We first compute separately the iterated cofibers in the first $k-1$ directions of the restrictions of $\Cons(\on{Unzip}^{[k+1, n]}(X))_\ast$ to $i_k=0$ and $i_k = 1$.
    By construction, we have a canonical equivalence
    \[
    \Cons(\on{Unzip}^{[k+1, n]}(X))_{\partial_k^0  \ast} \simeq \Cons(\on{Unzip}^{[k, n]}(X)))_{\partial_k^0 \ast}.
    \]
    Hence by Assumption $(k)$ we have that $\on{cof}_1^{k-1} \Cons(\on{Unzip}^{[k+1, n]}(X))_{\partial_k^0 \ast} \simeq \Loc(\on{Unzip}^{[k, n]}(X)))_{\partial_k^0 \ast}$.
    Using Assumption $(k)$ again and the fact that lax colimits commutes with colimits (\Cref{lem:lax_gluing_commutes_with_colimits}), we find equivalences
\begin{align*}
        \on{cof}^{k-1}_1 & (\Cons(\on{Unzip}^{[k+1, n]}(X))_{\partial_k^1 \ast}) \\ 
        & \simeq \on{cof}^{k-1}_1 (\laxpush{\Loc^0(\on{T}(\widetilde{X^k}))_\ast}{\cons(\on{Unzip}^{[k, n]}(X))_{\partial_k^0 \ast}}{\cons(\on{Unzip}^{[k, n]}(X))_{\partial_k^1 \ast})} \\
        & \simeq \big(\laxpush{\on{cof}^{k-1}_1(\Loc^0(\on{T}(\widetilde{X^k}))_\ast)}{\on{cof}^{k-1}_1(\cons(\on{Unzip}^{[k, n]}(X))_{\partial_k^0 \ast})} {\on{cof}^{k-1}_1(\cons(\on{Unzip}^{[k, n]}(X))_{\partial_k^1 \ast})}\big)_{\partial_k^1 \ast} \\
        & \simeq \big(\laxpush{\Loc(\on{T}(\widetilde{X^k}))_{\partial_1^1 \ast}} { \Loc(\on{Unzip}^{[k, n]}(X))_{\partial_k^0 \ast}} { \Loc(\on{Unzip}^{[k, n]}(X))_{\partial_k^1 \ast}}\big)_{\partial_k^1 \ast} .
\end{align*}
  Hence we get
\begin{align*}
\on{cof}_1^{\times k} & (\cons(\on{Unzip}^{[k+1, n]}(X))_\ast)  \\
& \simeq \on{cof}_1(\big[ \Loc(\on{Unzip}^{[k, n]}(X))_{\partial_k^0 \ast} \to \big(\laxpush{\Loc(\on{T}(\widetilde{X^k}))_{\partial_1^1 \ast}} { \Loc(\on{Unzip}^{[k, n]}(X))_{\partial_k^0 \ast}} { \Loc(\on{Unzip}^{[k, n]}(X))_{\partial_k^1 \ast}}\big)_{\partial_k^1 \ast}\big]) \\
& \simeq \Loc(\on{T}(\widetilde{X^k}))_{\partial_1^1 \ast} \bigsqcup_{\Loc(\on{Unzip}^{[k, n]}(X))_{\partial_k^0 \ast}} \Loc(\on{Unzip}^{[k, n]}(X))_{\partial_k^1 \ast} \\
& \simeq \Loc(\on{Unzip}^{[k+1,n]}(X))_\ast, 
\end{align*}
where the last equivalence follows from \Cref{rem:lax_gluing_tub_neighb}.
\end{proof}

\begin{definition}\label{def:Cons_X}
	Let $(X,P)$ be a canonically smooth stratified space with boundary, such that $X$ is a compact topological manifold with boundary. 
	Let $n$ be the depth of the stratification.
	We define the categorical $(n+1)$-cube $\on{Cons}_P(X)$ of $(X,P)$ as the output
\[
\Cons_P(X)_\ast\coloneqq \Cons(\on{Unzip}^{[n+1, n]}(X))_\ast\in \on{Cube}_{n+1}(\on{LinCat}_k^{\on{dual}})\,,
\]
of \Cref{construction:cube}.
\end{definition}

\begin{remark}
    To illustrate \Cref{construction:cube}, we spell out explicitly the construction of $\Cons(\on{Unzip}^{[2,n]}(X))_\ast$.
	The $n$-cube $\on{Loc}(\on{T}(\widetilde{X^2}))$ associated to the manifold with $\langle n\rangle$-corners $\on{T}(\widetilde{X^2})$ fits in the diagram of categorical $(n-1)$-cubes
\begin{equation}\label{cons_square}
\begin{tikzcd}[sep=small]
	{\Loc(\on{Unzip}^{[1, n]}(X))_{\partial_1^0, \partial_2^0 \ast}} && 0_\ast \\
	\\
	{\Cons(\on{Unzip}^{[2,n]}(X)})_{\partial_1^1, \partial2^0 \ast} && {\Loc(\on{Unzip}^{[2, n]}(X))_{\partial_2 \ast}} \\
	&&& {\Loc(\on{T}(\widetilde{X^2}))_{\partial_1^1 \ast}}
	\arrow[from=1-1, to=1-3]
	\arrow["\psi"', from=1-1, to=3-1]
	\arrow[from=1-3, to=3-3]
	\arrow[from=3-1, to=3-3]
	\arrow["\varphi", from=3-3, to=4-4]
\end{tikzcd}
\end{equation}
where it is presented as the diagonal map $\varphi$.
	Let $\Loc^0(\on{T}(\widetilde{X^2}))_\ast \coloneqq  [0_\ast \to \Loc(\on{T}(\widetilde{X^2}))_{\partial_1^1 \ast}]$. 
	By the definition of the lax pushout of cubes, the restriction of $\on{Cons}(\on{Unzip}^{[2,n]}(X))$ to $i_1=0$ is exactly $\Loc({\on{Unzip}}^{[1, n]}(X))_{\partial_1^0 \ast}$.
	Hence by \eqref{cons_square}, we have a morphism of $n$-cubes
\[
\Cons(\on{Unzip}^{[2,n]}(X))_{\partial_2^0 \ast} \to \Loc^0(\on{T}(\widetilde{X_2}))_\ast.
\]
	We define $\Cons(\on{Unzip}^{[3,n]}(X))$ by
\[
\big[\Loc^0(\on{T}(\widetilde{X^2}))_\ast \bigsqcup^\rightarrow_{\Cons(\on{Unzip}^{[2,n]}(X))_{\partial_2^0 \ast}} \Cons(\on{Unzip}^{[2,n]}(X))_{\partial_2^1 \ast} \big] .
\]
\Cref{lm:construction_cube} shows that we have a colimit diagram of $(n-3)$-cubes as follows:
\[\begin{tikzcd}[sep=small]
	{\Cons(\on{Unzip}^{[3, n]}(X))_{\partial_1^0, \partial_2^0 \ast}} && {\Cons (\on{Unzip}^{[3, n]}(X))_{\partial_1^0, \partial_2^1 \ast}} \\
	& 0 && 0 \\
	{\Cons(\on{Unzip}^{[3,n]}(X))_{\partial_1^1, \partial_2^0 \ast}} && {\Cons(\on{Unzip}^{[3,n]}(X))_{\partial_1^1, \partial_2^1 \ast}} \\
	& 0 && {\Loc(\on{Unzip}^{[3, n]}(X))_\ast}
	\arrow[from=1-1, to=1-3]
	\arrow[from=1-1, to=2-2]
	\arrow[from=1-1, to=3-1]
	\arrow[from=1-3, to=2-4]
	\arrow[from=1-3, to=3-3]
	\arrow[from=2-2, to=2-4]
	\arrow[from=2-2, to=4-2]
	\arrow[from=2-4, to=4-4]
	\arrow[from=3-1, to=3-3]
	\arrow[from=3-1, to=4-2]
	\arrow[from=3-3, to=4-4]
	\arrow[from=4-2, to=4-4]
\end{tikzcd}\]
\end{remark}

\begin{remark}\label{rem:tip_of_cons_cube}
Repeatedly applying \Cref{cor:lax_pushout_Unzip}, one can explicitly describe the entries of the categorical cubes 
\[
\cons(\on{Unzip}^{[k, n]}(X))_\ast \colon I^{n+1} \to \on{St}_k
\]
as follows:
\[
(\Cons(\on{Unzip}^{[k, n]}(X)))_{\bf i} = 
\begin{cases} 
\on{Cons}( (p_{1,n})^{-1}(\partial X) \cap \bigcap_{i_j=0, j\not=n} \partial_{i_j}^{{\on{min}\{\{i_j=0\},k\}}} X), & \text{if $ \ i_n =0$}\\
\on{Cons}( \on{Unzip}^{[\on{min}\{\{i_j=0\},k\},n]}(X) \cap \bigcap_{i_j=0, j\not=n} \partial_{i_j}^{\on{min}\{\{i_j=0\},k\}} X ) & \text{if $ \ i_n =1$}
\end{cases}
\]
	All the categories of sheaves in this cube are constructible with respect to the stratification induced by the stratification on $X$. Hence the notation $\cons(\on{Unzip}^{[k, n]}(X))_\ast$ is justified by the fact that the tip of the cube, that is the element corresponding to $\mathbf{i} = (1,1, \ldots ,1)$, is identified with constructible sheaves on $\on{Unzip}^{[k, n]}(X)$ for the stratification induced by $(X,P)$.
	In particular, the tip of $\cons(X)_\ast$ is identified with $\cons_P(X)$.
\end{remark}

\subsection{Calabi--Yau cubes associated with smooth manifolds with corners}\label{subsec:manifolds_with_corners}

	Denote by $C_*(\mhyphen;R)\colon \mathcal{S}\to \on{Mod}_k$ the $k$-valued homology functor, which is the essentially unique colimits preserving functor determined by $C_*(\ast)\simeq k$.
	Given $X\in \mathcal{S}$, we have $X\simeq \on{colim}_X \ast \in \mathcal{S}$. Applying the functor $\on{Loc}(\mhyphen)$ and the negative cyclic homology functor $\on{HH}^{S^1}(\mhyphen)$, this equivalence induces a morphism in $\on{Mod}_k$
\[
C_*(X)\simeq \on{colim}_X k\simeq \on{colim}_X \on{HH}^{S^1}(\on{Loc}(\ast))\to \on{HH}^{S^1}(\on{Loc}(X))\,.
\]
	The morphism is functorial in $X$. Given a manifold $X$ with boundary $\partial X$, we thus obtain a morphism in $\on{Mod}_k$
\begin{equation}\label{eq:sing_homology_to_cyclic_homology}
C_*(X,\partial X)\coloneqq \on{cof}( C_*(\partial X)\to C_*(X))\longrightarrow  \on{HH}^{S^1}(\on{Loc}(X),\on{Loc}(\partial X))\,.
\end{equation}

	The following is an $\infty$-categorical version of \cite[Thm.~5.7]{BD19} for smooth manifolds with boundary.

\begin{theorem}\label{thm:rel_CY_structure_local_systems_with_boundary}
	Let $X$ be a compact smooth orientable manifold of dimension $n$ with boundary $\partial X$. The composite of any orientation class $k[n]\to C_\ast(X,\partial X)$ with the morphism \eqref{eq:sing_homology_to_cyclic_homology} defines a relative left $n$-Calabi--Yau structure on the functor 
\[
\on{Loc}(\partial X)\longrightarrow \on{Loc}(X)
\]
which restricts to a left $(n-1)$-Calabi--Yau structure on $\on{Loc}(\partial X)$.
\end{theorem}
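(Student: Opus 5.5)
The plan is to reduce non-degeneracy to Poincaré–Lefschetz duality by identifying all the relevant endofunctors concretely in terms of the homotopy types. First, make $\on{Loc}(Y)\simeq \on{LMod}_{C_*(\Omega Y)}$ explicit for a connected compact manifold $Y$ (working one component at a time). Compact manifolds are finite CW complexes, so $C_*(\Omega Y)$ is a smooth $k$-algebra, and $\on{Loc}(Y)$ is smooth and dualizable. The inverse dualizing bimodule $\on{id}^!_{\on{Loc}(Y)}$ corresponds to the $C_*(\Omega Y)$-bimodule $\on{Mor}_{C_*(\Omega Y)^e}(C_*(\Omega Y),C_*(\Omega Y)^e)$. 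Via the identification $\on{Fun}(\Delta^1\text{-free})$ of bimodules with local systems on $Y$ with values in $\on{Loc}$-endofunctors (equivalently, $\on{Loc}$ of the free loop space fibered over $Y\times Y$), this becomes the functor $\mathcal{F}\mapsto C^*(Y;\mathcal{F}\otimes \mathcal{L})$-type expression. In particular, $\on{id}^!$ is computed by \emph{cohomology with local coefficients} (Poincaré dual of the diagonal local system), while $\on{id}$ is computed by homology.

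Second, check that the map $C_*(X)\to \on{HH}^{S^1}(\on{Loc}(X))\to \on{HH}(\on{Loc}(X))\simeq C_*(LX)$ is the inclusion of constant loops. This follows from the functoriality in $X$ and the case $X=\ast$, using colimit preservation of $\on{Loc}$ and of $\on{HH}$ on $\on{LinCat}^{\on{dual}}_k$. The relative version \eqref{eq:sing_homology_to_cyclic_homology} is then the relative constant-loop map. The class $[X,\partial X]\in C_*(X,\partial X)$ thus gives, under $\on{HH}(\C)\simeq\on{Mor}(\on{id}^!,\on{id})$, the cap-product map. Concretely, the vertical maps in the Brav–Dyckerhoff diagram are, after evaluating on a local system $\mathcal{F}$ (or on the generator, the universal local system of $\Omega$-modules), the maps
\[
C^{*}(X;\mathcal{F})\xrightarrow{\cap [X,\partial X]} C_{*}(X,\partial X;\mathcal{F})[-n]
\]
and their relatives. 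Here I use that $F_!=i_!$ for $i\colon\partial X\to X$ is left Kan extension, so $F_!(\on{id}^!_{\partial X})$ computes cochains on $\partial X$ pushed forward.

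Third, identify the horizontal fiber/cofiber sequences with the long exact sequences of the pair $(X,\partial X)$ in cohomology and homology with local coefficients. The diagram then becomes the ladder
\[
C^*(X,\partial X;\mathcal{F})\to C^*(X;\mathcal{F})\to C^*(\partial X;\mathcal{F})\quad\text{vs}\quad C_*(X;\mathcal{F})\to C_*(X,\partial X;\mathcal{F})\to C_*(\partial X;\mathcal{F})[1],
\]
with appropriate shifts, and the verticals are cap products with $[X,\partial X]$ and $[\partial X]$. Poincaré–Lefschetz duality with twisted coefficients, applied with $\mathcal{F}$ ranging over a generator such as $C_*(\widetilde{X})$, gives equivalences. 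The orientability hypothesis ensures that the orientation local system is trivial. The absolute statement for $\partial X$ is the same argument with $\partial\partial X=\emptyset$. Its class is the boundary of the relative class, and the boundary map $C_*(X,\partial X)\to C_*(\partial X)[1]$ is compatible with \eqref{eq:sing_homology_to_cyclic_homology}.

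The main obstacle is the second/third step: rigorously identifying, natively in the $\infty$-categorical setting, the morphism $\on{id}^!\to \on{id}[-n]$ induced by a Hochschild class coming from $C_*(X)$ with the cap product, including the relative version and compatibility with $F_!$. I would try first to reduce to $X$ a point and colimits. Both sides are natural in $X\in\mathcal{S}$, but $\on{id}^!$ is not colimit-compatible, so instead I would use the adjunction $\on{HH}(\C)\simeq \on{Mor}(\on{id}^!,\on{id})$ and write $\on{id}^!$ via the coevaluation for $\on{Loc}(X)\simeq\on{Fun}(X,\on{Mod}_k)$. Here $\on{Loc}(X)^\vee\simeq\on{Loc}(X)$ and $\on{ev}$ is $\mathcal{F}\boxtimes\mathcal{G}\mapsto C_*(X;\mathcal{F}\otimes\mathcal{G})$. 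This gives $\on{ev}^L$ as $\Delta_*$ of cochain-type data, finite because $X$ is a finite complex. This makes the cap-product identification a formal Beck–Chevalley computation.
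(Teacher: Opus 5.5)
Your argument is sound in outline but takes a genuinely different route from the paper's. It is close in spirit to the original argument of \cite{BD19}: you compute $\on{id}^!$ globally, identify the map induced by the constant-loop fundamental class with a cap product, and read off non-degeneracy from twisted Poincar\'e--Lefschetz duality.

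The paper never computes $\on{id}^!$ of a non-trivial space, nor any cap product. It triangulates $X$ and uses unzippings to build the $1$-cube $\on{Loc}(\partial X)\to\on{Loc}(X)$ by iterated non-lax pushouts of cubes of local systems on unzipped simplices and on tubular neighbourhoods, inducting on the dimension. The only hands-on input is \Cref{lem:local_systems_on_spheres_are_CY} for $\on{Loc}(S^{m-1})\to\on{Loc}(D^m)\simeq\on{Mod}_k$. There the structure on the sphere is glued from two discs, and relative non-degeneracy is forced by the absence of non-zero transformations $\on{id}\to[1-m]$ for $m\geq 2$. Everything else is propagated by \Cref{prop:cofiber_and_pushout_CY_cubes}.

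What each route buys:
\begin{itemize}
\item The paper's route is local and formal, avoids all chain-level identifications, and exercises exactly the cube/unzipping/gluing machinery reused for constructible sheaves. The price is triangulations and bookkeeping over faces.
\item Your route is more direct and applies to any finite oriented Poincar\'e duality pair. However, all its weight rests on the step you flag as the main obstacle, which you leave as a plan.
\end{itemize}

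That step does go through cleanly. One has $\on{ev}^L(k)\simeq\Delta_!D_X$ (with $\Delta_!$, not $\Delta_*$), where $D_X\in\on{Loc}(X)$ corepresents $C_*(X;-)$. Hence $\on{HH}(\on{Loc}(X))\simeq\on{Mor}(D_X,\Delta^*\Delta_!k_X)\simeq C_*(LX)$. A class $c\colon k[n]\to C_*(X)\simeq\on{Mor}(D_X,k_X)$, composed with the unit $k_X\to\Delta^*\Delta_!k_X$ (the constant loops), induces precisely $\Delta_!(c)\colon\on{id}^!\to\on{id}[-n]$. For $F=i_!$ one has $F_!(\on{id}^!_{\partial X})\simeq\Delta_!i_!D_{\partial X}$ and $F_!(\on{id}_{\partial X})\simeq\Delta_!i_!k_{\partial X}$. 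So the whole Brav--Dyckerhoff diagram is $\Delta_!$ applied to the Poincar\'e--Lefschetz ladder in $\on{Loc}(X)$.

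Two justifications need repair.

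First, $\on{HH}$ does not preserve colimits on $\on{LinCat}_k^{\on{dual}}$. For instance, $\on{Loc}(S^1)$ is a pushout of copies of $\on{Mod}_k$, but $\on{HH}(\on{Loc}(S^1))\simeq C_*(LS^1)$ is not the corresponding pushout $C_*(S^1)$. For the constant-loop identification you only need naturality of $\on{HH}(\on{Loc}(Y))\simeq C_*(LY)$ in $Y$, since \eqref{eq:sing_homology_to_cyclic_homology} is an assembly map.

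Second, $C_*(\widetilde X)$ of the universal cover generates $\on{Loc}(X)$ only for aspherical $X$. On $S^2$, for example, $C^*(S^2;-)$ kills the non-zero $C_*(\Omega S^2)$-modules on which the generator of $H_1(\Omega S^2)$ acts invertibly. The generators are $x_!k$ for $x\colon\ast\to X$, one per component, with stalks the chains on path spaces. The coefficients that occur are therefore genuine $\infty$-local systems, so you need Poincar\'e--Lefschetz duality on all of $\on{Fun}(X,\on{Mod}_k)$. This follows from the classical $\pi_1$-twisted statement by d\'evissage along Postnikov towers, because for finite complexes the twisted chain and cochain functors involved preserve all limits and colimits.
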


	We prove \Cref{thm:rel_CY_structure_local_systems_with_boundary} at the end of this section.
	We next record the following refinement of \Cref{thm:rel_CY_structure_local_systems_with_boundary}, which is also a variant of \cite[Thm.~7.4.2]{CDW23}.

\begin{corollary}\label{prop:CY_cube_manifold_w_corners}
	Let $X$ be a compact smooth $\langle n\rangle$-manifold of dimension $d$.
	An orientation of the topological manifold with boundary underlying $X$ induces a cubical left $d$-Calabi--Yau structure on the categorical $n$-cube 
\[
\loc(X)_\ast \coloneqq \loc(\underline{X}) \colon I^n\to \on{LinCat}_k\,.
\]
\end{corollary}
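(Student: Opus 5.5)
The plan is to build the class from the relative fundamental class of $X$ and then check non-degeneracy by induction on $n$, using the recursive \Cref{def:cubical_CY_structure}.

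\emph{The class.} Since $\on{Loc}(\mhyphen)$ and $C_*(\mhyphen)$ preserve colimits and the comparison map $C_*(Y)\to \on{HH}^{S^1}(\on{Loc}(Y))$ is natural in $Y$, applying it entrywise to $\underline{X}$ gives a morphism of cubes $C_*(\underline{X})\to \on{HH}^{S^1}(\loc(\underline{X}))$. Taking total cofibers yields a map $C_*(\underline{X})^{\on{tot}}\to \on{HH}^{S^1,\on{tot}}(\loc(X)_\ast)$. Because $\bigcup_{{\bf i}<{\bf j}}\underline{X}({\bf i})=\partial\underline{X}({\bf j})$, and the faces $\partial_iX$ meet cleanly along corners, excision identifies the total cofiber with relative chains:
\[
C_*(\underline{X})^{\on{tot}}\simeq C_*(X,\partial X)\,.
\]
The orientation class $k[d]\to C_*(X,\partial X)$ then gives the candidate class $\eta\colon k[d]\to \on{HH}^{S^1,\on{tot}}(\loc(X)_\ast)$.

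\emph{Condition (1).} The colimit $\on{colim}_{I^n\backslash\{1\}^{\times n}}\loc(\underline{X})$ is $\loc$ of the homotopy colimit of the faces and their intersections. This homotopy colimit is $\partial X$, since the faces form a closed cover by cofibrations, i.e.\ an iterated pushout along collared corners. So the functor in condition (1) is $\loc(\partial X)\to\loc(X)$. The relative class induced by $\eta$ is the image of the orientation class under \eqref{eq:sing_homology_to_cyclic_homology}, and \Cref{thm:rel_CY_structure_local_systems_with_boundary} shows it is relative left $d$-Calabi--Yau.

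\emph{Condition (2).} The face cube $\loc(X)_{\partial_i^0\ast}$ is the $(n-1)$-cube $\loc(\underline{\partial_iX})$ of the $\langle n-1\rangle$-manifold $\partial_iX$. The induced class $k[d-1]\to \on{HH}^{S^1,\on{tot}}$ is the image of the boundary map $C_*(X,\partial X)\to C_*(\partial_iX,\partial\partial_iX)[-1]$ applied to the fundamental class, which is the fundamental class of $\partial_iX$ with the induced orientation. By induction on $n$ this is a cubical left $(d-1)$-Calabi--Yau structure. The base case $n=0$ is Brav--Dyckerhoff for closed manifolds, which is the special case $\partial X=\emptyset$ of \Cref{thm:rel_CY_structure_local_systems_with_boundary}.

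The main obstacle is the bookkeeping of compatibility. We must check that the restriction maps from the total-cofiber class to the face classes, encoded in the total cofiber as in the data listed after \Cref{def:CY_Cubes}, agree with the topological boundary maps of fundamental classes. We must also check that signs and orientations match. To handle this I would compute the total cofiber iteratively, first taking the cofiber in direction $i$. This exhibits $C_*(\underline{X})^{\on{tot}}$ as the cofiber of $C_*(\underline{\partial_iX})^{\on{tot}}\to C_*(\underline X_{\partial_i^1})^{\on{tot}}$, so that naturality of the comparison map reduces compatibility to the long exact sequence of the triple $(X,\partial X\setminus\mathring{\partial_iX},\partial X)$.
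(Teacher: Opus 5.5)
Your proposal is correct and takes essentially the same route as the paper. The class is the image of the relative fundamental class under the natural map $C_*(\underline{X})^{\on{tot}}\simeq C_*(X,\partial X)\to \on{HH}^{S^1,\on{tot}}(\loc(X)_\ast)$, condition (1) is \Cref{thm:rel_CY_structure_local_systems_with_boundary}, and condition (2) follows by iterating over faces once the class is known to restrict to the fundamental class of $\partial_iX$; the paper records this as a commutative square, and you obtain it from the triple sequence. One small slip: with $[1]$ denoting suspension, the boundary map must land in $C_*(\partial_iX,\partial\partial_iX)[1]$, not $[-1]$, to produce the class $k[d-1]\to C_*(\partial_iX,\partial\partial_iX)$ you want.
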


\begin{proof}
	The proof is analogous to the proof of \cite[Thm.~7.4.2]{CDW23}: 

	We have $\on{colim}_{{\bf i}\in I^n\backslash \{1,\dots,1\}} C_\ast(\underline{X}({\bf i})) \simeq C_\ast(\partial X)$, since the functor $C_\ast(\mhyphen)\colon \mathcal{S}\to \on{Mod}_k$ preserves colimits.
	There is thus an induced morphism in $\on{Mod}_k$
\[ C_\ast(X,\partial X) \coloneqq \on{cof}(C_\ast(\partial X) \to C_\ast(X))\to \on{HH}^{S^1,\on{tot}}(\loc(X)_\ast)\,.\]
	For each face $\partial_i X$ of $X$, there is a corresponding commutative square:
\[
\begin{tikzcd}
{C_\ast(X,\partial X)} \arrow[r] \arrow[d]                   & {\on{HH}^{S^1,\on{tot}}(\loc(X)_\ast)} \arrow[d]                             &                                                        \\
{C_{\ast}(\partial_i X,\partial \partial_i X)[1]} \arrow[r] & {\on{HH}^{S^1,\on{tot}}(\loc(X)_{\partial_i^0 \ast})[1]} \arrow[r, "\simeq"] & {\on{HH}^{S^1,\on{tot}}(\loc(\partial_i X)_\ast)[1]}
\end{tikzcd}
\]
	The total negative cyclic homology class of $\loc(X)_\ast$ induced by an orientation of $(X,\partial X)$ restricts to the total negative cyclic homology class of the $i$-th face $\loc(\partial_i X)_\ast$ induced by the restriction of the orientation to $(\partial_i X,\partial \partial_i X)$. 

	The negative cyclic homology class of the functor 
\[ \loc(\partial X)\simeq \on{colim}_{I^m\backslash \{1,\dots,1\}}\loc(X)_\ast \longrightarrow \loc(X)\]
induced by an orientation of $(X,\partial X)$ describes a relative left $d$-Calabi--Yau structure by \Cref{thm:rel_CY_structure_local_systems_with_boundary}.
	Using that the restricted classes arise from the restricted orientation of the faces of $X$, we can iterating this argument for all faces, showing the recursive condition (2) from the definition of a cubical left Calabi--Yau structure in \Cref{def:cubical_CY_structure}.
\end{proof}

	Let $X$ be a compact differentiable manifold. Then $X$ admits a triangulation \cite{Whi40}, meaning that $X$ is homeomorphic to a finite simplicial complex.
	Furthermore, the triangulation satisfies that the link of any vertex of the simplicial complex is homeomorphic to a sphere.

\begin{lemma}
	Let $X$ be a compact smooth manifold with boundary and a triangulation. Then there exists a conically smooth stratification with boundary on $X$, whose depth $d$ stratum is given by the interior of all $d$-cells in $X$. 
\end{lemma}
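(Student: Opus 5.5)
We would construct the stratification directly from the triangulation, taking as strata the open simplices grouped by dimension. Concretely, let $K$ be a finite simplicial complex with a homeomorphism $|K|\cong X$; by Whitehead's theorem \cite{Whi40} we may take this to be a smooth ($C^1$) triangulation, so each closed simplex is smoothly embedded. Define $\rho\colon X\to P$, where $P$ is the face poset of $K$ ordered by reverse inclusion ($\sigma\le\tau$ iff $\sigma\subseteq\tau$, so that lower-dimensional simplices are deeper). Each point is sent to the unique simplex containing it in its interior. Declare as boundary part of $P$ those simplices contained in $\partial X$; this gives the map $P\to\mathcal{P}([1])$. The strata are then open cells. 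After grouping, the stratum of dimension $\dim X-d$, i.e.\ of depth $d$, is the union of interiors of the corresponding cells, which matches the statement with the paper's indexing of depth.

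The key step is to show that $(X,P)$ is conically smooth in the sense of \cite{AFT17}. The cleanest route we would try first is to exhibit it as a Whitney stratified space and then invoke \Cref{thm:Whitney_implies_conically_smooth}.

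The triangulation is smooth, so locally near an open simplex $\mathring\sigma$ the pair $(X,\{\text{simplices}\})$ is diffeomorphic to a linear model. This model is $\mathbb{R}^{\dim\sigma}\times C(\on{lk}(\sigma))$, the product with the open cone on the link, embedded linearly in a Euclidean space (or a half-space for boundary simplices). Linear simplicial stratifications of polyhedra in $\mathbb{R}^N$ satisfy Whitney's condition (b) trivially: all strata are affine pieces, and the tangent planes of a higher stratum along a sequence converge to a fixed affine plane containing the lower one.

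The main obstacle is transporting this along the smooth triangulation. The homeomorphism $|K|\to X$ is smooth only on each closed simplex, so Whitney regularity does not automatically transfer. To get around this, we would use Whitehead's result that a smooth triangulation admits a smoothing compatible with the PL structure, so that around each simplex there is a chart of $X$ that is a diffeomorphism onto a neighbourhood in a linear model.

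Alternatively, and more robustly, we would bypass Whitney theory and verify conical smoothness directly by induction on the depth. Take the conical charts $\mathbb{R}^{i}\times C(\on{lk}\sigma)$ of the triangulation; note that $\on{lk}\sigma$ is a sphere or a disk, hence itself a compact smooth triangulated manifold of lower dimension. By induction, $\on{lk}\sigma$ carries a conically smooth stratification, and the charts $\mathbb{R}^i\times C(\on{lk}\sigma)$ are basics in the sense of \cite[Def.~3.2.21]{AFT17}. The remaining check is that the transition maps between these charts are conically smooth. This follows because the charts come from the smooth structure, and conically smooth maps between basics include restrictions of smooth maps preserving strata.

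For the boundary, we use collars \cite[Lemma 8.2.1]{AFT17} to obtain the topologically coCartesian condition. Boundary simplices have links that are disks, and the chart $\mathbb{R}_{\ge0}\times\mathbb{R}^{i}\times C(\on{lk}_{\partial X}\sigma)$ provides the required extension $\mathbb{R}_{\ge0}\times\rho'^{-1}(\{1\})\subset X$.
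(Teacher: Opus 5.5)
Your setup is fine: the face poset, with boundary simplices sent to $\{1\}\in\mathcal{P}([1])$, does the job of the paper's refinement of $P$ into interior and boundary cells. The gap is the central step, conical smoothness. The paper does not prove this; it takes the conically smooth stratification induced by the triangulation from \cite[Example 3.5.6]{AFT17}. Neither of your two routes establishes it.

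\textbf{Whitney route.} You rely on Whitehead providing, around each simplex, a chart of $X$ in which the triangulation becomes linear. This is not part of Whitehead's theorem and is false in general. The images of the simplices are curved. At a vertex of high valence whose edges have generic curvature, the second-order conditions for a diffeomorphism to straighten all edges (one per edge) outnumber the free parameters in its quadratic part.

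\textbf{Direct route.} You assert the transition maps are conically smooth ``because the charts come from the smooth structure''. But the charts $\mathring\sigma\times C(\on{lk}\sigma)$ are PL join charts, and their radial coordinate is only piecewise smooth on $X$. The transitions are PL, and their conical smoothness at the cone points is exactly what has to be proved. Moreover, $\on{lk}\sigma$ is only a PL sphere or disk, so your induction hypothesis on smooth triangulated manifolds does not directly apply to it.

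\textbf{Repairing the Whitney route.} It can be fixed without any linearization. Take a $C^\infty$ triangulation, let $\tau$ be a face of $\sigma$, and let $x_i\in\mathring\sigma$, $y_i\in\mathring\tau$ converge to $y\in\mathring\tau$. All these points lie in the closed simplex $\bar\sigma$, which is a smoothly embedded manifold with corners. So limits of secant lines lie in $T_y\bar\sigma=\lim T_{x_i}\mathring\sigma$. Thus Whitney's condition (b) holds directly, and \Cref{thm:Whitney_implies_conically_smooth} applies once you view $X$ as a closed subset of $X\cup_{\partial X}(\partial X\times[0,1))$.

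\textbf{Boundary step.} This also fails as written. The star of a boundary simplex is $\mathring\sigma\times C(\on{lk}\sigma)$ with $\on{lk}\sigma$ a triangulated disk. This is homeomorphic to $\mathbb{R}_{\geq 0}\times\mathring\sigma\times C(\on{lk}_{\partial X}\sigma)$, but in general not isomorphic to it as a stratified space. In the product model, a boundary vertex $v$ of a surface has exactly one interior $1$-dimensional stratum and two $2$-dimensional strata emanating from it. A triangulation may instead have several interior edges and triangles at $v$. In that case no open embedding $\mathbb{R}_{\geq 0}\times\partial X\hookrightarrow X$ extending the identity of $\partial X$ maps strata into strata. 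So the collars of \cite[Lemma 8.2.1]{AFT17} do not give the topologically coCartesian condition. You would have to choose the decomposition to be a product $[0,1)\times(\text{cells of }\partial X)$ on a collar of $\partial X$ — a cell decomposition rather than a triangulation there, in line with the ``cells'' of the statement — and then verify the condition. The paper's own proof disposes of this step in one sentence after separating interior and boundary cells, so an actual argument is needed here.
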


\begin{proof}
	The conically smooth stratification $X\to P$ is described in \cite[Example 3.5.6]{AFT17}.
	Refining $P$ if necessary, we may assume that for each $p\in P$, $X_p$ is a disjoint union of interiors of $d$-cells contained only in $X^\circ$ or only in $\partial X$.
	The morphism $X\to P$ then conically extends to a topologically coCartesian map $X\to P\to \mathcal{P}([1])$, specifying a conically smooth stratification with $\langle 1\rangle$-corners.
\end{proof}

\begin{lemma}\label{lem:local_systems_on_spheres_are_CY}
	For $m\geq 1$, let $D^m$ be the $m$-dimensional disc with boundary $S^{m-1}$.
	A choice of orientation class $\sigma\colon k[m]\simeq C_*(D^m,S^{m-1})$ gives rise via \eqref{eq:sing_homology_to_cyclic_homology} to a relative left $m$-Calabi--Yau structure on the functor $\on{Loc}(S^{m-1})\to \on{Loc}(D^m)$ which restricts to a left $(m-1)$-Calabi--Yau structure on $\on{Loc}(S^{m-1})$.
\end{lemma}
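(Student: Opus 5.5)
Since $D^m$ is contractible, we have $\on{Loc}(D^m)\simeq \on{Mod}_k$, and the functor $\on{Loc}(S^{m-1})\to \on{Loc}(D^m)$ identifies with the pushforward $\pi_!\colon \on{Fun}(S^{m-1},\on{Mod}_k)\to \on{Mod}_k$ along $\pi\colon S^{m-1}\to \ast$, i.e.~$\on{colim}_{S^{m-1}}$. Moreover $\on{Loc}(S^{m-1})\simeq \on{LMod}_{C_*(\Omega S^{m-1})}$ for $m\geq 2$, with $C_*(\Omega S^{m-1})$ a chain algebra that is homologically smooth. For $m=1$ we have $S^0$ two points and $\on{Loc}(S^0)\simeq \on{Mod}_k\times\on{Mod}_k$. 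My plan is to reduce the statement to a direct check of the non-degeneracy diagram from the definition of a relative left Calabi--Yau structure, as in \cite[Thm.~5.7]{BD19} and its proof, working in the $\infty$-categorical setting.

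\textbf{Step 1 (absolute structure on $\on{Loc}(S^{m-1})$).} The boundary of the class $\sigma$ is the fundamental class $[S^{m-1}]\colon k[m-1]\to C_*(S^{m-1})$. I will identify $\on{HH}(\on{Loc}(Y))\simeq C_*(LY)$ for $Y\in\mathcal{S}$ connected (free loop space; this follows from $\on{Loc}(Y)\simeq \on{LMod}_{C_*(\Omega Y)}$ and the classical computation of Hochschild homology of group algebras). Under this identification, the map $C_*(Y)\to \on{HH}^{S^1}(\on{Loc}(Y))\to \on{HH}(\on{Loc}(Y))$ is induced by the constant loops $Y\to LY$. The inverse dualizing bimodule of $\on{LMod}_{C_*(\Omega Y)}$ is the bimodule $C_*(\Omega Y)^!$, and for $Y=S^{m-1}$ (a Poincaré duality space) cap product with $[S^{m-1}]$ gives $C_*(\Omega Y)^!\simeq C_*(\Omega Y)[1-m]$. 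This is the Poincaré duality statement for the adjoint-action bimodule, equivalently the fact that the dualizing spectrum of $S^{m-1}$ is $k[1-m]$. Hence the class is non-degenerate. For $m=1$ the check is trivial: $\on{Mod}_k$ is $0$-Calabi--Yau with the unit class.

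\textbf{Step 2 (relative structure).} I will compute $\pi_!(\on{id}^!_{\on{Loc}(S^{m-1})})$ and the counit $\on{cu}\colon \pi_!\pi^*\to \on{id}$ on $\on{Mod}_k$. Endofunctors of $\on{Mod}_k$ are $k$-modules, and $\pi_!(\on{id}_{\on{Loc}(S^{m-1})})$ corresponds to $C_*(S^{m-1})$, with $\on{cu}$ the augmentation $C_*(S^{m-1})\to k$. The fiber of $\on{cu}[1-m]$ is $\tilde C_*(S^{m-1})[1-m]$ shifted by one, which is $k$. The top row is $\on{id}^!_{\on{Mod}_k}=k\to \pi_!(\on{id}^!)\simeq C_*(S^{m-1})[1-m]$. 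I then need to show that the vertical maps induced by $\sigma$ are equivalences. The left vertical map is $k\to k$, the pairing of $\sigma$ with the unit. The middle map is $\pi_!$ applied to the Step 1 equivalence. The right map then follows from the five lemma in the stable setting.

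The main obstacle is making the identification of the vertical maps with cap products rigorous and functorial. Concretely, this means that the class built via \eqref{eq:sing_homology_to_cyclic_homology} induces, on inverse dualizing bimodules, exactly the Poincaré–Lefschetz duality maps for $(D^m,S^{m-1})$. My approach would be to avoid explicit bimodule calculations: I would note that the map \eqref{eq:sing_homology_to_cyclic_homology} is natural in spaces, and that $\on{Loc}$ sends the pushout $S^{m-1}\simeq D^{m-1}\amalg_{S^{m-2}}D^{m-1}$ to a pushout. This lets me induct on $m$ using gluing of relative Calabi--Yau structures along pushouts \cite{BD19} (the $1$-cube case of \Cref{prop:cofiber_and_pushout_CY_cubes}), where the base case $m=1$, $\on{Mod}_k^{\times 2}\to\on{Mod}_k$, is verified directly. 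Applying \Cref{lem:vertical_cofiber_is_CY} to the resulting square then yields the structure on $\on{Loc}(S^{m-1})\to\on{Loc}(D^m)$, which completes the proof.
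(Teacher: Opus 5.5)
Your final route for the absolute part (induct on $m$, and glue two copies of the relative structure on $\on{Loc}(S^{m-2})\to\on{Loc}(D^{m-1})$ along $S^{m-1}\simeq D^{m-1}\amalg_{S^{m-2}}D^{m-1}$) is exactly the paper's. Your set-up of the relative non-degeneracy diagram in $\on{Mod}_k$ is also right: with $\on{cu}$ the augmentation, $\on{fib}\simeq\tilde C_*(S^{m-1})[1-m]\simeq k$, with no extra shift.

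The gap is in the relative part. The absolute structure on $\on{Loc}(S^{m-1})$ only shows that the \emph{middle} vertical map is an equivalence. In a map of fiber sequences this says nothing about the outer maps, and the five lemma needs one of them first. Your claim that the left map $k\to k$ is ``the pairing of $\sigma$ with the unit'' (and hence invertible) is unjustified; it is precisely the cap-product identification you yourself flag as the obstacle.

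The proposed workaround does not supply it. Gluing along $D^{m-1}\amalg_{S^{m-2}}D^{m-1}$ yields only $(m-1)$-dimensional information, and the pushout square it produces has an equivalence as its vertical cofiber functor. The square that actually encodes $\on{Loc}(S^{m-1})\to\on{Loc}(D^m)$ is the disc with its boundary split into two hemispheres, and it contains the desired relative non-degeneracy as condition (1) of \Cref{def:cubical_CY_structure}. So neither square gives you the relative structure via \Cref{lem:vertical_cofiber_is_CY} without circularity. Some genuinely $m$-dimensional input is needed, for example a product formula for $D^{m-1}\times D^1$, which you do not invoke. Moreover, the inductive step at level $m+1$ glues the relative structure at level $m$, so this gap also breaks your induction.

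The missing idea, which is how the paper concludes, is a short degree argument. As endofunctors of $\on{Mod}_k$, both $\pi_!(\on{id}^!_{\on{Loc}(S^{m-1})})$ and $\pi_!(\on{id}_{\on{Loc}(S^{m-1})})[1-m]$ are $\on{id}\oplus[1-m]$. By \cite[Lem.~3.3]{Chr20} and \cite[Lem.~2.34]{Chr23}, one can choose the splittings so that $\on{cu}^!$ is the inclusion of the summand $\on{id}$ and $\on{cu}[1-m]$ is the projection onto $[1-m]$.

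For $m\geq 2$ the off-diagonal components of the middle equivalence vanish. Since $k$ is discrete, there are no nonzero transformations between $\on{id}$ and $[1-m]$ in either direction; the paper uses connectivity of $k$ for one direction and commutativity of the diagram for the other. The middle equivalence is therefore diagonal with invertible diagonal entries. The left vertical map is its $\on{id}\to\on{id}$ entry, hence an equivalence, and then so is the right one. This also explains why $m=1$, where $[1-m]=\on{id}$, has to be the separately checked base case.
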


\begin{proof}
	We prove the assertion by an induction on $m$. The case $m=1$ follows from a direct argument and is left to the reader.
	Suppose the statement has been proved for $m-1$. Using the pushout diagram in $\on{LinCat}_k^{\on{dual}}$ 
\[
\begin{tikzcd}
\on{Loc}(S^{m-2}) \arrow[d] \arrow[r] \arrow[rd, "\ulcorner", phantom] & \on{Loc}(D^{m-1}) \arrow[d] \\
\on{Loc}(D^{m-1}) \arrow[r]                                              & \on{Loc}(S^{m-1})          
\end{tikzcd}
\]
and the gluing result for the left Calabi--Yau structures from \cite[Thm.~3.15]{Chr23}, we find that $\on{Loc}(S^{m-1})$ inherits a relative left $(m-1)$-Calabi--Yau structure arising from the restriction of $\sigma$. 

	The class $\sigma$ gives via \eqref{eq:sing_homology_to_cyclic_homology} rise to a diagram of $k$-linear endofunctors of $\on{Loc}(D^m)\simeq \on{Mod}_k$ of the form
\[
\begin{tikzcd}
\on{id}_{\on{Mod}_k} \arrow[d] \arrow[r, hook] & {\on{id}_{\on{Mod}_k}\oplus [1-m]} \arrow[d, "\simeq"] \arrow[r] & \on{cof} \arrow[d] \\
\on{fib} \arrow[r]                             & {\on{id}_{\on{Mod}_k}\oplus [1-m]} \arrow[r, two heads]          & {[1-m]}           
\end{tikzcd}
\]
with horizontal fiber and cofiber sequences, where $\on{id}_{\on{Mod}_k}\simeq \on{id}_{\on{Mod}_k}^!$ and the middle morphism arises from the equivalence $\on{id}_{\on{Loc}(S^{m-1})}^!\simeq \on{id}_{\on{Loc}(S^{m-1})}[1-m]$.
	The right lower horizontal morphism is given by a counit morphism of the adjunction $f_!\colon \on{Loc}(S^{m-1})\to \on{Loc}(\ast)\noloc f^*$, with $f\colon S^{m-1}\to \ast$, and thus by \cite[Lem.~3.3]{Chr20} equivalent to a projection from the direct sum.
	Similarly, the upper left horizontal morphism is by \cite[Lem.~2.34]{Chr23} and \cite[Lem.~3.3]{Chr20} equivalent to an inclusion of the direct summand.
	We choose the direct sum decompositions of $\on{id}_{\on{Mod}_{\mathbf{S}}}\oplus [1-m]$ such that the inclusion and projection are the canonical ones.
	Since $k$ is connective (it is even concentrated in degree $0$) and $m\geq 2$, there are no non-zero natural transformations $\on{id}_{\on{Mor}_k}\to [1-m]$. The middle vertical autoequivalence of $\on{id}_{\on{Mod}_k}\oplus [1-m]$ must thus be lower triangular with autoequivalences on the diagonal.
	The commutativity of the diagram shows that the lower off-diagonal term of the equivalence must vanish as well.
	From this, we conclude that the vertical morphisms are equivalences. 
\end{proof}

\begin{lemma}\label{lem:unzip_of_Delta_n_is_CY}
	Consider $\Delta^n$ as a conically smooth stratified space and let $\on{Unzip}^{[1,n]}(\Delta^n)$ be its unzipping, considered as a compact smooth $\langle n\rangle$-manifold.
	Then the categorical $n$-cube $\on{Loc}(\on{Unzip}^{[1,n]}(\Delta^n))_\ast$ admits a canonical cubical left $n$-Calabi--Yau structure. 
\end{lemma}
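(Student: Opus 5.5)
The plan is to reduce the statement to \Cref{prop:CY_cube_manifold_w_corners} together with \Cref{lem:local_systems_on_spheres_are_CY}, using the explicit geometry of the unzipping of the simplex. First, describe $Y\coloneqq\on{Unzip}^{[1,n]}(\Delta^n)$ concretely. Iteratively cutting out tubular neighborhoods of the vertices, then the edges, and so on, yields a compact smooth $\langle n\rangle$-manifold homeomorphic to a disc $D^n$. Its $k$-th face $\partial_k Y$ is the union of the pieces of the boundary lying over the depth-$(k)$ strata (or over the original faces of $\Delta^n$). Every intersection of faces $\underline{Y}({\bf i})$ is a product of truncated simplices and is in particular contractible or a disc with corners. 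The underlying topological manifold with boundary is a disc, with boundary homeomorphic to $S^{n-1}$, and it carries the orientation inherited from $\Delta^n\subset\mathbb{R}^n$.

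Second, I would invoke \Cref{prop:CY_cube_manifold_w_corners} directly. An orientation of the topological manifold with boundary underlying $Y$ induces a cubical left $n$-Calabi--Yau structure on $\on{Loc}(Y)_\ast$. The canonical choice is the standard orientation class $k[n]\simeq C_\ast(D^n,S^{n-1})$. The one subtlety is that \Cref{prop:CY_cube_manifold_w_corners} rests on \Cref{thm:rel_CY_structure_local_systems_with_boundary}, whose proof appears later in the paper and may itself depend on the present lemma via triangulations. To avoid circularity, I would instead verify the cubical conditions of \Cref{def:cubical_CY_structure} directly.

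Concretely, I would proceed by induction on $n$. Every proper face $\underline{Y}({\bf i})$ with ${\bf i}\neq(1,\dots,1)$ is again, up to homeomorphism of manifolds with corners, a product of unzipped simplices of lower dimension. One could hope to handle such products via Künneth compatibility of Calabi--Yau structures, but this is awkward. It is easier to note that each face is a disc $D^j$ with corners, and that its cube is the unzipped cube of a lower-dimensional simplex, for which the induction hypothesis applies. Condition (2) then holds by induction, using that the restricted classes come from the restricted orientations, exactly as in the proof of \Cref{prop:CY_cube_manifold_w_corners}. For condition (1), the colimit $\on{colim}_{I^n\backslash\{1,\dots,1\}}\on{Loc}(Y)_\ast\simeq\on{Loc}(\partial Y)\simeq\on{Loc}(S^{n-1})$, since $\on{Loc}$ preserves colimits and the faces cover $\partial Y$. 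The functor to $\on{Loc}(Y)\simeq\on{Loc}(D^n)$ is then the restriction $\on{Loc}(S^{n-1})\to\on{Loc}(D^n)$, and \Cref{lem:local_systems_on_spheres_are_CY} shows that the orientation class defines a relative left $n$-Calabi--Yau structure on it.

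The main obstacle I expect is the compatibility step. One must check that the total negative cyclic class, obtained from $C_\ast(Y,\partial Y)\to\on{HH}^{S^1,\on{tot}}(\on{Loc}(Y)_\ast)$, restricts on each face to the class coming from that face's relative fundamental class. One must also check that its image in the relative class of $\on{Loc}(S^{n-1})\to\on{Loc}(D^n)$ is precisely $\sigma$ from \Cref{lem:local_systems_on_spheres_are_CY}. I would handle this through the naturality of $C_\ast(\mhyphen)\to\on{HH}^{S^1}(\on{Loc}(\mhyphen))$ in the space variable, applied to the cube $\underline{Y}$, combined with the boundary map in relative homology sending $[Y,\partial Y]$ to the fundamental classes of the faces.
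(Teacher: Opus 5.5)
Your proposal follows essentially the same route as the paper. The orientation class $k[n]\simeq C_\ast(Y,\partial Y)$ of the disc $Y=\on{Unzip}^{[1,n]}(\Delta^n)$ gives the total class, its restrictions to the faces are the orientation classes of the faces, and each non-degeneracy condition is reduced to \Cref{lem:local_systems_on_spheres_are_CY}; the paper's proof also never invokes \Cref{prop:CY_cube_manifold_w_corners}, so it sidesteps the circularity you flag in the same way.

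One caveat, which the paper's own wording shares: not every face cube is the unzipped cube of a lower-dimensional simplex. For $n=3$, one back face consists of six squares $\on{Unzip}^{[1,1]}(\Delta^1)\times\on{Unzip}^{[1,1]}(\Delta^1)$, and their cube differs from that of the hexagon $\on{Unzip}^{[1,2]}(\Delta^2)$, so the induction as you phrase it does not close. The fix is the observation you already half-make: use only that each $\underline{Y}(\mathbf{i})$ is a disjoint union of discs whose boundary sphere is the union of the smaller intersections. Then apply \Cref{lem:local_systems_on_spheres_are_CY} componentwise at every vertex $\mathbf{i}$, with no induction over simplices needed.
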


\begin{proof}
	We find that $\partial \on{Unzip}^{[1,n]}(\Delta^n)\simeq S^{n-1}$ and we thus obtain a negative cyclic homology class $\sigma\colon k[n]\simeq C_*(X,\partial X)\to \on{HH}^{S^1,\on{tot}}(\on{Loc}(\on{Unzip}^{[1,n]}(\Delta^n))_\ast)$ from the orientation of $\on{Unzip}(\Delta^n)$.
	For $1\leq i\leq n$, the back face $\on{Loc}(\on{Unzip}^{[1,n]}(\Delta^n))_{\delta^0_i \ast}$ is canonically equivalent to $\on{Loc}(\on{Unzip}^{[1,n]}(\Delta^{n-1}))_\ast$ and the total negative cyclic homology class of $\on{Loc}(\on{Unzip}^{[1,n]}(\Delta^n))_\ast$ restricts to the class arising from the orientation of $\on{Unzip}^{[1,n]}(\Delta^{n-1})$.
	The fact that $\sigma$ induces a cubical left $n$-Calabi--Yau structure thus reduces to an iterated application of \Cref{lem:local_systems_on_spheres_are_CY}.
\end{proof}

\begin{proof}[Proof of \Cref{thm:rel_CY_structure_local_systems_with_boundary}]
	We prove this by an induction on $n$, thus assuming that the statement has been shown for all compact orientable smooth manifolds of dimension $n-1$.
	Consider the categorical cubes $\Loc(\on{Unzip}^{[k,n]}(X))_\ast$ from \Cref{construction:cube}.
	In the case $k=1$, $\on{Unzip}^{[1,n]}(X)$ is given by the disjoint union of the unzippings of the n-cells $\Delta_n\subset X$ and $\Loc(\on{Unzip}^{[1,n]}(X))_\ast$ thus inherits a cubical $n$-Calabi--Yau structure form the orientation of $X$ by \Cref{lem:unzip_of_Delta_n_is_CY}. For $k\geq 1$, we have
\[ 
\Loc(\on{Unzip}^{[k+1,n]}(X))_\ast\simeq \Loc(\on{T}(\widetilde{X^k}))_{\partial_1^1 \ast} \bigsqcup_{\Loc(\on{Unzip}^{[k, n]}(X))_{\partial_k^0 \ast}} \Loc(\on{Unzip}^{[k, n]}(X))_{\partial_k^1 \ast}\,.
\]
	We iteratively produce cubical left $n$-Calabi--Yau structures on the cubes $\Loc(\on{Unzip}^{[k+1,n]}(X))_\ast$ by gluing.
	The boundary of $\on{T}(\widetilde{X^k})$ is homeomorphism to an $(n-1)$-sphere.
	Using \Cref{lem:local_systems_on_spheres_are_CY} and applying the induction assumption to the faces of $\Loc(\on{Unzip}^{[k+1,n]}(X))_\ast$, we thus find that the orientation of $X$ induces a cubical left $n$-Calabi--Yau structure on $\Loc(\on{T}(\widetilde{X^k}))_{\ast}$, compatible with the cubical Calabi--Yau stucture of $\Loc(\on{Unzip}^{[k,n]}(X))_\ast$ on their common face.
	By \Cref{prop:cofiber_and_pushout_CY_cubes}, $\Loc(\on{Unzip}^{[k+1,n]}(X))_\ast$ inherits a cubical left $n$-Calabi--Yau structure.

	In the case $k=n$, the $1$-cube $\Loc(\on{Unzip}^{[n+1,n]}(X))_\ast$ amounts to the functor $\on{Loc}(\partial X)\to \on{Loc}(X)$, yielding the desired Calabi--Yau structure.
\end{proof}

\begin{remark}
	The proof of \Cref{lem:local_systems_on_spheres_are_CY} directly translates from the $k$-linear setting to the base being any connective $\mathbb{E}_\infty$-ring spectrum.
	From this, one can deduce \Cref{lem:local_systems_on_spheres_are_CY} with coefficients in any $\mathbb{E}_\infty$-ring spectrum $R$. The proof of \Cref{thm:rel_CY_structure_local_systems_with_boundary} then also translates to the $R$-linear setting, assuming $X$ is endowed with an orientation with respect to $R$. 
\end{remark}

\subsection{The cubical Calabi--Yau structure on \texorpdfstring{$\Cons(X)_\ast$}{Cons(X)}}\label{section:main_theorem}

	As above, we fix a conically smooth stratified space with boundary $X\to P\to \mathcal{P}([1])$, such that the topological space underlying $X$ is a compact manifold with boundary of dimension $d$.
	In this section, we show that if $X$ is orientable, an orientation of $X$ induces a cubical left $d$-Calabi--Yau structure on the categorical cube $\Cons(X)_\ast$ from \Cref{def:Cons_X}. 

\begin{proposition}\label{prop:final_result_k}
	Let $(X,P)$ be a conically smooth stratified space with boundary of depth $n$ and dimension $d$.
	Assume that the underlying topological space $X$ is an orientable manifold with boundary.
	For every $1 \leq k \leq n+1$, a choice of orientation on $X$ induces a cubical left $d$-Calabi--Yau structure on the categorical $(n+1)$-cube $\cons(\on{Unzip}^{[k, n]}(X))_\ast$ of \Cref{construction:cube}.
\end{proposition}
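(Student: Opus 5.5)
We argue by induction on $k$, following the iterative structure of \Cref{construction:cube}. For $k=1$ the cube $\cons(\on{Unzip}^{[1,n]}(X))_\ast$ is, by construction, the cube of local systems $\Loc(\on{Unzip}^{[1,n]}(X))_\ast$ attached to the compact smooth $\langle n+1\rangle$-manifold $\on{Unzip}^{[1,n]}(X)$. The orientation of $X$ induces an orientation of the underlying manifold with boundary of $\on{Unzip}^{[1,n]}(X)$, compatibly with its embedding into $X$. \Cref{prop:CY_cube_manifold_w_corners} then gives a cubical left $d$-Calabi--Yau structure, whose class is the image of the relative fundamental class $k[d]\to C_\ast(\on{Unzip}^{[1,n]}(X),\partial)$.

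For the inductive step, assume $\cons(\on{Unzip}^{[k,n]}(X))_\ast$ carries a cubical left $d$-Calabi--Yau structure induced by the orientation. By definition,
\[
\cons(\on{Unzip}^{[k+1,n]}(X))_\ast=\big[\cons(\on{Unzip}^{[k,n]}(X))_{\partial_k^0\ast}\to \laxpush{\Loc^0(\on{T}(\widetilde{X^k}))_\ast}{\cons(\on{Unzip}^{[k,n]}(X))_{\partial_k^0\ast}}{\cons(\on{Unzip}^{[k,n]}(X))_{\partial_k^1\ast}}\big].
\]
This has exactly the shape of the special case $\A_\ast\simeq\A''_\ast\simeq 0$ of \Cref{prop:laxgluingCYcubes}, with $\A'_\ast=\cons(\on{Unzip}^{[k,n]}(X))_{\partial_k^0\ast}$, $\B_\ast=\Loc^0(\on{T}(\widetilde{X^k}))_\ast$ and $\C_\ast=\cons(\on{Unzip}^{[k,n]}(X))_{\partial_k^1\ast}$. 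So we need two cubical $d$-Calabi--Yau structures, compatible at $\A'_\ast$.

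The first is on $[\A'_\ast\to\C_\ast]$. This is just $\cons(\on{Unzip}^{[k,n]}(X))_\ast$ with the $k$-th direction viewed as the extra coordinate, so the inductive hypothesis supplies it.

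The second is on $[\A'_\ast\to\Loc^0(\on{T}(\widetilde{X^k}))_\ast]$. This cube is zero except in the $(n-k+2)$-subcube where $i_1=\dots=i_{k-1}=1$. There it agrees with $\Loc(\on{T}(\widetilde{X^k}))_\ast$, the cube of the compact smooth $\langle n-k+2\rangle$-manifold $\on{T}(\widetilde{X^k})$ (\Cref{rem:tubular_neigh}). \Cref{prop:CY_cube_manifold_w_corners} applied to $\on{T}(\widetilde{X^k})$, with the orientation restricted from $X$, gives a cubical $d$-Calabi--Yau structure on that subcube. We then extend it by zero to the whole $(n+1)$-cube. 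Checking that zero-extension preserves cubical Calabi--Yau structures uses \Cref{prop:equivalent_characterization_CY_cube}: adjoining directions in which all but the top entries vanish leaves the relevant colimits and Hochschild classes unchanged, and the faces $\partial^0_i$ in those directions are zero cubes, which trivially carry the zero structure.

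The main obstacle is \emph{compatibility at $\A'_\ast$}: the two structures must restrict to the same (homotopic) class on $\cons(\on{Unzip}^{[k,n]}(X))_{\partial_k^0\ast}$. After taking iterated cofibers in the first $k-1$ directions, Assumption $(k)$ (proved in \Cref{lm:construction_cube}) identifies this cube with $\Loc(\on{Unzip}^{[k,n]}(X))_{\partial_k^0\ast}\simeq\Loc(\on{T}(\widetilde{X^k}))_{\partial_1^0\ast}$, the cube of the common face $\mathbb{R}_{\geq0}\times\on{Link}_k(X)$. On this face both classes come from restricting the fundamental class of $X$, via the orientation-compatible embeddings of \Cref{rem:lax_gluing_tub_neighb}. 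By the naturality of \eqref{eq:sing_homology_to_cyclic_homology} with respect to maps of spaces, as in the commutative square in the proof of \Cref{prop:CY_cube_manifold_w_corners}, the two induced classes in $\on{HH}^{S^1,\on{tot}}$ therefore agree. One must still lift this agreement from the cofiber cubes back to the cube itself; we do this by noting that the entries of the cube over the other corners are supplied by the inductive structure, whose class was itself built from the same orientation.

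Applying \Cref{prop:laxgluingCYcubes} now yields the structure on $\cons(\on{Unzip}^{[k+1,n]}(X))_\ast$, completing the induction up to $k=n+1$.
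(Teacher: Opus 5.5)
\textbf{Gap: the second cube in your inductive step is misidentified.} Your base case is fine, and so is your treatment of the first cube $[\A'_\ast\to\C_\ast]$. Reducing the inductive step to the case $\A_\ast\simeq\A''_\ast\simeq 0$ of \Cref{prop:laxgluingCYcubes} is also what the paper does.

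The problem is $\on{C}^k_\ast\coloneqq[\A'_\ast\to\Loc^0(\on{T}(\widetilde{X^k}))_\ast]$. For $k\geq 2$ this cube is \emph{not} zero outside the subcube $\{i_1=\dots=i_{k-1}=1\}$, and it does not agree with $\Loc(\on{T}(\widetilde{X^k}))_\ast$ on that subcube. Only the face $\Loc^0(\on{T}(\widetilde{X^k}))_\ast$ has this shape. The other face $\A'_\ast=\cons(\on{Unzip}^{[k,n]}(X))_{\partial_k^0\ast}$ behaves differently:
\begin{itemize}
\item It has non-zero entries where some $i_j=0$ with $j<k$; these are local systems on corners created by earlier unzippings.
\item Its entries with $i_1=\dots=i_{k-1}=1$ are in general categories of constructible sheaves on faces of $\on{Unzip}^{[k,n]}(X)$, not local systems on faces of $\on{T}(\widetilde{X^k})$.
\end{itemize}
The only true relation is $\on{cof}_1^{\times(k-1)}\on{C}^k_\ast\simeq\Loc(\on{T}(\widetilde{X^k}))_\ast$. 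In the illustrated example with $k=2$, $\on{C}^2_\ast$ is the cube \eqref{example:cube4}. It contains $\loc(\ast\sqcup\ast)^{\times 4}$ and $\cons_{0<1}(S^1)$ where your description would put $0$ and $\Loc(S^1)$.

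Your description is correct only for $k=1$, where $\on{C}^1_\ast=\Loc(\on{T}(\widetilde{X^1}))_\ast$. For $k\geq2$, extending the structure of \Cref{prop:CY_cube_manifold_w_corners} by zero gives no class on $\on{C}^k_\ast$ at all. Your final step, ``lift this agreement from the cofiber cubes back to the cube itself'', is therefore the missing construction, not a formality.

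\textbf{What is needed.} The paper builds the class on $\on{C}^k_\ast$ by lifting.
\begin{itemize}
\item There is a fiber sequence $\on{HH}^{S^1,\on{tot}}(\on{C}^k_\ast)\to\on{HH}^{S^1,\on{tot}}(\on{C}^k_{\partial_k^0\ast})[1]\to\on{HH}^{S^1,\on{tot}}(\on{C}^k_{\partial_k^1\ast})[1]$. Passing to $\on{cof}_1^{\times(k-1)}$ maps it to the analogous sequence for $\Loc(\on{T}(\widetilde{X^k}))_\ast$.
\item On the third term this comparison is an equivalence, because $\on{C}^k_{\partial_k^1\ast}=\Loc^0(\on{T}(\widetilde{X^k}))_\ast$ genuinely vanishes off the subcube.
\item So lifting $-\alpha_{\partial_k^0}$, the restriction of the inductive class with the boundary sign, reduces to null-homotoping its image in the $\on{T}(\widetilde{X^k})$-sequence. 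The orientation class of $\on{T}(\widetilde{X^k})$ provides this null-homotopy.
\end{itemize}
That last step works only if you know the inductive class, pushed to $\on{cof}_1^{\times(k-1)}\cons(\on{Unzip}^{[k,n]}(X))_\ast\simeq\Loc(\on{Unzip}^{[k,n]}(X))_\ast$, is the orientation class there. This is why the paper strengthens the induction hypothesis to include this cofiber statement, and re-establishes it at stage $k+1$ via the computation in \Cref{lm:construction_cube}. Your hypothesis, ``a structure induced by the orientation'', does not carry this information.

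With the lift $\widetilde{\alpha}$ in hand, compatibility at $\A'_\ast$ holds by construction. Non-degeneracy is then checked with \Cref{prop:equivalent_characterization_CY_cube}:
\begin{itemize}
\item For $j<k$, the faces $\on{C}^k_{\partial_j^0\ast}\simeq[\cons(\on{Unzip}^{[k,n]}(X))_{\partial_k^0\partial_j^0\ast}\to 0_\ast]$ are controlled by $\alpha$.
\item The remaining condition follows because the image of $\widetilde{\alpha}$ under the cofiber comparison is the orientation class of $\on{T}(\widetilde{X^k})$.
\end{itemize}
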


\begin{proof}
	We prove by induction on $1 \leq k \leq n+1$ the following statement: a choice of orientation on $X$ induces a cubical left $n$-Calabi--Yau structure on the categorical $(n+1)$-cube $\cons(\on{Unzip}^{[k, n]}(X))_\ast$ of \Cref{construction:cube}, and the induced Calabi--Yau structure on 
	\[
	\on{cof}_1^{\times k-1}\Cons(\on{Unzip}^{[k,n]}(X))_{\ast}\simeq \Loc(\on{Unzip}^{[k,n]}(X))_\ast
	\]
	coincides with the one arising from \Cref{prop:CY_cube_manifold_w_corners} by restricting the orientation of $X$ to $\on{Unzip}^{[k,n]}(X)$.
	For $k=1$, the first part of the statement is \Cref{prop:CY_cube_manifold_w_corners}, while the second part is trivial.
	Suppose we have proved the claim for some $k < n+1$.
	By \Cref{prop:laxgluingCYcubes} and by the inductive definition of $\Cons(\on{Unzip}^{[k+1,n]}(X))_\ast$, it is enough to prove that the $(n+1)$-cube 
    \[
    \on{C}^k_\ast \coloneqq [\cons(\on{Unzip}^{[k, n]}(X))_{\partial_k^0 \ast} \to \loc^0(\on{T}(\widetilde{X^k}))_\ast]
    \]
    carries a cubical left $n$-Calabi--Yau structure compatible with the one on $\cons(\on{Unzip}^{[k, n]}(X))$ given by the induction hypothesis.
    
    \begin{itemize}
    \item[\textbf{Claim}:] The categorical $(n+1)$-cube 
\[
\on{C}^k_\ast \coloneqq [\cons(\on{Unzip}^{[k, n]}(X))_{\partial_k^0 \ast} \to \loc^0(\on{T}(\widetilde{X^k}))_\ast]
\]
    carries a cubical left $d$-Calabi--Yau structure whose restriction to the $\partial_k^0$-face is compatible with the restriction of the cubical left $d$-Calabi--Yau structure of $\cons(\on{Unzip}^{[k, n]}(X))_\ast$ to the $\partial_k^0$-face.
    \end{itemize}
    
    We suppose for the moment that the claim holds and we show how this allows us to finish the proof of the proposition.
    By \Cref{prop:laxgluingCYcubes} and by the inductive definition of $\Cons(\on{Unzip}^{[k+1,n]}(X))_\ast$, the claim implies that $\Cons(\on{Unzip}^{[k+1,n]}(X))_\ast$ has a cubical left $d$-Calabi--Yau structure induced by the ones of $\on{C}^k_\ast$ and $\cons(\on{Unzip}^{[k, n]}(X))_\ast$.
    To complete our induction, we are left to show that the induced cubical Calabi--Yau structure on 
	\[
	\on{cof}_1^{\times k}\Cons(\on{Unzip}^{[k+1,n]}(X))_{\partial_k^0\ast}\simeq \loc(\on{Unzip}^{[k+1,n]}(X))_\ast
	\]
	coincide with the one given by \Cref{prop:CY_cube_manifold_w_corners}.
	The proof of \Cref{lm:construction_cube} shows that we have equivalences
    \begin{align*}
        & \on{cof}_1^{\times k} (\cons(\on{Unzip}^{[k+1, n]}(X))_\ast) \\
        & \simeq \on{cof}_1 \big[\on{cof}_{1}^{k-1}(\cons(\on{Unzip}^{[k, n]}(X))_{\partial_k^0 \ast}) \to \big(\loc(\on{T}(\widetilde{X^k}))_{\partial_1^1 \ast} \! \! \! \! \! \! \! \! \!  \! \! \! \! \! \! \! \! \! \! \! \! \!\bigsqcup^{\rightarrow}_{\on{cof}_{1}^{k-1}(\cons(\on{Unzip}^{[k, n]}(X))_{\partial_k^0 \ast})} \! \! \! \! \! \! \! \! \!\! \! \! \! \! \! \! \! \! \! \! \! \!\on{cof}_{1}^{k-1}(\cons(\on{Unzip}^{[k, n]}(X))_{\partial_k^1 \ast})\big)_{\partial_k^1 \ast} \big]  \\
        & \simeq \loc(\on{Unzip}^{[k+1, n]}(X))_\ast.
    \end{align*}
    The second part of our inductive hypothesis then shows that the cubical $d$-Calabi--Yau structure on $\cons(\on{Unzip}^{[k+1, n]}(X))_\ast$ induces a cubical $d$-Calabi--Yau structure on $\on{cof}_1^{\times k} (\cons(\on{Unzip}^{[k+1, n]}(X))_\ast) \simeq  \loc(\on{Unzip}^{[k+1, n]}(X))_\ast$ which arises from the restriction of the orientation of $X$.
    
    We now proceed with the proof of the claim. 
    We begin by constructing a class $\widetilde{\alpha} \in \on{HH}^{S^1,\on{tot}}(\on{C}_\ast^k)$ which enjoys nice properties, allowing us to prove its non-degeneracy.
    \begin{itemize}
    \item \textbf{Construction of $\widetilde{\alpha} \in \on{HH}^{S^1, \on{tot}}(\on{C}_\ast^k)$}
    \end{itemize}
    By definition of $\on{C}^k_\ast$, we have $\on{C}^k_{\partial_k^0} \simeq \Cons(\on{Unzip}^{[k,n]}(X))_{\partial_k^0\ast}$.
	Hence the cubical left $d$-Calabi--Yau class $\alpha$ on $\Cons(\on{Unzip^{[k,n]}}(X))_\ast$ induces a cubical left $(d-1)$-Calabi--Yau class $\alpha_{\partial_k^0}$ on $\on{C}^k_{\partial_k^0 \ast}$.
	By construction, we have an equivalence $\on{cof}_1^{\times (k-1)} C_\ast^k \simeq \Loc(\on{T}(\widetilde{X^k}))_\ast$, hence inducing a morphism between horizontal fiber and cofiber sequences
\[\begin{tikzcd}[sep=small]
	{\on{HH}^{S^1, \on{tot}}(C^k_{\ast})} && {\on{HH}^{S^1, \on{tot}}(C^k_{\partial_k^0 \ast})[1]} && {\on{HH}^{S^1, \on{tot}}(C^k_{\partial_k^1 \ast})[1]} \\
	\\
	{\on{HH}^{S^1, \on{tot}}(\Loc(\on{T}(\widetilde{X^k}))_\ast)} && {\on{HH}^{S^1, \on{tot}}(\Loc(\on{T}(\widetilde{X^k}))_{\partial_k^0 \ast})[1]} && {\on{HH}^{S^1, \on{tot}}(\Loc(\on{T}(\widetilde{X^k}))_{\partial_k^1 \ast})[1]}
	\arrow["\eta", from=1-1, to=1-3]
	\arrow["{\widetilde{q}}"', from=1-1, to=3-1]
	\arrow["\psi", from=1-3, to=1-5]
	\arrow["q"', from=1-3, to=3-3]
	\arrow["q'", from=1-5, to=3-5]
	\arrow["\delta"', from=3-1, to=3-3]
	\arrow["\varphi"', from=3-3, to=3-5]
\end{tikzcd}\]

	We want to show that $-\alpha_{\partial_k^0}$ admits a lift $\widetilde{\alpha}$ along $\eta$ such that $\widetilde{q}(\widetilde{\alpha})$ agrees with the (image of the) orientation class of $\on{T}(\widetilde{X^k})$ (see \Cref{prop:CY_cube_manifold_w_corners}).
	
	Notice that since $C^k_{\partial_k^1 \ast} \simeq \Loc^0(\on{T}(\widetilde{X^k}))_\ast$, the morphism $q'$ is an equivalence.
	Hence to construct a lift of $-\alpha$ it is enough to show that $\varphi \circ q (-\alpha_{\partial_k^0 \ast})$ is homotopic to zero.
	
	By construction, the morphism $q$ agrees with the restriction to $\partial_k^0$ of the morphism induced on total Hochschild homology by
\[
\on{cof}_1^{\times (k-1)} \Cons(\on{Unzip}^{[k,n]}(X)) \simeq \Loc(\on{Unzip}^{[k,n]}(X)) \,.
\]
	Hence, by our inductive hypothesis, the class $q(\alpha_{\partial_k^0})$ agrees with the (image of the) orientation class of $T(\widetilde{X^k})$ induced by the orientation class of $\on{Unzip}^{[k,n]}(X)$.
	Notice that the (image of the) orientation class of class of $\on{T}(\widetilde{X^k})$ induces via $\delta$ the opposite class of $q(\alpha_{\partial_k^0})$.
	Hence the orientation class of $\on{T}(\widetilde{X^k})$ induces an homotopy to zero $\varphi \circ q (-\alpha_{\partial_k^0 \ast}) \simeq 0$.
	This defines a lift $\widetilde{\alpha}$ of $-\alpha_{\partial_k^0 \ast}$ and, by construction, we have that $\widetilde{q}(\widetilde{\alpha})$ agrees with the (image of the) orientation class of $\on{T}(\widetilde{X^k})$.
	
	\begin{itemize}
	\item \textbf{Proof of the claim}
	\end{itemize}
	Consider the class $\widetilde{\alpha} \colon k[d] \to \on{HH}^{S^1, \on{tot}}(C^k_{\ast})$ constructed in the previous paragraph.
	By construction, the class $\widetilde{\alpha}$ is compatible with the class $\alpha$ describing the cubical left $d$-Calabi--Yau on $\Cons(\on{Unzip}^{[k,n]}(X))_\ast$, meaning that it restricts to $-\alpha_{\partial_k^0}$.
	We will complete the proof by showing that $\widetilde{\alpha}$ satisfies the assumptions of \Cref{prop:equivalent_characterization_CY_cube}.
	Since we have
	\[
    \on{C}^k_{ \partial_j \ast} \simeq [\Cons(\on{Unzip}^{[k, n]}(X))_{\partial_k^0 \partial^0_j \ast} \to 0_\ast]\,
    \]
    the compatibility with $\alpha$ also shows that $\widetilde{\alpha}$ satisfies \Cref{prop:equivalent_characterization_CY_cube}-(1).
    Since $\widetilde{q}(\widetilde{\alpha})$ agrees with the (image of the) orientation class of $\on{T}(\widetilde{X^k})$, we also have that $\widetilde{\alpha}$ satisfies \Cref{prop:equivalent_characterization_CY_cube}-(1), hence completing the proof.
\end{proof}

\begin{remark}
	\Cref{construction:cube} can be generalized, associating a categorical $(n+m)$-cube with a conically smooth stratification with $\langle m\rangle$-corners of depth $n$ of a manifold with boundary.
	\Cref{thm:final_result} can also be generalized to this setting.
\end{remark}

\begin{theorem}\label{thm:final_result}
	Let $(X,P)$ be an conically smooth stratified space with boundary of depth $n$ and dimension $d$.
	Assume that the underlying topological space $X$ is an orientable manifold with boundary.
	A choice of orientation on $X$ induces an $d$-Calabi--Yau structure on the categorical $(n+1)$-cube $\cons_P(X)_\ast$ of \Cref{def:Cons_X}.
\end{theorem}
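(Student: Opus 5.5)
The theorem should follow directly from \Cref{prop:final_result_k}, specialised to the final stage of \Cref{construction:cube}. By \Cref{def:Cons_X}, the cube $\cons_P(X)_\ast$ is by definition $\cons(\on{Unzip}^{[n+1,n]}(X))_\ast$, where $\on{Unzip}^{[n+1,n]}(X)=X$. I would therefore apply \Cref{prop:final_result_k} with $k=n+1$, which lies in the allowed range $1\leq k\leq n+1$. This gives a cubical left $d$-Calabi--Yau structure on $\cons_P(X)_\ast$, induced by the chosen orientation of $X$.

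Next I would record the consequences needed for \Cref{introthm:1}. The first clause of \Cref{def:cubical_CY_structure} says that the functor
\[
\on{colim}_{I^{n+1}\backslash\{1,\dots,1\}}\cons_P(X)_\ast\longrightarrow \cons_P(X)_{(1,\dots,1)}
\]
is relative left $d$-Calabi--Yau. By \Cref{rem:tip_of_cons_cube}, its target is $\cons_P(X)$; in the boundary case this is identified with $\on{Cons}_P(X^\circ)$ via \Cref{rem:corners_or_interior}. I would also check dualizability and smoothness of every entry of the cube, since the definitions of Hochschild classes and relative Calabi--Yau structures take place in $\on{LinCat}_k^{\on{dual}}$. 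For this I would argue as follows. Each entry is built from categories $\on{Loc}(M)$ of compact manifolds by directed pushouts. Directed pushouts are colimits in $\on{LinCat}_k$, and the forgetful functor $\on{LinCat}_k^{\on{dual}}\to\on{LinCat}_k$ preserves colimits. Alternatively, \Cref{thm:finiteness_conically_smooth} together with exodromy (\Cref{thm:exodromy_equivalence}) shows that each entry is functors out of a compact $\infty$-category, which gives compact generation and smoothness.

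The main obstacle is that the inductive argument of \Cref{prop:final_result_k} has to be legitimately applicable all the way up to $k=n+1$. Concretely, the Claim in its proof, the Calabi--Yau structure on $\on{C}^k_\ast$ obtained from the orientation class of $\on{T}(\widetilde{X^k})$, must hold at every stage, including $k=n$. At that stage $\on{T}(\widetilde{X^n})$ is the tubular neighborhood of the deepest stratum, a compact $\langle 2\rangle$-manifold. I would verify that \Cref{prop:CY_cube_manifold_w_corners} applies to it and that the sign compatibility $\delta(\text{or}_{T})=-q(\alpha_{\partial_k^0})$ still holds there. This is the boundary-orientation convention on the common face $\mathbb{R}_{\geq0}\times\on{Link}_n(X)$, as in \Cref{rem:lax_gluing_tub_neighb}. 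Once this is in place, the gluing step \Cref{prop:laxgluingCYcubes} yields the structure on $\cons(\on{Unzip}^{[n+1,n]}(X))_\ast$, and the theorem follows.
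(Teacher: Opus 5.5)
Your proposal is correct and follows the paper's proof, which is exactly the case $k=n+1$ of \Cref{prop:final_result_k}, using $\cons_P(X)_\ast=\cons(\on{Unzip}^{[n+1,n]}(X))_\ast$. The extra checks you list (dualizability of the entries, and the sign compatibility at the stage $k=n$) are already part of that proposition's statement and proof, so you do not need to redo them here.
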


\begin{proof}
    This is the case $k=n+1$ in \Cref{prop:final_result_k}.
\end{proof}

\begin{corollary}\label{cor:relative_Calabi_Yau}
    Let $(X,P)$ be conically smooth stratification with boundary of depth $d$ and dimension $n$.
	Assume that the underlying topological space $X$ is an orientable manifold with boundary.
	A choice of orientation on $X$ induces a relative left $n$-Calabi--Yau structure on the functor
\begin{equation}\label{eq:constr_rel_CY_functor}
\on{colim}_{I^{n+1} \setminus \{(1, \ldots 1)\}}\cons_P(X)_\ast \to \cons_P(X)\,.
\end{equation}
\end{corollary}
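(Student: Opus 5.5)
This corollary follows directly from \Cref{thm:final_result} together with condition (1) in the recursive \Cref{def:cubical_CY_structure}. The plan is to first fix the orientation of $X$ and invoke \Cref{thm:final_result}, which yields a total negative cyclic homology class
\[
\alpha\colon k[d]\longrightarrow \on{HH}^{S^1,\on{tot}}(\cons_P(X)_\ast)
\]
defining a cubical left Calabi--Yau structure on the categorical $(n+1)$-cube $\cons_P(X)_\ast$. (There is a clash of letters between the statement and \Cref{thm:final_result}: here $n$ denotes the dimension and $d$ the depth. I will read the statement with the Calabi--Yau dimension equal to the dimension of $X$, and the cube of size depth plus one.)

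Next I unwind \Cref{def:cubical_CY_structure}. Condition (1) says precisely that the relative negative cyclic homology class obtained from $\alpha$, via the identification of the total cofiber with
\[
\on{cof}\big(\on{HH}^{S^1}(\on{colim}_{I^{n+1}\setminus\{(1,\dots,1)\}}\cons_P(X)_\ast)\to \on{HH}^{S^1}(\cons_P(X)_{(1,\dots,1)})\big),
\]
is a relative left Calabi--Yau structure on the canonical functor from the colimit of the punctured cube to the tip. This identification is the one recalled after \Cref{def:CY_Cubes}.

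Finally, \Cref{rem:tip_of_cons_cube} identifies the tip $\cons_P(X)_{(1,\dots,1)}$ with $\cons_P(X)$, which here means constructible sheaves on the interior, by \Cref{rem:corners_or_interior}. Composing with this equivalence transports the relative Calabi--Yau structure to the functor \eqref{eq:constr_rel_CY_functor}. Since relative Calabi--Yau structures are invariant under equivalence of the target (Hochschild homology and inverse dualizing bimodules are functorial in equivalences), this completes the argument.

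The only point needing care is the identification of the tip and its compatibility with the canonical maps out of the punctured colimit. \Cref{rem:tip_of_cons_cube} states this, and it ultimately rests on iterating \Cref{cor:lax_pushout_Unzip} along the construction. I will take it as given rather than reprove it. Dualizability of all entries is needed for Hochschild homology to be defined, and it holds because local systems are compactly generated and directed pushouts preserve dualizability, as noted after \Cref{def:directed_pushout}.
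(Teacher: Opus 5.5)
Your proposal is correct and follows the paper's own proof: the paper likewise obtains the corollary by combining \Cref{thm:final_result} with \Cref{rem:tip_of_cons_cube}. Condition (1) of \Cref{def:cubical_CY_structure}, applied at the tip, gives the relative structure, and you also read the swapped letters correctly, with the Calabi--Yau dimension equal to $\dim X$ and the cube of size depth plus one.
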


\begin{proof}
    This follows directly by combining \Cref{thm:final_result} with \Cref{rem:tip_of_cons_cube}.
\end{proof}

\begin{example}
	\Cref{cor:relative_Calabi_Yau} applies to Whitney stratified spaces by \Cref{thm:Whitney_implies_conically_smooth}.
\end{example}

\begin{remark}
	It appears that \Cref{thm:final_result} cannot be naively generalized to conically smooth stratified spaces $(X,P)$ if $X$ is not a manifold.
	There appears to be, in general, no notion of orientation of $X$ suitable for this purpose.

	If $X$ is closed, the cofiber of the functor \eqref{eq:constr_rel_CY_functor} in $\on{LinCat}_k$ is equivalent to $\on{Loc}(X)$, which thus inherits an absolute left $n$-Calabi--Yau structure.
	If $X$ is not a closed oriented manifold, $\on{Loc}(X)$ does not in general admit a left Calabi--Yau structure. 
\end{remark}

\subsection{An illustrated example}\label{subsec:example}

\begin{example}
	Let $(X,P)=(\on{D}_2, 0<1<2)$ be a two-dimensional disk stratified by its axes and their intersection at the origin, as depicted in the following picture:
\begin{center}
\begin{tikzpicture}

\draw (0,0) circle (2);

\draw[blue, thick] (-2,0) -- (2,0);
\draw[blue, thick] (0,-2) -- (0,2);

\fill[red] (0,0) circle (2pt);

\end{tikzpicture}
\end{center}

	This is a stratified space arising as the interior (equivalently, by coarsening the stratification in the normal direction to the boundary, see \Cref{rem:corners_or_interior}) of the following conically smooth stratified space with boundary.
\begin{center}
\begin{tikzpicture}

\draw[ao, thick] (0,0) circle (2);

\draw[blue, thick] (-2,0) -- (2,0);
\draw[blue, thick] (0,-2) -- (0,2);

\fill[red] (0,0) circle (2pt);
\fill[orange] (0,2) circle (2pt);
\fill[orange] (0,-2) circle (2pt);
\fill[orange] (2,0) circle (2pt);
\fill[orange] (-2,0) circle (2pt);
\end{tikzpicture}
\end{center}

	The interiors of the stratified spaces $\on{Unzip}^{[2,2]}(X)$ and $\on{Unzip}^{[1,2]}(X)$ can be depicted as follows:
\begin{center}

\begin{tikzpicture}
\hskip -75pt
\draw (0,0) circle (2);

\draw (0,0) circle (0.6);

\draw[blue, thick] (-2,0) -- (-0.6,0);
\draw[blue, thick] (0.6,0) -- (2,0);
\draw[blue, thick] (0,-2) -- (0,-0.6);
\draw[blue, thick] (0,0.6) -- (0,2);

\hskip 150pt

\foreach \ang in {0,90,180,270}
{
    \begin{scope}[rotate=\ang]

\draw (10:0.6) -- (10:2);

\draw (10:2) arc (10:80:2);

\draw (80:2) -- (80:0.6);

\draw (80:0.6) arc (80:10:0.6);
	\end{scope}
}
\end{tikzpicture}
\end{center}
	The stratified spaces above are given by first removing a tubular neighborhood of the origin (in red), and then removing a tubular neighborhood of the blue lines.
	In this example, we have that $\widetilde{X^2}=X^2$ is the red origin, while $\widetilde{X^1}$ is given by the blue axes of $\on{Unzip}^{[2,2]}(X)$.
	
	In order to get a Calabi--Yau structure on $\Cons_P(X)$, we first consider the Calabi--Yau cube arising from the smooth manifold with corners $\on{Unzip}^{[1,2]}(X)$ via \Cref{prop:CY_cube_manifold_w_corners}.
	For this, we consider the $\langle 3\rangle$-corners that $\on{Unzip}^{[1,2]}(X)$ inherits, which can be depicted as follows:
\begin{center}
\begin{tikzpicture}
	\foreach \ang in {0,90,180,270}
{
    \begin{scope}[rotate=\ang]

\draw[purple] (10:0.6) -- (10:2);

\draw[purple] (80:2) -- (80:0.6);

\draw[magenta] (10:2) arc (10:80:2);

\draw (80:0.6) arc (80:10:0.6);

\fill[orange] (10:0.6) circle (2pt);
\fill[orange] (80:0.6) circle (2pt);
\fill[ao] (10:2) circle (2pt);
\fill[ao] (80:2) circle (2pt);
	\end{scope}
}
\end{tikzpicture}
\end{center}
	Then the left $2$-Calabi--Yau categorical cube arising from \Cref{prop:CY_cube_manifold_w_corners} is given by
\begin{equation}\label{example:cube1}
\begin{tikzcd}[sep=small]
	0 && {\loc(\textcolor{ao}{\ast} \bigsqcup \textcolor{ao}{\ast})^{\times 4}} \\
	& {\loc(\textcolor{orange}{\ast} \bigsqcup \textcolor{orange}{\ast})^{\times 4}} && {\loc(\textcolor{purple}{|} \bigsqcup \textcolor{purple}{|})^{\times 4}} \\
	0 && {\loc(\textcolor{magenta}{\frown})^{\times 4}} \\
	& {\loc(\textcolor{black}{\frown})^{\times 4}} && {\loc(\on{Unzip}^{[1,2]}(X))}
	\arrow[from=1-1, to=1-3]
	\arrow[from=1-1, to=2-2]
	\arrow[from=1-1, to=3-1]
	\arrow[from=1-3, to=2-4]
	\arrow[from=1-3, to=3-3]
	\arrow[from=2-4, to=4-4]
	\arrow[from=3-1, to=3-3]
	\arrow[from=3-1, to=4-2]
	\arrow[from=3-3, to=4-4]
	\arrow[from=4-2, to=4-4]
	\arrow[crossing over, from=2-2, to=2-4]
	\arrow[crossing over, from=2-2, to=4-2]
\end{tikzcd}
\end{equation}
	Now consider a tubular neighborhood $\on{T}(\widetilde{X^1})$ of the blue axis of $\on{Unzip}^{[2,2]}(X)$, and consider it as equipped with $\langle 3\rangle$-corners as follows:
\begin{center}
\begin{tikzpicture}
	\foreach \ang in {0,90,180,270}
{
    \begin{scope}[rotate=\ang]

\draw[purple] (-10:0.6) -- (-10:2);

\draw[purple] (10:2) -- (10:0.6);

\draw[magenta] (-10:2) arc (-10:10:2);

\draw (10:0.6) arc (10:-10:0.6);

\fill[orange] (-10:0.6) circle (2pt);
\fill[orange] (10:0.6) circle (2pt);

\fill[ao] (-10:2) circle (2pt);
\fill[ao] (10:2) circle (2pt);
	\end{scope}
}
\end{tikzpicture}
\end{center}
	The left $2$-Calabi--Yau categorical cube arising from the smooth manifold with $\langle 3\rangle$-corners $\on{T}(\tilde{X}^1)$ is then given by
\begin{equation}\label{example:cube2}
\begin{tikzcd}[sep=small]
	0 && {\loc(\textcolor{ao}{\ast} \bigsqcup \textcolor{ao}{\ast})^{\times 4}} \\
	& {\loc(\textcolor{orange}{\ast} \bigsqcup \textcolor{orange}{\ast})^{\times 4}} && {\loc(\textcolor{purple}{|} \bigsqcup \textcolor{purple}{|})^{\times 4}} \\
	0 && {\loc(\textcolor{magenta}{\frown})} \\
	& {\loc(\textcolor{black}{\frown})} && {\loc(\on{T}(X^1))}
	\arrow[from=1-1, to=1-3]
	\arrow[from=1-1, to=2-2]
	\arrow[from=1-1, to=3-1]
	\arrow[from=1-3, to=2-4]
	\arrow[from=1-3, to=3-3]
	\arrow[from=2-4, to=4-4]
	\arrow[from=3-1, to=3-3]
	\arrow[from=3-1, to=4-2]
	\arrow[from=3-3, to=4-4]
	\arrow[from=4-2, to=4-4]
	\arrow[crossing over, from=2-2, to=2-4]
	\arrow[crossing over, from=2-2, to=4-2]
\end{tikzcd}
\end{equation}
	Following \Cref{construction:cube}, we lax glue the cubes \eqref{example:cube1} and \eqref{example:cube2} along their common top face, thus yielding the left $2$-Calabi--Yau categorical $3$-cube $\on{Cons}(\on{Unzip}^{[2,2]}(X))_\ast$:
\begin{equation}\label{example:cube3}
\begin{tikzcd}[sep=small]
	0 && {\loc(\textcolor{ao}{\ast} \bigsqcup \textcolor{ao}{\ast})^{\times 4}} \\
	& {\loc(\textcolor{orange}{\ast} \bigsqcup \textcolor{orange}{\ast})^{\times 4}} && {\loc(\textcolor{purple}{|} \bigsqcup \textcolor{purple}{|})^{\times 4}} \\
	0 && {\cons_{0<1}(S^1)} \\
	& {\cons_{0<1}(S^1)} && {\cons_{0<1}(\on{Unzip}^{[1,2]}(X))}
	\arrow[from=1-1, to=1-3]
	\arrow[from=1-1, to=2-2]
	\arrow[from=1-1, to=3-1]
	\arrow[from=1-3, to=2-4]
	\arrow[from=1-3, to=3-3]
	\arrow[from=2-4, to=4-4]
	\arrow[from=3-1, to=3-3]
	\arrow[from=3-1, to=4-2]
	\arrow[from=3-3, to=4-4]
	\arrow[from=4-2, to=4-4]
	\arrow[crossing over, from=2-2, to=2-4]
	\arrow[crossing over, from=2-2, to=4-2]
\end{tikzcd}
\end{equation}
Above, the stratified circle $S^1$ appearing at the back right position is the exterior circle of $\on{Unzip}^{[2,2]}(X)$, while the stratified circle $S^1$ appearing at the bottom left position is the interior circle of $\on{Unzip}^{[2,2]}(X)$.
	Notice that the circles arise since the four copies of the green and orange points are (cyclically) permuted in the gluing maps. 

The stratum $X^2$ is the origin, and its tubular neighborhood $T(X^2)$ inherits the structure of a manifold with $\langle 2\rangle$-corners. However, since $X^2$ does not intersect the boundary of $X$, the $\langle 2\rangle$-manifold structure on $\on{T}(X^2)$ arises from a $\langle 1\rangle$-manifold structure, namely the disc with boundary: 
\begin{center}
\begin{tikzpicture}

\draw[ao, thick] (0,0) circle (1);

\end{tikzpicture}
\end{center}

Associated with the $\langle 2\rangle$-manifold $\on{T}(X^2)$ is thus the following $2$-Calabi--Yau square :
\[
\begin{tikzcd}
0 \arrow[d] \arrow[r]   & 0 \arrow[d]      \\
\on{Loc}(S^1) \arrow[r] & \on{Loc}(T(X^2))
\end{tikzcd}
\]
We extend this square with $0$'s to a $3$-cube and attach to the left the left face of the $3$-cube \eqref{example:cube3}.
\[
\begin{tikzcd}[sep=small]
0 \arrow[dd] \arrow[rd] \arrow[rr] &                                                                                                                                                   & 0 \arrow[rd] \arrow[dd] \arrow[rr] &                          & 0 \arrow[dd] \arrow[rd] &                       \\
                                   & \loc(\textcolor{orange}{\ast} \bigsqcup \textcolor{orange}{\ast})^{\times 4} \arrow[dd] \arrow[rr] \arrow[rrdd, "\ulcorner", phantom, bend right=10, near end] &                                    & 0 \arrow[dd] \arrow[rr]  &                         & 0 \arrow[dd]          \\
0 \arrow[rd] \arrow[rr]            &                                                                                                                                                   & 0 \arrow[rd] \arrow[rr]            &                          & 0 \arrow[rd]            &                       \\
                                   & \cons_{0<1}(S^1) \arrow[rr]                                                                                                                       &                                    & \on{Loc}(S^1) \arrow[rr] &                         & \on{Loc}(\on{T}(X^2))
\end{tikzcd}
\] 
	The front face of the left $3$-cube above is a pushout square, encoding the equivalence 
\[ \on{cof}_1 \on{Cons}(\on{Unzip}^{[1,2]}(X))_{\partial^0_1 \ast}\simeq \on{Loc}(T(X^2))_{\partial^0_1\ast}\,.\] 
	Composing the $3$-cubes, we obtain the $3$-cube $\loc^0(\on{T}(X^2))$, whose left face agrees with the left face of \eqref{example:cube3}:
\begin{equation}\label{example:cube4}
\begin{tikzcd}[sep=small]
	0 && 0 \\
	& {\loc(\textcolor{orange}{\ast} \bigsqcup \textcolor{orange}{\ast})^{\times 4}} && 0 \\
	0 && 0 \\
	& {\cons_{0<1}(S^1)} && {\on{Loc}(\on{T}(X^2))}
	\arrow[from=1-1, to=1-3]
	\arrow[from=1-1, to=2-2]
	\arrow[from=1-1, to=3-1]
	\arrow[from=1-3, to=2-4]
	\arrow[from=1-3, to=3-3]
	\arrow[from=2-4, to=4-4]
	\arrow[from=3-1, to=3-3]
	\arrow[from=3-1, to=4-2]
	\arrow[from=3-3, to=4-4]
	\arrow[from=4-2, to=4-4]
	\arrow[crossing over, from=2-2, to=2-4]
	\arrow[crossing over, from=2-2, to=4-2]
\end{tikzcd}
\end{equation}
	Finally, we lax glue the cubes \eqref{example:cube3} and \eqref{example:cube4} along their common left face, thus yielding the desired left $2$-Calabi--Yau categorical cube with tip $\cons_{0<1<2}(X)$:
\[
\begin{tikzcd}[sep=small]
	0 && {\loc(\textcolor{ao}{\ast} \bigsqcup \textcolor{ao}{\ast})^{\times 4}} \\
	& {\loc(\textcolor{orange}{\ast} \bigsqcup \textcolor{orange}{\ast})^{\times 4}} && {\loc(\textcolor{purple}{|} \bigsqcup \textcolor{purple}{|})^{\times 4}} \\
	0 && {\cons_{0<1}(S^1)} \\
	& {\cons_{0<1}(S^1)} && {\cons_{0<1<2}(X)}
	\arrow[from=1-1, to=1-3]
	\arrow[from=1-1, to=2-2]
	\arrow[from=1-1, to=3-1]
	\arrow[from=1-3, to=2-4]
	\arrow[from=1-3, to=3-3]
	\arrow[from=2-4, to=4-4]
	\arrow[from=3-1, to=3-3]
	\arrow[from=3-1, to=4-2]
	\arrow[from=3-3, to=4-4]
	\arrow[from=4-2, to=4-4]
	\arrow[crossing over, from=2-2, to=2-4]
	\arrow[crossing over, from=2-2, to=4-2]
\end{tikzcd}
\]
\end{example}

\section{Lagrangian structures on moduli of constructible sheaves}\label{sec:Lagrangian_str}

	In this section we analyze the derived algebraic geometric implications of \Cref{thm:final_result}.
	We show the existence of shifted Lagrangian structures for the derived moduli of constructible sheaves and perverse sheaves on an orientable conically smooth stratified space in \Cref{subsec:moduli_of_constr_sheaves}. For stratifications given by a smooth submanifold of codimension $2$, we further describe symplectic leaves of perverse sheaves with prescribed monodromy around the tubular neighborhoods in \Cref{subsec:symplectic_leaves}. Examples are $0$-shifted and $(-1)$-shifted symplectic stacks arise from Riemann surfaces and knot embeddings, see \Cref{ex:Riemann_surface} and \Cref{ex:knots_embedding}.

\subsection{Moduli of objects}

	In this paragraph we introduce the moduli of objects of a $k$-linear $\infty$-category of finite type (\Cref{def:finite_type}). This construction first appeared in \cite{TV07}, but we will follow the presentation given in \cite[Section 1.5]{Porta_HDR} (see also \cite[Section 5]{AG14}, where a different convention is used).	
\begin{definition}\label{def:finite_type}
	Let $\C$ be a compactly generated $k$-linear $\infty$-category.
	We say that $\C$ is of finite type if it is a compact object in the $\infty$-category $\on{LinCat}_k^\omega$ of compactly generated $k$-linear $\infty$-categories and $k$-linear compact objects preserving functors.
\end{definition}
	
\begin{recollection}[{$\!\!$\cite[Section 4.8]{HA}}]\label{tensor_product}
	For $\C, \D \in \on{LinCat}_k$, we can consider their tensor product
\[
\C \otimes_k \D \simeq \on{Fun}^{\on{R}}_k(\C^{op}, \D)\,,
\]
where the right hand side is the $\infty$-category of $k$-linear functors commuting with limits.
	This tensor product endows $\on{LinCat}_k$ with a symmetric monoidal structure which restricts to $\on{LinCat}_k^\omega$. 
	If $\C$ is compactly generated, there is a canonical equivalence
\[
\C \otimes_k \D \simeq \on{Fun}^{\on{ex}}_k((\C^{\omega})^{op}, \D)\,,
\]
where the right hand side is the $\infty$-category of exact $k$-linear functors.
\end{recollection}

\begin{definition} 
	Let $\C \in \on{LinCat}_k$ and $\on{Spec}(A) \in \on{dAff}_k$.
	We define the $\infty$-category of $A$-families of objects in $\C$ as
\[
\C_A \coloneqq  \C \otimes_k \on{Mod}_A\,.
\]
\end{definition}

\begin{definition}\label{pseudo_perfect}
	Let $\C \in \on{LinCat}_k^\omega$ and $\on{Spec}(A) \in \on{dAff}_k$.
	We define the $\infty$-category of pseudo-perfect families over $A$ in $\C$ as the full subcategory
\[
\on{Fun}_k^{\on{ex}}((C^\omega)^{op}, \on{Perf}(A)) \subset \C_A
\]
spanned by exact $k$-linear functors valued in $\on{Perf}(A)$.
\end{definition}

\begin{recollection}[{$\!\!$\cite[Recollection 2.2.1]{Lam25}}]\label{moduli_of_objects_recollection}
	Let $\C \in \on{LinCat}_k^\omega$.
	Consider the presheaf
\[
\widehat{\M}_\C \colon\on{dAff}_k^{op} \to \mathcal{S}
\]
\[
\on{Spec}(A) \to \C_A^{\simeq}.
\]
	The presheaf $\widehat{\M}_\C $ defines a derived stack over $k$.
	The stack $\widehat{\M}_\C$ is typically too big to be representable.
	The derived moduli of objects of $\C$ is defined as the presheaf 
\[
\M_\C:\on{dAff}_k^{op} \to \mathcal{S}
\]
\[
\on{Spec}(A) \to \C_A^{pp}\,,
\]
where $\C_A^{pp} \subset \C_A$ is the maximal groupoid spanned by pseudo-perfect families over $A$ in the sense of \Cref{pseudo_perfect}.
	The sub-presheaf
\[
\M_\C \subset \widehat{\M}_\C
\]
is a derived substack.
\end{recollection}

	The main theorem of \cite{TV07} translates as:
\begin{theorem}[{$\!\!$\cite[Theorem 5.8 \& Corollary 5.9]{AG14}}]
	Let $\C \in \on{LinCat}_k^\omega$  be of finite type.
	Then $\M_\C$ is a locally geometric derived stack locally of finite presentation over $k$.
	Furthermore, the tangent complex at a point $x \colon  \on{Spec}(A) \to \M_\C$ corresponding to a pseudo-perfect family of objects $M_x \in \C_A$ is given by
\[
x^\ast\mathbb{T}_{\M_\C} \simeq \on{Hom}(M_x, M_x)[1]\,.
\]
\end{theorem}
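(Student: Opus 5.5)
The plan is to reduce to the dg-categorical theorem of To\"en--Vaqui\'e \cite{TV07}, following \cite[Section~5]{AG14}. I would first translate the $\infty$-categorical input into dg language, then argue geometricity by induction over a cellular presentation, and finally compute the tangent complex by deformation theory.

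\textbf{Translation to dg categories.} Passing to compact objects identifies $\on{LinCat}_k^\omega$ with small idempotent-complete stable $k$-linear $\infty$-categories. These are the same as Morita classes of $k$-linear dg categories. Under this identification, compactness of $\C$ in $\on{LinCat}_k^\omega$ corresponds to $\C^\omega$ being a dg category of finite type in the sense of \cite{TV07}, i.e.\ homotopically finitely presented. For a fixed $\on{Spec}(A)$, \Cref{tensor_product} gives $\C_A^{pp}\simeq \on{Fun}^{\on{ex}}_k((\C^\omega)^{op},\on{Perf}(A))^{\simeq}$. This is exactly the space of $(\C^\omega)^{op}$-modules that are pseudo-perfect over $A$. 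Hence $\M_\C$ agrees with the To\"en--Vaqui\'e moduli $\M_{\C^\omega}$, functorially in $A$. The descent property follows from descent for $\on{Perf}(-)$ and from the fact that $\on{Fun}^{\on{ex}}(\C^\omega,-)$ preserves limits.

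\textbf{Geometricity.} Since $\C^\omega$ is of finite type, it is a retract of a dg category obtained from finitely many objects by attaching finitely many free cells. It is also Morita equivalent to a single dg algebra $B$ that is a retract of a finite cell algebra. I would prove the statement by induction on the cells.
\begin{enumerate}[(i)]
\item For $B=k$, $\M_B$ is the moduli of perfect complexes. It is a union of open $n$-geometric substacks, indexed by Tor-amplitude bounds, each locally of finite presentation.
\item Attaching a cell along $S^{n-1}\to B$ expresses $\M_{B'}$ as a fiber product of $\M_B$ with a mapping stack $\underline{\on{Hom}}(E,E[m])$ over $\M_B$. For perfect $E$, this mapping stack is a linear stack $\mathbb{V}(\mathcal{E}nd(E)^\vee[-m])$, hence geometric and finitely presented relative to $\M_B$.
\item Retracts preserve local geometricity and local finite presentation.
\end{enumerate}

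\textbf{Tangent complex.} At a point $M_x$, I would compute $\M_\C(A\oplus N)$ for square-zero extensions. Lifts of $M_x$ are given by $\on{Map}(M_x,M_x\otimes_A N[1])$. Since $M_x$ is pseudo-perfect, this equals $\on{Map}_A(A,\on{Hom}(M_x,M_x)[1]\otimes_A N)$, which gives $x^\ast\mathbb{T}\simeq \on{Hom}(M_x,M_x)[1]$.

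The main obstacle is step (ii) of the geometricity argument. There one has to check two things carefully. First, the finite-type hypothesis is exactly what makes the cellular induction terminate. Second, the relative mapping stacks remain locally of finite presentation.
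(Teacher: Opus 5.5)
Your proposal matches what the paper does: the paper gives no argument of its own and simply imports the statement from \cite[Theorem~5.8 \& Corollary~5.9]{AG14}. That reference is To\"en--Vaqui\'e's theorem transported along the identification of compactly generated $k$-linear $\infty$-categories with Morita classes of $k$-linear dg categories, which is exactly your first step; the cellular induction you then sketch is To\"en--Vaqui\'e's own proof and is not needed once you cite them. If you do rerun the tangent computation, the identification $\on{Hom}(M_x, M_x\otimes_A N)\simeq \on{Hom}(M_x,M_x)\otimes_A N$ needs $M_x$ to be compact in $\C_A$. This holds because finite type implies smooth, so pseudo-perfect families are perfect; being pseudo-perfect alone does not give it.
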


\begin{theorem}[$\!\!${\cite[Theorem 5.5]{BD21}}]\label{thm:CY_lagr}~
\begin{enumerate}[(i)]
	\item Let $\C$ be a compactly generated, smooth $k$-linear $\infty$-category.
	Then a left $d$-Calabi--Yau structure on $\C$ induces a $(2-d)$ shifted symplectic structure on $\M_\C$. 
	\item Let $\varphi \colon \C \to \D$ be a compact objects preserving $k$-linear functor between compactly generated, smooth $k$-linear $\infty$-categories with right adjoint $\varphi^r$.
	Then a relative left $d$-Calabi--Yau structure on $\varphi$ induces a $(2-d)$-shifted Lagrangian structure on $\varphi^r \colon \M_\D \to \M_\C$.
\end{enumerate}
\end{theorem}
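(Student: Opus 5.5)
The statement is Brav--Dyckerhoff's theorem translated into $\infty$-categorical language. My plan is to follow their argument in the form used for the $\infty$-categorical moduli of objects, reducing everything to a pointwise comparison of tangent and cotangent complexes.

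\textbf{Part (i).} First I use smoothness of $\C$ to identify the cotangent complex of $\M_\C$. At a pseudo-perfect family $M_x\in\C_A$, smoothness gives
\[
x^\ast\mathbb{L}_{\M_\C}\simeq \on{Hom}(M_x,\on{id}^!_\C(M_x))^\vee[-1],
\]
by dualising the formula $x^\ast\mathbb{T}_{\M_\C}\simeq\on{Hom}(M_x,M_x)[1]$. Equivalently, the Hom complexes are computed via the inverse dualizing bimodule.

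Second, I construct a map from negative cyclic homology to closed forms,
\[
\on{HH}^{S^1}(\C)[2-d]\to\mathcal{A}^{2,\on{cl}}(\M_\C,2-d).
\]
I would do this functorially in $\C$ by taking the Chern character of the universal family $\mathcal{E}\in\C\otimes\on{Perf}(\M_\C)$. This gives an $S^1$-equivariant map $\on{HH}(\C)\otimes\on{HH}(\M_\C)\to\on{HH}(\M_\C)$, and via HKR the $S^1$-fixed points land in the de Rham filtration. Pairing with the class $k[d]\to\on{HH}^{S^1}(\C)$ then yields a closed $2$-form of degree $2-d$.

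Third, I check nondegeneracy. The underlying $2$-form at $x$ is the composite
\[
\on{Hom}(M_x,M_x)[1]\to\on{Hom}(M_x,\on{id}^!_\C M_x)[d-1]\to\on{Hom}(M_x,M_x)^\vee[1-d]\cdots,
\]
induced by the equivalence $\on{id}^!_\C\simeq\on{id}_\C[-d]$. Because this map is an equivalence, the form is nondegenerate.

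\textbf{Part (ii).} I run the same construction relatively. The relative class lies in $\on{cof}(\on{HH}^{S^1}(\C)\to\on{HH}^{S^1}(\D))$. Its image gives a closed form on $\M_\C$ together with a nullhomotopy of its pullback along $\varphi^r$; here I use that the map $\M_\D\to\M_\C$ is induced by restriction, which is well defined because $\varphi$ preserves compacts. The Lagrangian nondegeneracy condition says that the induced map $\mathbb{T}_{\varphi^r}\to\mathbb{L}_{\M_\D}[1-d]$ is an equivalence. Pointwise this is the comparison of the fiber sequence $\on{Hom}_\D(N,N)\to\on{Hom}_\C(\varphi^rN,\varphi^rN)$ with the dual sequence. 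That comparison is exactly the defining diagram with $\on{id}_\D^!\to\varphi_!\on{id}_\C^!$, evaluated on $N$ through the adjunction $\varphi\dashv\varphi^r$, and its vertical arrows are equivalences by assumption.

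The main obstacle is the second step, the construction of the map from negative cyclic homology to closed forms with its compatibility with the de Rham filtration, done in the $\infty$-categorical setting and for non-representable, locally geometric stacks. I would first handle this on affine charts using the Toën--Vaquié atlas, and then glue by functoriality.
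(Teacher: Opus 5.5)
Your overall route is the one the paper relies on. The paper gives no argument of its own: it cites Brav--Dyckerhoff and remarks that their proof is formal and transports to $k$-linear $\infty$-categories. Your plan is that proof: push the negative cyclic class along the functor classified by the universal family, take the weight-two HKR component, and compare pointwise with the bimodule diagram.

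As written, though, the nondegeneracy step is not correct.
\begin{itemize}
\item Your formula $x^\ast\mathbb{L}_{\M_\C}\simeq\on{Hom}(M_x,\on{id}^!_\C M_x)^\vee[-1]$ is wrong. Dualizing the tangent complex gives $\on{Hom}(M_x,M_x)^\vee[-1]$. What smoothness provides is the duality $\on{Hom}(M,N)^\vee\simeq\on{Hom}(\on{id}^!_\C N,M)$ for pseudo-perfect $M,N$, so $x^\ast\mathbb{L}_{\M_\C}\simeq\on{Hom}(\on{id}^!_\C M_x,M_x)[-1]$.
\item Your version puts $\on{id}^!_\C$ in the wrong slot. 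After inserting $\on{id}^!_\C\simeq\on{id}_\C[-d]$ it is off by $[d]$, and the shifts in your composite do not match either. The correct chain is
\[
\on{Hom}(M_x,M_x)[1]\simeq\on{Hom}(\on{id}^!_\C M_x,M_x)[1-d]\simeq\on{Hom}(M_x,M_x)^\vee[1-d]\simeq x^\ast\mathbb{L}_{\M_\C}[2-d]\,.
\]
\item In (ii), the relative class induces, via the boundary map, a class $k[d-1]\to\on{HH}^{S^1}(\C)$. So $\M_\C$ is $(3-d)$-shifted symplectic, and the Lagrangian condition reads $\mathbb{T}_{\varphi^r}\simeq\mathbb{L}_{\M_\D}[2-d]$. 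Pointwise this is $\on{fib}\bigl(\on{End}_\D(N)\to\on{End}_\C(\varphi^rN)\bigr)[1]\simeq\on{End}_\D(N)^\vee[1-d]$. Your $\mathbb{L}_{\M_\D}[1-d]$ is off by one.
\end{itemize}

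More substantially, you simply assert that the underlying $2$-form at $x$ \emph{is} this duality composite. That identification is the actual content of Brav--Dyckerhoff's theorem. One has to compute the weight-two HKR component of the image of the Calabi--Yau class under $\on{HH}^{S^1}(\C)\to\on{HH}^{S^1}(A)$ in terms of the universal family (e.g.\ through its Atiyah class). Then one must show that, as a pairing on $\on{End}(M_x)[1]$, it agrees with the composite displayed above. In (ii) the same comparison is needed for the nullhomotopy. Without this, nondegeneracy of the bimodule diagram tells you nothing about the form you constructed.

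Conversely, the step you single out as the main obstacle is not one. Closed forms on a derived stack are by definition a limit over affines $\on{Spec}A\to\M_\C$, so a construction natural in $A$ suffices, with no atlas or gluing.

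Finally, the map $\on{HH}(\C)\otimes\on{HH}(A)\to\on{HH}(A)$ comes from functoriality of $\on{HH}^{S^1}$ along the $A$-linear functor $\C^\vee\otimes\on{Mod}_A\to\on{Mod}_A$ classified by the family. This functor is dualizable precisely because the family is pseudo-perfect. The map does not come from a Chern character of $\mathcal{E}$: such a character could only be paired with a class in $\on{HH}(\C)$ if $\C$ were proper.
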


We note that \Cref{thm:CY_lagr} was formulated in \cite{BD21} in terms of dg categories and the To\"en-Vaquié moduli objects. The proof given in \cite{BD21} is however of a formal categorical nature, and thus directly translates from the setting of dg categories to that of $k$-linear stable $\infty$-categories.

\subsection{Moduli of constructible and perverse sheaves}\label{subsec:moduli_of_constr_sheaves}
	In this paragraph we give the definition of the derived moduli of constructible and perverse sheaves, following \cite{HPT26}.
	Under mild assumptions that are always satisfied for compact conically smooth stratified spaces, the moduli of constructible sheaves is an example of a moduli of objects (\Cref{openness_flatness_perv}), of which the moduli of perverse sheaves is an open substack.\\
	
	We start by recalling the definition of perverse sheaves following \cite{BBDG18}.
\begin{definition}\label{def_perv}
	Let $(X,P)$ be a stratified space and $\mathfrak{p} \colon P \to \mathbb{Z}$ be a function. 
	Let $R$ be a simplicial commutative ring.
	Consider the pair of full subcategories $\on{Sh}(X;\on{Mod}_A)$ defined by
\[
{}^{\mathfrak{p}} \on{Sh}(X;\on{Mod}_A)_{\geq 0} \coloneqq \big\{F \in \on{Sh}(X;\on{Mod}_A) \mid \ \forall p \in P \ , \ \pi_i( i_p^{\ast}(F) ) = 0 \text{ for every } i \leq \mathfrak p(p) \big\}\,,
\]
\[
{}^{\mathfrak{p}} \on{Sh}(X;\on{Mod}_A)_{\leq 0} \coloneqq \big\{F \in \on{Sh}(X;\on{Mod}_A) \mid \ \forall p \in P \ , \ \pi_i( i_p^{!}(F) ) = 0 \text{ for every } i \geq \mathfrak p(p) \big\}\,.
\]
	The $\infty$-category of perverse sheaves is
\[
{}^{\mathfrak{p}} \on{Perv}(X; \on{Mod}_A) \coloneqq {}^{\mathfrak{p}} \on{Sh}(X;\on{Mod}_A)_{\leq 0} \cap {}^{\mathfrak{p}} \on{Sh}(X;\on{Mod}_A)_{\geq 0}\,.
\]
\end{definition}

\begin{definition}
    In the setting of \Cref{def_perv}, we set
\[
{}^{\mathfrak{p}} \Cons_P(X;\on{Mod}_A)_{\geq 0} \coloneqq {}^{\mathfrak p} \on{Sh}(X;\on{Mod}_A)_{\geq 0} \cap \Cons_P(X;\on{Mod}_A) \subset \Cons_P(X;\on{Mod}_A)\,.
\]
    We define analogously ${}^{\mathfrak{p}} \Cons_P(X;\on{Mod}_A)_{\leq 0}$ and ${}^{\mathfrak p} \on{Perv}_P(X;\on{Mod}_A)$.
    We denote by
\[
{}^{\mathfrak p} \on{Perv}_{P, \omega}(X;\on{Mod}_A) \coloneqq {}^{\mathfrak {p}} \on{Perv}_{P}(X;\on{Mod}_A) \cap \Cons_{P, \omega}(X;\on{Mod}_A)_{\geq 0} \subset {}^{\mathfrak p} \on{Perv}_{P}(X;\on{Mod}_A)
\]
the full subcategory spanned by $P$-constructible perverse sheaves with perfect stalks.
\end{definition}

	\Cref{prop:recollements} allows one to prove the following (see also \cite[Lemma 3.2.2 \& Remark 4.2.12]{HPT26}):

\begin{proposition}[{$\!\!$\cite[Proposition 4.1.5]{Lam25}}]\label{perverse_t_structure}
	Let $(X,P)$ be a stratified space with $P$ finite and let $\mathfrak{p} \colon P \to \mathbb{Z}$ be a function.
\begin{enumerate}\itemsep=0.2cm
    \item The pair of $\infty$-categories $({}^{\mathfrak p} \on{Sh}(X;\on{Mod}_A)_{\leq 0}, {}^{\mathfrak p} \on{Sh}(X;\on{Mod}_A)_{\geq 0})$ of \Cref{def_perv} define a $t$-structure ${}^{\mathfrak{p}} \tau$ on $\on{Sh}(X;\on{Mod}_A)$.
    \item If $(X,P)$ is conically stratified with locally weakly constructible strata, the $t$-structure ${}^{\mathfrak{p}} \tau$ restricts to a $t$-structure on $\on{Cons}_P(X;\on{Mod}_A)$.
\end{enumerate}
\end{proposition}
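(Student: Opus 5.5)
Part (1) is the substantive claim; it amounts to checking that the two classes form a $t$-structure on $\on{Sh}(X;\on{Mod}_A)$. Part (2) then follows by showing that the truncation functors preserve constructibility. I would build everything by induction on the finite poset $P$, gluing along the recollements of \Cref{prop:recollements}, as in the classical argument of \cite{BBDG18}.

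For (1), I first treat a single stratum, i.e.\ $P=\{p\}$. The pair is then the standard $t$-structure on $\on{Sh}(X;\on{Mod}_A)$ (induced from $\on{Mod}_A$, stalkwise for hypersheaves), shifted by $\mathfrak{p}(p)$. In general I pick a minimal element $p\in P$. Then $U=P\setminus\{p\}$ is closed, or dually a maximal $p$ makes $\{p\}$ open; I choose the closed subset $S$ and open complement $U$ so that both have fewer strata.

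For sheaves of $\on{Mod}_A$ on the space $X$, the closed/open decomposition $i\colon X_S\to X$, $j\colon X_U\to X$ gives a recollement with all six functors $i^*,i_*=i_!,i^!,j_!,j^*=j^!,j_*$. By induction there are $t$-structures on the two pieces, and the BBD gluing theorem (which holds verbatim for stable $\infty$-categories, e.g.\ \cite[A.8]{HA}) produces the glued $t$-structure:
\[
F\in{}_{\geq0}\iff j^*F\in{}_{\geq0},\ i^*F\in{}_{\geq0},\qquad F\in{}_{\leq0}\iff j^*F\in{}_{\leq0},\ i^!F\in{}_{\leq0}.
\]
It remains to identify the glued classes with those of \Cref{def_perv}. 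I would use $i_p^*=i_{p,S}^*\circ i^*$ for $p\in S$, $i_p^*=i_{p,U}^*\circ j^*$ for $p\in U$, and the analogous factorisations for $i_p^!$. The one point needing care is that $j^*$ is both $j^!$ and $j^*$, so the open part contributes correctly to both conditions.

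For (2), I must show that the glued truncations preserve $P$-constructibility. By induction it suffices that the six functors $i^*,i^!,j_!,j_*$ preserve constructibility and that the pieces' truncations do. The latter holds for a single stratum because the standard truncation of a locally constant sheaf is locally constant. The former is exactly the content of \Cref{prop:recollements}, which says the recollement restricts to $\on{Cons}$. That result requires conically stratified spaces with locally weakly contractible strata, and I expect it to be the main technical input.

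Truncations in a glued $t$-structure are built from $j_*\tau j^*$, $i_*\tau i^*$ and cofibres, so they stay inside $\on{Cons}_P$. The $t$-structure therefore restricts. The subtlety I anticipate is the bookkeeping of $\leq$ versus $\geq$, which interchanges homological and cohomological conventions (the $\pi_i$ vanishing conditions). I would fix these once in the one-stratum base case.
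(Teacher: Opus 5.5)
Your proof is correct and follows the route the paper has in mind. The paper gives no proof of its own: it cites \cite{Lam25} and remarks that the recollement of \Cref{prop:recollements} is what makes the argument work, i.e.\ induction on $P$ with BBD gluing along the open/closed recollement for (1), and stability of $\on{Cons}$ under $i^*,i_*,j^*,j_*$ for (2).

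Three small points. For minimal $p$, the set $P\setminus\{p\}$ is open, not closed, in the Alexandroff topology. For the induction in (2), you should note that $X_S$ and $X_U$ are again conically stratified with locally weakly contractible strata. Your identification of the single-stratum case with a shift of the standard $t$-structure requires reading the inequalities in \Cref{def_perv} as strict, since taken literally they give a zero heart.
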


\begin{notation}\label{notation:standard}
	It follows from \cite[Lemma 3.1.8]{HPT26} that if $\mathfrak{p}$ is the constant $0$ function, then the perverse $t$-structure coincides with the standard $t$-structure on $\Cons_P(X; \on{Mod}_A)$.
	We will denote it by $\tau_{st}$.
\end{notation}

\begin{definition}\label{def:tau_flat}
	Let $(X,P)$ be a stratified space with $P$ finite and let $\mathfrak{p} \colon P \to \mathbb{Z}$ be a function.
	Let $\on{Spec}(A) \in \on{dAff}_k$ and $F \in \Cons_P(X; \on{Mod}_A)$.
	We say that $F$ is ${}^{\mathfrak{p}}\tau$-flat over $A$ if for every $M \in \on{Mod}_A^\heartsuit$ we have $F \otimes_A M \in {}^{\mathfrak{p}}\on{Perv}_P(X; \on{Mod}_A)$.
\end{definition}

\begin{remark}
	In the setting of \Cref{def:tau_flat}, when $A$ is discrete, a $\tau$-flat family $F$ over $A$ lies in ${}^{\mathfrak{p}}\on{Perv}_P(X; \on{Mod}_A)$.
	Indeed in this case one has $A \in \on{Mod}_A^\heartsuit$, so that $F \simeq F \otimes_A A \in {}^{\mathfrak{p}}\on{Perv}_P(X; \on{Mod}_A)$.
	The converse holds when $A$ is a field.
\end{remark}

\begin{remark}\label{standard_t_structure}
	In the setting of \Cref{def:tau_flat}, if we consider the standard $t$-structure $\tau_{st}$ of \Cref{notation:standard}, then $F \in \Cons_{P, \omega}(X; \on{Mod}_A)$ is $\tau_{st}$-flat if and only if its stalks are $A$-modules of Tor-amplitude $[0,0]$\footnote{Recall that an $A$-module is of Tor-amplitude $[0,0]$ if the derived tensor product with any discrete $A$-module is again discrete.}.
	In particular, when $A$ is discrete then $F$ is $\tau_{st}$-flat if and only if its stalks are locally free $A$-modules of finite rank.
\end{remark}

\begin{recollection}\label{recollection:moduli_cons}
	Let $(X,P)$ be a stratified space and $\mathfrak{p} \colon X \to P$ be a function.
	The derived prestack of constructible sheaves is defined by the assignment
\[
\mathbf{Cons}_P(X) \colon \on{dAff}_k \to \mathcal{S}
\]
\[
\on{Spec} (A) \mapsto \Cons_{P,\omega}(X; \on{Mod}_A)^{\simeq}
\]
with functoriality given by extension of scalars. 
	If $P$ is finite, we let
\[
{}^{\mathfrak{p}}\mathbf{Perv}_P(X) \subset \mathbf{Cons}_P(X)
\]
the sub-prestack over $k$ spanned by those constructible sheaves with perfect stalks that are ${}^{\mathfrak{p}}\tau$-flat.
	We will refer to ${}^{\mathfrak{p}}\mathbf{Perv}_P(X)$ as the derived prestack of perverse sheaves.

We emphasize that if $(X,P)$ is a conically smooth stratification with boundary, i.e.~$\langle 1\rangle$-corners, $\on{Cons}_{P,\omega}(X;\on{Mod}_A)^{\simeq}$
\end{recollection}

\begin{notation}
In the setting of \Cref{recollection:moduli_cons}, if the stratification is trivial, i.e., if $P = \{ \ast \}$, we denote $\mathbf{Cons}_P(X)$ by $\mathbf{Loc}(X)$. We further denote by $\mathbf{Loc}^{[0,0]}(X)\subset \mathbf{Loc}(X)$ the sub-prestack over $k$ spanned by local systems with perfect stalks of Tor-amplitude $[0,0]$.
\end{notation}

	In the following theorem, we summarize the results contained in (the proof of) \cite[Corollary 4.1.11 \& Proposition 4.1.12 \& Theorem 4.2.11]{HPT26}.

\begin{theorem}\label{openness_flatness_perv}
    Let $(X, R)$ be a conically stratified space with locally weakly contractible strata, let $\phi\colon R \to P$ be a map of posets, let $\mathfrak{p} \colon P\to \mathbb{Z}$ be a function. 
    Assume that $(X, R)$ is categorically compact.
    Then:
\begin{enumerate}\itemsep=0.2cm
	\item $\Cons_P(X; \on{Mod}_k) \in \on{LinCat}_k^\omega$ is of finite type.
    \item There is a canonical equivalence
\[
\mathcal{M}_{\Cons_P(X; \on{Mod}_k)} \simeq \mathbf{Cons}_P(X).
\]
    In particular $\mathbf{Cons}_P(X)$ is a locally geometric derived stack, locally of finite presentation over $k$.
    \item If furthermore $(X, R)$ is locally categorically compact and $R,P$ are finite posets, then the morphism of derived stacks
\[
{}^{\mathfrak{p}}\mathbf{Perv}_P(X) \hookrightarrow \mathbf{Cons}_P(X)
\]
is representable by an open immersion.
    In particular ${}^{\mathfrak{p}}\mathbf{Perv}_P(X)$ is a $1$-Artin stack locally of finite presentation over $k$.
\end{enumerate}
\end{theorem}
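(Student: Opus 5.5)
The theorem summarizes results from \cite{HPT26}, so the plan is to reduce each item to the exodromy description and compactness of $\Pi_\infty(X,R)$, rather than build new machinery. Throughout we use \Cref{thm:exodromy_equivalence} to identify $\Cons_R(X;\on{Mod}_A)\simeq\on{Fun}(\Pi_\infty(X,R),\on{Mod}_A)$. We also identify $\Cons_P(X;\on{Mod}_A)$ with the full subcategory of functors inverting the arrows of $\Pi_\infty(X,R)$ lying over identities of $P$. This identifies it with $\on{Fun}(\Pi_\infty(X,R)[W^{-1}],\on{Mod}_A)$, where $W$ is the set of these arrows and the localization is over $\phi$.

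For (1), set $\E:=\Pi_\infty(X,R)[W^{-1}]$. Categorical compactness of $(X,R)$ says $\Pi_\infty(X,R)$ is compact in $\on{Cat}_\infty$. A localization at a finite set of arrows preserves compactness, so we must check that $W$ can be generated by finitely many arrows up to equivalence. We expect this to follow because only finitely many strata pairs occur (or else we replace $R$ by its image, which is harmless). Next, $\on{Fun}(\E,\on{Mod}_k)\simeq\on{Mod}_k\otimes\mathcal{P}(\E)$. The functor $\E\mapsto\on{Fun}(\E,\on{Mod}_k)$ from $\on{Cat}_\infty$ to $\on{LinCat}_k^\omega$ preserves colimits, since it is the composite of the free stable $k$-linear cocompletion with $\on{Ind}$. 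It also sends compact objects to compact objects, because its right adjoint (taking the compact objects and forgetting) preserves filtered colimits. Hence $\Cons_P(X;\on{Mod}_k)$ is of finite type.

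For (2), we compare the two prestacks on $\on{Spec}(A)$. The left side gives pseudo-perfect objects of $\Cons_P(X;\on{Mod}_k)\otimes\on{Mod}_A$. By \Cref{tensor_product} these are the exact functors $(\on{Fun}(\E,\on{Mod}_k)^\omega)^{op}\to\on{Perf}(A)$. The compact objects are generated by the corepresentables $k[\on{Map}(e,-)]$, and by Yoneda such a functor becomes a functor $\E\to\on{Perf}(A)$. Under exodromy over $A$, this is exactly a constructible sheaf with perfect stalks, since the stalk at $x$ is evaluation at the object $x$. Naturality in $A$ is clear, so $\mathcal{M}_{\Cons_P}\simeq\mathbf{Cons}_P(X)$. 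Local geometricity then follows from (1) and the Toën–Vaquié theorem \cite{AG14}.

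For (3), the main obstacle, we must show that ${}^{\mathfrak p}\tau$-flatness is an open condition. We reduce to showing that, for a family $F$ with perfect stalks over $A$, the locus of points where $F$ is perverse is open and is computed pointwise. Local categorical compactness makes $i_p^*F$ and $i_p^!F$ again constructible with perfect stalks: $i_p^!$ is computed via links, which are categorically compact, giving finite limits of perfect modules. Flatness of $F$ then becomes the condition that each stalk of $i_p^*F$ has Tor-amplitude $\geq\mathfrak p(p)+1$ and each stalk of $i_p^!F$ has Tor-amplitude $\leq\mathfrak p(p)-1$, in the homotopical conventions of \Cref{def_perv}. Since $i_p^!$ commutes with base change for perfect families, these are Tor-amplitude bounds on finitely many perfect complexes, one per stratum and point of a finite skeleton. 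Tor-amplitude bounds on perfect complexes are open conditions by upper semicontinuity of fiberwise cohomology, which gives openness. Finally, the $1$-Artin property: perverse families over discrete rings form an abelian heart, so negative self-Ext groups vanish. This bounds the tangent complex $\on{Hom}(M,M)[1]$ in the amplitude needed for a $1$-stack, and combined with local finite presentation this yields the claim.
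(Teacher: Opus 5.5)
The paper does not prove this statement; it only quotes \cite{HPT26}. Your arguments for (2) and for the openness in (3) follow the expected lines. For (2), pseudo-perfect families are exactly functors with perfect values on corepresentables, i.e.\ perfect stalks. For (3), flatness amounts to Tor-amplitude bounds on finitely many perfect stalks of $i_p^*F$ and $i_p^!F$.

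\textbf{The $1$-Artin step fails as written.} A bound on the tangent complex $\on{Hom}(M,M)[1]$, together with local finite presentation, does not make a stack $1$-Artin. Tangent complexes are blind to discrete higher homotopy: the constant stack $K(\mathbb{Z}/2,2)$ is locally of finite presentation with vanishing tangent complex, yet it is not $1$-Artin. The vanishing of negative self-Exts has to be used on functors of points instead. For discrete $A$, a ${}^{\mathfrak p}\tau$-flat family $F$ lies in the heart ${}^{\mathfrak p}\on{Perv}_P(X;\on{Mod}_A)$. Hence $\pi_i\on{Map}(F,F)\simeq \on{Ext}^{-i}(F,F)=0$ for $i>0$, and ${}^{\mathfrak p}\mathbf{Perv}_P(X)(A)$ is a $1$-groupoid. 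What yields a $1$-Artin stack is this $1$-truncatedness of the classical truncation, together with ${}^{\mathfrak p}\mathbf{Perv}_P(X)$ being open in the locally geometric, locally finitely presented stack $\mathbf{Cons}_P(X)$.

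\textbf{The compactness step in (1) is not established.} You leave compactness of $\Pi_\infty(X,R)[W^{-1}]$ as an expectation, and the reason offered is not the operative one: the number of strata does not control how many arrows must be inverted. Take $S^1$ stratified by a point and its complement, both sent to one $p\in P$. The two exit directions give two non-homotopic arrows between the same two objects, and inverting only one of them produces $B\mathbb{N}$ rather than $B\mathbb{Z}$. Finite generation of $W$ comes from compactness itself:
\begin{itemize}
\item Write $\C=\Pi_\infty(X,R)$ as a retract $\C\xrightarrow{i}K\xrightarrow{r}\C$ of an $\infty$-category presented by a finite simplicial set $K$.
\item For $f\in W$ over $p$, $i(f)$ is homotopic to a composite of edges of $K$. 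Since $P$ is a poset, all vertices of these edges lie over $p$ under $K\xrightarrow{r}\C\to P$.
\item Hence $f\simeq r(i(f))$ is a composite of the finitely many arrows $r(e)$, with $e$ an edge of $K$ over an identity of $P$. Inverting these arrows inverts all of $W$.
\end{itemize}

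\textbf{A smaller gap in (3).} Local categorical compactness is a hypothesis on $(X,R)$, whereas $i_p$ includes a possibly non-conical $P$-stratum. So ``$i_p^!$ is computed via links'' first needs a reduction to the $R$-strata $r\in\phi^{-1}(p)$, as in the comparison of \Cref{thm:refinement_are_open}.
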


\begin{example}
	\Cref{openness_flatness_perv} applies to constructible sheaves on the interior of a compact conically smooth stratified space with corners by \Cref{thm:finiteness_conically_smooth}.
\end{example}

	For future reference, let us record also the following:

\begin{theorem}[{$\!\!$\cite[Lemma 4.2.5 \& Lemma 4.2.10]{HPT26}}]\label{thm:refinement_are_open}
	Let $(X, R)$ be a categorically compact, locally categorically conically stratified space with locally weakly contractible strata and $\phi \colon R \to P$ be a map of finite posets.
	Let $\mathfrak{p} \colon P \to \mathbb{Z}$ be a function.
	Then we have a pullback square in $\dSt$
\[\begin{tikzcd}[sep=small]
	{{}^{\mathfrak{p}}\mathbf{Perv}_P(X; \on{Mod}_A)} && {{}^{\mathfrak{p} \circ \phi}\mathbf{Perv}_R(X; \on{Mod}_A)} \\
	\\
	{\mathbf{Cons}_P(X; \on{Mod}_A)} && {\mathbf{Cons}_R(X; \on{Mod}_A)}
	\arrow[hook, from=1-1, to=1-3]
	\arrow[hook, from=1-1, to=3-1]
	\arrow[hook, from=1-3, to=3-3]
	\arrow[hook, from=3-1, to=3-3]
\end{tikzcd}\]
where all the arrows are open immersions.
\end{theorem}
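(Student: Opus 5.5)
The statement is \Cref{thm:refinement_are_open}. I would reduce it to pointwise statements about perverse t-structures under pullback along the refinement $\phi$, together with the openness results of \Cref{openness_flatness_perv}.

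First, the bottom arrow. Since $\phi\colon R\to P$ is a map of posets, every $P$-constructible sheaf is $R$-constructible. This gives a fully faithful inclusion $\Cons_P(X;\on{Mod}_A)\subset \Cons_R(X;\on{Mod}_A)$, compatible with base change in $A$ and with the perfect-stalks condition. Under exodromy (\Cref{thm:exodromy_equivalence}), it is restriction along the localization $\Pi_\infty(X,R)\to \Pi_\infty(X,P)$. It identifies $\Cons_P$ with the functors that invert the exit paths lying inside a single $P$-stratum. Inverting a morphism is an open condition on families: for a pseudo-perfect family, the cofiber of the map $F(\gamma)$ is a perfect $A$-module, and its vanishing locus is open, since it is the complement of the support. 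As $P$ and $R$ are finite and the categories are compact, finitely many generating morphisms suffice. So $\mathbf{Cons}_P(X)\to\mathbf{Cons}_R(X)$ is a monomorphism cut out by finitely many open conditions, hence an open immersion. The vertical arrows are open immersions by \Cref{openness_flatness_perv}(3), applied once with the map $R\to P$ and once with the identity of $R$.

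Second, the pullback square. I would show that for $F\in\Cons_{P,\omega}(X;\on{Mod}_A)$, the sheaf $F$ is ${}^{\mathfrak p}\tau$-flat as a $P$-constructible sheaf if and only if it is ${}^{\mathfrak p\circ\phi}\tau$-flat as an $R$-constructible sheaf. Because $\otimes_A M$ preserves $P$-constructibility, it suffices to compare the two t-structures on objects of $\Cons_P$. For $q\in R$ lying in the $P$-stratum $X_{\phi(q)}$, the stalk conditions in \Cref{def_perv} for $i_q^\ast F$ follow from those for $i_{\phi(q)}^\ast F$, since restriction to the open-in-closed substratum $X_q\subset X_{\phi(q)}$ preserves stalks. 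For the costalk conditions, $i_{\phi(q)}^! F$ is locally constant on $X_{\phi(q)}$. Because $X_q$ sits inside the manifold $X_{\phi(q)}$ with codimension $c=\dim X_{\phi(q)}-\dim X_q$, one gets $i_q^!F\simeq (i_{\phi(q)}^!F)|_{X_q}[-c]$ up to orientation twist. This shift apparently breaks the naive equality of perversities. I therefore expect the correct statement to use the perversity convention of \cite{HPT26}, in which the shift is absorbed: either $\mathfrak p$ is measured relative to the stratum dimension, or the perverse conditions are checked on the coarser stratum via the recollement of \Cref{prop:recollements} applied to the closed union. In the latter approach I would use the recollement to show that the t-structure glued along $R$, restricted to $\Cons_P$, agrees with the one glued along $P$. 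The argument is an induction on $|P|$, splitting off a closed stratum each time.

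The main obstacle is this comparison of the two t-structures on $\Cons_P$, specifically the $!$-restriction conditions when one $P$-stratum is refined into several $R$-strata. I would first try to prove it by induction over the strata of $P$, gluing with \Cref{prop:recollements} and using that within one $P$-stratum, $F$ and $i^!F$ are locally constant, so vanishing of homotopy in a degree range can be tested at any single point of the stratum. Once flatness is compared pointwise and compatibly with base change, the square is a pullback of prestacks, because all four stacks are full sub-groupoids of $\mathbf{Cons}_R(X)$ and the pullback is just the intersection of two of them.
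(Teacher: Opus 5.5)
\textbf{The outer parts of your plan are fine.} The vertical arrows come from \Cref{openness_flatness_perv}(3). The bottom arrow is open because inverting the exit paths inside a $P$-stratum is an open condition on pseudo-perfect families. Your ``finitely many morphisms'' claim needs one more line: every morphism of the compact $\infty$-category $\Pi_\infty(X,R)$ is a composite of morphisms from a finite set, and since the fibres of $\phi$ are convex, a composite lies over a single $p\in P$ only if each factor does. Once the square is cartesian, the top arrow is open as a base change.

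\textbf{The gap is the central step.} You need ${}^{\mathfrak p}\on{Perv}_P(X;\on{Mod}_A)={}^{\mathfrak p\circ\phi}\on{Perv}_R(X;\on{Mod}_A)\cap\Cons_P(X;\on{Mod}_A)$, and you leave this open while misdiagnosing the difficulty. The shift $[-c]$ does not break anything, and no change of perversity convention is needed, because the $!$-conditions of \Cref{def_perv} are one-sided: they only bound the homotopy degrees of $i_p^!F$ from above.

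For the direction ``$P$-perverse $\Rightarrow$ $R$-perverse'', write $e_q\colon X_q\hookrightarrow X_{\phi(q)}$. The functor $e_q^!$ is a restriction to an open subset followed by the right adjoint of a t-exact closed pushforward. Hence it is left t-exact for the standard t-structures, and $i_q^!F\simeq e_q^!\, i^!_{\phi(q)}F$ is at least as coconnective as $i^!_{\phi(q)}F$. Together with $i_q^\ast F\simeq (i^\ast_{\phi(q)}F)|_{X_q}$, this gives the implication without any manifold structure on the strata.

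For the converse, use that the standard t-structure on $\on{Sh}(X_p;\on{Mod}_A)$ is the one glued along the finite stratification $\phi^{-1}(p)$. This holds because closed pushforwards and open restrictions are t-exact; it is the constant-perversity case behind \Cref{notation:standard}. So $i_p^!F$ satisfies the coconnectivity bound if and only if every $e_q^!\, i_p^!F\simeq i_q^!F$, $q\in\phi^{-1}(p)$, does. The bound is the same, since $(\mathfrak p\circ\phi)(q)=\mathfrak p(p)$.

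In other words, gluing is transitive, so ${}^{\mathfrak p}\tau$ and ${}^{\mathfrak p\circ\phi}\tau$ coincide on all of $\on{Sh}(X;\on{Mod}_A)$. Since $-\otimes_A M$ preserves $\Cons_P$, the two flatness conditions then agree on $\Cons_{P,\omega}$.

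\textbf{Your proposed fallback would not close the gap.} It is false in general that $i_p^!F$ is locally constant on $X_p$ for $F\in\Cons_P(X;\on{Mod}_A)$: $(X,P)$ need not be conically stratified, and $X_p$ need not be a manifold. Take $X=\mathbb{R}^2$, $X_p=\{xy=0\}$ and $F=A_X$. The stalks of $i_p^!F$ are $A[-1]$ at the smooth points of $X_p$ but $A^{\oplus 3}[-1]$ at the origin.

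Even where $i_p^!F$ is locally constant, the $R$-condition at a non-open substratum $X_q\subset X_p$ concerns $e_q^!\,i_p^!F$, not a stalk of $i_p^!F$. So ``testing at a single point'' only makes sense on the open substrata, and would need a density argument you did not give.

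The paper itself gives no argument for this statement; it imports it from \cite{HPT26}.
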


	Given a functor $C_\ast \colon K \to \on{LinCat}_k^\omega$, we denote by $\M_{C_\ast} \colon K \to \on{dSt}_k$ to composite with the functor $\M_{(-)}$. \Cref{cor:relative_Calabi_Yau} translates to the following result on the moduli of objects:	
		
\begin{corollary}\label{corollary:lagrangian_general}
	Let $(X,P)$ a conically smooth stratified space of depth $n$ with (possibly empty) boundary.
	Assume that the underlying topological space $X$ is a compact orientable topological manifold of dimension $d$ with (possibly empty) boundary.
	Let $\mathfrak{p}\colon P \to \mathbb{Z}$ be a function.
	Then a choice of orientation of $X$ induces a $(2-d)$-shifted Lagrangian structures on the morphisms in $\dSt$
\[
\mathbf{Cons}_P(X) \to \on{lim}_{I^{n+1} \setminus \{(1, \ldots 1)\}}\M_{\cons_P(X)_\ast}\,.
\]
and
\[
{}^{\mathfrak{p}}\mathbf{Perv}_P(X)\longrightarrow  \mathcal{M}_{\on{colim}_{I^{n+1}\backslash \{1,\dots,1\}}\on{Cons}_P(X)_\ast}\,.
\]
\end{corollary}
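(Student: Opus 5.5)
The plan is to deduce both Lagrangian structures from \Cref{cor:relative_Calabi_Yau} via \Cref{thm:CY_lagr}(ii), and then restrict the first one to perverse sheaves along the open immersion of \Cref{openness_flatness_perv}(3).

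\textbf{Step 1: the relative Calabi--Yau functor.} By \Cref{cor:relative_Calabi_Yau}, the orientation of $X$ induces a relative left $d$-Calabi--Yau structure on
\[
F\colon \mathcal{A}\coloneqq\on{colim}_{I^{n+1}\backslash\{1,\dots,1\}}\on{Cons}_P(X)_\ast\longrightarrow \on{Cons}_P(X)\,.
\]

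\textbf{Step 2: hypotheses of \Cref{thm:CY_lagr}(ii).} We must check that both categories are compactly generated and smooth, and that $F$ preserves compact objects.
\begin{enumerate}[(a)]
\item Each vertex of the cube is a category of constructible sheaves on the interior of a compact conically smooth stratified space, or a category of local systems on a compact manifold. By \Cref{thm:finiteness_conically_smooth} together with exodromy (\Cref{thm:exodromy_equivalence}), each is $\on{Fun}(\Pi_\infty,\on{Mod}_k)$ for a compact $\infty$-category $\Pi_\infty$. Hence each is of finite type, in particular compactly generated and smooth.
\item The transition functors are built from restrictions of $\on{Loc}$ along maps of spaces and from the structure functors of directed pushouts. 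The latter are colimits of left Kan extensions and the functors $\iota_0,\iota_1$, all of which preserve compact objects.
\item A finite colimit in $\on{LinCat}_k^\omega$ of finite type categories is of finite type, hence smooth. Since $\on{LinCat}_k^\omega\to\on{LinCat}_k$ preserves colimits, $\mathcal{A}$ is of finite type and $F$ preserves compacts.
\end{enumerate}

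\textbf{Step 3: the first Lagrangian.} \Cref{thm:CY_lagr}(ii) now gives a $(2-d)$-shifted Lagrangian structure on $F^r\colon\mathcal{M}_{\on{Cons}_P(X)}\to\mathcal{M}_{\mathcal{A}}$. By \Cref{openness_flatness_perv}(2) we have $\mathcal{M}_{\on{Cons}_P(X)}\simeq\mathbf{Cons}_P(X)$. Since $\mathcal{M}_{(-)}$ sends colimits in $\on{LinCat}_k^\omega$ to limits of stacks, $\mathcal{M}_{\mathcal{A}}\simeq\lim_{I^{n+1}\backslash\{1,\dots,1\}}\mathcal{M}_{\on{Cons}_P(X)_\ast}$. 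This uses $\mathcal{C}_A^{pp}=\on{Fun}^{ex}((\mathcal{C}^\omega)^{op},\on{Perf}(A))$, which turns colimits of small idempotent complete categories into limits. This yields the first morphism.

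\textbf{Step 4: the perverse Lagrangian.} By \Cref{openness_flatness_perv}(3), applicable by \Cref{thm:finiteness_conically_smooth} and \Cref{rem:finite_poset}, the inclusion ${}^{\mathfrak p}\mathbf{Perv}_P(X)\hookrightarrow\mathbf{Cons}_P(X)$ is an open immersion. Restricting a Lagrangian structure along an open immersion gives a Lagrangian structure. The isotropic structure pulls back, and non-degeneracy is a condition on tangent complexes, which the étale map $U\to\mathbf{Cons}_P(X)$ preserves. The composite is then the asserted second morphism.

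\textbf{Main obstacle.} The delicate point is Step 2 together with the identification of $\mathcal{M}_{\mathcal{A}}$ as a limit. One must verify that the colimit over the punctured cube, formed in $\on{LinCat}_k$, agrees with the one formed in $\on{LinCat}_k^\omega$, and that all lax-gluing functors preserve compact objects. I would handle this by noting that every vertex is a functor category on a finite $\infty$-category, so that all functors are induced by functors of the underlying small categories.
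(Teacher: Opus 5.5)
Your proposal is correct and follows essentially the same route as the paper. Both apply \Cref{thm:CY_lagr} to the relative left $d$-Calabi--Yau functor of \Cref{cor:relative_Calabi_Yau}, check finite type and compact-preservation using that $\on{LinCat}_k^\omega\to\on{LinCat}_k$ preserves colimits, and pass to perverse sheaves via the open immersion ${}^{\mathfrak{p}}\mathbf{Perv}_P(X)\hookrightarrow\mathbf{Cons}_P(X)$ of \Cref{openness_flatness_perv}. Your explicit identification $\mathcal{M}_{\mathcal{A}}\simeq\lim_{I^{n+1}\backslash\{1,\dots,1\}}\mathcal{M}_{\on{Cons}_P(X)_\ast}$, obtained by writing pseudo-perfect families as exact functors out of $(\mathcal{C}^\omega)^{op}$ so that colimits become limits, is a step the paper leaves implicit; it usefully reconciles the two differently written targets in the statement.
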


\begin{proof}
	Recall that $P$ is finite by \Cref{rem:finite_poset}, so that ${}^{\mathfrak{p}}\mathbf{Perv}_P(X)$ is well defined and ${}^{\mathfrak{p}}\mathbf{Perv}_P(X) \hookrightarrow \mathbf{Cons}_P(X)$ is an open immersion by \Cref{openness_flatness_perv}.
	Hence it is enough to prove the first statement.
	By construction, every $k$-linear $\infty$-category appearing in the diagram $\cons(X)_\ast$ is a colimit of finite type $k$-linear $\infty$-categories and each functor is a colimit of dualizable functors.
	Since $\on{LinCat}_k^\omega \to \on{LinCat}_k$ preserves colimits, the left $d$-Calabi-Yau functor $\on{colim}_{I^{n+1} \setminus \{(1, \ldots 1)\}}\cons(X)_\ast \to \cons_P(X)$ of \Cref{cor:relative_Calabi_Yau} is a dualizable functor between finite type categories.
	The claim then follows by \Cref{thm:CY_lagr}.
\end{proof}

\begin{remark}
	\Cref{corollary:lagrangian_general} can be generalized to a stratified space $(X,P)$ admitting a refinement $(X,R)$ which is the interior of a conically smooth stratified space with (possible empty) boundary.
	Indeed this follows by applying \Cref{corollary:lagrangian_general} to $(X,R)$ and then restricting the Lagrangian morphism along the open immersion $\mathbf{Cons}_P(X) \hookrightarrow \mathbf{Cons}_R(X)$ (\Cref{thm:refinement_are_open}).
\end{remark}

	As a direct corollary of \Cref{corollary:lagrangian_general}, we obtain the following result:

\begin{corollary}\label{coro:Poisson_structure}
	Let $X$ be a compact oriented manifold of dimension $d$ with boundary.
	Let $(X,P)$ be a conically smooth stratification of $X$ with boundary and let $\mathfrak{p} \colon P \to \mathbb{Z}$ be a function.
	Then the derived stacks $\mathbf{Cons}_P(X)$ and ${}^{\mathfrak{p}}\mathbf{Perv}_P(X)$ carry $(2-d)$-shifted Poisson structures.
\end{corollary}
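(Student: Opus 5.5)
The plan is to deduce the Poisson structures from the Lagrangian morphisms of \Cref{corollary:lagrangian_general}, using the standard principle that the source of a shifted Lagrangian morphism carries a Poisson structure of the same shift. Fix an orientation of $X$. \Cref{corollary:lagrangian_general} gives a $(2-d)$-shifted Lagrangian structure on
\[
\mathbf{Cons}_P(X)\longrightarrow \mathcal{Y}\coloneqq \mathcal{M}_{\on{colim}_{I^{n+1}\backslash \{1,\dots,1\}}\on{Cons}_P(X)_\ast}\,.
\]
By \Cref{lem:half_cube_colimit_CY}(1) and \Cref{thm:final_result}, the domain of the Calabi--Yau functor carries a left $(d-1)$-Calabi--Yau structure. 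Hence, by \Cref{thm:CY_lagr}(i), the target $\mathcal{Y}$ is $(3-d)$-shifted symplectic, and this is compatible with the Lagrangian structure by construction in \cite{BD21}.

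Next I would invoke the theorem of Melani--Safronov, also due to Calaque--Pantev--Toën--Vaquié--Vezzosi. It says that a Lagrangian morphism $L\to Y$ into an $(s+1)$-shifted symplectic stack determines a coisotropic structure on $L\to Y$. Forgetting to the source, this gives an $s$-shifted Poisson structure on $L$. Applying it with $s=2-d$ produces a $(2-d)$-shifted Poisson structure on $\mathbf{Cons}_P(X)$. The hypotheses needed are that the stacks be derived Artin, locally of finite presentation. For $\mathbf{Cons}_P(X)$ and $\mathcal{Y}$ these come from \Cref{openness_flatness_perv} together with the finite-type argument in the proof of \Cref{corollary:lagrangian_general}.

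Local geometricity rather than geometricity is the point I expect to be the main obstacle. The Melani--Safronov results are stated for derived Artin stacks, whereas $\mathbf{Cons}_P(X)$ is only locally geometric. I would handle this by writing $\mathbf{Cons}_P(X)$ as a filtered union of open substacks $U_i$ that are Artin of finite presentation, for instance by bounding the Tor-amplitude of the stalks. The restriction of a Lagrangian morphism to an open substack is again Lagrangian, so each $U_i$ acquires a Poisson structure. Since shifted Poisson structures are defined via polyvector fields, which satisfy étale (in particular Zariski) descent, these structures glue along the open cover, and they are compatible on overlaps by functoriality under open immersions.

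For ${}^{\mathfrak{p}}\mathbf{Perv}_P(X)$ the argument is shorter. It is an open substack of $\mathbf{Cons}_P(X)$ by \Cref{openness_flatness_perv}(3), and it is already a $1$-Artin stack. The composite ${}^{\mathfrak{p}}\mathbf{Perv}_P(X)\to \mathcal{Y}$ is Lagrangian by \Cref{corollary:lagrangian_general}, so the same theorem applies directly. Alternatively, one can restrict the Poisson structure on $\mathbf{Cons}_P(X)$ along the open immersion, which is an étale map and hence pulls back Poisson structures.
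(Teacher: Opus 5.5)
Your proposal is correct and follows essentially the same route as the paper. The paper likewise passes from the Lagrangian structures of \Cref{corollary:lagrangian_general} to non-degenerate coisotropic structures via Melani--Safronov's theorem (it also cites Safronov's comparison of their coisotropic notion with that of Calaque--Pantev--To\"en--Vaqui\'e--Vezzosi) and reads off the $(2-d)$-shifted Poisson structure on the source from the definition of a coisotropic structure. Your treatment of local geometricity is extra care the paper does not spell out, with one adjustment: the target $\mathcal{Y}$ is also only locally geometric, so when you restrict to an open Artin piece $U_i$ of the source you should likewise restrict the target to an open Artin substack containing the image of $U_i$ (Lagrangian structures restrict along such open immersions).
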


\begin{proof}
	Thanks to the results of \cite[Theorem 4.22]{MS24}, the space of $n$-shifted Lagrangian structures on a morphism of derived stacks is equivalent to the space of non-degenerate $n$-shifted coisotropic structures in the sense of \cite[Definition 4.3]{MS24}.
	The notions of $n$-shifted coisotropic structures of \cite{CPT+17} and of \cite{MS24} agree by \cite[Corollary 3.9]{Saf18}.
	Hence the statement follows directly from \Cref{corollary:lagrangian_general} and the definition of shifted coisotropic structures (see \cite[Definition 3.4.4]{CPT+17} and the discussion thereafter).
\end{proof}

	In \Cref{section:leaves_divisors} we study the symplectic leaves of the above Poisson structure for perverse sheaves in some particular cases.

\subsection{Symplectic leaves for moduli of perverse sheaves}\label{subsec:symplectic_leaves}

	In this paragraph we study conically smooth stratifications given by a closed smooth manifold $X$ and a smooth closed submanifold $D$.
	In this setting, we can give an explicit description of the Lagrangian morphism appearing in \Cref{corollary:lagrangian_general} (\Cref{corollary:lagrangian_from_divisor}).
	When $D$ is of codimension $2$, this also allow us to study the symplectic leaves of the moduli of perverse sheaves (\Cref{thm:symplectic_leaves}).
	
\subsubsection{Lagrangian structure in the case of smooth submanifolds}

\begin{construction}\label{construction:divisor_stratification}
    Let $X$ be a smooth closed manifold of dimension $d$ and 
    \[
    D \coloneqq \sqcup_{i=1}^m D_i
    \]
    be a disjoint union of smooth connected codimension $c$ closed submanifolds with $c\geq 1$.
        Consider the stratification 
    \[
    \rho \colon X \to \{0<1\}
    \]
\begin{align*}
x \mapsto
\left\{
    \begin{aligned}
        & 0 \ \  \ \ \text{if $x \in D$} \\
        & 1\ \ \ \ \text{if $x \in X \setminus D$.}                  
    \end{aligned}
\right.
\end{align*}
	The stratified space $(X, \{ 0<1 \})$ is conically stratified by \cite[Example 3.5.8]{AFT17}.
\end{construction}
	
\begin{notation}
	Exceptionally, in this paragraph we denote the stratified space $(X, \{ 0<1 \})$ by $(X,D)$.
	We modify the notation for constructible and perverse sheaves on $(X,D)$ accordingly.
\end{notation}

    Since there are only two strata, we need to apply the $\on{Unzip}$ procedure only once, and \Cref{prop:final_result_k} shows that we get a $d$-Calabi--Yau cube of dimension $1$, that is, a relative $d$-Calabi--Yau functor.
    Consider a set of tubular neighborhoods $\on{T}(D_i) \subset X$ of $D_i$, $1 \leq i \leq m$. 
    We can choose the tubular neighborhoods so that they do not intersect, that is $\on{T}(D_i) \cap \on{T}(D_j) = \emptyset$ for every $i \neq j$.
    Then we have 
    \[
    \on{Unzip}(X) \simeq X \setminus (\sqcup_{i=1}^m \on{T}(D_i))
    \]
    which is a manifold with boundary.
    For $1 \leq i \leq m$, let $q \colon \partial D_i \to D_i$ be the $S^{c-1}$-bundle associated to the $D^c$-bundle $\on{T}(D_i) \to D_i$.
    Notice that $\bigsqcup_{i=0}^m \partial D_i$ is the boundary of $\on{Unzip}(X)$.
    The projection map $p \colon \on{Unzip}(X) \to X$ and the inclusion of the boundary $\widetilde{i} \colon \bigsqcup_{i=0}^m \partial D_i \to \on{Unzip}(X)$ fit in a pushout square in $\on{Top}$ as follows:
\[\begin{tikzcd}[sep=small]
	{\bigsqcup_{i=0}^n \partial D_i} && {\on{Unzip}(X)} \\
	\\
	{\bigsqcup_{i=1}^m D_i} && X
	\arrow["{\widetilde{i}}", from=1-1, to=1-3]
	\arrow["q"', from=1-1, to=3-1]
	\arrow["p", from=1-3, to=3-3]
	\arrow["i"', from=3-1, to=3-3]
\end{tikzcd}\]

\begin{proposition}\label{prop:lagra_submanifold}
    The functor
    \[
    p_\# \widetilde{i}_\ast \colon \bigsqcup_{i=0}^m \on{Loc}(\partial D_i) \to \on{Cons}_{0<1}(X)
    \]
    carries a relative left $d$-Calabi--Yau structure.
\end{proposition}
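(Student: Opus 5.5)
The plan is to specialize \Cref{thm:final_result} (equivalently \Cref{cor:relative_Calabi_Yau}) to the depth-one stratification $(X,D)$, and then identify the abstract relative Calabi--Yau functor with $p_\# \widetilde{i}_\ast$. Since $X$ is closed and the stratification has only two strata, \Cref{construction:cube} requires only one unzipping, so $\on{Cons}_{0<1}(X)_\ast$ is a categorical $1$-cube. Because $\partial X=\emptyset$, the back vertex is $\on{Loc}(\on{Link}(X))\simeq\bigsqcup_i\on{Loc}(\partial D_i)$ and the tip is $\on{Cons}_{0<1}(X)$. Concretely, the $1$-cube is
\[
\Big[\on{Loc}(\partial\on{Unzip}(X))\to \laxpush{\on{Loc}(\on{T}(D))}{\on{Loc}(\partial \on{Unzip}(X))}{\on{Loc}(\on{Unzip}(X))}\Big],
\]
where the functor is the one from \Cref{rem:functor_into_directed_pushout}. \Cref{cor:lax_pushout_Unzip} identifies the target with $\on{Cons}_{0<1}(X)$, and the colimit over $I\setminus\{1\}$ is just the back vertex. \Cref{thm:final_result} then gives a relative left $d$-Calabi--Yau structure on this functor, once we fix an orientation of $X$. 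Orientability is implicit here; if it is not assumed, one adds it as a hypothesis.

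The remaining step, which I expect to be the main obstacle, is to identify the functor $\A\to\laxpush{\B}{\A}{\C}$ of \Cref{rem:functor_into_directed_pushout} with $p_\#\widetilde i_\ast$ under \Cref{cor:lax_pushout_Unzip}. Here $\A=\on{Loc}(\partial D)$, $\B=\on{Loc}(\on{T}(D))$ and $\C=\on{Loc}(\on{Unzip}(X))$. The functor is $\A\xrightarrow{\iota}\on{Fun}(\Delta^1,\A)\to\laxpush{\B}{\A}{\C}$, with $\iota(L)=(L\xrightarrow{\on{id}}L)$.

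\begin{enumerate}[(1)]
\item Identify $\on{Fun}(\Delta^1,\A)\simeq\on{Cons}_{\{0<1\}}(\partial D\times\mathbb{R}_{\geq0})$, as in the proof of \Cref{cor:lax_pushout_Unzip}. Under this identification, $\iota(L)$ corresponds to the pullback of $L$ along the projection $\partial D\times\mathbb{R}_{\geq 0}\to\partial D$.
\item Describe the functor $\on{Cons}_{\{0<1\}}(\partial D\times\mathbb{R}_{\geq0})\to\on{Cons}_{0<1}(X)$ induced by the collar map $c\colon\partial D\times\mathbb{R}_{\geq0}\to X$. On exit-path categories it is left Kan extension along $\Pi_\infty(c)$, since the pushout in \Cref{lm:pushout} is a pushout of exit-path $\infty$-categories and $\on{Fun}(-,\on{Mod}_k)$ turns it into a colimit in $\on{LinCat}_k$ via left Kan extensions. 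So the composite is $\on{LKE}$ along $\Pi_\infty(\partial D)\to\Pi_\infty(X,D)$, the map induced by $p\circ\widetilde i$.
\item Check that $p_\#\widetilde i_\ast$ is also this left Kan extension. In exodromy terms, $\widetilde i_\ast$ on local systems on the boundary of $\on{Unzip}(X)$, seen through the collar (\Cref{rem:corners_or_interior}), is left Kan extension along $\Pi_\infty(\partial D)\to\Pi_\infty(\on{Unzip}(X),\partial)$. The functor $p_\#$ is the left adjoint of $p^\ast$, hence left Kan extension along $\Pi_\infty(p)$. Composing the two gives the claim.
\end{enumerate}

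The delicate point is step (3): for a closed inclusion of a boundary, $\widetilde i_\ast$ coincides with left Kan extension on the exit path category with boundary stratum. I would verify this by computing stalks in a collar chart $\partial D\times\mathbb{R}_{\geq0}$. Exit paths there go from the boundary inward, so the left Kan extension at an interior point is zero and at a boundary point returns $L$, which agrees with $\widetilde i_\ast L$.

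Finally, by \Cref{rem:tip_of_cons_cube}, the tip identification is compatible with these functors. The relative left $d$-Calabi--Yau structure therefore transports to $p_\#\widetilde i_\ast$, which proves \Cref{prop:lagra_submanifold}.
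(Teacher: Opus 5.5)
The Calabi--Yau half of your argument is sound and is in substance the paper's proof. The paper applies \Cref{prop:laxCYgluing} directly to the Brav--Dyckerhoff functors $\on{Loc}(\partial D)\to\on{Loc}(\on{T}(D))$ and $\on{Loc}(\partial D)\to\on{Loc}(\on{Unzip}(X))$, together with \Cref{cor:lax_pushout_Unzip}. Routing through \Cref{thm:final_result} performs the same lax gluing with more overhead.

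The paper leaves the identification of the glued functor with $p_\#\widetilde i_\ast$ implicit. Your attempt to supply it is where the argument breaks. \textbf{Step (3) is false.} Extension by zero from the closed boundary is the \emph{right} Kan extension along $\Pi_\infty(\partial D)\to\Pi_\infty(\on{Unzip}(X),\partial)$, not the left one. Exit paths run from the boundary inward. So the comma category computing the left Kan extension at an interior point $y$ (boundary points with an exit path to $y$) is non-empty, and in a collar chart even contractible. The left Kan extension is therefore the pullback of $L$ along the collar projection, not extension by zero. It is the comma category for the right Kan extension (exit paths from $y$ to the boundary) that is empty. Already for $\on{Cons}_{0<1}(\mathbb{R}_{\geq 0})\simeq\on{Fun}(\Delta^1,\on{Mod}_k)$, the skyscraper $k_{\{0\}}$ is $(k\to 0)$, whereas the left Kan extension from $\{0\}$ is $(k\xrightarrow{\on{id}}k)$.

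Consequently, your steps (1)--(2), taken literally, compute the left Kan extension along $\Pi_\infty(\partial D)\to\Pi_\infty(D)\subset\Pi_\infty(X,D)$. That functor corepresents $F\mapsto\on{Map}(L,q^\ast i^\ast F)$. By contrast, $p_\#\widetilde i_\ast L$ corepresents $F\mapsto\on{Map}(L,\widetilde i^!p^\ast F)\simeq\on{Map}\big(L,\on{fib}(q^\ast i^\ast F\to\widetilde i^\ast\widetilde j_\ast\widetilde j^\ast p^\ast F)\big)$. These functors differ: for $F=k_X$ the second vanishes and the first does not. Moreover, the right orthogonal of the image of the functor you computed is $\{F\mid i^\ast F=0\}$, so its cofiber is $\on{Loc}(X\setminus D)$, not $\on{Loc}(X)$ as \Cref{rem:functor_into_directed_pushout} requires.

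The underlying error is in step (1). The legs of \Cref{def:directed_pushout} are $L\mapsto(L\to 0)$ and $L\mapsto(0\to L)[1]$. The left Kan extension legs of the exit-path pushout of \Cref{lm:pushout} are $L\mapsto(L\xrightarrow{\on{id}}L)$ and $L\mapsto(0\to L)$. Identifying the two colimit diagrams therefore requires a rotation of $\on{Fun}(\Delta^1,\A)$; these rotations permute the three objects $(L\xrightarrow{\on{id}}L)$, $(L\to0)$, $(0\to L)$ cyclically up to shift. In the directed pushout, $\iota(L)=(L\xrightarrow{\on{id}}L)$ is the one of these three that is not a leg. So it corresponds to the non-leg object of the exit-path presentation: the extension by zero $(L\to 0)$ from $\partial D\times\{0\}$, not the constant sheaf.

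With this correction the identification goes through. The left Kan extension along the collar of $(L\to 0)$ corepresents exactly $F\mapsto\on{Map}\big(L,\on{fib}(q^\ast i^\ast F\to\widetilde i^\ast\widetilde j_\ast\widetilde j^\ast p^\ast F)\big)$. Hence it is $p_\#\widetilde i_\ast L$ up to a shift, and the shift does not affect relative Calabi--Yau structures. The right orthogonal of its image is $\on{Loc}(X)$, as it should be.
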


\begin{proof}
	The functors
\[
\widetilde{i}_\# \colon \on{Loc}(\bigsqcup_{i=0}^m \partial D_i) \simeq \bigsqcup_{i=0}^m \on{Loc}(\partial D_i) \to \on{Loc}(\on{Unzip}(X))
\]
\[
q_\# \colon \on{Loc}(\bigsqcup_{i=0}^m \partial D_i) \simeq \bigsqcup_{i=0}^m \on{Loc}(\partial D_i) \to \bigsqcup_{i=0}^m \on{Loc}(\on{T}(D_i)) \simeq\on{Loc}( \bigsqcup_{i=0}^m \on{T}(D_i))
\]
carry compatibles relative left $d$-Calabi--Yau structures by \cite[Theorem 5.7]{BD19}.
    By \Cref{cor:lax_pushout_Unzip}, we have an equivalence
    \[
    \on{Cons}_{0<1}(X) \simeq \laxpush{\on{Loc}(\bigsqcup_{i=1}^m D_i)}{\on{Loc}(\bigsqcup_{i=0}^n \partial D_i)}{\on{Loc}(\on{Unzip}(X))}.
    \]
    The claim then follows by \Cref{prop:laxCYgluing}.
\end{proof}

\begin{notation}\label{notation_upper_shriek}
    We denote by $\widetilde{i}^! \colon \on{Cons}_{0<1}(\on{Unzip}(X)) \to \on{Loc}(\bigsqcup_{i=0}^m \partial D_i)$ the right adjoint to the extension by zero functor $\widetilde{i}_\ast \colon \on{Loc}(\bigsqcup_{i=0}^m \partial D_i) \to \on{Cons}_{0<1}(\on{Unzip}(X))$.
\end{notation}

\begin{corollary}\label{corollary:lagrangian_from_divisor}
    In the setting of \Cref{notation_upper_shriek}, let $\mathfrak{p} \colon \{0<1\} \to \mathbb{Z}$ be a function.
    The map in $\dSt$
\[
    \widetilde{i}_\ast^!p^\ast \colon \mathbf{Cons}_D(X) \to \prod_{i=0}^m \mathbf{Loc}(\partial D_i).
\]
carries a $(2-d)$-shifted Lagrangian structure.
\end{corollary}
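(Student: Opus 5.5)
The plan is to apply \Cref{thm:CY_lagr}(ii) to the relative left $d$-Calabi--Yau functor $\varphi\coloneqq p_\#\widetilde{i}_\ast$ of \Cref{prop:lagra_submanifold}. Three things have to be checked. First, the categories involved are compactly generated, smooth and of finite type. Second, $\varphi$ preserves compact objects. Third, the induced map of moduli stacks $\varphi^r\colon \mathcal{M}_{\on{Cons}_{0<1}(X)}\to \mathcal{M}_{\bigsqcup_i\on{Loc}(\partial D_i)}$ is identified with $\widetilde{i}^!_\ast p^\ast\colon \mathbf{Cons}_D(X)\to\prod_i\mathbf{Loc}(\partial D_i)$.

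For the first point, I would use that $(X,D)$ is a compact conically smooth stratified space, so it is categorically compact by \Cref{thm:finiteness_conically_smooth}. Hence $\on{Cons}_{0<1}(X)$ is of finite type by \Cref{openness_flatness_perv}(1), and $\mathcal{M}_{\on{Cons}_{0<1}(X)}\simeq \mathbf{Cons}_D(X)$ by \Cref{openness_flatness_perv}(2). The same reasoning applies to the compact manifolds $\partial D_i$ with the trivial stratification. It gives $\mathcal{M}_{\on{Loc}(\partial D_i)}\simeq\mathbf{Loc}(\partial D_i)$, and $\mathcal{M}$ of a finite coproduct is the product of the moduli. Smoothness holds because finite type categories are smooth; alternatively, it is part of the input to \Cref{prop:lagra_submanifold}.

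For compact preservation, I would note that the right adjoint of $\varphi$ is $\widetilde{i}^!\,p^\ast$, where $p^\ast$ is right adjoint to $p_\#$ and $\widetilde{i}^!$ is right adjoint to $\widetilde{i}_\ast$ as in \Cref{notation_upper_shriek}. So it suffices to show that this right adjoint preserves colimits, which makes $\varphi$ compact-preserving. I would check this on the exodromy side (\Cref{thm:exodromy_equivalence}). There $p^\ast$ is restriction along a functor of exit-path $\infty$-categories, so it preserves colimits. Likewise $\widetilde{i}^!$ is computed as a fiber of restrictions to the boundary collar, hence is exact and colimit preserving. Alternatively, one can observe that $\varphi$ is built as a colimit of dualizable functors in the lax pushout of \Cref{cor:lax_pushout_Unzip}, as in the proof of \Cref{corollary:lagrangian_general}.

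The identification of $\varphi^r$ on moduli then follows formally: the map $\mathcal{M}_\D\to\mathcal{M}_\C$ in \Cref{thm:CY_lagr} is induced by the right adjoint on $A$-families, which is $\widetilde{i}^!p^\ast\otimes\on{Mod}_A$. \Cref{thm:CY_lagr}(ii) then yields the $(2-d)$-shifted Lagrangian structure. The function $\mathfrak p$ plays no role here; it only enters upon restriction to the open substack ${}^{\mathfrak p}\mathbf{Perv}_D(X)$.

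I expect the main obstacle to be the compact-preservation step: concretely, checking that $\widetilde{i}^!$ preserves colimits, or equivalently that $\widetilde{i}_\ast$ (extension by zero from the boundary of $\on{Unzip}(X)$) sends compact local systems to compact constructible sheaves. I would first try the exodromy description, where $\widetilde{i}_\ast$ corresponds to $\iota_0$ in \Cref{def:directed_pushout}, and check there that its right adjoint is evaluation-type and hence colimit preserving.
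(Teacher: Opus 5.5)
Your proposal is correct and is essentially the paper's argument, which combines \Cref{prop:lagra_submanifold} with \Cref{thm:finiteness_conically_smooth} and \Cref{thm:CY_lagr}. Your extra checks just make explicit hypotheses the paper leaves implicit: finite type and the identification $\mathcal{M}_{\on{Cons}_{0<1}(X)}\simeq\mathbf{Cons}_D(X)$ via \Cref{openness_flatness_perv}, and compact preservation because the right adjoint $\widetilde{i}^!p^\ast$ is built from restrictions and a fiber (for instance $(A\to B)\mapsto\on{fib}(A\to B)$ as right adjoint of $\iota_0$, rather than a plain evaluation).
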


\begin{proof}
	This follows directly by combining \Cref{prop:lagra_submanifold} with \Cref{thm:finiteness_conically_smooth} and \Cref{thm:CY_lagr}.
\end{proof}

\begin{corollary}
	In the setting of \Cref{notation_upper_shriek}, let $\mathfrak{p} \colon \{0<1\} \to \mathbb{Z}$ be a function.
	   The map in $\dSt$
\[
    \widetilde{i}_\ast^!p^\ast \colon {}^{\mathfrak{p}}\mathbf{Perv}_D(X) \to \prod_{i=0}^m \mathbf{Loc}(\partial D_i).
\]
carries a $(2-d)$-shifted Lagrangian structure.
\end{corollary}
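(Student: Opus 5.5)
The plan is to restrict the Lagrangian morphism of \Cref{corollary:lagrangian_from_divisor} along the open immersion of perverse sheaves into constructible sheaves. Lagrangian structures pull back along open immersions of the source.

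First, note that $(X,D)$ is a compact conically smooth stratified space with finite poset $\{0<1\}$. By \Cref{thm:finiteness_conically_smooth} it is categorically compact and locally categorically compact. Hence \Cref{openness_flatness_perv}(3) applies and gives an open immersion
\[
j\colon {}^{\mathfrak{p}}\mathbf{Perv}_D(X)\hookrightarrow \mathbf{Cons}_D(X).
\]
The morphism in the statement is the composite $\widetilde{i}_\ast^!p^\ast\circ j$, since both maps are defined by the same functor applied to families.

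Second, I recall that an isotropic structure on $f\colon \mathcal{L}\to \mathcal{Y}$ is a homotopy between $f^\ast\omega$ and $0$ in closed $(2-d)$-forms on $\mathcal{L}$. Such a homotopy pulls back along any morphism $g\colon \mathcal{L}'\to \mathcal{L}$ to an isotropic structure on $f\circ g$. So $j$ induces an isotropic structure on the composite.

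The remaining point is non-degeneracy, i.e. that the induced map $\mathbb{T}_{\mathcal{L}'}\to \mathbb{L}_{f\circ g}[1-d]$ is an equivalence. For $g=j$ an open immersion, $\mathbb{L}_j\simeq 0$. Therefore
\[
\mathbb{T}_{{}^{\mathfrak{p}}\mathbf{Perv}}\simeq j^\ast\mathbb{T}_{\mathbf{Cons}}
\quad\text{and}\quad
\mathbb{L}_{f\circ j}\simeq j^\ast\mathbb{L}_f,
\]
and the non-degeneracy map for $f\circ j$ is the $j^\ast$-pullback of the one for $f$, hence an equivalence.

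The only step needing care is checking that these identifications are compatible with the null-homotopy, i.e. that the pulled-back isotropic structure induces exactly $j^\ast$ of the original comparison map. This is formal from the functoriality of closed forms and cotangent complexes, so I expect no real obstacle. Alternatively, one can cite \Cref{corollary:lagrangian_general} directly: in depth one with empty boundary, its target $\mathcal{M}_{\on{colim}}$ is $\mathcal{M}_{\on{Loc}(\sqcup\partial D_i)}\simeq\prod_i\mathbf{Loc}(\partial D_i)$.
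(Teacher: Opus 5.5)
Your proposal is correct and matches the paper's proof: the paper also uses \Cref{thm:finiteness_conically_smooth} and \Cref{openness_flatness_perv}-(3) to get the open immersion ${}^{\mathfrak{p}}\mathbf{Perv}_D(X)\hookrightarrow \mathbf{Cons}_D(X)$, and then restricts the Lagrangian structure of \Cref{corollary:lagrangian_from_divisor}; your argument that non-degeneracy survives because the open immersion has vanishing cotangent complex spells out a step the paper leaves implicit. One cosmetic correction: in the paper's convention a ``$(2-d)$-shifted Lagrangian'' has a $(3-d)$-shifted symplectic target (each $\partial D_i$ is closed of dimension $d-1$), so the null-homotopy lives in closed $(3-d)$-shifted $2$-forms and the non-degeneracy map is $\mathbb{T}\to\mathbb{L}_{f\circ j}[2-d]$, which does not affect your argument.
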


\begin{proof}
	By \Cref{thm:finiteness_conically_smooth} and \Cref{openness_flatness_perv}-(3), the natural inclusion ${}^{\mathfrak{p}}\mathbf{Perv}_D(X) \subset \mathbf{Cons}_D(X)$ in an open immersion.
	Hence the results follows by \Cref{corollary:lagrangian_from_divisor}.
\end{proof}

\subsubsection{Symplectic leaves for smooth divisors}\label{section:leaves_divisors}

\begin{notation}\label{notation:perversity_divisor}
	In the setting of \Cref{construction:divisor_stratification}, we fix for the rest of this paragraph the perversity $\mathfrak{p} \coloneqq \on{id} \colon \{0,1\} \to \{0,1\} \subset \mathbb{Z}$.
\end{notation}

\begin{example}
	The perversity $\mathfrak{p}$ of \Cref{notation:perversity_divisor} coincides with the middle perversity if $\dim X=2$ (e.g. $X$ is a Riemann surface with a finite number of marked points), and with the lower middle perversity if $\dim X=3$ (e.g., $X= S^3$ stratified at a knot $K \subset S^3$).
	More generally, for $D \subset X$ a smooth divisor (i.e., a smooth submanifold of real codimension $2$), the perversity $\mathfrak{p}$ coincides with (a shift of) the lower middle perversity.
\end{example}

\begin{lemma}\label{lm:preserve_t_structure}
	In the setting of \Cref{notation:perversity_divisor}, let $D \subset X$ be of codimension $2$ and let $\on{Spec}(A) \in \on{dAff}_k$.
	Then the functor
\[
\widetilde{i}^!p^\ast \colon \Cons_{0<1}(X; \on{Mod}_A) \to \Loc(\partial D; \on{Mod}_A)
\]
restricts to a functor
\[
\widetilde{i}^!p^\ast \colon {}^{\mathfrak{p}}\on{Perv}_{0<1}(X; \on{Mod}_A) \to \Loc(\partial D; \on{Mod}_A)^\heartsuit \,.
\]	
\end{lemma}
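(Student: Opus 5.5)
The plan is to compute $\widetilde{i}^!p^\ast F$ locally near $D$, where it is the restriction of $F$ to the punctured tubular neighbourhood, and then read the perverse conditions stalkwise. Since $\widetilde{i}^!$ is right adjoint to extension by zero along the boundary inclusion $\partial D \hookrightarrow \on{Unzip}(X)$, and the stratification of $\on{Unzip}(X)$ is locally constant in the normal (collar) direction, $\widetilde{i}^!$ on such sheaves agrees with restriction $\widetilde{i}^\ast$, up to the shift coming from the collar direction. The first step is to fix this normalization. Identify a collar $\partial D\times[0,\epsilon)$ with $T^\circ(D)\setminus D$; then $\widetilde{i}^!p^\ast F$ becomes the local system $j^\ast F|_{\partial D}$, with $j\colon X\setminus D\to X$, up to a fixed shift $s$ determined by the conventions of \Cref{notation_upper_shriek}. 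Via \Cref{cor:lax_pushout_Unzip}, this is just the functor $\on{Cons}_{0<1}(X)\to \on{Loc}(\partial D)$ remembering the component over the link.

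The question is therefore whether the generic-stratum local system $L \coloneqq j^\ast F$, restricted to $\partial D$ and shifted, lies in the heart. This can be checked on stalks, using flat families: pass to $F\otimes_A M$ for $M\in\on{Mod}_A^\heartsuit$, or simply work with the given $A$-module conditions, since the heart of $\on{Loc}(\partial D;\on{Mod}_A)$ is detected stalkwise. The open-stratum condition with $\mathfrak{p}(1)=1$ says that $i_1^\ast F=L$ has stalks concentrated in the single degree $1$ in the homological convention of \Cref{def_perv}: the $\geq$-condition kills $\pi_i$ for $i\leq 1$, and the $\leq$-condition, using $i_1^!=i_1^\ast$ on the open stratum, kills $\pi_i$ for $i\geq 1$.

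At this point the indexing needs care. As literally written, with strict inequalities on both sides, the two conditions together force $\pi_1=0$ as well, so I would first check the intended convention (one inequality should be non-strict, as in the standard definition). The intended outcome is that $L$ is a local system concentrated in one degree. Since the stalks of $L|_{\partial D}$ are stalks of $L$ at points of $X\setminus D$, the restriction $L|_{\partial D}[s]$ lies in $\Loc(\partial D;\on{Mod}_A)^\heartsuit$ once $s$ matches that degree.

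Note that the closed-stratum conditions (with $\mathfrak p(0)=0$) and the codimension-$2$ hypothesis are not needed for landing in the heart. They only serve to pin down the shift: the codimension-$2$ hypothesis makes the link fibre $S^1$ and the normal collar one-dimensional, fixing $s$ consistently with the earlier normalization. The main obstacle is exactly this bookkeeping — identifying $\widetilde{i}^!p^\ast$ with a shifted restriction to the link, and reconciling the shift with the degree forced by $\mathfrak{p}(1)=1$. I would settle it first in the local model $\mathbb{R}^{d-2}\times C(S^1)$ using \Cref{lm:pushout}, and then globalize by the sheaf property.
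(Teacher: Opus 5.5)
Your argument has a genuine gap at the first step: $\widetilde{i}^!p^\ast F$ is not a shift of $(j^\ast F)|_{\partial D}$. The projection $p\colon \on{Unzip}(X)\to X$ collapses the link $\partial D$ onto $D$. So along $\partial D\times\{0\}$ the sheaf $p^\ast F$ is $q^\ast i^\ast F$, the stalks of $F$ along $D$. On $\partial D\times(0,\epsilon)\cong T^\circ(D)\setminus D$ it is $j^\ast F$. The two pieces are glued by the specialization map, so $p^\ast F$ is not locally constant in the collar direction.

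Even for a sheaf that is locally constant normal to the boundary, $\widetilde{i}^!$ would not be a shifted restriction. Local cohomology at a boundary point of a half-space vanishes, so $\widetilde{i}^!$ would give $0$. The correct computation comes from the sequence $\widetilde{i}^!\to \widetilde{i}^\ast\to\widetilde{i}^\ast\widetilde{j}_\ast\widetilde{j}^\ast$ and gives
\[
(\widetilde{i}^!p^\ast F)_x\simeq \on{fib}\big(F_{q(x)}\longrightarrow F_y\big)\,.
\]
This is the fiber of the specialization map from the stalk at $q(x)\in D$ to the stalk at a nearby point $y\in X\setminus D$. In the lax pushout picture, the functor records the fiber of the gluing morphism, not the component over the link. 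For example, for the perverse sheaf $F=i_\ast V$ your formula gives $0$, whereas $\widetilde{i}^!p^\ast F\simeq q^\ast V$.

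Consequently, your claim that the closed-stratum conditions and the codimension play no role is wrong, and your argument proves too much. The sheaf $F=i_\ast V[1]$ satisfies the open-stratum condition, yet $\widetilde{i}^!p^\ast F\simeq q^\ast V[1]$ is not in the heart.

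The paper reduces to the local model $\{0\}\hookrightarrow \on{C}(S^1)$ and then argues as follows:
\begin{enumerate}
\item $F_y$ sits in degree $1$ by the open-stratum condition.
\item $F_{q(x)}$ sits in degrees $[0,1]$, using $\pi_n(i^\ast F)=0$ for $n<0$ and the $1$-coconnectivity of perverse sheaves.
\item Hence the fiber sits in degrees $[0,1]$, and its $\pi_1$ is the kernel of $\pi_1(F_{q(x)})\to\pi_1(F_y)$.
\item To show this map is injective, factor it through $\pi_1\Gamma(\on{C}(S^1)\setminus\{0\},j^\ast F)$. The first map is injective because the costalk condition gives $\pi_1(i^!F)=0$. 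The second is injective because, the link being $S^1$ in codimension $2$, it is the inclusion of monodromy invariants into a stalk.
\end{enumerate}
Your observation that \Cref{def_perv} must be read with strict inequalities, so that $j^\ast F$ is concentrated in degree $1$, is correct.
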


\begin{proof}
	Let $x \in \partial D$ and $F \in {}^{\mathfrak{p}}\on{Perv}_{0<1}(X; \on{Mod}_A)$. 
	We need to show that $(\widetilde{i}^!p^\ast F)_x \in \on{Mod}_A^\heartsuit$.
	The question is local around $x$, hence we can suppose that $D$ is connected and that $D \hookrightarrow X$ is given by $\{(0,0)\} \times D^{n-2} \hookrightarrow D^n \simeq \on{C}(S^1) \times D^{n-2}$, where $D^{n-2}$ is the $(n-2)$-dimensional disk.
	By homotopy invariance \cite[Corollary 5.22]{HPT23}, we can reduce to $\{(0,0)\} \hookrightarrow D^2 \simeq \on{C}(S^1) $.
	In this setting, the unzipping and link can be described explicitly by the following diagram with pullback squares
\[\begin{tikzcd}[sep=small]
	{S^1 \times \{0\} } && {S^1 \times [0,1)} && {S^1 \times (0,1)} \\
	\\
	{\{(0,0)\}} && {\on{C}(S^1)} && {\on{C}(S^1) \setminus (\{(0,0)\}}
	\arrow["{\widetilde{i}}", hook, from=1-1, to=1-3]
	\arrow["q"', from=1-1, to=3-1]
	\arrow["p", from=1-3, to=3-3]
	\arrow["{\widetilde{j}}"', hook', from=1-5, to=1-3]
	\arrow[from=1-5, to=3-5]
	\arrow["i"', hook, from=3-1, to=3-3]
	\arrow["j"', hook', from=3-5, to=3-3]
\end{tikzcd}\]
where the left and central vertical arrows are the natural projections, the right vertical arrow is a homeomorphism, the left horizontal arrows are closed immersions and the right horizontal arrows are open immersions.	
	Consider the fiber/cofiber sequence
	\[
	\widetilde{i}^!p^\ast F \to \widetilde{i}^\ast p^\ast F \to \widetilde{i}^\ast \widetilde{j}_\ast \widetilde{j}^\ast p^\ast F \,.
	\]
Consider the long exact sequence in $\on{Mod}_A^\heartsuit$ of the homotopy groups of the stalks at $x$
	\[
	0 \to \pi_1((\widetilde{i}^!p^\ast F)_x) \to \pi_1((\widetilde{i}^\ast p^\ast F)_x) \xrightarrow{\varphi} \pi_1((\widetilde{i}^\ast j_\ast j^\ast p^\ast F)_x) \to \pi_0((\widetilde{i}^!p^\ast F)_x) \to  \pi_0((\widetilde{i}^\ast p^\ast F)_x) \to 0 \,,
	\]
where $\pi_i((\widetilde{i}^!p^\ast F)_x) \simeq \pi_n(i^\ast F)$ for all $i \neq 0,1$.
	Indeed, by homotopy invariance applied to the pushforward along $\widetilde{j} \colon S^1 \times (0,1) \to S^1 \times [0,1)$ and since $F$ is $\mathfrak{p}$-perverse, we get that $i^\ast j_\ast j^\ast p^\ast$ is concentrated in degree $1$.
	Since $F$ is $\mathfrak{p}$-perverse, we have $\pi_n(i^\ast F) \simeq 0$ for every $n < \mathfrak{p}(0)=0$.
	Also, since $F$ is $\mathfrak{p}$-perverse and $\mathfrak{p}(1)=1$, we have that it is $1$-coconnective \cite[Lemma 3.1.8]{HPT26}.
	Since $\widetilde{i}^!$ is left exact for the standard $t$-structure (its left adjoint being $t$-exact), we get $\pi_i((\widetilde{i}^!p^\ast F)_x)\simeq 0$ for every $n>1$. \\ \indent
	Hence we are left to show that $\pi_1((\widetilde{i}^!p^\ast F)_x) \simeq 0$, or equivalently that the morphism 
	\[
	\varphi \colon \pi_1((\widetilde{i}^\ast p^\ast F)_x) \to \pi_1((\widetilde{i}^\ast j_\ast j^\ast p^\ast F)_x)
	\]
is injective.
	Let $y \in \on{C}(S^1)\setminus \{0\}$ and $U_y \subset \on{C}(S^1) \setminus \{ 0\}$ a contractible open neighborhood of $y$.
	Unraveling the definitions, one finds that $\varphi$ factors as the composition
	\[
	\pi_1(F_0) \simeq \pi_1\Gamma(\on{C}(S^1),F) \xrightarrow{\varphi_1} \pi_1\Gamma(\on{C}(S^1) \setminus \{ 0 \},j^\ast F) \xrightarrow{\varphi_2} \pi_1\Gamma( U_y, j^\ast F) \simeq \pi_1(j^\ast F_y)\,.
	\]
	The morphism $\varphi_2$, which is given by restricting to a contractible open neighborhood of $y$, is always injective (it is the inclusion of the fixed points of the monodromy of $F_y[-1] \in \on{Mod}_A^\heartsuit$).
	Hence we are left to show that $\varphi_1$ is injective.
	For this, consider the fiber/cofiber sequence
	\[
	i^!F \to i^\ast F \to i^\ast j_\ast j^\ast F \,.
	\]
	Taking the long exact sequence of homotopy groups of the stalks at $0$ we get an exact sequence in $\on{Mod}_A^\heartsuit$
\[
\pi_1(i^!F) \to \pi_1(i^\ast F) \xrightarrow{\varphi_1} \pi_1(i^\ast j_\ast j^\ast F)
\]
	By definition of ${}^{\mathfrak{p}}\on{Perv}_{0<1}(X; \on{Mod}_A)$, we have that $\pi_1(i^!F) \simeq 0$, hence the claim follows.
\end{proof}

\begin{corollary}\label{corollary:factorisation}
	In the setting of \Cref{notation:perversity_divisor}, let $D \subset X$ be of codimension $2$ and let $\on{Spec}(A) \in \on{dAff}_k$.
	Then the functor
\[
\widetilde{i}^!p^\ast \colon \Cons_{0<1}(X; \on{Mod}_A) \to \Loc(\partial D; \on{Mod}_A)
\]
sends ${}^{\mathfrak{p}}\tau$-flat objects of $\Cons_{0<1}(X; \on{Mod}_A)$ to $\tau_{st}$-flat objects of $\Loc(\partial D; \on{Mod}_A)$
\end{corollary}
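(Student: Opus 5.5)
The plan is to reduce the flatness statement to \Cref{lm:preserve_t_structure} by commuting $\widetilde{i}^!p^\ast$ with base change. Let $F \in \Cons_{0<1}(X;\on{Mod}_A)$ be ${}^{\mathfrak{p}}\tau$-flat over $A$, so $F\otimes_A M \in {}^{\mathfrak{p}}\on{Perv}_{0<1}(X;\on{Mod}_A)$ for all $M\in\on{Mod}_A^\heartsuit$. By definition of $\tau_{st}$-flatness, we must show that $\widetilde{i}^!p^\ast F \otimes_A M \in \Loc(\partial D;\on{Mod}_A)^\heartsuit$ for every such $M$.

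\textbf{Step 1: linearity.} The functor $\widetilde{i}^!p^\ast$ is $\on{Mod}_A$-linear, i.e.
\[
\widetilde{i}^!p^\ast(F\otimes_A M)\simeq (\widetilde{i}^!p^\ast F)\otimes_A M .
\]
The pullback $p^\ast$ and the stalk-type functor $\widetilde{i}^\ast$ are symmetric monoidal in the coefficients, so they commute with $-\otimes_A M$. It therefore suffices to treat $\widetilde{i}^!$. Use the fiber sequence
\[
\widetilde{i}^! \to \widetilde{i}^\ast \to \widetilde{i}^\ast\widetilde{j}_\ast\widetilde{j}^\ast
\]
from the proof of \Cref{lm:preserve_t_structure}, which reduces the claim to $\widetilde{j}_\ast$ commuting with $-\otimes_A M$.

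For constructible sheaves, \Cref{thm:exodromy_equivalence} identifies $\widetilde{j}_\ast$ with a right Kan extension along a functor of exit-path $\infty$-categories. By \Cref{thm:finiteness_conically_smooth}, the relevant slice categories are finite (categorically compact). Right Kan extension along finite slices is a finite limit, hence exact and colimit preserving, so it commutes with $-\otimes_A M$. Locally the situation is the explicit model $S^1\times(0,1)\hookrightarrow S^1\times[0,1)$, where this is just a finite limit over $S^1$.

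I expect this base-change compatibility of $\widetilde{j}_\ast$ (equivalently of $\widetilde{i}^!$) to be the main point needing care. It could alternatively be deduced from $\widetilde{i}^!$ being right adjoint to a functor whose right adjoint, by \Cref{thm:CY_lagr}-type dualizability, is itself $k$-linear and colimit preserving; the exit-path argument is more direct.

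\textbf{Step 2: apply the lemma.} Since $F\otimes_A M$ is $\mathfrak{p}$-perverse, \Cref{lm:preserve_t_structure} gives $\widetilde{i}^!p^\ast(F\otimes_A M)\in \Loc(\partial D;\on{Mod}_A)^\heartsuit$. Combined with Step 1, $(\widetilde{i}^!p^\ast F)\otimes_A M$ lies in the heart for every discrete $M$, which is exactly $\tau_{st}$-flatness.

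\textbf{Step 3: perfect stalks.} If the moduli statement needs perfect stalks as well, note that $\widetilde{i}^!p^\ast F$ has perfect stalks whenever $F$ does. Indeed, the fiber sequence above expresses its stalks as finite limits of stalks of $F$, since the links are compact. By \Cref{standard_t_structure}, these stalks then have Tor-amplitude $[0,0]$.
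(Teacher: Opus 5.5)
Your proposal is correct and follows the paper's proof: commute $\widetilde{i}^!p^\ast$ with $-\otimes_A M$, then apply \Cref{lm:preserve_t_structure} to the perverse sheaf $F\otimes_A M$. The only difference is that the paper cites \cite[Lemma 3.3.4]{HPT26} for the base-change equivalence $\widetilde{i}^!p^\ast(F)\otimes_A M\simeq \widetilde{i}^!p^\ast(F\otimes_A M)$, whereas you sketch it yourself via exodromy and finite right Kan extensions; your Step 3 on perfect stalks is not needed for this statement.
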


\begin{proof}
	Let $M \in \on{Mod}_A^\heartsuit$ and $F \in \Cons_{0<1}(X; \on{Mod}_A) $ be ${}^{\mathfrak{p}}\tau$-flat.
	We need to show that $\widetilde{i}^!p^\ast (F) \otimes_A M \in \Loc(X; \on{Mod}_A)^\heartsuit$. 
	By \cite[Lemma 3.3.4]{HPT26}, we have an equivalence
\[
	\widetilde{i}^!p^\ast (F) \otimes_A M \simeq \widetilde{i}^!p^\ast (F \otimes_A M).
\]
	By assumption we have $F \otimes_A M \in {}^{\mathfrak{p}}\on{Perv}_{0<1}(X; \on{Mod}_A)$, hence the result follows from \Cref{lm:preserve_t_structure}.
\end{proof}

\begin{corollary}\label{prop:shifted_lagrangian_perv}
	In the setting of \Cref{notation:perversity_divisor}, let $D \subset X$ be of codimension $2$.
	Then we have a commutative square in $\dSt$
\[\begin{tikzcd}[sep=small]
	{{}^{\mathfrak{p}}\mathbf{Perv}_D(X)} && {\mathbf{Loc}^{[0,0]}(\partial D)} \\
	\\
	{\mathbf{Cons}_D(X)} && {\mathbf{Loc}(\partial D)}
	\arrow["{\widetilde{i}^!p^\ast}", from=1-1, to=1-3]
	\arrow[hook, from=1-1, to=3-1]
	\arrow[hook, from=1-3, to=3-3]
	\arrow["{\widetilde{i}^!p^\ast}"', from=3-1, to=3-3]
\end{tikzcd}\]
where the vertical arrows are open immersions and the horizontal arrows are $(2-\dim X)$-shifted Lagrangian.
\end{corollary}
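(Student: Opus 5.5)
The plan has three parts: build the square, check that the vertical maps are open immersions, and deduce the Lagrangian property of the top map from that of the bottom one.

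\textbf{Constructing the square.} The bottom horizontal map is the $(2-\dim X)$-shifted Lagrangian morphism of \Cref{corollary:lagrangian_from_divisor}. For the square itself, I would check on $A$-points that the top arrow is a restriction of the bottom one. Let $F \in \Cons_{0<1,\omega}(X;\on{Mod}_A)$ be ${}^{\mathfrak p}\tau$-flat. Then \Cref{corollary:factorisation} shows that $\widetilde{i}^!p^\ast F$ is $\tau_{st}$-flat. By \Cref{standard_t_structure}, a $\tau_{st}$-flat local system with perfect stalks is exactly one whose stalks have Tor-amplitude $[0,0]$. So $\widetilde{i}^!p^\ast$ sends $A$-points of ${}^{\mathfrak p}\mathbf{Perv}_D(X)$ to $A$-points of $\mathbf{Loc}^{[0,0]}(\partial D)$. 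It still has to be checked that $\widetilde{i}^!p^\ast F$ has perfect stalks. This follows because the functor of \Cref{corollary:lagrangian_from_divisor} is already a map of moduli stacks, i.e.\ it preserves pseudo-perfect families. Everything is natural in $A$, because $\widetilde{i}^!p^\ast$ commutes with base change (\cite[Lemma 3.3.4]{HPT26}, as used in the proof of \Cref{corollary:factorisation}). Hence the square commutes.

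\textbf{Open immersions.} The left vertical map is an open immersion by \Cref{openness_flatness_perv}-(3), combined with \Cref{thm:finiteness_conically_smooth}. For the right vertical map, note that $\mathbf{Loc}^{[0,0]}(\partial D) = {}^{0}\mathbf{Perv}_{\ast}(\partial D)$ for the trivial stratification and the constant perversity $0$. This is because, by \Cref{notation:standard}, the perverse $t$-structure for $\mathfrak p=0$ is the standard one, and flatness for it is exactly Tor-amplitude $[0,0]$ by \Cref{standard_t_structure}. Since $\partial D$ is a compact manifold, \Cref{openness_flatness_perv}-(3) again gives an open immersion.

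\textbf{Lagrangian property of the top map.} I would use the general fact that Lagrangian structures restrict along open immersions. Suppose $f\colon \mathcal X\to \mathcal Y$ is Lagrangian, $U\subset\mathcal X$ and $V\subset \mathcal Y$ are open, and $f(U)\subset V$. Restrict the symplectic form to $V$ and pull back the isotropic homotopy to $U$. Non-degeneracy is a condition on tangent complexes, which are unchanged under open immersions, since their cotangent complexes are trivial. So $U\to V$ is again Lagrangian, and in our situation this gives the top horizontal arrow.

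\textbf{Main obstacle.} The main obstacle is purely bookkeeping: making the factorization through $\mathbf{Loc}^{[0,0]}(\partial D)$ hold functorially in families rather than pointwise. This is exactly the base-change compatibility recorded in \Cref{corollary:factorisation}.
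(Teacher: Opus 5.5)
Your proposal is correct and follows the paper's proof. The paper gets the square from \Cref{corollary:factorisation}, the open immersions from \Cref{thm:finiteness_conically_smooth} and \Cref{openness_flatness_perv}-(3), and the Lagrangian property of the top arrow by restricting the Lagrangian of \Cref{corollary:lagrangian_from_divisor} to open substacks. Your extra details (identifying $\mathbf{Loc}^{[0,0]}(\partial D)$ with the perverse moduli stack for the trivial stratification and constant perversity $0$, and spelling out why Lagrangian structures restrict along open immersions) only make explicit what the paper leaves implicit.
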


\begin{proof}
	That the above commutative diagram exists is the content of \Cref{corollary:factorisation}.
	That the vertical arrows are open immersions is the content of \Cref{thm:finiteness_conically_smooth} and \Cref{openness_flatness_perv}-(3).
	That the lower horizontal map is $(2-n)$-shifted Lagrangian is the content of \Cref{corollary:lagrangian_from_divisor}.
	Since the top horizontal map is a restriction to open substacks of a $(2-n)$-shifted Lagrangian map, it is itself $(2-n)$-shifted Lagrangian.
\end{proof}

	We proceed with a study of the symplectic leaves of the $(2-n)$-shifted Lagrangian map
\[
\widetilde{i}^!p^\ast \colon {}^{\mathfrak{p}}\mathbf{Perv}_D(X) \to \mathbf{Loc}^{[0,0]}(\partial D)\,.
\]
from \Cref{prop:shifted_lagrangian_perv}. We start by recalling the following construction by Pantev-Toën \cite{PT18}:
	
\begin{recollection}\label{recollection:bundle_of_lagrangians}
    Given $T \in \mathcal{S}$, we can consider it as a constant derived stack over $k$.
    Then a locally constant family of derived stacks over $T$ is the data of a morphism $\X \to T$ in $\dSt$.
    Let $n \in \mathbb{N}$ and $G \coloneqq GL_n$.
    Consider the stack $[G/G]$.
    Notice that $S^1$ acts on $[G/G] \simeq \mathbf{Map}(S^1, BG)$.
    Consider now an $S^1$-bundle $\partial T \to T$. This is classified by a map $\alpha \colon T \to BS^1$. 
    We then consider the family of stacks over $T$ defined by the pullback square
\[\begin{tikzcd}[sep=small]
	{\widetilde{[G/G]}_\alpha} && {[[G/G] /S^1]} \\
	\\
	T && {BS^1}
	\arrow["{\alpha'}", from=1-1, to=1-3]
	\arrow["q"', from=1-1, to=3-1]
	\arrow["p", from=1-3, to=3-3]
	\arrow["\alpha"', from=3-1, to=3-3]
\end{tikzcd}\]
    Notice that the fibers of $q$ are (non-canonically) equivalent to $[G/G]$.\\ \indent
    Let now $\lambda \in G$ and $Z_\lambda \subset G$ be its centralizer.
    Then $S^1$ acts on $B(Z)$ via the element $\lambda \in Z_\lambda = \pi_1(\on{Aut}(BZ_\lambda),\on{id})$ and the map $BZ_\lambda \to [G/G]$ is $S^1$-equivariant.
    Hence we can similarly construct $(\widetilde{BZ_\lambda})_\alpha$ and it comes with a $1$-shifted Lagrangian map $(\widetilde{BZ_\lambda})_\alpha \to \widetilde{[G/G]}_\alpha$ in the category of locally constant families of geometric derived over $T$.
    If $T$ has the homotopy type of an topological orientable closed manifold of dimension $n$, then a choice of orientation induces a $(2-n)$-shifted Lagrangian morphism on the derived stacks of global sections
\[
\mathbf{Loc}_{Z_\lambda, \alpha}(Y) \to \mathbf{Loc}_G(Y)\,.
\]
\end{recollection}

\begin{recollection}\label{recollection:compatible_symplectic_structures}
    Let $r \in \mathbb{N}$ and $k$ a field of characteristic zero.
    Then we have a canonical open immersion $BGL_r \to \mathbf{Perf}_k$ sending a vector bundle of rank $r$ to the corresponding perfect complex concentrated in degree $0$.
    By \cite{PTVV13}, the stacks $BGL_r$ and $\mathbf{Perf}_k$ carry compatible symplectic structures.
    Notice that the characteristic zero assumption can be relaxed to $\mathrm{char}(k) \neq 2$ by \cite[Theorem 6.23]{Fu25}.
\end{recollection}

\begin{recollection}\label{recollection:fixed_rank}
	Let $T$ be a locally weakly contractible topological space.
	By \Cref{standard_t_structure}, the derived stack $\mathbf{Loc}^{[0,0]}(X)$ decomposes as disjoint union of open and closed substacks
\[
\mathbf{Loc}^{[0,0]}(T) \coloneqq \bigsqcup_{r \in \mathbb{N}} \mathbf{Loc}^r(T),
\]
where $\mathbf{Loc}^r(T) \simeq \mathbf{Map}(\Pi_\infty(T), BGL_r)$ is the derived stack of local system on $T$ of rank $r$. 
	By \Cref{recollection:compatible_symplectic_structures}, if $T$ is an orientable closed connected topological manifold, then $\mathbf{Loc}^{[0,0]}(T)$ and $\mathbf{Loc}(T)$ carry compatible symplectic structures.
\end{recollection}

\begin{construction}\label{construction:fixed_monodromy}
	In the setting of \Cref{notation:perversity_divisor}, write $D$ as a disjoint union of its connected components $D_1, \ldots, D_m$.
	By \Cref{recollection:fixed_rank}, the derived stack $\mathbf{Loc}^{[0,0]}(\partial D)$ decomposes as a disjoint union of open and closed substacks
\[
\mathbf{Loc}^{[0,0]}(\partial D) \simeq \bigsqcup_{\underline{r} \in \mathbb{N}^{m}} \mathbf{Loc}^{\underline{r}}(\partial D)\,,
\]
where 
\[
\mathbf{Loc}^{\underline{r}}(\partial D) \coloneqq \prod_{i=1}^{m} \mathbf{Loc}^{r_i}(\partial D_i)\,.
\]
We define the derived stack ${}^{\mathfrak{p}}\mathbf{Perv}_D^{\underline{r}}(X)$ as the following pullback in $\dSt$:
\[\begin{tikzcd}[sep=small]
	{{}^{\mathfrak{p}}\mathbf{Perv}^{\underline{r}}_D(X)} && {\mathbf{Loc}^{\underline{r}}(\partial D)} \\
	\\
	{{}^{\mathfrak{p}}\mathbf{Perv}_D(X)} && {\mathbf{Loc}^{[0,0]}(\partial D)}
	\arrow[from=1-1, to=1-3]
	\arrow[from=1-1, to=3-1]
	\arrow[from=1-3, to=3-3]
	\arrow["{\widetilde{i}^!p^\ast}", from=3-1, to=3-3]
\end{tikzcd}\]
	Then ${{}^{\mathfrak{p}}\mathbf{Perv}_D(X)}$ decomposes as a disjoint union of open and closed substacks
\[
{}^{\mathfrak{p}}\mathbf{Perv}_D(X) \simeq \bigsqcup_{\underline{r} \in \mathbb{N}^{m}} {}^{\mathfrak{p}}\mathbf{Perv}^{\underline{r}}_D(X)\,.
\]
	For a set of elements $\lambda_i \in GL_{r_i}$, $i= 0, \ldots, m$ and in the setting of \Cref{recollection:bundle_of_lagrangians}, we define the derived stack ${}^{\mathfrak{p}}\mathbf{Perv}_D^{\underline{\lambda}}(X)$ via the pullback square in $\on{dSt}_k$
\begin{equation}\label{eq:pbsq}\begin{tikzcd}[sep=small]
	{{}^{\mathfrak{p}}\mathbf{Perv}^{\underline{r}, \underline{\lambda}}_D(X)} && {\mathbf{Loc}_{Z_{\underline{\lambda}}, \alpha}(\partial D) \coloneqq \prod_{i=1}^{m} \mathbf{Loc}_{Z_{\underline{\lambda}}, \alpha_i}(\partial D_i)} \\
	\\
	{{}^{\mathfrak{p}}\mathbf{Perv}^{\underline{r}}_D(X)} && {\mathbf{Loc}^{\underline{r}}(\partial D) \coloneqq \prod_{i=1}^{m} \mathbf{Loc}^{r_i}(\partial_iD)}
	\arrow[from=1-1, to=1-3]
	\arrow[from=1-1, to=3-1]
	\arrow[from=1-3, to=3-3]
	\arrow["{\widetilde{i}^!p^\ast}"', from=3-1, to=3-3]
\end{tikzcd}
\end{equation}
where $\alpha \colon D \to BS^1$ (respectively, $\alpha_i \colon D_i \to BS^1$) is the map classifying the $S^1$-bundle $\partial D \to D$ (respectively, $\partial D_i \to D_i$).
\end{construction}

We find the following symplectic leaves of ${}^{\mathfrak{p}}\mathbf{Perv}_D(X)$:

\begin{theorem}\label{thm:symplectic_leaves}
Consider a set of elements $\lambda_i \in GL_{r_i}$, $i= 0,\ldots, m$. Then the derived stack ${}^{\mathfrak{p}}\mathbf{Perv}^{\underline{r}, \underline{\lambda}}_D(X)$ of perverse sheaves with prescribed monodromy $\underline{\lambda}=\{\lambda_1,\dots,\lambda_m\}$ from \Cref{construction:fixed_monodromy} carries a $(2-\dim X)$-shifted symplectic structure.
\end{theorem}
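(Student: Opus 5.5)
The plan is to realize ${}^{\mathfrak{p}}\mathbf{Perv}^{\underline{r}, \underline{\lambda}}_D(X)$ as a derived Lagrangian intersection, using the pullback square \eqref{eq:pbsq}. By \Cref{prop:shifted_lagrangian_perv}, the map $\widetilde{i}^!p^\ast\colon {}^{\mathfrak{p}}\mathbf{Perv}_D(X)\to \mathbf{Loc}^{[0,0]}(\partial D)$ is $(2-d)$-shifted Lagrangian, where $d=\dim X$ and $\partial D$ is a closed oriented $(d-1)$-manifold. The target therefore carries its $(3-d)$-shifted symplectic structure. By \Cref{recollection:fixed_rank}, this structure is compatible with the one on $\mathbf{Loc}(\partial D)$ used in \Cref{corollary:lagrangian_from_divisor}. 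Since $\mathbf{Loc}^{\underline{r}}(\partial D)\subset \mathbf{Loc}^{[0,0]}(\partial D)$ is open and closed, and ${}^{\mathfrak{p}}\mathbf{Perv}^{\underline{r}}_D(X)$ is its preimage, the restricted map ${}^{\mathfrak{p}}\mathbf{Perv}^{\underline{r}}_D(X)\to \mathbf{Loc}^{\underline{r}}(\partial D)$ is again $(2-d)$-shifted Lagrangian. Restriction of Lagrangian structures to open substacks only requires that the pullback of the closed form is still non-degenerate, which is local.

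Next I would show that the right vertical map $\mathbf{Loc}_{Z_{\underline{\lambda}},\alpha}(\partial D)\to \mathbf{Loc}^{\underline{r}}(\partial D)$ in \eqref{eq:pbsq} is $(2-d)$-shifted Lagrangian for the same symplectic structure on the target. This proceeds componentwise over $i=1,\dots,m$, followed by taking products; products of Lagrangians into products of symplectic stacks are Lagrangian.

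For a fixed $i$ I apply \Cref{recollection:bundle_of_lagrangians} with $T=D_i$ and with $\alpha_i$ classifying $\partial D_i\to D_i$. The key identification is
\[
\mathbf{Loc}^{r_i}(\partial D_i)\simeq \on{Map}_{/D_i}\big(D_i,\widetilde{[GL_{r_i}/GL_{r_i}]}_{\alpha_i}\big),
\]
that is, maps from the total space of the circle bundle to $BGL_{r_i}$ are the same as sections of the associated family of free loop stacks $\mathbf{Map}(S^1,BGL_{r_i})$. The relative $1$-shifted symplectic structure on the family $\widetilde{[G/G]}_{\alpha_i}$ comes from the $2$-shifted structure on $BG$ via the AKSZ construction on the fibers $S^1$. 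Transgressing it along the closed oriented $(d-2)$-manifold $D_i$ gives a $(1-(d-2))=(3-d)$-shifted symplectic structure on the space of sections. This structure must agree with the AKSZ structure on $\mathbf{Loc}^{r_i}(\partial D_i)$ coming from the orientation of $\partial D_i$, which is built from the orientations of $D_i$ and of the fibers. Then the $1$-shifted Lagrangian $(\widetilde{BZ_{\lambda_i}})_{\alpha_i}\to\widetilde{[G/G]}_{\alpha_i}$ transgresses to a $(2-d)$-shifted Lagrangian on sections, as stated at the end of \Cref{recollection:bundle_of_lagrangians}.

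Once both maps are Lagrangian into the same $(3-d)$-shifted symplectic stack $\mathbf{Loc}^{\underline{r}}(\partial D)$, the theorem follows from the Lagrangian intersection theorem of \cite{PTVV13}. The fiber product of two $(s)$-shifted Lagrangians in an $(s+1)$-shifted symplectic stack carries an $(s)$-shifted symplectic structure; here $s=2-d$, giving the claimed $(2-d)$-shifted symplectic structure on the pullback \eqref{eq:pbsq}.

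The main obstacle is the compatibility step. One has to check that the symplectic form on $\mathbf{Loc}^{r_i}(\partial D_i)$ induced from the Calabi--Yau structure via \Cref{thm:CY_lagr} and \Cref{recollection:compatible_symplectic_structures} coincides, up to a sign fixed by orientation conventions, with the Pantev--Toën fiberwise-transgressed form. I would first try to reduce this to a comparison of fundamental classes, via the pushforward $C_\ast(\partial D_i)\to C_\ast(D_i)\otimes C_\ast(S^1)$-type fiber integration, under which both constructions are AKSZ transgressions of the same $2$-shifted form on $BGL_{r_i}$.
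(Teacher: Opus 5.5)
Your proposal is correct and follows the paper's proof. The paper also gets the bottom map of \eqref{eq:pbsq} as a Lagrangian from \Cref{prop:shifted_lagrangian_perv}, gets the right vertical map from the Pantev--Toën construction of \Cref{recollection:bundle_of_lagrangians}, invokes \Cref{recollection:compatible_symplectic_structures} for compatibility on the target, and concludes with the Lagrangian intersection theorem of \cite{PTVV13}. The paper simply asserts that the Calabi--Yau-induced form on $\mathbf{Loc}^{\underline{r}}(\partial D)$ agrees with the fiberwise-transgressed Pantev--Toën form, so your explicit treatment of that comparison and your degree bookkeeping (cleaner than the indexing in \Cref{recollection:bundle_of_lagrangians}) add detail without changing the route.
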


\begin{proof}
	By \Cref{prop:shifted_lagrangian_perv}, \Cref{recollection:bundle_of_lagrangians} and \Cref{recollection:compatible_symplectic_structures}, the bottom horizontal and right vertical morphisms in \eqref{eq:pbsq} carry compatible $(2-\dim X)$-shifted Lagrangian structures.
	The claim then follows by the derived Lagrangian intersection theorem \cite[Theorem 2.9]{PTVV13}.
\end{proof}

\begin{example}\label{ex:Riemann_surface}
	Let $S$ be a Riemann surface and $D = \{p_1, \ldots p_n\} \subset S$ be a finite number of points.
	Let $\underline{r}= \{r_1, \ldots, r_m\} \in \mathbb{N}^r$ and $\underline{\lambda} = \{\lambda_1 \ldots \lambda_r\} \in \prod_{i=1}^{m} GL_{r_i}$.
	Then there exists a $1$-shifted Lagrangian intersection pullback square in $\on{dSt}_k$ as follows:
\[\begin{tikzcd}[sep=small]
	{{}^{\mathfrak{p}}\mathbf{Perv}^{\underline{r}, \underline{\lambda}}_D(S)} && {\prod_{i=1}^{m} BZ_{\lambda_i}} \\
	\\
	{{}^{\mathfrak{p}}\mathbf{Perv}^{\underline{r}}_D(S)} && {\prod_{i=1}^{m} \mathbf{Loc}^{r_i}(S^1) \simeq\prod_{i=1}^{m} \left[GL_{n_i}/GL_{n_i}\right] }
	\arrow[from=1-1, to=1-3]
	\arrow[from=1-1, to=3-1]
	\arrow[from=1-3, to=3-3]
	\arrow["{\widetilde{i}^!p^\ast}"', from=3-1, to=3-3]
\end{tikzcd}\]
	In particular, ${}^{\mathfrak{p}}\mathbf{Perv}^{\underline{r}, \underline{\lambda}}_D(S)$ carries a $0$-shifted symplectic structure. 
\end{example}

\begin{remark}
	Let us point out that studying the symplectic leaves of more complicated stratifications is a challenging problem.
	For example, let $(X,D)$ be a stratified space with $X$ a complex algebraic variety and $D \subset X$ a normal crossing divisor where two (or more) irreducible components intersects.
	Then we do not have an explicit description of the shifted symplectic target of the shifted Lagrangian $\mathbf{Cons}_D(X)$, other than it being a pullback of stacks of constructible sheaves.
	Since exceptional pullbacks are involved, we do not expect that this symplectic target can in general be described by a category of local systems (as in the smooth divisor case).
	For some considerations for local systems on the complement of $D$ in the case of two components $D = D_1 \cup D_2$, see \cite[Section 4.3]{PT18}.
\end{remark}

\bibliography{biblio}
\bibliographystyle{alpha}

\textsc{MC: Mathematisches Institut, Universit\"at Bonn, Endenicher Allee 60, 53115 Bonn, Germany}

\textit{Email address:} \texttt{christ@math.uni-bonn.de}\\

\textsc{EL: Sorbonne Université and Université Paris Cité, CNRS, IMJ-PRG, F-75005 Paris, France}

\textit{Email address:} \texttt{lampetti@imj-prg.fr}

\end{document}